\theoremstyle{plain}
        \newtheorem{theorem}{Theorem}[section]
        \newtheorem*{theorem*}{Theorem}
        \newtheorem*{conj*}{Conjecture}
        \newtheorem{lemma}[theorem]{Lemma}
        \newtheorem{prop}[theorem]{Proposition}
        \newtheorem{cor}[theorem]{Corollary}
        \newtheorem{thmx}{Theorem}
\theoremstyle{definition}
        \newtheorem{definition}[theorem]{Definition}
        \newtheorem{rem}[theorem]{Remark}
         \newtheorem{rems}[theorem]{Remarks}
         \newtheorem*{assumptions}{The Assumptions}
\theoremstyle{remark}
        \newtheorem*{remark}{Remark}
\numberwithin{equation}{section}
\numberwithin{theorem}{section}
\numberwithin{table}{section}
\numberwithin{figure}{section}
\providecommand{\defn}[1]{\emph{#1}}
\renewcommand{\leq}{\leqslant}
\renewcommand{\geq}{\geqslant}
\newcommand{\diam}  {\operatorname{diam}}
\newcommand{\inte}  {\operatorname{inte}}
\newcommand{\inter}  {\operatorname{int}}
\newcommand{\id} {\operatorname{id}}
\newcommand{\card} {\operatorname{card}}
\newcommand{\supp}{\operatorname{supp}}
\newcommand{\R}{\mathbb{R}}
\newcommand{\B}{\mathbb{B}}    
\newcommand{\C}{\mathbb{C}}      
\newcommand{\N}{\mathbb{N}}      
\newcommand{\Z}{\mathbb{Z}}      
\newcommand{\D}{\mathbb{D}}     
\renewcommand{\H}{\mathbb{H}}    
\providecommand{\abs}[1]{\lvert#1\rvert}
\newcommand{\Abs}[1]{\left\lvert#1\right\rvert}
\providecommand{\Absbig}[1]{\bigl\lvert#1\bigr\rvert}
\providecommand{\Absbigg}[1]{\biggl\lvert#1\biggr\rvert}
\providecommand{\AbsBig}[1]{\Bigl\lvert#1\Bigr\rvert}
\providecommand{\AbsBigg}[1]{\Biggl\lvert#1\Biggr\rvert}
\providecommand{\norm}[1]{\|#1\|}
\providecommand{\Norm}[1]{\left\|#1\right\|}
\renewcommand{\:}{\colon}
\renewcommand{\b}{\mathfrak{b}}
\newcommand{\w}{\mathfrak{w}}
\renewcommand{\c}{\mathfrak{c}}
\newcommand{\I}{\mathbf{i}}
\newcommand{\crit}{\operatorname{crit}}
\newcommand{\post}{\operatorname{post}}
\newcommand{\CC}{\mathcal{C}}
 \newcommand{\DD}{\mathbf{D}}
\newcommand{\X} {\mathbf{X}}
\newcommand{\E} {\mathbf{E}}
\newcommand{\V} {\mathbf{V}}
\newcommand{\W} {\mathbf{W}}
\newcommand{\CCC}{C}
\newcommand{\PPP}{\mathcal{P}}
\newcommand{\MMM}{\mathcal{M}}
\newcommand{\Holder}[1] {\CCC^{0,#1}}
\newcommand{\Hseminorm}[3] {\Abs{#3}_{#1}}
\newcommand{\Hnorm}[3] {\Norm{#2}_{\Holder{#1}{#3}}}
\newcommand{\vertiii}[1]{{\left\vert\kern-0.25ex\left\vert\kern-0.25ex\left\vert #1 
    \right\vert\kern-0.25ex\right\vert\kern-0.25ex\right\vert}}
\newcommand{\vertiiibig}[1]{{\bigl\vert\kern-0.25ex\bigl\vert\kern-0.25ex\bigl\vert #1 
    \bigr\vert\kern-0.25ex\bigr\vert\kern-0.25ex\bigr\vert}}
\newcommand{\vertiiiBig}[1]{{\Bigl\vert\kern-0.25ex\Bigl\vert\kern-0.25ex\Bigl\vert #1 
    \Bigr\vert\kern-0.25ex\Bigr\vert\kern-0.25ex\Bigr\vert}}
\newcommand{\RR}{\mathcal{L}}
\newcommand{\lcm}{\operatorname{lcm}}
\newcommand{\XX}{\mathbb{X}}
\newcommand{\wt}[1]{\widetilde{#1}}
\newcommand{\minus}{\scalebox{0.6}[0.6]{$-\!\!\;$}}
\newcommand{\circsmall}{\scalebox{0.6}[0.6]{$\circ$}}
\newcommand{\Orb}{\mathfrak{P}}
\newcommand{\ti}{\vartriangle}
\newcommand{\ee}{\boldsymbol{\shortparallel}}
\newcommand{\e}{\boldsymbol{\shortmid}}
\newcommand{\po}{\bullet}
\newcommand{\DS}{\mathfrak{D}}
\newcommand{\Inv}{\operatorname{Inv}}
\newcommand{\bigO}{\mathcal{O}}
\begin{document}
\title[Prime orbit theorems for expanding Thurston maps]{Prime orbit theorems for expanding Thurston maps: \\Dirichlet series and orbifolds}
\author{Zhiqiang~Li \and Tianyi~Zheng}
\thanks{Z.~Li was partially supported by NSFC Nos.~12101017, 12090010, 12090015, and BJNSF No.~1214021.}

\address{Zhiqiang~Li, School of Mathematical Sciences \& Beijing International Center for Mathematical Research, Peking University, Beijing 100871, China}
\email{zli@math.pku.edu.cn}
\address{Tianyi~Zheng, Department of Mathematics, University of California, San Diego, San Diego, CA 92093--0112}
%\thanks{The author was partially supported by NSF grant .}
\email{tzheng2@math.ucsd.edu}
%\date{\today}

\subjclass[2020]{Primary: 37C30; Secondary: 37C35, 37F15, 37B05, 37D35}

\keywords{expanding Thurston map, postcritically-finite map, rational map, Latt\`{e}s map, Prime Orbit Theorem, Prime Number Theorem, Ruelle zeta function, dynamical zeta function, dynamical Dirichlet series, thermodynamical formalism, non-local integrability.}

\begin{abstract}
We obtain an analog of the prime number theorem for a class of branched covering maps on the $2$-sphere $S^2$ called expanding Thurston maps, which are topological models of some non-uniformly expanding rational maps without any smoothness or holomorphicity assumptions. More precisely, we show that the number of primitive periodic orbits, ordered by a weight on each point induced by an (eventually) positive real-valued H\"{o}lder continuous function on $S^2$ that is not cohomologous to a constant, is asymptotically the same as the well-known logarithmic integral. In particular, our results apply to postcritically-finite rational maps for which the Julia set is the whole Riemann sphere.
\end{abstract}

\maketitle

\tableofcontents

\section{Introduction}

\subsection{History and motivations}

Counting is probably one of the very first mathematical activities that predates any written history of humankind. It remains at the core of virtually all fields of mathematics to count important objects in the field and study their statistical properties. 

One useful idea in such studies is to code the important objects in a function in the form of a polynomial or a series. Perhaps the most famous of such functions is the \emph{Riemann zeta function}
\begin{equation*}
	\zeta_{\operatorname{Riemann}} (s) \coloneqq \sum\limits_{n=1}^{+\infty} \frac{1}{n^s}  
	=  \prod\limits_{p \text{ prime}}  ( 1 - p^{-s}  )^{-1},    \qquad\qquad  \Re(s) > 1,
\end{equation*}
whose analytic properties were studied by B.~Riemann in the 19th century, even though the product formula was already known to L.~Euler in the 18th century. Analytic properties of the Riemann zeta function are closely related to the distribution of prime numbers. It is known that the assertion that $\zeta_{\operatorname{Riemann}}$ has a non-vanishing holomorphic extension on the line $\Re(s) = 1$ except for a simple pole at $s=1$ is equivalent to the famous Prime Number Theorem of Ch.~J.~de~la~Vall\'ee-Poussin and J.~Hadamard stating that the number $\pi(T)$ of primes no larger than $T>0$ satisfies 
\begin{equation*}
	\pi(T) \sim \operatorname{Li} (T)  \sim \frac{T}{\log T},  \qquad\qquad  \text{as } T \to +\infty,
\end{equation*}
where $\operatorname{Li}(y)$ is the \defn{Eulerian logarithmic integral function}
\begin{equation}      \label{eqDefLogIntegral}
	\operatorname{Li} (y) \coloneqq \int_2^y\! \frac{1}{\log u} \,\mathrm{d} u,  \qquad\qquad y>0.
\end{equation}
A more careful study of $\zeta_{\operatorname{Riemann}}$ reveals that a condition of H.~von~Koch from 1901 \cite{vK01} on the error term in the Prime Number Theorem, namely,
\begin{equation*}
	\pi(T) = \operatorname{Li} (T)  + \bigO \bigl( \sqrt{T} \log T \bigr),  \qquad\qquad  \text{as } T \to +\infty,
\end{equation*}
is equivalent to the Riemann hypothesis (see also \cite[Section~5.1]{BCRW08}).

The idea of studying zeta functions was first introduced by A.~Selberg in 1956 from number theory into geometry, where (primitive) closed geodesics serve the role of prime numbers. He defined a zeta function
\begin{equation}    \label{eqDefZetaFnSelberg}
	\zeta_{\operatorname{Selberg}} (s) \coloneqq   \prod\limits_{\gamma \in \Orb }   \prod\limits_{ n=0 }^{+\infty}   \bigl( 1 - e^{ -(s + n) l(\gamma) }  \bigr),    \qquad\qquad  \Re(s) > 1,
\end{equation}
where $\Orb$ denotes the set of primitive closed geodesics and $l(\gamma)$ is the length of the geodesic $\gamma$ \cite{Se56}.

H.~Huber established the first \emph{Prime Geodesic Theorem}, as an analog of the Prime Number Theorem, for surfaces of constant negative curvature in 1961, where A.~Selberg's work \cite{Se56} was implicitly used.

\begin{theorem*}[H.~Huber \cite{Hu61}]
	Let $M$ be a compact surface of constant curvature $-1$, and by $\pi(T)$ we denote the number of primitive closed geodesics $\gamma$ of length $l(\gamma) \leq T$. 
	
	Then there exists $\alpha \in (0,1)$ such that 
	\begin{equation*}
		\pi(T) = \operatorname{Li}  \bigl(  e^T  \bigr)  + \bigO \bigl( e^{\alpha T} \bigr),  \qquad\qquad  \text{as } T \to +\infty.
	\end{equation*} 
\end{theorem*}

The zeta functions were then introduced into dynamics by M.~Artin and B.~Mazur \cite{AM65} in 1965 for diffeomorphisms and by S.~Smale \cite{Sm67} in 1967 for Anosov flows, where (primitive) periodic orbits serve the role of prime numbers. S.~Smale used A.~Selberg's formulation in the context of geodesic flows on surfaces of constant negative curvature due to the direct correspondence between closed geodesics on the surface and periodic orbits of the geodesic flow. A related formulation of zeta functions for flows was later proposed and studied by D.~Ruelle \cite{Rue76a, Rue76b, Rue76c} in 1976, which behaves better under changes of time scale in the more general context of Axiom A systems. More precisely, for Anosov flows, the Ruelle zeta function is defined as
\begin{equation}   \label{eqDefZetaFnRuelle}
	\zeta_{\operatorname{Ruelle}} (s) \coloneqq   \prod\limits_{\gamma \in \Orb }   \bigl( 1 - e^{ - s l(\gamma) }  \bigr)^{-1} ,    \qquad\qquad  \Re(s) > 1,
\end{equation}
where $\Orb$ denotes the set of primitive periodic orbits of the flow and $l(\gamma)$ is the length of the orbit $\gamma$. With this interpretation of $\Orb$ and $l(\gamma)$, we have
\begin{equation}   \label{eqRuelleSelberg}
	\zeta_{\operatorname{Ruelle}} (s)  
	= \frac{ \zeta_{\operatorname{Selberg}}(s+1)}{ \zeta_{\operatorname{Selberg}}(s) }
\end{equation}
when both sides are defined.

Extensive researches have been carried out in geometry and dynamics in establishing Prime Geodesic Theorems on various spaces and Prime Orbit Theorems for various flows and other dynamical systems. We recall but a few such results here and by no means claim to give a complete review of the literature.

We denote by $\pi(T)$ the number of primitive periodic orbits $\gamma$ of ``length'' (appropriately interpreted for the corresponding dynamical system) $l(\gamma) \leq T$. By a Prime Orbit Theorem without an error term, we mean the assertion that there exists a constant $h>0$ such that 
$\pi(T) \sim  \operatorname{Li}  \bigl(  e^{hT}  \bigr)$ as $T \to +\infty$. By a Prime Orbit Theorem with an exponential error term, we mean the assertion that there exist constants $h>0$ and $\delta\in(0,h)$ such that 
$\pi(T)=  \operatorname{Li}  \bigl(  e^{hT}  \bigr)  + \bigO \bigl(  e^{(h-\delta)T}  \bigr)$ as $T \to +\infty$. 

Generalizing the first order asymptotics of H.~Huber for geodesic flows over compact surfaces of constant negative curvature, G.~A.~Margulis established in his thesis in 1970 \cite{Mar04} (see also \cite{Mar69}) a Prime Orbit Theorem without an error term for the geodesic flows over compact Riemannian manifolds with variable negative curvature, and more generally, for weak-mixing Anosov flows preserving a smooth volume. Similar results were obtained by P.~Sarnak in his thesis in 1980 for non-compact surfaces of finite volume \cite{Sa80}.

For geodesic flows over convex-cocompact surfaces of constant negative curvature, a Prime Orbit Theorem without an error term was obtained conditionally by L.~Guillop\'e \cite{Gu86} and later unconditionally by S.~P.~Lalley \cite{La89}.

The exponential error terms in the Prime Orbit Theorems in the contexts above (except in H.~Huber's result) were out of reach until D.~Dolgopyat's seminal work on the exponential mixing of Anosov flows in his thesis \cite{Dol98}, where he developed an ingenuous approach to get new upper bounds on the norms of the complex Ruelle (transfer) operators on some appropriate function spaces. M.~Pollicott and R.~Sharp \cite{PoSh98} combined these bounds with techniques from number theory to get a Prime Orbit Theorem with an exponential error term for the geodesic flows over compact surfaces of variable negative curvature. For related works on closed geodesics satisfying some homological constraints, see R.~Phillips and ~P.~Sarnak \cite{PhSa87}, S.~P.~Lalley \cite{La89}, A.~Katsuda and T.~Sunada \cite{KS90}, M.~Pollicott \cite{Po91}, R.~Sharp \cite{Sh93}, M.~Babillot and F.~Ledrappier \cite{BabLe98}, M.~Pollicott and R.~Sharp \cite{PoSh98}, N.~Anantharaman \cite{An00a, An00b}, etc.

The elegant idea of M.~Pollicott and R.~Sharp in \cite{PoSh98} used in establishing the error term for their Prime Orbit Theorem is summarized in a nutshell below:
\begin{enumerate}
	\smallskip
	\item[(1)] Obtain a quantitative bound for each term in the additive form of the Ruelle zeta function $\zeta_{\operatorname{Ruelle}}$ (compare with (\ref{eqDefZetaFn}) and (\ref{eqZetaFnOrbForm})) in terms of the operator norm of the Ruelle operator via an argument of D.~Ruelle \cite{Rue90} that matches the preimage points and periodic points of the symbolic dynamics induced by the Bowen--Ratner symbolic coding for the geodesic flows.
	
	\smallskip
	\item[(2)] By combining the bound above with D.~Dolgopyat's bound \cite{Dol98} on the norms of the Ruelle operator on some appropriate function spaces, derive a non-vanishing holomorphic extension to $\zeta_{\operatorname{Ruelle}}$ on a vertical strip $h-\epsilon \leq \Re(s) \leq h$, for some $\epsilon>0$, except for a simple pole at $s=h$, where $h\in\R$ is the smallest number such that the additive form of $\zeta_{\operatorname{Ruelle}} (s)$ converges on $\{ s\in\C \,|\, \Re(s)> h\}$, and additionally, obtain a quantitative bound of $\abs{ \zeta_{\operatorname{Ruelle}} }$ on this strip.
	
	\smallskip
	\item[(3)] Establish the Prime Orbit Theorem with an exponential error term from the bound of $\abs{ \zeta_{\operatorname{Ruelle}} }$ above via standard arguments from analytic number theory.
\end{enumerate}

Variations and simplifications of this general strategy of M.~Pollicott and R.~Sharp, relying on the machinery of D.~Dolgopyat, have been adapted by many authors in various contexts, see for example, F.~Naud \cite{Na05}, L.~N.~Stoyanov \cite{St11}, P.~Giulietti, C.~Liverani, and M.~Pollicott \cite{GLP13}, H.~Oh and D.~Winter \cite{OW16, OW17}, D.~Winter \cite{Wi16}, etc. The importance of the analytic properties of various dynamical zeta functions in understanding the distribution of periodic orbits now becomes apparent. Not surprisingly, in view of the connection between \cite{Dol98} and \cite{PoSh98}, dynamical zeta functions are also closely related to the decay of correlations and resonances. As M.~Pollicott has put it, these are basically ``two sides of the same coin''. For related researches on the side of decay of correlations, see for example,  D.~Dolgopyat \cite{Dol98}, C.~Liverani \cite{Liv04}, A.~Avila, S.~Gou\"ezel, and J.~C.~Yoccoz \cite{AGY06}, L.~N.~Stoyanov \cite{St01, St11}, V.~Baladi and C.~Liverani \cite{BalLiv12}, V.~Baladi, M.~Demers, and C.~Liverani \cite{BDL18}, etc.

In the context of convex-cocompact surfaces $M$ of constant negative curvature, i.e., $M = \Gamma \backslash \H^2$ being the quotient of a \emph{classical Fuchsian Schottky group} $\Gamma$ (see \cite[Section~4.1]{Na05}) acting on the hyperbolic plane $\H^2$, F.~Naud \cite{Na05} established in 2005 a Prime Orbit Theorem with an exponential error term by producing some vertical strip in $\C$ on which the Selberg zeta function $\zeta_{\operatorname{Selberg}}$ (resp.\ the Ruelle zeta function $\zeta_{\operatorname{Ruelle}}$) has a non-vanishing holomorphic extension except a simple zero (resp.\ pole, see (\ref{eqRuelleSelberg})). For stronger results on the zero free strip and distribution of zeros in these contexts, see the recent works of J.~Bourgain, A.~Gamburd, and P.~Sarnak \cite{BGS11}, F.~Naud \cite{Na14}, H.~Oh and D.~Winter \cite{OW16}, S.~Dyatlov and J.~Zahl \cite{DZ16}, J.~Bourgain and S.~Dyatlov \cite{BD17}.

In the context of subgroups of the group of orientation preserving isometries of higher dimensional real hyperbolic space $\H^n$ and more general settings, T.~Roblin \cite{Ro03} proved a Prime Orbit Theorem without an error term for geometrically finite subgroups, G.~A.~Margulis, A.~Mohammadi, and H.~Oh \cite{MMO14} established an exponential error term for geometrically finite subgroups under additional conditions, and D.~Winter \cite{Wi16} showed a Prime Orbit Theorem with an exponential error term for convex-cocompact subgroups. A form of Prime Orbit Theorem without an error term for abelian covers of some hyperbolic manifolds was established by H.~Oh and W.~Pan \cite{OP18}.

In the same work \cite{Na05}, F.~Naud also established the first Prime Orbit Theorem with an exponential error term in complex dynamics, for a class of hyperbolic polynomials $z^2 + c$, $c\in(-\infty, -2)$. One key feature of this class of polynomials is that their Julia sets are Cantor sets. For an earlier work on dynamical zeta functions for a class of sub-hyperbolic quadratic polynomials, see V.~Baladi, Y.~Jiang, and H.~H.~Rugh \cite{BJR02}. For hyperbolic rational maps, S.~Waddington studied a variation of the Ruelle zeta function defined by strictly preperiodic points instead of periodic points (compare with (\ref{eqDefZetaFn}) and (\ref{eqZetaFnOrbForm})), and established a corresponding form of Prime Orbit Theorem without an error term in \cite{Wad97}.

The study of iterations of polynomials and rational maps, known as complex dynamics, dates back to the work of G.~K{\oe}nigs, E.~Schr\"oder, and others in the 19th century. This subject was developed into an active area of research, thanks to the remarkable works of S.~Latt\`{e}s, C.~Carath\'eodory, P.~Fatou, G.~Julia, P.~Koebe, L.~Ahlfors, L.~Bers, M.~Herman, A.~Douady, D.~P.~Sullivan, J.~H.~Hubbard, W.~P.~Thurston, J.-C.~Yoccoz, C.~McMullen, J.~Milnor, M.~Lyubich, M.~Shishikura, and many others.

In the early 1980s, D.~P.~Sullivan \cite{Su85, Su83} introduced a ``dictionary'', known as \emph{Sullivan's dictionary} nowadays, linking the theory of complex dynamics with another classical area of conformal dynamical systems, namely, geometric group theory, mainly concerning the study of Kleinian groups acting on the Riemann sphere. Many dynamical objects in both areas can be similarly defined and results similarly proven, yet essential and important differences remain.

The Prime Orbit Theorems with exponential error terms of F.~Naud in \cite{Na05} can be considered as another new correspondence in Sullivan's dictionary. Despite active researches on dynamical zeta functions and Prime Orbit Theorems in many areas of dynamical systems, especially the works of L.~N.~Stoyanov \cite{St11}, G.~A.~Margulis, A.~Mohammadi, and H.~Oh \cite{MMO14}, and D.~Winter \cite{Wi16} on the group side of Sullivan's dictionary, the authors are not aware of similar entries in complex dynamics since F.~Naud \cite{Na05}, until the recent work of H.~Oh and D.~Winter \cite{OW17}. At a suggestion of D.~P.~Sullivan regarding holonomies, H.~Oh and D.~Winter established a Prime Orbit Theorem (as well as the equidistribution of holonomies) with an exponential error term for hyperbolic rational maps in \cite{OW17}. A rational map is \emph{hyperbolic} if the closure of the union of forward orbits of critical points is disjoint from its Julia set. The Julia set of a hyperbolic rational map has zero area. A rational map is forward-expansive on some neighborhood of its Julia set if and only if it is hyperbolic. The novelty and emphasis of this paper differ from that of \cite{OW17}; see Subsection~\ref{subsctPlan} for more details.

In Sullivan's dictionary, Kleinian groups, i.e., discrete subgroups of M\"obius transformations on the Riemann sphere, correspond to rational maps, and convex-cocompact Kleinian groups correspond to rational maps that exhibit strong expansion properties such as hyperbolic rational maps, semi-hyperbolic rational maps, and postcritically-finite sub-hyperbolic rational maps. See insightful discussions on this part of the dictionary in \cite[Chapter~1]{BM17}, \cite[Chapter~1]{HP09}, and \cite[Section~1]{LM97}.

One important question in conformal dynamical systems is: ``\emph{What is special about conformal dynamical systems in a wider class of dynamical systems characterized by suitable metric-topological conditions?}''

W.~P.~Thurston gave an answer to this question in his celebrated combinatorial characterization theorem of \emph{postcritically-finite}  rational maps (i.e., the union of forward orbits of critical points is a finite set) on the Riemann sphere among a class of more general topological maps, known as Thurston maps nowadays \cite{DH93}. A \emph{Thurston map} is a (non-homeomorphic) branched covering map on the topological $2$-sphere $S^2$ whose finitely many critical points are all preperiodic (see Subsection~\ref{subsctThurstonMap} for a precise definition). Thurston's theorem asserts that a Thurston map is essentially a rational map if and only if there exists no so-called \emph{Thurston obstruction}, i.e., a collection of simple closed curves on $S^2$ subject to certain conditions \cite{DH93}. 

Under Sullivan's dictionary, the counterpart of Thurston's theorem in geometric group theory is Cannon's Conjecture \cite{Ca94}. This conjecture predicts that an infinite, finitely presented Gromov hyperbolic group $G$ whose boundary at infinity $\partial_\infty G$ is a topological $2$-sphere is a Kleinian group. Gromov hyperbolic groups can be considered as metric-topological systems generalizing the conformal systems in the context of geometric group theory, namely, convex-cocompact Kleinian groups. Inspired by Sullivan's dictionary and their interest in Cannon's Conjecture, M.~Bonk and D.~Meyer, along with others, studied a subclass of Thurston maps by imposing some additional condition of expansion. A new characterization theorem of rational maps from a metric space point of view is established in this context by M.~Bonk and D.~Meyer \cite{BM10, BM17}, and by P.~Ha\"issinsky and K.~M.~Pilgrim \cite{HP09}. Roughly speaking, we say that a Thurston map is \emph{expanding} if for any two points $x, \, y\in S^2$, their preimages under iterations of the map get closer and closer. For each expanding Thurston map, we can equip the $2$-sphere $S^2$ with a natural class of metrics, called \emph{visual metrics}. As the name suggests, these metrics are constructed in a similar fashion as the visual metrics on the boundary $\partial_\infty G$ of a Gromov hyperbolic group $G$. See Subsection~\ref{subsctThurstonMap} for a more detailed discussion on these notions.

\begin{theorem*}[M.~Bonk \& D.~Meyer \cite{BM10, BM17}, P.~Ha\"issinsky \& K.~M.~Pilgrim \cite{HP09}] 
	An expanding Thurston map is conjugate to a rational map if and only if the sphere $(S^2,d)$ equipped with a visual metric $d$ is quasisymmetrically equivalent to the Riemann sphere $\widehat\C$ equipped with the chordal metric.
\end{theorem*}   

See \cite[Theorem~18.1~(ii)]{BM17} for a proof. For an equivalent formulation of Cannon's conjecture from a similar point of view, see \cite[Conjecture~5.2]{Bon06}. The definition of the chordal metric is recalled in Remark~\ref{rmChordalVisualQSEquiv} and the notion of quasisymmetric equivalence in Definition~\ref{defQuasiSymmetry}.

We remark on the subtlety of the expansion property of expanding Thurston maps by pointing out that such maps are never forward-expansive due to the critical points. In fact, each expanding Thurston map without periodic critical points is \emph{asymptotically $h$-expansive}, but not \emph{$h$-expansive}; on the other hand, expanding Thurston maps with at least one periodic critical point are not even asymptotically $h$-expansive \cite{Li15}. Asymptotic $h$-expansiveness and $h$-expansiveness are two notions of weak expansion introduced by M.~Misiurewicz \cite{Mi73} and R.~Bowen \cite{Bow72}, respectively. Note that forward-expansiveness implies $h$-expansiveness, which in turn implies asymptotic $h$-expansiveness \cite{Mi76}.

Thanks to the fundamental works of W.~P.~Thurston, M.~Bonk, D.~Meyer, P.~Ha\"issinsky, and K.~M.~Pilgrim, the dynamics and geometry of expanding Thurston maps and similar topological branched covering maps has attracted a considerable amount of interests, with motivations from both complex dynamics as well as Sullivan's dictionary. Under the dictionary, an expanding Thurston map corresponds to a Gromov hyperbolic group whose boundary at infinity is the topological $2$-sphere, and the special case of a rational expanding Thurston map (i.e., a postcritically-finite rational map whose Julia set is the whole Riemann sphere) corresponds to a convex-cocompact Kleinian group whose limit set is homeomorphic to a $2$-sphere (i.e., a cocompact lattice of $\mathrm{PSL}(2,\C)$) (see \cite[Chapter~1]{BM17}, \cite[Section~1]{Yi15}, and compare with \cite[Chapter~1]{HP09}).

Lastly, we want to remark that we have not been able to make connections to another successful approach to dynamical zeta functions dating back to the work of J.~Milnor and W.~P.~Thurston in 1988 on the \emph{kneading determinant} for real $1$-dimensional dynamics with critical points \cite{MT88}. The Milnor--Thurston kneading theory has been developed and used by many authors since then, for example, V.~Baladi and D.~Ruelle \cite{BR96}, V.~Baladi, A.~Kitaev, D.~Ruelle, and S.~Semmes \cite{BKRS97}, M.~Baillif \cite{Bai04}, M.~Baillif and V.~Baladi \cite{BB05}, H.~H.~Rugh \cite{Rug16}, and V.~Baladi \cite[Chapter~3]{Bal18}.

\subsection{Main results}

Complex dynamics is a vibrant field of dynamical systems, focusing on the study of iterations of polynomials and rational maps on the Riemann sphere $\widehat{\C}$. It is closely connected, via \emph{Sullivan's dictionary} \cite{Su85, Su83}, to geometric group theory, mainly concerning the study of Kleinian groups.

In complex dynamics, the lack of uniform expansion of a rational map arises from critical points in the Julia set. One natural class of non-uniformly expanding rational maps are called \emph{topological Collet--Eckmann maps}, whose basic dynamical properties have been studied by S.~Smirnov, F.~Przytycki, J.~Rivera-Letelier, Weixiao Shen, etc.\ (see \cite{PRLS03, PRL07, PRL11, RLS14}). In this paper, we focus on a subclass of topological Collet--Eckmann maps for which each critical point is preperiodic and the Julia set is the whole Riemann sphere. Actually, the most general version of our results is established for topological models of these maps, called \emph{expanding Thurston maps}. Thurston maps were studied by W.~P.~Thurston in his celebrated characterization theorem of postcritically-finite rational maps among such topological models \cite{DH93}. Thurston maps and Thurston's theorem, sometimes known as the fundamental theorem of complex dynamics, are indispensable tools in the modern theory of complex dynamics. Expanding Thurston maps were studied extensively by M.~Bonk, D.~Meyer \cite{BM10, BM17} and P.~Ha\"issinsky, K.~M.~Pilgrim \cite{HP09}.

The investigations of the growth rate of the number of periodic orbits (e.g.\ closed geodesics) have been a recurring theme in dynamics and geometry. 

Inspired by the seminal works of F.~Naud \cite{Na05} and H.~Oh, D.~Winter \cite{OW17} on the growth rate of periodic orbits, known as Prime Orbit Theorems, for hyperbolic (uniformly expanding) polynomials and rational maps, we establish in this paper the first Prime Orbit Theorems (to the best of our knowledge) in a non-uniformly expanding setting in complex dynamics. On the other side of Sullivan's dictionary, see related works \cite{MMO14, OW16, OP18}. For an earlier work on dynamical zeta functions for a class of sub-hyperbolic quadratic polynomials, see V.~Baladi, Y.~Jiang, and H.~H.~Rugh \cite{BJR02}. See also related work of S.~Waddington \cite{Wad97} on strictly preperiodic points of hyperbolic rational maps.

Given a map  $f\: X \rightarrow X$ on a metric space $(X,d)$ and a function $\phi\: S^2 \rightarrow \R$,  we define the weighted length $l_{f,\phi} (\tau)$ of a primitive periodic orbit 
\begin{equation*}
\tau \coloneqq \bigl\{x, \, f(x), \,  \cdots, \,  f^{n-1}(x) \bigr\} \in \Orb(f)
\end{equation*}
as
\begin{equation}  \label{eqDefComplexLength}
l_{f,\phi} (\tau) \coloneqq \phi(x) + \phi(f(x)) + \cdots + \phi \bigl(f^{n-1}(x) \bigr).
\end{equation}
We denote by
\begin{equation}   \label{eqDefPiT}
\pi_{f,\phi}(T) \coloneqq \card \{ \tau \in \Orb(f)  : l_{f,\phi}( \tau )  \leq T \}, \qquad   T>0,
\end{equation}
the number of primitive periodic orbits with weighted lengths up to $T$. Here $\Orb(f)$ denotes the set of all primitive periodic orbits of $f$ (see Section~\ref{sctNotation}). 

Note that the Prime Orbit Theorems in \cite{Na05, OW17} are established for the \emph{geometric potential} $\phi= \log \abs{f'}$. For hyperbolic rational maps, the Lipschitz continuity of the geometric potential plays a crucial role in \cite{Na05, OW17}. In our non-uniform expanding setting, critical points destroy the continuity of $\log\abs{f'}$. So we are left with two options to develop our theory, namely, considering 
\begin{enumerate}
\smallskip
\item[(a)] H\"older continuous $\phi$ or

\smallskip
\item[(b)] the geometric potential $\log\abs{f'}$.
\end{enumerate}
Despite the lack of H\"older continuity of $\log\abs{f'}$ in our setting, its value is closely related to the size of pull-backs of sets under backward iterations of the map $f$. This fact enables an investigation of the Prime Orbit Theorem in case (b), which will be investigated in an upcoming series of separate works starting with \cite{LRL}. 

The current paper is the first of a series of three papers (together with \cite{LZhe23b, LZhe23c}) focusing on case (a), in which the incompatibility of H\"older continuity of $\phi$ and non-uniform expansion of $f$ calls for a close investigation of metric geometries associated to $f$.

Latt\`{e}s maps are rational Thurston maps with parabolic orbifolds (see \cite[Chapter~3]{BM17}). They form a well-known class of rational maps. We first formulate our theorem for Latt\`{e}s maps. 

\begin{thmx}[Prime Orbit Theorem for Latt\`{e}s maps]   \label{thmLattesPOT}
Let $f\: \widehat{\C} \rightarrow \widehat{\C}$ be a Latt\`{e}s map on the Riemann sphere $\widehat{\C}$. Let $\phi \: \widehat{\C} \rightarrow \R$ be eventually positive and continuously differentiable. Then there exists a unique positive number $s_0>0$ with $P(f,-s_0 \phi) = 0$ and there exists $N_f\in\N$ depending only on $f$ such that the following statements are equivalent:
\begin{enumerate}
\smallskip
\item[(i)] $\phi$ is not cohomologous to a constant in the space $\CCC\bigl(\widehat{\C} \bigr)$ of real-valued continuous functions on $\widehat{\C}$.

\smallskip
\item[(ii)] For each $n\in \N$ with $n\geq N_f$, we have 
\begin{equation*}
\pi_{F,\Phi}(T) \sim \operatorname{Li}\bigl( e^{s_0 T} \bigr) \text{ as } T \to + \infty,
\end{equation*}
where  $F\coloneqq f^n$ and $\Phi\coloneqq \sum_{i=0}^{n-1} \phi \circ f^i$.

\smallskip
\item[(iii)] For each $n\in \N$ with $n\geq N_f$, there exists a constant $\delta \in (0, s_0)$ such that
\begin{equation*}
\pi_{F,\Phi}(T) = \operatorname{Li}\bigl( e^{s_0 T} \bigr)  + \bigO \bigl( e^{(s_0 - \delta)T} \bigr) \text{ as } T \to + \infty ,
\end{equation*}
where  $F\coloneqq f^n$ and $\Phi\coloneqq \sum_{i=0}^{n-1} \phi \circ f^i$.
\end{enumerate} 
Here $P(f,\cdot)$ denotes the topological pressure, and $\operatorname{Li} (y) \coloneqq \int_2^y\! \frac{1}{\log u} \,\mathrm{d} u$, $y>0$, is the \defn{Eulerian logarithmic integral function}.
\end{thmx}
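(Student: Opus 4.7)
I plan to prove the cycle (iii) $\Rightarrow$ (ii) $\Rightarrow$ (i) $\Rightarrow$ (iii). The existence and uniqueness of $s_0 > 0$ with $P(f,-s_0\phi) = 0$ is standard: since $\phi$ is eventually positive, $s \mapsto P(f,-s\phi)$ is continuous, strictly decreasing for large $s$, equals $h_{\mathrm{top}}(f) = \log \deg f > 0$ at $s = 0$, and tends to $-\infty$ as $s \to +\infty$. The implication (iii) $\Rightarrow$ (ii) is immediate from $\operatorname{Li}(e^{s_0 T}) \sim e^{s_0 T}/(s_0 T)$. For (ii) $\Rightarrow$ (i), I argue contrapositively: if $\phi = c + u \circ f - u$ for some $c \in \R$ and continuous $u$, then every primitive $m$-periodic orbit of $F = f^n$ carries weight $mnc$, so $\pi_{F,\Phi}(T)$ is a step function with jumps only at multiples of $nc$, incompatible with the smooth asymptotic $\operatorname{Li}(e^{s_0 T})$.

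The substantive direction is (i) $\Rightarrow$ (iii). I exploit the Latt\`es structure: $f$ is semi-conjugate, via a finite orbifold covering $\Theta\colon \C/\Lambda \to \widehat{\C}$, to an affine expanding map $A$ on a flat torus. I choose $N_f$ (depending only on $f$) large enough that, for $n \geq N_f$, the iterate $F = f^n$ admits a Markov-type combinatorial structure adapted to the orbifold singularities, and the pull-back $\widetilde{\Phi}$ of $\Phi = \sum_{i=0}^{n-1}\phi \circ f^i$ is a genuine $C^1$ function on the torus (with controlled branching over finitely many points). This reduces the Prime Orbit Theorem on $\widehat{\C}$ to a corresponding statement for the uniformly expanding affine map $A^n$ on the torus with a $C^1$ potential, up to a combinatorial correction of bounded multiplicity that does not affect the leading asymptotic.

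On the torus I encode the count into the dynamical Dirichlet series
\[
\eta(s) \coloneqq \sum_{\tau \in \Orb(A^n)} \frac{1}{\#\tau}\, e^{-s\, l_{A^n,\widetilde{\Phi}}(\tau)},
\]
and analyze it through the family of transfer operators $\RR_{-s\widetilde{\Phi}}$ acting on a suitable space of smooth functions on the torus. Two analytic inputs then suffice: (a) a spectral gap at real $s$ near $s_0$, giving a simple leading singularity of $\eta(s)$ at $s = s_0$ together with meromorphic continuation slightly past $\RE s = s_0$; and (b) a Dolgopyat-type estimate, namely polynomial-in-$|t|$ decay of iterated operator norms of $\RR_{-(s_0 + it)\widetilde{\Phi}}$ as $|t| \to \infty$. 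A contour-shift Tauberian argument in the spirit of Parry--Pollicott then converts (a) and (b) into $\pi_{F,\Phi}(T) = \operatorname{Li}(e^{s_0 T}) + \bigO(e^{(s_0 - \delta)T})$.

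The main obstacle will be establishing (b). This estimate is sensitive to the cohomology class of $\widetilde{\Phi}$: were $\widetilde{\Phi}$ co-homologous to a constant, eigenvalues of modulus $1$ of the twisted operator would persist along a discrete vertical progression, destroying rapid decay. Hypothesis (i), transported to the torus, is precisely the non-local integrability condition that excludes this obstruction; the $C^1$ regularity of $\phi$ enters here because the oscillatory cancellations underlying Dolgopyat-type bounds ultimately rely on integration by parts (or Fourier decay) on the flat torus, which requires one genuine derivative of the potential. The most delicate technical step will be verifying that the orbifold lift is compatible with the spectral analysis on the torus and that the multiplicative correction relating the two periodic orbit counts affects only lower-order terms.
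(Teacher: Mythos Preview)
Your argument for (iii)$\Rightarrow$(ii) and your step-function argument for (ii)$\Rightarrow$(i) are both correct; the latter is in fact cleaner than the paper's Proposition~\ref{propNecessary}, which instead bounds $\pi_{f,\phi}(nK)$ from below quantitatively and compares against $e^{s_0 nK}/(s_0 nK)$.

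You should be aware, however, that the paper does \emph{not} prove (i)$\Rightarrow$(iii) here: that implication is explicitly postponed to the sequel \cite{LZhe23b}, where it is obtained via an ``$\alpha$-strong non-integrability'' condition and split Ruelle operators on the visual sphere, not via the flat-torus cover. What the present paper actually establishes is only (i)$\Leftrightarrow$(ii), as the special case of Theorem~\ref{thmPrimeOrbitTheorem} for arbitrary expanding Thurston maps, and the route is entirely different from yours. The iterate $F$ is coded by a one-sided subshift of finite type built from tiles of an $F$-invariant Jordan curve; the symbolic zeta function extends non-vanishingly to the closed half-plane $\Re s \geq s_0$ except for a simple pole at $s_0$ (Theorem~\ref{thmZetaAnalExt_SFT}, via the Parry--Pollicott theory \cite{PP90}); the passage from the symbolic zeta function to $\zeta_{F,-\Phi}$ is handled not by a bounded-multiplicity argument but through the dynamical Dirichlet series $\DS_{F,-\Phi,\deg_F}$ and a combinatorial identity on the invariant curve (Theorem~\ref{thmNoPeriodPtsIdentity}); and the asymptotic follows from the Ikehara--Wiener Tauberian theorem, which needs only extension \emph{to} the critical line, not past it. In particular no Dolgopyat input is used for~(ii).

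Your torus-lift strategy for (i)$\Rightarrow$(iii) is a natural Latt\`es-specific idea, but two steps are genuine obstacles rather than routine. First, the periodic-orbit correspondence under the branched cover $\Theta$ is not simply ``bounded multiplicity that does not affect the leading asymptotic'': orbits meeting the orbifold points lift irregularly, and for an \emph{exponential} error term this discrepancy must itself be controlled with an exponential saving---the paper devotes all of Section~\ref{sctDynOnC} to analogous bookkeeping already for the weaker statement~(ii). Second, and more seriously, Dolgopyat-type bounds for expanding maps with a $C^1$ roof typically require a hypothesis strictly stronger than ``not cohomologous to a constant'' (some uniform or strong non-integrability); you have not explained why, for the linear torus model, hypothesis~(i) alone forces that stronger condition. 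This is exactly the gap the sequel \cite{LZhe23b} fills via its $\alpha$-strong non-integrability condition, and your sketch does not yet bridge it.
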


See Definitions~\ref{defCohomologous} and~\ref{defEventuallyPositive} for the definitions of co-homology and eventually positive functions, respectively.

The implication (i)$\implies$(iii) relies crucially on some local properties of the metric geometry of the visual sphere induced by the Latt\`{e}s maps, and is not expected (by the authors) to hold in general. We postpone the proof of the implication (i)$\implies$(iii) to the next paper \cite{LZhe23b}, in which the exponential error term similar to that in (iii) for a class of more general rational Thurston maps will be established under a condition called \emph{$\alpha$-strong non-integrability condition}. In the third paper \cite{LZhe23c} in this series, we show that this condition is generic.

In fact, the equivalence of the first two conditions is proved for more general postcritically-finite rational maps without periodic critical points. The following theorem is an immediate consequence of a more general result in Theorem~\ref{thmPrimeOrbitTheorem}.

\begin{thmx}[Prime Orbit Theorems for rational expanding Thurston maps]  \label{thmPrimeOrbitTheorem_rational}
Let $f\: \widehat{\C} \rightarrow \widehat{\C}$ be a postcritically-finite rational map without periodic critical points. Let $\sigma$ be the chordal metric or the spherical metric on the Riemann sphere $\widehat{\C}$, and $\phi \in \Holder{\alpha} \bigl( \widehat{\C}, \sigma \bigr)$ be an eventually positive real-valued H\"{o}lder continuous function with an exponent $\alpha\in (0,1]$. Then there exists a unique positive number $s_0>0$ with topological pressure $P(f,-s_0 \phi) = 0$ and there exists $N_f\in\N$ depending only on $f$ such that for each $n\in \N$ with $n\geq N_f$, the following statements hold for $F\coloneqq f^n$ and $\Phi\coloneqq \sum_{i=0}^{n-1} \phi \circ f^i$:
\begin{enumerate}
	\smallskip
	\item[(i)] $\pi_{F,\Phi}(T) \sim \frac{n s_0 c \exp(n s_0 c)}{\exp(n s_0 c) - 1} \operatorname{Li}\bigl( e^{s_0 T } \bigr) \sim \frac{n (\deg f)^n }{ ( \deg f)^n -1} \cdot \frac{(\deg f)^{T/c}}{T/c}$ as $T \to + \infty$ and $s_0 = \frac1c \log (\deg f)$ if $\phi$ is cohomologous to a constant $c>0$ in the space $\CCC\bigl(\widehat{\C} \bigr)$ of real-valued continuous functions on $\widehat{\C}$.
	
	\smallskip
	\item[(ii)] $\pi_{F,\Phi}(T) \sim \operatorname{Li}\bigl( e^{s_0 T} \bigr)$ as $T \to + \infty$ if $\phi$ is not cohomologous to a constant in the space $\CCC\bigl(\widehat{\C} \bigr)$ of real-valued continuous functions on $\widehat{\C}$.
	\end{enumerate}
\end{thmx}

Our strategy to overcome the obstacles presented by the incompatibility of the non-uniform expansion of our rational maps and the H\"older continuity of the weight $\phi$ (e.g.\ (a)~the set of $\alpha$-H\"older continuous functions is not invariant under the Ruelle operator $\RR_\phi$, for each $\alpha\in(0,1]$; (b) the weakening of the regularity of the temporal distance compared to that of the potential)  is to investigate the metric geometry of various natural metrics associated to the dynamics such as visual metrics, the canonical orbifold metric, and the chordal metric. Such considerations lead us beyond conformal, or even smooth, dynamical settings and into the realm of topological dynamical systems. More precisely, we will work in the abstract setting of branched covering maps on a topological $2$-sphere $S^2$ (see Subsections~\ref{subsctBranchedCoveringMaps} and~\ref{subsctThurstonMap}) without any smoothness assumptions. A \emph{Thurston map} is a postcritically-finite branched covering map on $S^2$. Thurston maps can be considered as topological models of the corresponding rational maps. 

Via Sullivan's dictionary, the counterpart of Thurston's theorem \cite{DH93} in geometric group theory is Cannon's Conjecture \cite{Ca94} as mentioned above.

% This conjecture predicts that an infinite, finitely presented Gromov hyperbolic group $G$ whose boundary at infinity $\partial_\infty G$ is a topological $2$-sphere is a Kleinian group. Gromov hyperbolic groups can be considered as metric-topological systems generalizing the conformal systems in the context of geometric group theory, namely, convex-cocompact Kleinian groups. Inspired by Sullivan's dictionary and their interest in Cannon's Conjecture, M.~Bonk and D.~Meyer, along with others, studied a subclass of Thurston maps by imposing some additional condition of expansion. Roughly speaking, we say that a Thurston map is \emph{expanding} if for any two points $x, \, y\in S^2$, their preimages under iterations of the map get closer and closer. 
%
%For each expanding Thurston map, we can equip the $2$-sphere $S^2$ with a natural class of metrics called \emph{visual metrics}. As the name suggests, these metrics are constructed in a similar fashion as the visual metrics on the boundary $\partial_\infty G$ of a Gromov hyperbolic group $G$. As such, these visual metrics on either side of Sullivan's dictionary are corresponding entries in the dictionary. See Subsection~\ref{subsctThurstonMap} for a more detailed discussion on these notions. 

The introduction of visual metrics into complex dynamics via Sullivan's dictionary by M.~Bonk, D.~Meyer, P.~Ha\"issinsky, and K.~M.~Pilgrim has helped to catalyze much recent progress in the area.

Various ergodic properties, including thermodynamic formalism, on which the current paper crucially relies, have been studied by the first-named author in \cite{Li17} (see also \cite{Li15, Li16, Li18}). Generalization of results in \cite{Li17} to the more general branched covering maps studied by P.~Ha\"issinsky, K.~M.~Pilgrim \cite{HP09} has drawn significant interest recently \cite{HRL19, DPTUZ19, LZheH23}. We believe that our ideas introduced in this paper can be used to establish Prime Orbit Theorems in their setting.

M.~Bonk, D.~Meyer \cite{BM10, BM17} and P.~Ha\"issinsky, K.~M.~Pilgrim \cite{HP09} proved that an expanding Thurston map is conjugate to a rational map if and only if the sphere $(S^2,d)$ equipped with a visual metric $d$ is quasisymmetrically equivalent to the Riemann sphere $\widehat\C$ equipped with the chordal metric. The quasisymmetry cannot be promoted to Lipschitz equivalence due to the non-uniform expansion of Thurston maps. There exist expanding Thurston maps not conjugate to rational Thurston maps (e.g.\ ones with periodic critical points). Our theorems below apply to all expanding Thurston maps, which form the most general setting in this series of papers.

\begin{thmx}[Prime Orbit Theorems for expanding Thurston maps]  \label{thmPrimeOrbitTheorem}
Let $f\: S^2 \rightarrow S^2$ be an expanding Thurston map, and $d$ be a visual metric on $S^2$ for $f$. Let $\phi \in \Holder{\alpha}(S^2,d)$ be an eventually positive real-valued H\"{o}lder continuous function with an exponent $\alpha\in (0,1]$. Denote by $s_0$ the unique positive number with topological pressure $P(f,-s_0 \phi) = 0$. Then there exists $N_f\in\N$ depending only on $f$ such that for each $n\in \N$ with $n\geq N_f$, the following statements hold for $F\coloneqq f^n$ and $\Phi\coloneqq \sum_{i=0}^{n-1} \phi \circ f^i$:
\begin{enumerate}
	\smallskip
	\item[(i)] $\pi_{F,\Phi}(T) \sim \frac{n s_0 c \exp(n s_0 c)}{\exp(n s_0 c) - 1} \operatorname{Li}\bigl( e^{s_0 T } \bigr) \sim \frac{n (\deg f)^n }{ ( \deg f)^n -1} \cdot \frac{(\deg f)^{T/c}}{T/c}$ as $T \to + \infty$ and $s_0 = \frac1c \log (\deg f)$ if $\phi$ is cohomologous to a constant $c>0$ in the space $\CCC( S^2 )$ of real-valued continuous functions on $S^2$.
	
	\smallskip
	\item[(ii)] $\pi_{F,\Phi}(T) \sim \operatorname{Li}\bigl( e^{s_0 T} \bigr)$ as $T \to + \infty$ if $\phi$ is not cohomologous to a constant in the space $\CCC( S^2 )$ of real-valued continuous functions on $S^2$.
\end{enumerate}
Here, $\operatorname{Li}(\cdot)$ is the Eulerian logarithmic integral function defined in Theorem~\ref{thmLattesPOT}.
\end{thmx}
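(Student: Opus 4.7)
The plan is to encode $\pi_{F,\Phi}$ in a dynamical Dirichlet series, establish its analytic behaviour on the closed half-plane $\{\RE s \geq s_0\}$, and extract the counting asymptotic via a Wiener--Ikehara Tauberian theorem. Concretely, consider
\[
\eta(s) \coloneqq \sum_{\tau \in \Orb(F)} l_{F,\Phi}(\tau)\, e^{-s l_{F,\Phi}(\tau)},
\]
which agrees, up to a summand holomorphic on $\{\RE s > s_0/2\}$, with $-\zeta'_{F,\Phi}(s)/\zeta_{F,\Phi}(s)$ for the Ruelle zeta $\zeta_{F,\Phi}(s) = \prod_{\tau} (1 - e^{-s l_{F,\Phi}(\tau)})^{-1}$. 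Wiener--Ikehara yields the weighted counting $\sum_{l_{F,\Phi}(\tau) \leq T} l_{F,\Phi}(\tau) \sim e^{s_0 T}/s_0$, and hence by partial summation $\pi_{F,\Phi}(T) \sim \operatorname{Li}(e^{s_0 T})$, provided $\eta(s) - \frac{1}{s-s_0}$ extends continuously to $\{\RE s \geq s_0\}$. The converse direction is soft: if $\phi = c + u\circ f - u$ for some $u \in \CCC(S^2)$, $c \in \R$, then telescoping gives $l_{F,\Phi}(\tau) = cnm \in cn\Z$ for each $\tau \in \Orb(F)$ of $F$-period $m$, so $\pi_{F,\Phi}$ is a step function and cannot be asymptotic to the smooth $\operatorname{Li}(e^{s_0 T})$.

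For the forward direction I would pass through the Ruelle transfer operator $\RR_{-s\phi}$: standard trace identities express $\eta(s)$, up to holomorphic terms, in terms of spectral data of iterates of $\RR_{-s\phi}$. The thermodynamic formalism in \cite{Li17} supplies, for real $s$ near $s_0$, a simple leading eigenvalue $e^{P(f,-s\phi)}$ of $\RR_{-s\phi}$ acting on $\Holder{\alpha}(S^2,d)$ with a spectral gap; the equation $P(f,-s_0\phi) = 0$ places the leading eigenvalue exactly at $1$ when $s=s_0$ and produces the simple pole of $\eta$ at $s_0$. Kato-type analytic perturbation then propagates this spectral picture to a complex neighbourhood of each bounded vertical segment through $s_0$, reducing everything to control at large $\abs{\Im s}$.

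The principal obstacle is the Dolgopyat-type step: to show that for $\RE s = s_0$ and $\abs{\Im s}$ large, the spectral radius of $\RR_{-s\phi}$ on an appropriate function space is strictly smaller than $1$, with bounds summable against the trace formula. In uniformly hyperbolic settings this follows from oscillatory cancellation driven by non-integrability of the potential, exploiting that $\RR_{-s\phi}$ preserves $\Holder{\alpha}$. In our setting, critical points of $f$ destroy that invariance: $\RR_{-s\phi}$ does not preserve $\Holder{\alpha}(S^2,d)$ for any $\alpha$. My plan is (i) to replace $f$ by a sufficiently high iterate $F = f^n$ (which is the origin of the threshold $N_f$), so that the dynamics is combinatorially governed by the tile decomposition induced by a Jordan curve $\CC \supseteq \post(f)$; (ii) to set up a Lasota--Yorke-type inequality for the appropriate twisted transfer operator acting on a function space whose regularity is measured by oscillation across tiles in the visual metric $d$, rather than by pointwise Hölder constants; and (iii) to show, using that $\phi$ is not co-homologous to a constant, that the temporal distance functions arising from pairs of inverse branches exhibit non-trivial oscillation at every scale, feeding into a Dolgopyat-type $L^2$-contraction estimate for $\RR_{-s\phi}$ at large $\abs{\Im s}$.

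I expect step (iii) — translating the abstract non-cohomology hypothesis into a quantitative non-local integrability condition compatible with the visual-metric geometry of an expanding Thurston map — to be the main technical hurdle, precisely because this is where the non-uniform expansion of $f$ interacts most delicately with the mere H\"older regularity of $\phi$. Once these ingredients are in place, $\eta(s) - 1/(s-s_0)$ extends continuously to $\{\RE s \geq s_0\}$, the Wiener--Ikehara theorem applies, and the forward direction follows.
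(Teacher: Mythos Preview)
Your overall architecture — encode the counting in $\zeta_{F,\minus\Phi}$, establish its analytic continuation up to the closed half-plane $\{\Re s\geq s_0\}$ with a unique simple pole at $s_0$, then invoke the Ikehara--Wiener Tauberian theorem — matches the paper exactly, and your treatment of the converse is morally correct (when $\phi$ is co-homologous to $c$, every weighted length lies in $nc\,\Z$, so $\pi_{F,\Phi}$ is constant on each interval $[ncm,nc(m+1))$; the asymptotic would then force $\operatorname{Li}(e^{s_0ncm})\sim\operatorname{Li}(e^{s_0nc(m+1)})$, whose ratio tends to $(\deg f)^{-n}<1$, a contradiction). The paper's Proposition~\ref{propNecessary} runs this computation slightly differently but to the same effect.

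The substantive divergence is your step~(iii). You frame the analytic continuation as requiring a Dolgopyat-type spectral bound on $\RR_{-s\phi}$ for large $\abs{\Im s}$. That is \emph{not} needed here: Wiener--Ikehara only asks for a continuous (in fact holomorphic) extension of $\eta(s)-\tfrac{1}{s-s_0}$ to each point of the line $\Re s=s_0$, with no uniformity in $\Im s$. A Dolgopyat estimate is what buys the exponential error term in Theorem~\ref{thmLattesPOT}~(iii), which the paper explicitly postpones to \cite{LZhe23b}; for Theorem~\ref{thmPrimeOrbitTheorem} it is overkill. So the ``principal obstacle'' you identify is not actually present, and your proposed workaround (a new tile-oscillation function space on which $\RR_{-s\phi}$ acts well) is an interesting but separate project.

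What the paper does instead is sidestep the Ruelle operator on $S^2$ entirely. It passes to the one-sided subshift $\bigl(\Sigma_{A_{\ti}}^+,\sigma_{A_{\ti}}\bigr)$ built from $1$-tiles (Proposition~\ref{propTileSFT}), where the classical Parry--Pollicott theory \cite{PP90} gives holomorphic extension of $\zeta_{\sigma_{A_{\ti}},\,\minus\phi\circsmall\pi_{\ti}}$ to $\{\Re s\geq s_0\}\setminus\{s_0\}$ directly, \emph{provided} $\phi\circ\pi_{\ti}$ is not co-homologous to a constant times an integer-valued function. That proviso is the content of Theorem~\ref{thmNLI} (the orbifold argument), which converts your hypothesis on $\phi$ into the symbolic condition. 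The remaining work — and this is where the paper spends most of its effort — is bridging $\zeta_{\sigma_{A_{\ti}},\,\minus\phi\circsmall\pi_{\ti}}$ back to $\zeta_{F,\,\minus\Phi}$. Because the factor map $\pi_{\ti}$ is not finite-to-one, the two zeta functions differ by infinitely many factors; the paper handles this via the combinatorial identity in Theorem~\ref{thmNoPeriodPtsIdentity}, auxiliary subshifts on edges of $\CC$, and the intermediate Dirichlet series $\DS_{F,\,\minus\Phi,\,\deg_F}$, showing that all correction factors are holomorphic and non-vanishing on a half-plane strictly containing $\{\Re s\geq s_0\}$ (Theorem~\ref{thmPressureOnC}). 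Your proposal does not anticipate this bridging step at all, and it is the genuinely new ingredient.
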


Note that $\lim_{y\to+\infty}   \operatorname{Li}(y) / ( y / \log y )  = 1$, thus in statement~(ii) we also get $\pi_{F,\Phi}(T) \sim   e^{s_0 T} \big/ ( s_0 T)$ as $T\to + \infty$. 

The proof of Theorem~\ref{thmPrimeOrbitTheorem} relying on holomorphic extension properties of certain dynamical zeta functions established in Theorem~\ref{thmZetaAnalExt_InvC} follows standard arguments in \cite{PP90} with special adaptations to our setting. See a detailed discussion at the end of Section~{\ref{sctDynOnC_Reduction}.

We remark that our proofs can be modified to derive equidistribution of holonomies similar to the corresponding result in \cite{OW17}, but we choose to omit them in order to emphasize our new ideas and to limit the length of this paper. See \cite{LRL} for a study of holonomies in a parallel setting.

In view of Remark~\ref{rmChordalVisualQSEquiv}, Theorem~\ref{thmPrimeOrbitTheorem_rational} is an immediate consequence of Theorem~\ref{thmPrimeOrbitTheorem}.

\begin{rem}    \label{rmNf}
The integer $N_f$ can be chosen as the minimum of $N(f,\wt{\CC})$ from Lemma~\ref{lmCexistsL} over all Jordan curves $\wt{\CC}$ with $\post f \subseteq \wt{\CC} \subseteq S^2$, in which case $N_f = 1$ if there exists a Jordan curve $\CC\subseteq S^2$ satisfying $f(\CC)\subseteq \CC$, $\post f\subseteq \CC$, and no $1$-tile in $\X^1(f,\CC)$ joins opposite sides of $\CC$ (see Definition~\ref{defJoinOppositeSides}). The same number $N_f$ is used in other results in this paper. We also remark that many properties of expanding Thurston maps $f$ can be established for $f$ after being verified first for $f^n$ for all $n\geq N_f$. However, some of the finer properties established for iterates of $f$ still remain open for the map $f$ itself; see for example, \cite{Me13, Me12}.
\end{rem}

%\begin{rem}
%Combining Theorem~\ref{thmPrimeOrbitTheorem} with the fact that for an expanding Thurston map $f$ the number of periodic points of period $n$, $n\in\N$, is $1+\deg f^n$ counted with a weight given by the local degree $\deg_{f^n}(x)$ (\cite[Theorem~1.1]{Li16}), we get a dichotomy for the asymptotic of $\pi_{F,\Phi}$ depending on whether the potential $\phi$ is cohomologous to a constant in $\CCC(S^2)$ or not. 
%\end{rem}

Note that due to the lack of any algebraic structure of expanding Thurston maps, even the fact that there are only countably many periodic points is not apparent from the definition (see \cite{Li16}). Without any algebraic, differential, or conformal structures, the main tools we rely on are from the interplay between the metric properties of various natural metrics and the combinatorial information on the iterated preimages of certain Jordan curves $\CC$ on $S^2$ (see Subsection~\ref{subsctThurstonMap}).

For many classical smooth dynamical systems with smooth potentials, analogous strong non-integrability conditions are often equivalent to a weaker condition, called \emph{non-local integrability conditions}, introduced in our context in Section~\ref{sctNLI}. In our context, a potential is non-locally integrable if and only if it is not cohomologous to a constant (Theorem~\ref{thmNLI}). However, the existence of critical points in the Julia set, and more seriously of periodic critical points for some expanding Thurston maps, gives rise to obstacles in the proof of this equivalence. For example, an inverse branch (in the universal orbifold covering space) of an expanding Thurston map with a periodic critical point may not have a fixed point. We nevertheless successfully establish such equivalence in Theorem~\ref{thmNLI} by a careful study of the universal orbifold covers, introduced by W.~P.~Thurston \cite{Th80} in the 1970s for the geometry of $3$-manifolds, in our context
in Section~\ref{sctNLI}. The lack of a fixed point for a general inverse branch on the universal orbifold covering space of an expanding Thurston map requires new arguments than that of the universal covering map for hyperbolic rational maps (see \cite{OW17}).

The counting result in Theorem~\ref{thmPrimeOrbitTheorem} follows from some quantitative information on the holomorphic extension of certain dynamical zeta function $\zeta_{F,\,\minus \Phi}$ defined as formal infinite products over periodic orbits. We briefly recall dynamical zeta functions and introduce dynamical Dirichlet series below. See Section~\ref{sctDynZetaFn} for a more detailed discussion.

Let $f\: S^2\rightarrow S^2$ be an expanding Thurston map and $\psi\in\CCC(S^2,\C)$ be a complex-valued continuous function on $S^2$.  Define, for each $n\in\N$ and each $x\in S^2$,
\begin{equation*}
	S_n \psi (x)  \coloneqq \sum_{j=0}^{n-1} \psi(f^j(x)).
\end{equation*}
We denote by the formal infinite product
\begin{equation*}  
\zeta_{f,\,\minus\psi} (s) \coloneqq 
\exp \Biggl( \sum_{n=1}^{+\infty} \frac{1}{n} \sum_{ x = f^n(x) } e^{-s S_n \psi(x)} \Biggr), \qquad s\in\C,
\end{equation*}
the \defn{dynamical zeta function} for the map $f$ and the \emph{potential} $\psi$. We remark that $\zeta_{f,\,\minus\psi}$ is the Ruelle zeta function for the suspension flow over $f$ with roof function $\psi$ if $\psi$ is positive. We define the \emph{dynamical Dirichlet series} associated to $f$ and $\psi$ as the formal infinite product
\begin{equation*}  
\DS_{f,\,\minus\psi,\, \deg_f} (s) \coloneqq \exp \Biggl( \sum_{n=1}^{+\infty} \frac{1}{n} \sum_{x = f^n(x)} e^{-s S_n \psi(x)} \deg_{f^n}(x) \Biggr), \qquad s\in\C.
\end{equation*}
Here $\deg_{f^n}$ is the \emph{local degree} of $f^n$ at $x\in S^2$ (see Definition~\ref{defBranchedCover}).

Note that if $f\: S^2 \rightarrow S^2$ is an expanding Thurston map, then so is $f^n$ for each $n\in\N$ (Remark~\ref{rmExpanding}). 

Recall that a function is holomorphic on a set $A\subseteq \C$ if it is holomorphic on an open set containing $A$.

\begin{thmx}[Analyticity properties of dynamical Dirichlet series and zeta functions for expanding Thurston maps]  \label{thmZetaAnalExt_InvC}
Let $f\: S^2 \rightarrow S^2$ be an expanding Thurston map, and $d$ be a visual metric on $S^2$ for $f$. Fix $\alpha\in(0,1]$. Let $\phi \in \Holder{\alpha}(S^2,d)$ be an eventually positive real-valued H\"{o}lder continuous function. Denote by $s_0$ the unique positive number with topological pressure $P(f,-s_0 \phi) = 0$.  
Then there exists $N_f\in\N$ depending only on $f$ such that for each $n\in \N$ with $n\geq N_f$, the following statement holds for $F\coloneqq f^n$ and $\Phi\coloneqq \sum_{i=0}^{n-1} \phi \circ f^i$:

\smallskip

Both the dynamical zeta function $\zeta_{F,\,\minus \Phi} (s)$ and the dynamical Dirichlet series $\DS_{F,\,\minus \Phi,\,\deg_F} (s)$ converge on $\{s\in\C : \Re(s) > s_0 \}$. If, in addition, $\phi$  is not cohomologous to a constant in $\CCC( S^2 )$, then $\zeta_{F,\,\minus \Phi} (s)$ and $\DS_{F,\,\minus \Phi,\,\deg_F} (s)$ extend to non-vanishing holomorphic functions on $\{s\in\C : \Re(s) \geq s_0\}$ except for the simple pole at $s=s_0$.
\end{thmx}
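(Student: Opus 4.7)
The plan is to express $\log \DS_{F,\,-\Phi,\,\deg_F}(s)$ as the generating series of iterates of the Ruelle operator
\[ (\RR_{-s\Phi}u)(y) \coloneqq \sum_{x\in F^{-1}(y)} \deg_F(x)\, e^{-s\Phi(x)}\, u(x), \]
exploiting the identity $(\RR_{-s\Phi}^n \mathbbm{1})(y) = \sum_{x\in F^{-n}(y)} \deg_{F^n}(x) e^{-s S_n \Phi(x)}$ and the way the local-degree weights line up with a ``trace formula'' when $y$ runs over fixed points of $F^n$. Convergence on $\{\Re(s)>s_0\}$ follows from the spectral radius estimate $\exp\!\bigl(P(f,-\Re(s)\Phi)\bigr)<1$ together with continuity and strict monotonicity of the topological pressure. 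Organising the summands by primitive periodic orbit $\tau$ yields the Euler product representations $\zeta_{F,-\Phi}(s) = \prod_\tau \bigl(1-e^{-s l_{F,\Phi}(\tau)}\bigr)^{-1}$ and $\DS_{F,-\Phi,\deg_F}(s) = \prod_\tau \bigl(1-d_\tau e^{-s l_{F,\Phi}(\tau)}\bigr)^{-1}$, where $d_\tau\coloneqq\deg_{F^k}(x)$ for any $x\in\tau$ of prime period $k$.

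To obtain meromorphy with a simple pole at $s_0$ and non-vanishing holomorphy elsewhere on $\{\Re(s)\geq s_0\}$, I would appeal to a Ruelle--Perron--Frobenius theorem for $\RR_{-s_0\Phi}$ acting on a suitable Banach space, yielding a simple isolated leading eigenvalue $1$ with the rest of the spectrum strictly inside the open unit disk. Analytic perturbation in $s$ then produces a holomorphic family of leading eigenvalues $\lambda(s)=\exp\!\bigl(P(f,-s\Phi)\bigr)$ with $\lambda(s_0)=1$ and $\lambda'(s_0)<0$ (nonvanishing of the derivative coming from eventual positivity of $\Phi$, which forces the integral of $\Phi$ against the equilibrium state to be strictly positive), and hence a simple pole of $\log \DS$ via a standard resolvent decomposition. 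On the remaining critical line $\{\Re(s)=s_0,\,\Im(s)\neq 0\}$ and a neighbourhood thereof, holomorphy and non-vanishing reduce to a uniform spectral radius bound $<1$ for $\RR_{-s\Phi}$; this is precisely where the hypothesis that $\phi$ is not co-homologous to a constant enters, through its reformulation as the non-local integrability condition (Theorem~\ref{thmNLI}), which feeds a Dolgopyat-style oscillatory cancellation estimate for iterated twisted Ruelle operators. The zeta function is handled via the Euler product comparison
\[ \zeta_{F,-\Phi}(s)\big/\DS_{F,-\Phi,\deg_F}(s) \;=\; \prod_{\tau\,:\,d_\tau>1}\frac{1-d_\tau e^{-s l_{F,\Phi}(\tau)}}{1-e^{-s l_{F,\Phi}(\tau)}}, \]
whose right-hand side is a \emph{finite} product, since an expanding Thurston map has only finitely many periodic critical orbits; each factor is entire in $s$, and for $n\geq N_f$ the weighted lengths $l_{F,\Phi}(\tau)$ over these critical orbits can be arranged to be large enough that each factor is non-vanishing on $\{\Re(s)\geq s_0\}$, so the pole/non-vanishing structure established for $\DS$ descends to $\zeta_{F,-\Phi}$.

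The chief technical obstacle is that $\Holder{\alpha}(S^2,d)$ is not invariant under $\RR_{-s\Phi}$: inverse branches of $F$ distort the visual metric in a snowflake rather than Lipschitz manner near postcritical points, and $\Phi$ composed with them fails to remain $\alpha$-H\"older. This forces the spectral analysis onto a function space tailored to the tile cellulation $\X^n(f,\CC)$ coming from an $f$-invariant Jordan curve $\CC\supseteq\post f$ (cf.\ Remark~\ref{rmNf}), on which $\RR_{-s\Phi}$ can be shown to be quasi-compact via a Lasota--Yorke type inequality. The hardest step is the Dolgopyat oscillatory estimate on high iterates of $\RR_{-s\Phi}$ with $|\Im s|$ large, which must be extracted from the non-local integrability hypothesis through a careful interplay between the visual metric, temporal distances along inverse orbits, and the combinatorial geometry of the tile decompositions; this bootstraps into the exponential decay of correlations needed to push the spectral radius strictly below $1$ across the entire critical line away from $s_0$.
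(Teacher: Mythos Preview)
Your route diverges substantially from the paper's, and in two places it has genuine gaps.

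\textbf{The paper's strategy.} The paper does \emph{not} do spectral theory for $\RR_{-s\Phi}$ directly on $S^2$, nor does it invoke any Dolgopyat-type cancellation for this theorem. Instead it passes to the one-sided subshift of finite type $\bigl(\Sigma_{A_{\ti}}^+,\sigma_{A_{\ti}}\bigr)$ via the factor map $\pi_{\ti}$ (Proposition~\ref{propTileSFT}), where classical Parry--Pollicott theory \cite[Theorems~5.5,~5.6]{PP90} applies out of the box and yields the holomorphic extension of $\zeta_{\sigma_{A_{\ti}},\,\minus\phi\circsmall\pi_{\ti}}$ to $\overline{\H}_{s_0}\setminus\{s_0\}$ (Theorem~\ref{thmZetaAnalExt_SFT}). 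The only input from the cohomology hypothesis is the equivalence (iii)$\Leftrightarrow$(v) of Theorem~\ref{thmNLI}; no oscillatory estimate is needed. The hard part is then the \emph{comparison} between the symbolic zeta function and $\DS_{F,\,\minus\Phi,\,\deg_F}$: since $\pi_{\ti}$ is not finite-to-one, these differ in infinitely many terms. The paper constructs two auxiliary edge subshifts $\bigl(\Sigma_{A_{\e}}^+,\sigma_{A_{\e}}\bigr)$ and $\bigl(\Sigma_{A_{\ee}}^+,\sigma_{A_{\ee}}\bigr)$ encoding the dynamics on $\CC$, proves the exact multiplicity identity $M_{\ti}-M_{\ee}+M_{\e}+M_{\po}=\deg_{F^n}$ (Theorem~\ref{thmNoPeriodPtsIdentity}), and shows these auxiliary systems have strictly smaller pressure (Theorem~\ref{thmPressureOnC}), so their zeta functions are holomorphic and non-vanishing past $s_0$. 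This yields the factorisation~(\ref{eqDirichletSeriesIsCombZetaFns}) and hence Claim~1 of the proof.

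\textbf{Gaps in your argument.} First, you invoke a Dolgopyat estimate to control the critical line; this is both unnecessary (for mere holomorphic extension, as opposed to the quantitative bounds of the sequel \cite{LZhe23b}) and unproved---you acknowledge it is ``the hardest step'' but give no mechanism, and the Banach-space obstacle you flag is exactly what the paper sidesteps via symbolic coding. Second, and more concretely: your claim that for $n\geq N_f$ the weighted lengths $l_{F,\Phi}(\tau)$ over the critical orbits ``can be arranged to be large enough'' to force each factor $1-d_\tau e^{-s l_{F,\Phi}(\tau)}$ non-vanishing on $\overline{\H}_{s_0}$ does not work. The role of $N_f$ is solely to obtain a good invariant Jordan curve (Remark~\ref{rmNf}); iterating does not help here, because $\beta_\tau \coloneqq d_\tau e^{-s_0 l_\Phi(\tau)}$ is governed by the pressure equation and a priori could equal $1$. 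The paper first shows $\beta_\tau\leq 1$ from $P(f,-s_0\phi)=0$, and then rules out $\beta_\tau=1$ by a separate contradiction argument (Claim~2): if $\beta_\eta=1$ for some $\eta$, one builds a modified Dirichlet series $\DS_{f,\,\minus\phi,\,w}$ with $w$ vanishing on $\eta$, shows it would acquire zeros at $s_0+\I j\cdot 2\pi/l_\phi(\eta)$ for $j\neq 0$, and derives a contradiction from the positivity of its Dirichlet coefficients. This step is essential and is missing from your outline.
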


In order to get information about $\zeta_{F,\,\minus \Phi}$, we need to investigate the zeta function $\zeta_{\sigma_{A_{\ti}},\,\minus \phi\circsmall\pi_{\ti}}$ of a symbolic model of $\sigma_{A_{\ti}} \: \Sigma_{A_{\ti}}^+\rightarrow \Sigma_{A_{\ti}}^+$ of $F$.

\begin{thmx}[Holomorphic extensions of the symbolic zeta functions]  \label{thmZetaAnalExt_SFT}
Let $f\: S^2 \rightarrow S^2$ be an expanding Thurston map with a Jordan curve $\CC\subseteq S^2$ satisfying $f(\CC)\subseteq \CC$, $\post f\subseteq \CC$, and no $1$-tile in $\X^1(f,\CC)$ joins opposite sides of $\CC$. Let $d$ be a visual metric on $S^2$ for $f$. Fix $\alpha\in(0,1]$. Let $\phi \in \Holder{\alpha}(S^2,d)$ be an eventually positive real-valued H\"{o}lder continuous function. Denote by $s_0$ the unique positive number with $P(f,-s_0 \phi) = 0$. Let $\bigl(\Sigma_{A_{\ti}}^+,\sigma_{A_{\ti}}\bigr)$ be the one-sided subshift of finite type associated to $f$ and $\CC$ defined in Proposition~\ref{propTileSFT}, and let $\pi_{\ti}\: \Sigma_{A_{\ti}}^+\rightarrow S^2$ be the factor map as defined in (\ref{eqDefTileSFTFactorMap}).

Then the dynamical zeta function $\zeta_{\sigma_{A_{\ti}},\,\minus \phi\circsmall\pi_{\ti}} (s)$ converges on the open half-plane $\{s\in\C: \Re(s) > s_0 \}$. If, in addition, $\phi$ is not cohomologous to a constant in $\CCC( S^2 )$, then $\zeta_{\sigma_{A_{\ti}},\,\minus \phi\circsmall\pi_{\ti}} (s)$ extends to a non-vanishing holomorphic function on the closed half-plane $\{s\in\C: \Re(s) \geq s_0 \}$ except for the simple pole at $s=s_0$. 
\end{thmx}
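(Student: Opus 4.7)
The plan is to exploit the spectral theory of the twisted Ruelle transfer operator
\begin{equation*}
(\RR_s u)(x) \coloneqq \sum_{\sigma_{A_{\ti}}(y) = x} e^{-s (\phi \circ \pi_{\ti})(y)} u(y), \qquad s\in\C,
\end{equation*}
acting on a suitable Banach space $\BB$ of H\"older continuous functions on $\Sigma^+_{A_{\ti}}$. Since the $n$-tile partitions provided by Proposition~\ref{propTileSFT} shrink exponentially in the visual metric $d$, the factor map $\pi_{\ti}$ is H\"older for a compatible symbolic metric on $\Sigma^+_{A_{\ti}}$, so $\phi\circ\pi_{\ti}$ inherits H\"older regularity and $\RR_s$ is quasi-compact on $\BB$. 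Matching pressure on the sphere with pressure on the SFT through the finite-to-one factor $\pi_{\ti}$, together with $P(f,-s_0\phi)=0$, gives $\rho(\RR_{s_0}) = 1$. Expanding
\begin{equation*}
\log \zeta_{\sigma_{A_{\ti}},\,\minus\phi\circ\pi_{\ti}}(s)
 = \sum_{n=1}^{+\infty} \frac{1}{n} \sum_{\sigma_{A_{\ti}}^n(y) = y} e^{-s S_n(\phi\circ\pi_{\ti})(y)}
\end{equation*}
and applying the standard sum-over-periodic-orbits identity for Ruelle operators on one-sided subshifts of finite type relates this series, order by order, to traces of $\RR_s^n$ on $\BB$, so the analytic behavior of $\zeta$ is governed by the spectrum of $\RR_s$.

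For $\Re(s) > s_0$ monotonicity of pressure in the real parameter yields $\rho(\RR_s) \leq \rho(\RR_{\Re(s)}) < 1$, whence the series above converges absolutely and $\zeta$ is non-vanishing on $\{\Re(s) > s_0\}$. At $s = s_0$ the Ruelle--Perron--Frobenius theorem provides a simple isolated leading eigenvalue $1$ with spectral gap and strictly positive eigenfunction, and the standard Ruelle--Mayer-type Fredholm determinant identity (equivalently, the identification of $\zeta$ with $\det(I-\RR_s)^{-1}$ up to a non-vanishing holomorphic factor coming from the essential spectrum) produces the simple pole at $s_0$ and no other singularity in a real neighborhood.

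The principal obstacle lies on the punctured vertical line $\{\Re(s) = s_0\} \setminus \{s_0\}$: for each $t \in \R \setminus \{0\}$ we must show $\RR_{s_0+it}$ has no eigenvalue of modulus one. If $\RR_{s_0+it} h = \lambda h$ with $\abs{\lambda} = 1$, then normalizing by the Perron eigenfunction of $\RR_{s_0}$ and running the classical telescoping argument of Parry--Pollicott forces $-it(\phi\circ\pi_{\ti})$ to be cohomologous, via a H\"older transfer function, to a constant modulo $2\pi\Z$. The hypothesis that $\phi$ is not co-homologous to a constant in $\CCC(S^2)$, combined with Theorem~\ref{thmNLI} identifying this condition with non-local integrability (whose proof uses the universal orbifold covering of $f$), then yields two pairs of inverse branches at a common basepoint along which the putative coboundary takes values differing by a non-integer multiple of $2\pi/t$, a contradiction. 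Combining this spectral exclusion with quasi-compactness of $\RR_s$ on $\BB$ and a compactness-in-parameter argument extends $\zeta_{\sigma_{A_{\ti}},\,\minus\phi\circ\pi_{\ti}}$ holomorphically and without zeros across the closed half-plane $\{\Re(s) \geq s_0\}$, with unique simple pole at $s = s_0$.
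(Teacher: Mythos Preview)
Your approach is the paper's: it cites \cite[Theorems~5.5 and~5.6]{PP90} directly for the transfer-operator/complex-pressure machinery you describe (quasi-compactness, Ruelle--Perron--Frobenius, the determinant-type identity producing the simple pole), and then excludes peripheral spectrum on $\{\Re(s)=s_0\}\setminus\{s_0\}$ via Theorem~\ref{thmNLI}. Two small corrections worth making: first, $\pi_{\ti}$ is \emph{not} finite-to-one in general (it is uncountable-to-one on a dense set when $f$ has a periodic critical point), so the pressure identity $P(\sigma_{A_{\ti}},-t\phi\circ\pi_{\ti})=P(f,-t\phi)$ must come from Proposition~\ref{propZetaFnConv_s0}~(i), which uses only injectivity of $\pi_{\ti}$ off the grand orbit of $\CC$; second, your final contradiction is more circuitous than needed --- the peripheral-eigenvalue argument yields $\phi\circ\pi_{\ti}=KM+\tau-\tau\circ\sigma_{A_{\ti}}$ with $M$ integer-valued, which is precisely condition~(v) of Theorem~\ref{thmNLI} and hence directly equivalent to $\phi$ being co-homologous to a constant, so there is no need to pass through non-local integrability and inverse branches.
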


The deduction of the information about $\zeta_{F,\,\minus \Phi}$ in Theorem~\ref{thmZetaAnalExt_InvC} from its symbolic version $\zeta_{\sigma_{A_{\ti}},\,\minus \phi\circsmall\pi_{\ti}}$ in Theorem~\ref{thmZetaAnalExt_SFT} is nontrivial in our setting, since the correspondence (i.e., the factor map) from the symbolic coding we have for expanding Thurston maps $F$ to the map $F$ itself is not bijective and the two formal infinite products $\zeta_{F,\,\minus \Phi}$ and $\zeta_{\sigma_{A_{\ti}},\,\minus \phi\circsmall\pi_{\ti}}$ differ in infinitely many terms. In fact, the correspondence is not even finite-to-one, and consequently, A.~Manning's argument used in the literature for symbolic codings that are finite-to-one does not apply here. To overcome this obstacle, we introduce and study in Section~\ref{sctDynOnC} the detailed combinatorial information of several auxiliary dynamical systems and introduce the dynamical Dirichlet series $\DS_{F,\,\minus\psi,\, \deg_F}$ (Definition~\ref{defDynDirichletSeries}) to address their precise relation to the symbolic dynamical system induced by tiles (see Proposition~\ref{propZetaProduct}), reducing the part of Theorem~\ref{thmZetaAnalExt_InvC} on $\DS_{F,\,\minus\psi,\, \deg_F}$ to Theorem~\ref{thmZetaAnalExt_SFT} first. More precisely, such a connection relies on three symbolic dynamical systems on the boundaries of the tiles with a careful study of the combinatorics of tiles (see Subsection~\ref{subsctDynOnC_Combinatorics}) with ideas from \cite{Li16}.

The information about $\zeta_{F,\,\minus \Phi}$ in Theorem~\ref{thmZetaAnalExt_InvC} can then be deduced from that about $\DS_{F,\,\minus\psi,\, \deg_F}$ (in the proof of Theorem~\ref{thmZetaAnalExt_InvC} in Section~\ref{sctDynOnC_Reduction}).   

It seems to be the first instance in the literature where such general dynamical Dirichlet series other than $L$-functions are crucially used.

The proof of Theorem~\ref{thmZetaAnalExt_SFT} relies on a characterization of the condition that a potential is cohomologous to a constant. In this investigation, the local integrability condition is introduced.

\begin{thmx}[Characterization of the local integrability condition]  \label{thmNLI}
Let $f\: S^2\rightarrow S^2$ be an expanding Thurston map and $d$ a visual metric on $S^2$ for $f$. Let $\psi \in \Holder{\alpha}((S^2,d),\C)$ be a complex-valued H\"{o}lder continuous function with an exponent $\alpha\in (0,1]$. Then the following statements are equivalent:

\begin{enumerate}
\smallskip
\item[(i)] The function $\psi$ is locally integrable (in the sense of Definition~\ref{defLI}).

\smallskip
\item[(ii)] There exists $n\in\N$ and a Jordan curve $\CC\subseteq S^2$ with $f^n(\CC)\subseteq \CC$ and $\post f\subseteq \CC$ such that
$
\bigl(S_n^f \psi\bigr)^{f^n,\,\CC}_{\xi,\,\eta}(x,y) = 0
$
for all $\xi=\{ \xi_{\minus i} \}_{i\in\N_0} \in  \Sigma_{f^n,\,\CC}^-$ and $\eta=\{ \eta_{\minus i} \}_{i\in\N_0} \in  \Sigma_{f^n,\,\CC}^-$ with $f^n(\xi_0) = f^n(\eta_0)$, and all $(x,y)\in \bigcup\limits_{\substack{X\in\X^1(f^n,\CC) \\ X\subseteq f^n(\xi_0)}}X \times X$.

\smallskip
\item[(iii)] The function $\psi$ is cohomologous to a constant in $\CCC(S^2,\C)$, i.e.,
$
	\psi= K+ \beta \circ f - \beta
$
for some $K\in\C$ and $\beta \in \CCC(S^2,\C)$.

\smallskip
\item[(iv)] The function $\psi$ is cohomologous to a constant in $\Holder{\alpha}((S^2,d),\C)$, i.e.,
$
	\psi= K+ \tau\circ f -  \tau
$
for some $K\in\C$ and $\tau\in \Holder{\alpha}((S^2,d),\C)$.

\smallskip
\item[(v)] There exists $n\in\N$ and a Jordan curve $\CC \subseteq S^2$ with $f^n(\CC) \subseteq \CC$ and $\post f \subseteq \CC$ such that the following statement holds for $F\coloneqq f^n$, $\Psi\coloneqq S_n^f \psi$, the one-sided subshift of finite type $\bigl( \Sigma_{A_{\ti}}^+, \sigma_{A_{\ti}} \bigr)$ associated to $F$ and $\CC$ defined in Proposition~\ref{propTileSFT}, and the factor map $\pi_\ti \: \Sigma_{A_{\ti}}^+ \rightarrow S^2$ defined in (\ref{eqDefTileSFTFactorMap}):

\smallskip

The function $\Psi \circ \pi_\ti$ is cohomologous to a constant multiple of an integer-valued continuous function in $\CCC\bigl( \Sigma_{A_{\ti}}^+, \C \bigr)$, i.e., 
$
	\Psi \circ \pi_\ti = K M + \varpi \circ \sigma_{A_{\ti}} - \varpi
$
for some $K\in\C$, $M \in \CCC\bigl( \Sigma_{A_{\ti}}^+ , \Z \bigr)$, and $\varpi \in \CCC\bigl( \Sigma_{A_{\ti}}^+ , \C \bigr)$.
\end{enumerate}

\smallskip

If, in addition, $\psi$ is real-valued, then the above statements are equivalent to
\begin{enumerate}
\smallskip
\item[(vi)] The equilibrium state $\mu_\psi$ for $f$ and $\psi$ is equal to the measure of maximal entropy $\mu_0$ of $f$.
\end{enumerate}
\end{thmx}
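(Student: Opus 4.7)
The plan is to close the ring (iii) $\Rightarrow$ (iv) $\Rightarrow$ (v) $\Rightarrow$ (i) $\Rightarrow$ (ii) $\Rightarrow$ (iii), handling the bulk of the equivalences in a single pass, and to treat (vi) at the end by the variational principle. Several arrows are immediate: (iv) $\Rightarrow$ (iii) is clear from $\Holder{\alpha}(S^2,d) \subseteq \CCC(S^2)$; if $\psi = K + \beta\circ f - \beta$ then $S_n\psi$ telescopes to $nK + \beta\circ f^n - \beta$, which collapses the four-term expression defining the temporal distance and thus yields (ii), and visibly yields (i); the choice $M\equiv 1$, $\varpi = \beta\circ\pi_\ti$ gives (v); and (i) $\Rightarrow$ (ii) is just unwrapping definitions on the Jordan curve $\CC$ and iterate $n$ provided by local integrability.

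The main content lies in (ii) $\Rightarrow$ (iv), which I would prove by a Livsic-type reconstruction of the transfer function. Replacing $f$ by $F = f^n$ and $\psi$ by $\Psi = S_n^f\psi$, I fix $X_0\in\X^1(F,\CC)$ and a basepoint $x_0\in X_0$, and for each $y\in S^2$ choose matched inverse orbits $\widehat y_N$, $\widehat x_{0,N}$ under $F^N$ lying in tiles that share the same $F^N$-image tile; the candidate is
\begin{equation*}
\tau(y) - \tau(x_0) \coloneqq \lim_{N\to\infty} \bigl( S_N^F\Psi(\widehat x_{0,N}) - S_N^F\Psi(\widehat y_N) \bigr).
\end{equation*}
Exponential contraction of inverse branches in the visual metric $d$, combined with the H\"older bound on $\Psi$, makes this sequence Cauchy and produces an $\alpha$-H\"older function $\tau$. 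The hypothesis in (ii) is exactly the identity needed to show that the limit is independent of the choice of inverse branches: two different choices correspond to pairs $\xi,\eta$ with $f^n(\xi_0) = f^n(\eta_0)$, and their discrepancy is a temporal distance $(S_n^f\psi)^{f^n,\CC}_{\xi,\eta}(x,y)$, which vanishes by assumption. A direct computation then gives $\Psi = nK + \tau\circ F - \tau$, after which a standard Gottschalk--Hedlund-type step promotes the equation from $F = f^n$ back to $f$.

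The most delicate link is (v) $\Rightarrow$ (iii), which I expect to be the main obstacle. Condition (v) only identifies $\Psi\circ\pi_\ti$ with a constant multiple of an integer-valued cocycle, so a direct transport along $\pi_\ti$ does not yield a scalar coboundary on $S^2$. My plan is to invoke the universal orbifold covering of the expanding Thurston map: lifting $\Psi$ upstairs converts the $\Z$-valued ambiguity into a deck-group ambiguity that can be absorbed into the coboundary, and averaging over deck orbits produces an honest scalar cohomology relation which then descends via $\pi_\ti$. In contrast to the setting of \cite{OW17}, a generic inverse branch has no fixed point on the orbifold cover, so the argument must be carried out through compactness and uniform contraction of inverse branches on compact sets rather than through a contracting fixed-point iteration; this is where the new ideas specific to Thurston maps must enter. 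An alternative, possibly cleaner route is to reduce (v) directly to (ii) by computing a temporal distance on the symbolic side and transferring it through $\pi_\ti$ to $S^2$.

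Finally, when $\psi$ is real-valued, (iii) $\Leftrightarrow$ (vi) follows from a short thermodynamic argument: by the variational principle and uniqueness of the equilibrium state for H\"older potentials, $\mu_\psi = \mu_0$ iff $\psi$ and the constant $P(f,\psi) - h_{\mathrm{top}}(f)$ yield the same equilibrium state, which by the standard cohomology criterion for coincidence of equilibrium states happens iff $\psi$ is cohomologous to that constant in $\CCC(S^2)$.
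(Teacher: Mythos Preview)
Your overall scheme is close to the paper's, but you have swapped the location of the hard step. In the paper, the orbifold argument is used for (ii)$\Rightarrow$(iii), while (v)$\Rightarrow$(ii) is done by an elementary computation (your ``alternative, possibly cleaner route''): since $M\in\CCC\bigl(\Sigma_{A_{\ti}}^+,\Z\bigr)$ has finite image on a compact totally disconnected space, it is constant on $m$-cylinders for some $m$, so for $x,y$ in the same $m$-tile the $KM$-contributions cancel along matched inverse branches and the temporal distance collapses via a telescoping $\varpi$-computation, exactly as in your (iii)$\Rightarrow$(ii) argument. No orbifold is needed there.

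The genuine gap is in your (ii)$\Rightarrow$(iv). The hypothesis in (ii) only says that $\Delta^{F,\CC}_{\Psi,\xi}(x,y)$ is independent of $\xi$ \emph{for $x,y$ in the same $1$-tile $X\subseteq F(\xi_0)$}. Your candidate $\tau(y)-\tau(x_0)$ requires matching inverse orbits of $y$ and $x_0$ when they lie in different tiles, and the independence of this value on the choice of matching is a global monodromy statement, not a local one. Concretely, two chains of tiles connecting $y$ to $x_0$ can differ by a loop encircling a postcritical point, and the corresponding inverse-branch sequences are related by a deck transformation of the universal orbifold cover; it is exactly this deck-invariance that must be proved. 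The paper does this by lifting to the orbifold cover $\Theta\:\XX\to S^2_0$, defining $\wt\beta_{\wt g}(u)=\sum_{i\geq 1}\bigl(\wt\Psi(\wt g^i(u))-\Psi_{\wt g}\bigr)$ for a fixed inverse branch $\wt g$, and then proving $\wt\beta_{\wt g}\circ\wt\sigma=\wt\beta_{\wt g}$ for every $\wt\sigma\in\pi_1(\mathcal{O}_f)$. That last step uses Proposition~\ref{propInvBranchFixPt} (each inverse branch on $\XX$ has a unique attractor, possibly at a fixed critical point of $f$) together with the finiteness of fixed points of $F$ to force the constants $\Psi_{\wt\sigma\circ\wt g}-\Psi_{\wt g}$ to vanish; this is precisely the place where the absence of fixed points for inverse branches on $\XX$ matters. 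Your outline does not supply a substitute for this argument, and without it $\tau$ is not well-defined.

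Two smaller points: your ring is stated as (iii)$\Rightarrow$(iv)$\Rightarrow\cdots$, but you only argue (iv)$\Rightarrow$(iii); in the paper (iii)$\Rightarrow$(iv) is a separate regularity statement obtained from \cite[Proposition~5.52]{Li17} (a Liv\v{s}ic regularity result: the continuous transfer function can be upgraded to H\"older). And for (vi), the paper likewise defers to \cite[Theorem~5.45]{Li17} rather than the Variational Principle directly.
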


\subsection{Structure of the paper}      \label{subsctPlan}

We will now give a brief description of the structure of this paper.

After fixing some notation in Section~\ref{sctNotation}, we give a review of basic definitions and results in Section~\ref{sctPreliminaries}. In Section~\ref{sctAssumptions}, we state the assumptions on some of the objects in this paper, which we are going to repeatedly refer to later as \emph{the Assumptions}. 
The goal of Section~\ref{sctDynOnC} is to introduce and study the detailed combinatorial information of several auxiliary dynamical systems (Theorem~\ref{thmNoPeriodPtsIdentity}) and compare their dynamical complexity in terms of topological pressure (Subsection~\ref{subsctDynOnC_TopPressure}).
Section~\ref{sctNLI} is devoted to characterizations of a necessary condition, called \emph{non-local integrability condition}, on the potential $\phi$ for the Prime Orbit Theorems (Theorem~\ref{thmPrimeOrbitTheorem}) using the notion of orbifolds introduced in general by W.~P.~Thurston in 1970s in his study of geometry of $3$-manifolds (see \cite[Chapter~13]{Th80}). The lack of a fixed point for a general inverse branch on the universal orbifold covering space of an expanding Thurston map is the main obstacle addressed here. We provide a proof of Theorem~\ref{thmZetaAnalExt_InvC} in Section~\ref{sctDynOnC_Reduction} to deduce the holomorphic extension of $\DS_{F,\,\minus \Phi,\,\deg_F}$ from that of $\zeta_{\sigma_{A_{\ti}},\,\minus\Phi\circsmall\pi_{\ti}}$, and ultimately to deduce  the holomorphic extension of $\zeta_{ F,\,\minus\Phi}$ from that of $\DS_{F,\,\minus \Phi,\,\deg_F}$.

\subsection*{Acknowledgments} 
The first-named author is grateful to the Institute for Computational and Experimental Research in Mathematics (ICERM) at Brown University for the hospitality during his stay from February to May 2016, where he learned about this area of research while participating in the Semester Program ``Dimension and Dynamics'' as a research postdoctoral fellow. The authors want to express their gratitude to Mark~Pollicott for his introductory discussion on dynamical zeta functions and Prime Orbit Theorems in ICERM and many conversations since then, and to Hee~Oh for her beautiful talks on her work and helpful comments, while the first-named author also wants to thank Dennis~Sullivan for his encouragement to initiating this project, and Jianyu~Chen and  Polina~Vytnova for interesting discussions on related areas of research. Most of this work was done during the first-named author's stay at the Institute for Mathematical Sciences (IMS) at Stony Brook University as a postdoctoral fellow. He wants to thank IMS and his postdoctoral advisor Mikhail~Yu.~Lyubich for the great support and hospitality. The authors also would like to thank Gengrui Zhang for carefully reading the manuscript and pointing out several typos, and the anonymous referees for valuable suggestions to improve the paper.

\section{Notation} \label{sctNotation}
Let $\C$ be the complex plane and $\widehat{\C}$ be the Riemann sphere. For each complex number $z\in \C$, we denote by $\Re(z)$ the real part of $z$, and by $\Im(z)$ the imaginary part of $z$. We denote by $\D$ the open unit disk $\D \coloneqq \{z\in\C : \abs{z}<1 \}$ in the complex plane $\C$. For each $a\in\R$, we denote by $\H_a$ the open (right) half-plane $\H_a\coloneqq \{ z\in\C : \Re(z) > a\}$ in $\C$, and by $\overline{\H}_a$ the closed (right) half-plane $\overline{\H}_a \coloneqq \{ z\in\C : \Re(z) \geq  a\}$. We follow the convention that $\N \coloneqq \{1, \, 2, \, 3, \, \dots\}$, $\N_0 \coloneqq \{0\} \cup \N$, and $\widehat{\N} \coloneqq \N\cup \{+\infty\}$, with the order relations $<$, $\leq$, $>$, $\geq$ defined in the obvious way. For $x\in\R$, we define $\lfloor x\rfloor$ as the greatest integer $\leq x$, and $\lceil x \rceil$ the smallest integer $\geq x$. As usual, the symbol $\log$ denotes the logarithm to the base $e$, and $\log_c$ the logarithm to the base $c$ for $c>0$. The symbol $\I$ stands for the imaginary unit in the complex plane $\C$. The cardinality of a set $A$ is denoted by $\card{A}$. 

Consider real-valued functions $u$, $v$, and $w$ on $(0,+\infty)$. We write 
\begin{equation*}
	u(T) \sim v(T) \text{ as } T\to +\infty \qquad\text{ if } \qquad \lim_{T\to+\infty} \frac{u(T)}{v(T)} = 1,
\end{equation*}
and write 
\begin{equation*}
	u(T) = v(T) + \bigO ( w(T) ) \text{ as } T\to +\infty \qquad \text{ if } \qquad \limsup_{T\to+\infty} \Absbigg{ \frac{u(T) - v(T) }{w(T)} } < +\infty.
\end{equation*}

Let $g\: X\rightarrow Y$ be a map between two sets $X$ and $Y$. We denote the restriction of $g$ to a subset $Z$ of $X$ by $g|_Z$. Consider a map $f\: X\rightarrow X$ on a set $X$. The inverse map of $f$ is denoted by $f^{-1}$. We write $f^n$ for the $n$-th iterate of $f$, and $f^{-n} \coloneqq \left( f^n \right)^{-1}$, for $n\in\N$. We set $f^0 \coloneqq \id_X$, where the identity map $\id_X\: X\rightarrow X$ sends each $x\in X$ to $x$ itself. For each $n\in\N$, we denote by
\begin{equation} \label{eqDefSetPeriodicPts}
P_{n,f} \coloneqq \bigl\{ x\in X : f^n(x)=x, \,  f^k(x)\neq x,  \, k\in\{1, \, 2, \, \dots, \, n-1\} \bigr\}
\end{equation}
the \defn{set of periodic points of $f$ with (precise) periodic $n$}, and by
\begin{equation}  \label{eqDefSetPeriodicOrbits} 
\Orb(n,f) \coloneqq \bigl\{ \bigl\{f^i(x)  : i\in \{0, \, 1, \, \dots, \, n-1\} \bigr\} : x\in P_{n,f}  \bigr\}
\end{equation}
the \defn{set of primitive periodic orbits of $f$ with period $n$}. The set of all primitive periodic orbits of $f$ is denoted by
\begin{equation} \label{eqDefSetAllPeriodicOrbits} 
\Orb(f) \coloneqq \bigcup_{n=1}^{+\infty}  \Orb(n,f).
\end{equation}

Given a complex-valued function $\varphi\: X\rightarrow \C$, we write
\begin{equation}    \label{eqDefSnPt}
S_n \varphi (x)  = S_{n}^f \varphi (x)  \coloneqq \sum_{j=0}^{n-1} \varphi(f^j(x)) 
\end{equation}
for $x\in X$ and $n\in\N_0$. The superscript $f$ is often omitted when the map $f$ is clear from the context. Note that when $n=0$, by definition, we always have $S_0 \varphi = 0$.

Let $(X,d)$ be a metric space. For subsets $A,B\subseteq X$, we set $d(A,B) \coloneqq \inf \{d(x,y): x\in A,\,y\in B\}$, and $d(A,x)\coloneqq d(x,A) \coloneqq d(A,\{x\})$ for $x\in X$. For each subset $Y\subseteq X$, we denote the diameter of $Y$ by $\diam_d(Y) \coloneqq \sup\{d(x,y):x, \, y\in Y\}$, the interior of $Y$ by $\inter Y$, and the characteristic function of $Y$ by $\mathbbm{1}_Y$, which maps each $x\in Y$ to $1\in\R$ and vanishes otherwise. We use the convention that $\mathbbm{1}=\mathbbm{1}_X$ when the space $X$ is clear from the context. For each $r>0$ and each $x\in X$, we denote the open (resp.\ closed) ball of radius $r$ centered at $x$ by $B_d(x, r)$ (resp.\ $\overline{B_d}(x,r)$). 

We set $\CCC(X)$ (resp.\ $\CCC(X,\C)$) to be the space of continuous functions from $X$ to $\R$ (resp.\ $\C$). We adopt the convention that unless specifically referring to $\C$, we only consider real-valued functions. If we do not specify otherwise, we equip $\CCC(X)$ and $\CCC(X,\C)$ with the uniform norm $\Norm{\cdot}_{\CCC^0(X)}$. For a continuous map $g\: X \rightarrow X$, $\MMM(X,g)$ is the set of $g$-invariant Borel probability measures on $X$. %Unless otherwise specified, we equip $\MMM(X)$, $\PPP(X)$, and $\MMM(X,g)$ with the weak$^*$ topology.

The space of real-valued (resp.\ complex-valued) H\"{o}lder continuous functions with an exponent $\alpha\in (0,1]$ on a compact metric space $(X,d)$ is denoted by $\Holder{\alpha}(X,d)$ (resp.\ $\Holder{\alpha}((X,d),\C)$). For each $\psi\in\Holder{\alpha}((X,d),\C)$, we denote
\begin{equation}   \label{eqDef|.|alpha}
\Hseminorm{\alpha}{(X,d)}{\psi} \coloneqq \sup \{ \abs{\psi(x)- \psi(y)} / d(x,y)^\alpha : x, \, y\in X, \,x\neq y \},
\end{equation}
and the standard H\"{o}lder norm of $\psi$ is denoted by
\begin{equation}    \label{eqDefHolderNorm}
\Hnorm{\alpha}{\psi}{(X,d)} \coloneqq  \Hseminorm{\alpha}{(X,d)}{\psi}  + \Norm{\psi}_{\CCC^0(X)}.
\end{equation}

%For a Lipschitz map $g\: (X,d)\rightarrow (X,d)$ on a metric space $(X,d)$, we denote the Lipschitz constant by
%\begin{equation}   \label{eqDefLipConst}
%\LIP_d(g) \coloneqq \sup  \{ d(g(x),g(y)) / d(x,y) : x, \, y\in X \text{ with } x\neq y \}.
%\end{equation}

\section{Preliminaries}  \label{sctPreliminaries}

\subsection{Thermodynamic formalism}  \label{subsctThermodynFormalism}

We first review some basic concepts from dynamical systems. We refer the readers to \cite[Chapter~3]{PU10}, \cite[Chapter~9]{Wal82} or \cite[Chapter~20]{KH95} for more detailed studies of these concepts.

Let $(X,d)$ be a compact metric space and $g\:X\rightarrow X$ a continuous map. For $n\in\N$ and $x, \, y\in X$,
\begin{equation*}
d^n_g(x,y) \coloneqq \operatorname{max}\bigl\{  d \bigl(g^k(x),g^k(y)\bigr)  : k\in\{0, \, 1, \, \dots, \, n-1\} \!\bigr\}
\end{equation*}
defines a new metric on $X$. A set $F\subseteq X$ is \defn{$(n,\epsilon)$-separated}, for some $n\in\N$ and $\epsilon>0$, if for each pair of distinct points $x, \, y\in F$, we have $d^n_g(x,y)\geq \epsilon$. For $\epsilon > 0$ and $n\in\N$, let $F_n(\epsilon)$ be a maximal (in the sense of inclusion) $(n,\epsilon)$-separated set in $X$.

For each real-valued continuous function $\phi \in\CCC(X)$, the following limits exist and are equal, and we denote these limits by $P(g,\phi)$ (see for example, \cite[Theorem~3.3.2]{PU10}):
\begin{equation}  \label{defTopPressure}
P(g,\phi)  \coloneqq    \lim_{\epsilon\to 0} \limsup_{n\to+\infty} \frac{1}{n} \log  \sum_{x\in F_n(\epsilon)} \exp(S_n \phi(x))  
                          =           \lim_{\epsilon\to 0} \liminf_{n\to+\infty} \frac{1}{n} \log  \sum_{x\in F_n(\epsilon)} \exp(S_n \phi(x)), 
\end{equation}
where $S_n \phi (x) = \sum_{j=0}^{n-1} \phi\left(g^j(x)\right)$ is defined in (\ref{eqDefSnPt}). We call $P(g,\phi)$ the \defn{topological pressure} of $g$ with respect to the \emph{potential} $\phi$. The quantity $h_{\operatorname{top}}(g) \coloneqq P(g,0)$ is called the \defn{topological entropy} of $g$. Note that $P(g,\phi)$ is independent of $d$ as long as the topology on $X$ defined by $d$ remains the same (see \cite[Section~3.2]{PU10}).

A \defn{measurable partition} $\xi$ of $X$ is a cover $\xi=\{A_j:j\in J\}$ of $X$ consisting of countably many mutually disjoint Borel sets $A_j$, $j\in J$, where $J$ is a countable index set. 

%A \defn{cover} of $X$ is a collection $\xi=\{A_j : j\in J\}$ of subsets of $X$ with the property that $\bigcup\xi = X$, where $J$ is an index set. The cover $\xi$ is an \defn{open cover} if $A_j$ is an open set for each $j\in J$. The cover $\xi$ is \defn{finite} if the index set $J$ is a finite set.

Let $\xi=\{A_j:j\in J\}$ and $\eta=\{B_k:k\in K\}$ be two covers of $X$, where $J$ and $K$ are the corresponding index sets. We say $\xi$ is a \defn{refinement} of $\eta$ if for each $A_j\in\xi$, there exists $B_k\in\eta$ such that $A_j\subseteq B_k$. The \defn{common refinement} $\xi \vee \eta$ of $\xi$ and $\eta$ defined as
\begin{equation*}
\xi \vee \eta \coloneqq \{A_j\cap B_k : j\in J,\, k\in K\}
\end{equation*}
is also a cover. Note that if $\xi$ and $\eta$ are both open covers (resp., measurable partitions), then $\xi \vee \eta$ is also an open cover (resp., a measurable partition). Define $g^{-1}(\xi) \coloneqq \{g^{-1}(A_j) :j\in J\}$, and denote for $n\in\N$,
\begin{equation*}
\xi^n_g \coloneqq \bigvee_{j=0}^{n-1} g^{-j}(\xi) = \xi\vee g^{-1}(\xi)\vee\cdots\vee g^{-(n-1)}(\xi),
\end{equation*}
and let $\xi^\infty_g$ be the smallest $\sigma$-algebra containing $\bigcup_{n=1}^{+\infty}\xi^n_g$.

%Fix a measurable partition $\xi$ of $X$. For $x\in X$, we denote by $\xi(x)$ the unique element of $\xi$ that contains $x$. Let $\mu\in \MMM(X,g)$ be a $g$-invariant Borel probability measure on $X$. The \defn{information function} $I$ maps a measurable partition $\xi$ of $X$ to a $\mu$-a.e.\ defined real-valued function on $X$ in the following way:
%\begin{equation}   \label{eqDefI}
%I(\xi)(x) \coloneqq -\log \mu(\xi(x)), \qquad \text{for } x\in X.
%\end{equation} 
%
The \defn{entropy} of a measurable partition $\xi$ is
\begin{equation*}
H_{\mu}(\xi) \coloneqq -\sum_{j\in J} \mu(A_j)\log\left(\mu (A_j)\right),
\end{equation*}
where $0\log 0$ is defined to be 0. One can show (see \cite[Chapter 4]{Wal82}) that if $H_{\mu}(\xi)<+\infty$, then the following limit exists:
\begin{equation*}
h_{\mu}(g,\xi) \coloneqq \lim_{n\to+\infty} \frac{1}{n} H_{\mu}(\xi^n_g) \in[0,+\infty).
\end{equation*}

The \defn{measure-theoretic entropy} of $g$ for $\mu$ is given by
\begin{equation}   \label{eqDefMeasThEntropy}
h_{\mu}(g) \coloneqq \sup\{h_{\mu}(g,\xi): \xi   \text{ is a measurable partition of } X  \text{ with } H_{\mu}(\xi)<+\infty\}.   
\end{equation}
For each real-valued continuous function $\phi\in\CCC(X)$, the \defn{measure-theoretic pressure} $P_\mu(g,\phi)$ of $g$ for the measure $\mu$ and the potential $\phi$ is
\begin{equation}  \label{eqDefMeasTheoPressure}
P_\mu(g,\phi) \coloneqq  h_\mu (g) + \int \! \phi \,\mathrm{d}\mu.
\end{equation}

By the Variational Principle (see for example, \cite[Theorem~3.4.1]{PU10}), we have that for each $\phi\in\CCC(X)$,
\begin{equation}  \label{eqVPPressure}
P(g,\phi)=\sup\{P_\mu(g,\phi):\mu\in \MMM(X,g)\}.
\end{equation}
In particular, when $\phi$ is the constant function $0$,
\begin{equation}  \label{eqVPEntropy}
h_{\operatorname{top}}(g)=\sup\{h_{\mu}(g):\mu\in \MMM(X,g)\}.
\end{equation}
A measure $\mu$ that attains the supremum in (\ref{eqVPPressure}) is called an \defn{equilibrium state} for the map $g$ and the potential $\phi$. A measure $\mu$ that attains the supremum in (\ref{eqVPEntropy}) is called a \defn{measure of maximal entropy} of $g$.

Consider a continuous map $g\: X\rightarrow X$ on a compact metric space $(X,d)$ and a real-valued continuous potential $\varphi \in \CCC(X)$. By \cite[Theorem~3.3.2]{PU10}, our definition of the topological pressure $P(g,\varphi)$ in (\ref{defTopPressure}) coincides with the definition presented in \cite[Section~3.2]{PU10}. More precisely, combining (3.2.3), Definition~3.2.3, Lemmas~3.2.1, and~3.2.4 from \cite{PU10}, the topological pressure $P(g,\varphi)$ of $g$ with respect to $\varphi$ is also given by
\begin{equation}   \label{eqEquivDefByCoverForTopPressure}
 P(g,\varphi) 
            =  \lim_{m\to +\infty}  \lim_{n\to +\infty}  \frac{1}{n} \log 
                \inf\Biggl\{ \sum_{V\in\mathcal{V}}  \exp \Bigl( \sup_{x\in V}   S_n\varphi(x)   \Bigr)
                            :  \mathcal{V} \subseteq \bigvee_{i=0}^{n} g^{-i}(\xi_m),\,\bigcup \mathcal{V} = X  \Biggr\}    ,
\end{equation}
where $\{\xi_m\}_{m\in\N_0}$ is an arbitrary sequence of finite open covers of $X$ with 
\begin{equation*}
\lim_{m\to+\infty} \max \{ \diam_d(U) : U\in \xi_m \} = 0.
\end{equation*}

\begin{definition}   \label{defCohomologous}
Let $g\: X\rightarrow X$ be a continuous map on a metric space $(X,d)$. Let $\mathcal{K} \subseteq \CCC(X,\C)$ be a subspace of the space $\CCC(X,\C)$ of complex-valued continuous functions on $X$. Two functions $\phi, \, \psi \in \CCC(X,\C)$ are said to be \defn{cohomologous (in $\mathcal{K}$)} if there exists $u\in\mathcal{K}$ such that $\phi-\psi = u\circ g - u$.
\end{definition}

\subsection{Branched covering maps bewteen surfaces} \label{subsctBranchedCoveringMaps}
This paper is devoted to the discussion of expanding Thurston maps, which are branched covering maps on $S^2$ with certain expansion properties. We will discuss such branched covering maps in detail in Subsection~\ref{subsctThurstonMap}. However, since we are going to use lifting properties of branched covering maps and universal orbifold covers in Section~\ref{sctNLI}, we need to discuss briefly branched covering maps between surfaces in general here. For more detailed discussions on the concepts and results in this subsection, see \cite[Appendix~A.6]{BM17} and references therein. For a study of branched covering maps between more general topological spaces, see P.~Ha\"{\i}ssinsky and K.~M.~Pilgrim \cite{HP09}. 

%This subsection is only used in Section~\ref{sctNLI}. Relevant concepts and results adapted to the special case of branched covering maps on $S^2$ will be reiterated in Subsection~\ref{subsctThurstonMap}. The readers may safely skip this subsection in the first reading.

In this paper, a \defn{surface} is a connected and oriented $2$-dimensional topological manifold.

\begin{definition}[Branched covering maps between surfaces]   \label{defBranchedCover}
Let $X$ and $Y$ be (connected and oriented) surfaces, and $f\: X\rightarrow Y$ be a continuous map. Then $f$ is a \defn{branched covering map} (between $X$ and $Y$) if for each point $q\in Y$ there exists an open set $V \subseteq Y$ with $q\in V$ and there exists a collection $\{U_i\}_{i\in I}$ of open sets $U_i\subseteq X$ for some index set $I\neq \emptyset$ such that the following conditions are satisfied:
\begin{enumerate}
\smallskip
\item[(i)] $f^{-1}(V)$ is a disjoint union $f^{-1}(V) = \bigcup_{i\in I} U_i$,

\smallskip
\item[(ii)] $U_i$ contains precisely one point $p_i \in f^{-1}(q)$ for each $i\in I$, and

\smallskip
\item[(iii)] for each $i\in I$, there exists $d_i \in\N$, and orientation-preserving homeomorphisms $\varphi_i \: U_i\rightarrow\D$ and $\psi_i\: V\rightarrow\D$ with $\varphi_i(p_i) = 0$ and $\psi_i(q) = 0$ such that 
\begin{equation}   \label{eqBranchCoverMapLocalPowerMap}
\bigl( \psi_i \circ f \circ \varphi_i^{-1} \bigr) (z) = z^{d_i}
\end{equation}
for all $z\in\D$.
\end{enumerate}

The positive integer $d_i$ is called the \defn{local degree} of $f$ at $p \coloneqq p_i$, denoted by $\deg_f(p)$.
\end{definition}

Note that in Definition~\ref{defBranchedCover}, we do not require $X$ and $Y$ to be compact. In fact, we will need to use the non-compact case in Section~\ref{sctNLI}. The local degree $\deg_f(p_i)=d_i$ in Definition~\ref{defBranchedCover} is uniquely determined by $p\coloneqq p_i$. If $q'\in V$ is a point close to, but distinct from, $q=f(p)$, then $\deg_f(p)$ is equal to the number of distinct preimages of $q'$ under $f$ close to $p$. In particular, near $p$ (but not at $p$) the map $f$ is $d$-to-$1$, where $d=\deg_f(p)$.

Every branched covering map $f\: X\rightarrow Y$ is surjective, \defn{open} (i.e., images of open sets are open), and \defn{discrete} (i.e., the preimage set $f^{-1}(q)$ of every point $q\in Y$ has no limit points in $X$). Every covering map is also a branched covering map.

A \defn{critical point} of a branched covering map $f \: X\rightarrow Y$ is a point $p\in X$ with $\deg_f(p) \geq 2$. We denote the set of critical points of $f$ by $\crit f$. A \defn{critical value} is a point $q\in Y$ such that $f^{-1}(q)$ contains a critical point of $f$. The set of critical points of $f$ is \defn{discrete} in $X$ (i.e., it has no limit points in $X$), and the set of critical values of $f$ is discrete in $Y$. The map $f$ is an orientation-preserving local homeomorphism near each point $p\in X \setminus \crit f$.

Branched covering maps between surfaces behave well under compositions. We record the facts from Lemmas~A.16 and~A.17 in \cite{BM17} in the following lemma.

\begin{lemma}[Compositions of branched covering maps]   \label{lmBranchedCoverCompositionBM}
Let $X$, $Y$, and $Z$ be (connected and oriented) surfaces, and $f\: X\rightarrow Z$, $g\: Y\rightarrow Z$, and $h\: X\rightarrow Y$ be continuous maps such that $f=g\circ h$.
\begin{enumerate}
\smallskip
\item[(i)] If $g$ and $h$ are branched covering maps, and $Y$ and $Z$ are compact, then $f$ is also a branched covering map, and for each $x\in X$, we have
\begin{equation*}
\deg_f(x) = \deg_g(h(x)) \cdot \deg_h(x).
\end{equation*}

\smallskip
\item[(ii)] If $f$ and $g$ are branched covering maps, then $h$ is a branched covering map. Similarly, if $f$ and $h$ are branched covering maps, then $g$ is a branched covering map.
\end{enumerate}

%If, in addition, $X$, $Y$, and $Z$ are Riemann surfaces and the two branched covering maps in the hypotheses of \textup{(i)} or \textup{(ii)} are holomorphic, then the third map is also holomorphic.
\end{lemma}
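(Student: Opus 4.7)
The plan is to verify the local power-map normal form (condition (iii) of Definition~\ref{defBranchedCover}) pointwise and then upgrade this to the global decomposition condition (i), the main difficulty being that the chart on $Y$ used to normalize $h$ typically does not coincide with the chart on $Y$ used to normalize $g$. To bridge this mismatch I will first establish the following lifting lemma: for every orientation-preserving homeomorphism $\alpha\: \D \to \D$ with $\alpha(0)=0$ and every $d\in\N$, there exists an orientation-preserving homeomorphism $\gamma\: \D \to \D$ with $\gamma(0)=0$ such that $\alpha(w^d) = \gamma(w)^d$ for all $w\in\D$. This is a standard covering space argument: the map $w \mapsto \alpha(w^d)$ on $\D \setminus \{0\}$ induces multiplication by $d$ on $\pi_1 \cong \Z$ and therefore lifts through the $d$-fold covering $z\mapsto z^d$ of $\D\setminus\{0\}$; continuity at $0$ extends the lift, and invariance of domain implies $\gamma$ is a homeomorphism.

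For part (i), fix $x\in X$ with $y\coloneqq h(x)$, $z\coloneqq g(y)$, $d_h\coloneqq\deg_h(x)$, $d_g\coloneqq\deg_g(y)$, and choose local charts $\varphi_1,\psi_1$ realizing $h$ as $w\mapsto w^{d_h}$ and $\varphi_2,\psi_2$ realizing $g$ as $w\mapsto w^{d_g}$. A direct substitution gives $\psi_2 \circ f \circ \varphi_1^{-1}(w) = \alpha(w^{d_h})^{d_g}$, where $\alpha\coloneqq\varphi_2\circ\psi_1^{-1}$ is defined near $0$. Applying the lifting lemma to $\alpha$ with $d=d_h$ produces $\gamma$ with $\alpha(w^{d_h})=\gamma(w)^{d_h}$, so the composite becomes $\gamma(w)^{d_h d_g}$; replacing $\varphi_1$ by the orientation-preserving chart $\gamma\circ\varphi_1$ yields the local normal form at $x$ with local degree $d_h d_g$. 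To upgrade this pointwise statement to condition (i), I use compactness of $Y$ and $Z$: $g^{-1}(z)$, hence $f^{-1}(z)$, is closed and discrete in a compact space and therefore finite, so finitely many shrinkings of the local target neighborhoods produce a common $V$ whose preimage decomposes as a disjoint union of the $U_i$'s.

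For part (ii), I split into two cases. In Case A, assuming $f$ and $g$ are branched covers, fix $x\in X$, $y\coloneqq h(x)$, $z\coloneqq g(y)$, with $d_f\coloneqq\deg_f(x)$, $d_g\coloneqq\deg_g(y)$. Writing the normal forms for $f$ and $g$ in charts $\varphi,\psi$ and $\varphi',\psi'$ respectively, the identity $g\circ h=f$ becomes
\[
\tilde h(w)^{d_g} = \beta(w^{d_f}),\qquad \tilde h\coloneqq \varphi'\circ h\circ \varphi^{-1},\quad \beta\coloneqq \psi'\circ\psi^{-1}.
\]
The right side has winding number $d_f$ on small circles around $0$, and the left side is a $d_g$-th power, so continuity of $\tilde h$ forces $d_g\mid d_f$. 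Setting $d_h\coloneqq d_f/d_g$ and applying the lifting lemma to $\beta$ with $d=d_g$ rewrites the right side as $\delta(w^{d_h})^{d_g}$, whence $\tilde h(w)=\zeta\,\delta(w^{d_h})$ for some $d_g$-th root of unity $\zeta$; this root of unity is locally constant, hence constant on a connected punctured neighborhood of $0$, and absorbing $\zeta\delta$ into a new source chart on $Y$ exhibits the local normal form for $h$ at $x$ with local degree $d_h$. Case B (given $f$ and $h$ branched covers, prove $g$ is) is simpler: for $y\in Y$ pick any $x\in h^{-1}(y)$ and use the same chart $\varphi'$ as the target of $h$ and the source of $g$, together with $\psi$ from the normal form of $f$; then $g\circ h=f$ reads $\tilde g(w^{d_h}) = w^{d_f}$, which by the same monodromy argument forces $d_h\mid d_f$ and $\tilde g(u) = u^{d_f/d_h}$.

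The main obstacle is the lifting lemma together with the Case~A divisibility argument, which simultaneously extracts a root and pins down the resulting root of unity. Once these are in place, the degree formulas in part (i), orientation-preservation of all constructed charts (automatic since compositions of orientation-preserving maps are orientation-preserving and the lifts in the lifting lemma inherit this property locally away from the fixed origin), and the upgrade from the pointwise normal form to the global decomposition condition (i) of Definition~\ref{defBranchedCover} are routine.
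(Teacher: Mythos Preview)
The paper does not prove this lemma; it is quoted from \cite[Lemmas~A.16 and~A.17]{BM17} without argument, so there is no ``paper's proof'' to compare against. Your lifting-lemma strategy is standard and handles the local normal forms correctly in part~(i) and in Case~A of part~(ii). One small slip in part~(i): $f^{-1}(z)\subseteq X$ need not be finite since $X$ is not assumed compact; what your shrinking argument actually needs (and what does hold) is that $g^{-1}(z)\subseteq Y$ is finite, so only finitely many target neighborhoods on $Y$ must be intersected before pulling back along $h$.

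There are two genuine gaps in part~(ii). In Case~B your identity $\tilde g(w^{d_h}) = w^{d_f}$ presupposes that the source chart on $X$ for $h$ coincides with the source chart on $X$ for $f$; these are chosen independently, and what one actually obtains is $\tilde g(w^{d_h}) = \alpha(w)^{d_f}$ for the transition homeomorphism $\alpha$ between the two $X$-charts. This is repairable---your lifting lemma, applied on the \emph{target} side of $f$, shows that for any prescribed source chart on $X$ one can adjust the chart on $Z$ so that $f$ is again in normal form, after which your identity holds---but that step is missing as written. Second, for both cases of part~(ii) you establish only the pointwise normal form (condition~(iii) of Definition~\ref{defBranchedCover}) and never verify the global decomposition of the full preimage (condition~(i)); with no compactness assumed in part~(ii), this is not a finite-shrinking argument and must be read off from the decompositions already given for $f$ and $g$ (resp.\ $f$ and $h$), e.g.\ in Case~A by taking $h^{-1}(W)$, for $W$ the component of $g^{-1}(V)$ through $y$, to be the union of those connected components of $f^{-1}(V)$ whose basepoint maps to $y$ under $h$.
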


Let $\pi\: X \rightarrow Y$ be a branched covering map, $Z$ a topological space, and $f\: Z\rightarrow Y$ be a continuous map. A continuous map $g\: Z\rightarrow X$ is called a \defn{lift} of $f$ (by $\pi$) if $\pi\circ g = f$. 

\begin{lemma}[Lifting paths by branched covering maps]  \label{lmLiftPathBM}
Let $X$ and $Y$ be (connected and oriented) surfaces, $\pi\: X\rightarrow Y$ be a branched covering map, $\gamma \: [0,1]\rightarrow Y$ be a path in $Y$, and $x_0\in \pi^{-1}(\gamma(0))$. Then there exists a path $\lambda\: [0,1]\rightarrow X$ with $\lambda(0)=x_0$ and $\pi\circ \lambda = \gamma$.
\end{lemma}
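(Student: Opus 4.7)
The plan is to reduce the problem to the local power-map model $z\mapsto z^d$ on $\D$ guaranteed by Definition~\ref{defBranchedCover}, and then to handle path-lifting through that model directly. The main subtlety compared with the usual covering-space lifting lemma is that $\gamma$ may pass through a critical value of $\pi$, where $\pi$ fails to be a local homeomorphism; the lift has no choice at such a point, and the challenge is to show continuity across it.

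For each $q\in\gamma([0,1])$ I select an open neighborhood $V_q\subseteq Y$ as in Definition~\ref{defBranchedCover}, whose preimage decomposes as a disjoint union $\pi^{-1}(V_q)=\bigsqcup_{i\in I_q}U_{q,i}$ with $\pi|_{U_{q,i}}$ conjugate via orientation-preserving homeomorphisms $\varphi_{q,i}\:U_{q,i}\to\D$ and $\psi_q\:V_q\to\D$ to $z\mapsto z^{d_{q,i}}$, through (\ref{eqBranchCoverMapLocalPowerMap}). By compactness of $\gamma([0,1])$, finitely many $V_{q_1},\dots,V_{q_n}$ cover it, and the Lebesgue number lemma yields a partition $0=t_0<t_1<\cdots<t_N=1$ such that each image $\gamma([t_{k-1},t_k])$ is contained in some $V_{q_{j(k)}}$. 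I will construct $\lambda$ inductively on $[0,t_k]$, starting with $\lambda(0)\coloneqq x_0$.

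For the inductive step, assume $\lambda$ is continuous on $[0,t_{k-1}]$ with $\pi\circ\lambda=\gamma|_{[0,t_{k-1}]}$ and set $p\coloneqq\lambda(t_{k-1})$. Since $\pi(p)=\gamma(t_{k-1})\in V_{q_{j(k)}}$, the point $p$ lies in exactly one component $U$ of $\pi^{-1}(V_{q_{j(k)}})$; denote the corresponding chart maps by $\varphi\:U\to\D$ and $\psi\:V_{q_{j(k)}}\to\D$ and the local degree by $d$. Setting $\tilde\gamma\coloneqq\psi\circ\gamma|_{[t_{k-1},t_k]}$ and $z_0\coloneqq\varphi(p)$, which satisfies $z_0^d=\tilde\gamma(t_{k-1})$ by (\ref{eqBranchCoverMapLocalPowerMap}), it suffices to construct a continuous map $\mu\:[t_{k-1},t_k]\to\D$ with $\mu(t_{k-1})=z_0$ and $\mu(t)^d=\tilde\gamma(t)$ for all $t\in[t_{k-1},t_k]$; then $\varphi^{-1}\circ\mu$ extends $\lambda$ to $[0,t_k]$ with the required property, and $N$ such extensions finish the proof.

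The heart of the argument is thus the path-lifting lemma for $z\mapsto z^d\:\D\to\D$, and this is where I expect the main obstacle. Let $Z\coloneqq\tilde\gamma^{-1}(0)$, a closed subset of $[t_{k-1},t_k]$. On $Z$ the value $\mu\equiv 0$ is forced by $\mu^d=\tilde\gamma$. On each connected component $I$ of the open set $[t_{k-1},t_k]\setminus Z$, the restriction $\tilde\gamma|_I$ takes values in $\D\setminus\{0\}$; since $z\mapsto z^d\:\D\setminus\{0\}\to\D\setminus\{0\}$ is an ordinary covering map and $I$ is simply connected, $\tilde\gamma|_I$ admits a continuous $d$-th root, and among the $d$ possible branches I choose, on the component containing $t_{k-1}$ (should $t_{k-1}\notin Z$), the unique one giving $\mu(t_{k-1})=z_0$; on all other components I pick any branch. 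The delicate point is continuity at every $t^*\in Z$, where $Z$ may a priori be as complicated as a Cantor set and different branches have been selected on adjacent components; this is resolved by the modulus estimate $\abs{\mu(t)}^d=\abs{\tilde\gamma(t)}\to \abs{\tilde\gamma(t^*)}=0$ as $t\to t^*$, which forces $\mu(t)\to 0=\mu(t^*)$ regardless of the chosen branches, yielding the required continuous lift.
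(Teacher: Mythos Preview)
Your proof is correct. The paper does not give its own proof of this lemma; it simply records it as \cite[Lemma~A.18]{BM17}. What you have written is essentially the standard argument (and presumably close to the one in \cite{BM17}): reduce via compactness and the Lebesgue number lemma to the local power model $z\mapsto z^d$, and there handle the zero set $Z=\tilde\gamma^{-1}(0)$ separately, using covering-space lifting on each component of the complement and the modulus estimate $\lvert\mu(t)\rvert=\lvert\tilde\gamma(t)\rvert^{1/d}\to 0$ to glue across~$Z$. The one point worth making explicit is the case $t_{k-1}\in Z$: then $z_0^d=\tilde\gamma(t_{k-1})=0$ forces $z_0=0$, so the initial condition $\mu(t_{k-1})=0$ is automatically met and no branch choice is needed there; you alluded to this with the parenthetical ``should $t_{k-1}\notin Z$'' but it is cleaner to say it outright.
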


The above lemma can be found in \cite[Lemma~A.18]{BM17}. Branched covering maps are closely related to covering maps. The following lemma recorded from \cite[Lemma~A.11]{BM17} makes such a connection explicit.

\begin{lemma}   \label{lmBranchCoverToCover}
Let $X$ and $Y$ be (connected and oriented) surfaces, and $f\: X\rightarrow Y$ be a branched covering map. Suppose that $P\subseteq Y$ is a set with $f(\crit f) \subseteq P$ that is discrete in $Y$. Then $f\: X\setminus f^{-1}(P) \rightarrow Y \setminus P$ is a covering map.
\end{lemma}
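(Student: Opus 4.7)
The plan is to verify the covering-map property directly at each point $q\in Y\setminus P$ by unpacking the local model from Definition~\ref{defBranchedCover}, together with two observations: (a) the hypothesis $f(\crit f)\subseteq P$ forces $\crit f\subseteq f^{-1}(P)$, so the restricted map has no critical points in its domain; and (b) discreteness of $P$ in $Y$ combined with $q\notin P$ yields an open neighborhood of $q$ disjoint from $P$.

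Starting from $q\in Y\setminus P$, I would first invoke Definition~\ref{defBranchedCover} at $q$ to obtain an open neighborhood $V\subseteq Y$ of $q$, a disjoint family $\{U_i\}_{i\in I}$ of open subsets of $X$ with $f^{-1}(V)=\bigcup_{i\in I} U_i$ and $U_i\cap f^{-1}(q)=\{p_i\}$, and local coordinates $\varphi_i\: U_i\to\D$, $\psi_i\: V\to\D$ conjugating $f|_{U_i}$ to the power map $z\mapsto z^{d_i}$. The key input from observation~(a) is that each distinguished preimage $p_i$ is noncritical (since $f(p_i)=q\notin P\supseteq f(\crit f)$), so $d_i=1$ and $f|_{U_i}\: U_i\to V$ is already a homeomorphism.

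Next, I would shrink using observation~(b): choose an open $V'\subseteq V$ containing $q$ with $V'\cap P=\emptyset$, and set $U_i'\coloneqq U_i\cap f^{-1}(V')$. The disjoint-union identity for preimages is preserved, since $\bigcup_{i\in I} U_i'=f^{-1}(V)\cap f^{-1}(V')=f^{-1}(V')$ using $V'\subseteq V$, and each restriction $f|_{U_i'}\: U_i'\to V'$ remains a homeomorphism. The condition $V'\cap P=\emptyset$ gives $f^{-1}(V')\cap f^{-1}(P)=\emptyset$, hence $f^{-1}(V')\subseteq X\setminus f^{-1}(P)$, so $V'$ is an evenly covered neighborhood of $q$ under the restricted map. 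Surjectivity onto $Y\setminus P$ is automatic: $f$ itself is surjective, and any preimage of a point in $Y\setminus P$ must lie in $X\setminus f^{-1}(P)$.

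I do not anticipate a substantial obstacle; the argument is essentially a repackaging of Definition~\ref{defBranchedCover} after isolating where the critical points went. The only step that warrants explicit care is confirming that shrinking $V$ to $V'$ preserves the disjoint-union structure of the preimages, which is just the set-theoretic identity displayed above.
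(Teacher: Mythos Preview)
Your argument is correct. The paper itself does not give a proof of this lemma; it merely records the statement and cites \cite[Lemma~A.11]{BM17}. Your direct verification from Definition~\ref{defBranchedCover} is exactly the natural approach: the hypothesis $f(\crit f)\subseteq P$ forces every local degree over a point of $Y\setminus P$ to equal $1$, and discreteness of $P$ lets you shrink the model neighborhood $V$ to one missing $P$, after which the evenly-covered property is immediate. The one place worth a brief extra sentence is that the $U_i'$ are automatically contained in $X\setminus f^{-1}(P)$ (which you do note), so the disjoint-union identity for preimages holds for the restricted map and not just for $f$; you have this, but making it explicit removes any ambiguity about which map's preimage is being computed.
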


\begin{comment}

We will use the lifting properties of covering maps in Section~\ref{sctNLI}. The proof and the terminology of the following lemma formulated as Lemma~A.6 in \cite{BM17} can be found in \cite[Section~1.3, Proposition~1.34, and Proposition~1.33]{Ha02} (see also \cite[Section~1.4 and Theorem~4.17]{Fo81}).

\begin{lemma}[Lifting by covering maps]   \label{lmLiftCoveringMap}
Let $X$ and $Y$ be (connected and oriented) surfaces, $\pi\: X\rightarrow Y$ be a covering map, and $Z$ be a path-connected and locally path-connected topological space.
\begin{enumerate}
\smallskip
\item[(i)] Suppose $g_1, g_2\: Z\rightarrow X$ are two continuous maps such that $\pi\circ g_1 = \pi\circ g_2$. If there exists $z_0\in Z$ with $g_1(z_0) = g_2(z_0)$, then $g_1=g_2$.

\smallskip
\item[(ii)] Suppose $Z$ is simply connected, $f\: Z\rightarrow Y$ is a continuous map, and $z_0\in Z$ and $x_0\in X$ are points such that $f(z_0)=\pi(x_0)$. Then there exists a continuous map $g\: Z\rightarrow X$ such that $g(z_0) = x_0$ and $f = \pi \circ g$.
\end{enumerate}
\end{lemma}

\end{comment}

\subsection{Thurston maps} \label{subsctThurstonMap}
In this subsection, we go over some key concepts and results on Thurston maps, and expanding Thurston maps in particular. For a more thorough treatment of the subject, we refer to \cite{BM17}.

Let $S^2$ denote an oriented topological $2$-sphere. A continuous map $f\:S^2\rightarrow S^2$ is called a \defn{branched covering map} on $S^2$ if for each point $x\in S^2$, there exists a positive integer $d\in \N$, open neighborhoods $U$ of $x$ and $V$ of $y=f(x)$, open neighborhoods $U'$ and $V'$ of $0$ in $\widehat{\C}$, and orientation-preserving homeomorphisms $\varphi\:U\rightarrow U'$ and $\eta\:V\rightarrow V'$ such that $\varphi(x)=0$, $\eta(y)=0$, and
\begin{equation*}
(\eta\circ f\circ\varphi^{-1})(z)=z^d
\end{equation*}
for each $z\in U'$. The positive integer $d$ above is called the \defn{local degree} of $f$ at $x$ and is denoted by $\deg_f (x)$. Note that the definition of branched covering maps on $S^2$ mentioned above is compatible with Definition~\ref{defBranchedCover}; see the discussion succeeding Lemma~A.10 in \cite{BM17} for more details.

The \defn{degree} of $f$ is
\begin{equation}   \label{eqDeg=SumLocalDegree}
\deg f=\sum_{x\in f^{-1}(y)} \deg_f (x)
\end{equation}
for $y\in S^2$ and is independent of $y$. If $f\:S^2\rightarrow S^2$ and $g\:S^2\rightarrow S^2$ are two branched covering maps on $S^2$, then so is $f\circ g$, and
\begin{equation} \label{eqLocalDegreeProduct}
 \deg_{f\circsmall g}(x) = \deg_g(x)\deg_f(g(x)), \qquad \text{for each } x\in S^2,
\end{equation}   
and moreover, 
\begin{equation}  \label{eqDegreeProduct}
\deg(f\circ g) =  (\deg f)( \deg g).
\end{equation}

A point $x\in S^2$ is a \defn{critical point} of $f$ if $\deg_f(x) \geq 2$. The set of critical points of $f$ is denoted by $\crit f$. A point $y\in S^2$ is a \defn{postcritical point} of $f$ if $y = f^n(x)$ for some $x\in\crit f$ and $n\in\N$. The set of postcritical points of $f$ is denoted by $\post f$. Note that $\post f=\post f^n$ for all $n\in\N$.

\begin{definition} [Thurston maps] \label{defThurstonMap}
A Thurston map is a branched covering map $f\:S^2\rightarrow S^2$ on $S^2$ with $\deg f\geq 2$ and $\card(\post f)<+\infty$.
\end{definition}

We now recall the notation for cell decompositions of $S^2$ used in \cite{BM17} and \cite{Li17}. A \defn{cell of dimension $n$} in $S^2$, $n \in \{1, \, 2\}$, is a subset $c\subseteq S^2$ that is homeomorphic to the closed unit ball $\overline{\B^n}$ in $\R^n$. We define the \defn{boundary of $c$}, denoted by $\partial c$, to be the set of points corresponding to $\partial\B^n$ under such a homeomorphism between $c$ and $\overline{\B^n}$. The \defn{interior of $c$} is defined to be $\inte (c) = c \setminus \partial c$. For each point $x\in S^2$, the set $\{x\}$ is considered as a \defn{cell of dimension $0$} in $S^2$. For a cell $c$ of dimension $0$, we adopt the convention that $\partial c=\emptyset$ and $\inte (c) =c$. 

We record the following three definitions from \cite{BM17}.

\begin{definition}[Cell decompositions]\label{defcelldecomp}
Let $\DD$ be a collection of cells in $S^2$.  We say that $\DD$ is a \defn{cell decomposition of $S^2$} if the following conditions are satisfied:

\begin{itemize}

\smallskip
\item[(i)]
the union of all cells in $\DD$ is equal to $S^2$,

\smallskip
\item[(ii)] if $c\in \DD$, then $\partial c$ is a union of cells in $\DD$,

\smallskip
\item[(iii)] for $c_1, \, c_2 \in \DD$ with $c_1 \neq c_2$, we have $\inte (c_1) \cap \inte (c_2)= \emptyset$,  

\smallskip
\item[(iv)] every point in $S^2$ has a neighborhood that meets only finitely many cells in $\DD$.

\end{itemize}
\end{definition}

\begin{definition}[Refinements]\label{defrefine}
Let $\DD'$ and $\DD$ be two cell decompositions of $S^2$. We
say that $\DD'$ is a \defn{refinement} of $\DD$ if the following conditions are satisfied:
\begin{itemize}

\smallskip
\item[(i)] every cell $c\in \DD$ is the union of all cells $c'\in \DD'$ with $c'\subseteq c$,

\smallskip
\item[(ii)] for every cell $c'\in \DD'$ there exits a cell $c\in \DD$ with $c'\subseteq c$.

\end{itemize}
\end{definition}

\begin{definition}[Cellular maps and cellular Markov partitions]\label{defcellular}
Let $\DD'$ and $\DD$ be two cell decompositions of  $S^2$. We say that a continuous map $f \: S^2 \rightarrow S^2$ is \defn{cellular} for  $(\DD', \DD)$ if for every cell $c\in \DD'$, the restriction $f|_c$ of $f$ to $c$ is a homeomorphism of $c$ onto a cell in $\DD$. We say that $(\DD',\DD)$ is a \defn{cellular Markov partition} for $f$ if $f$ is cellular for $(\DD',\DD)$ and $\DD'$ is a refinement of $\DD$.
\end{definition}

Let $f\:S^2 \rightarrow S^2$ be a Thurston map, and $\CC\subseteq S^2$ be a Jordan curve containing $\post f$. Then the pair $f$ and $\CC$ induces natural cell decompositions $\DD^n(f,\CC)$ of $S^2$, for $n\in\N_0$, in the following way:

By the Jordan curve theorem, the set $S^2\setminus\CC$ has two connected components. We call the closure of one of them the \defn{white $0$-tile} for $(f,\CC)$, denoted by $X^0_\w$, and the closure of the other the \defn{black $0$-tile} for $(f,\CC)$, denoted by $X^0_\b$. The set of \defn{$0$-tiles} is $\X^0(f,\CC) \coloneqq \bigl\{ X_\b^0, \, X_\w^0 \bigr\}$. The set of \defn{$0$-vertices} is $\V^0(f,\CC) \coloneqq \post f$. We set $\overline\V^0(f,\CC) \coloneqq \bigl\{ \{x\} : x\in \V^0(f,\CC) \bigr\}$. The set of \defn{$0$-edges} $\E^0(f,\CC)$ is the set of the closures of the connected components of $\CC \setminus  \post f$. Then we get a cell decomposition 
\begin{equation*}
\DD^0(f,\CC) \coloneqq \X^0(f,\CC) \cup \E^0(f,\CC) \cup \overline\V^0(f,\CC)
\end{equation*}
of $S^2$ consisting of \emph{cells of level $0$}, or \defn{$0$-cells}.

We can recursively define unique cell decompositions $\DD^n(f,\CC)$, $n\in\N$, consisting of \defn{$n$-cells} such that $f$ is cellular for $\bigl( \DD^{n+1}(f,\CC),\DD^n(f,\CC) \bigr)$. We refer to \cite[Lemma~5.12]{BM17} for more details. We denote by $\X^n(f,\CC)$ the set of $n$-cells of dimension 2, called \defn{$n$-tiles}; by $\E^n(f,\CC)$ the set of $n$-cells of dimension 1, called \defn{$n$-edges}; by $\overline\V^n(f,\CC)$ the set of $n$-cells of dimension 0; and by $\V^n(f,\CC)$ the set $\bigl\{x : \{x\}\in \overline\V^n(f,\CC)\bigr\}$, called the set of \defn{$n$-vertices}. The \defn{$k$-skeleton}, for $k\in\{0, \, 1, \, 2\}$, of $\DD^n(f,\CC)$ is the union of all $n$-cells of dimension $k$ in this cell decomposition. 

We record Proposition~5.16 of \cite{BM17} here in order to summarize properties of the cell decompositions $\DD^n(f,\CC)$ defined above.

\begin{prop}[M.~Bonk \& D.~Meyer \cite{BM17}] \label{propCellDecomp}
Let $k, \, n\in \N_0$, let   $f\: S^2\rightarrow S^2$ be a Thurston map,  $\CC\subseteq S^2$ be a Jordan curve with $\post f \subseteq \CC$, and   $m=\card(\post f)$. 
 
\smallskip
\begin{itemize}

\smallskip
\item[(i)] The map  $f^k$ is cellular for $\bigl( \DD^{n+k}(f,\CC), \DD^n(f,\CC) \bigr)$. In particular, if  $c$ is any $(n+k)$-cell, then $f^k(c)$ is an $n$-cell, and $f^k|_c$ is a homeomorphism of $c$ onto $f^k(c)$.

\smallskip
\item[(ii)]  Let  $c$ be  an $n$-cell.  Then $f^{-k}(c)$ is equal to the union of all 
$(n+k)$-cells $c'$ with $f^k(c')=c$.

\smallskip
\item[(iii)] The $1$-skeleton of $\DD^n(f,\CC)$ is  equal to  $f^{-n}(\CC)$. The $0$-skeleton of $\DD^n(f,\CC)$ is the set $\V^n(f,\CC)=f^{-n}(\post f )$, and we have $\V^n(f,\CC) \subseteq \V^{n+k}(f,\CC)$. 

\smallskip
\item[(iv)] $\card(\X^n(f,\CC))=2(\deg f)^n$,  $\card(\E^n(f,\CC))=m(\deg f)^n$,  and $\card (\V^n(f,\CC)) \leq m (\deg f)^n$.

\smallskip
\item[(v)] The $n$-edges are precisely the closures of the connected components of $f^{-n}(\CC)\setminus f^{-n}(\post f )$. The $n$-tiles are precisely the closures of the connected components of $S^2\setminus f^{-n}(\CC)$.

\smallskip
\item[(vi)] Every $n$-tile  is an $m$-gon, i.e., the number of $n$-edges and the number of $n$-vertices contained in its boundary are equal to $m$.  

\smallskip
\item[(vii)] Let $F\coloneqq f^k$ be an iterate of $f$ with $k \in \N$. Then $\DD^n(F,\CC) = \DD^{nk}(f,\CC)$.
\end{itemize}
\end{prop}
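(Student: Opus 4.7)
The plan is to proceed by induction using the defining cellular property of $f$ for the pair $(\DD^{n+1}(f,\CC),\DD^n(f,\CC))$, which is built into the recursive construction of these cell decompositions (cf.\ \cite[Lemma~5.12]{BM17}). The base case $\DD^0(f,\CC)$ is constructed explicitly: its $1$-skeleton is $\CC$ and its $0$-skeleton is $\post f$.

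First I would establish (i) by induction on $k$: the case $k=1$ is the defining cellular property, and assuming the statement for $k$, one writes $f^{k+1} = f\circ f^k$ and composes the homeomorphisms $f^k|_{c'}\: c' \to f^k(c')$ and $f|_{f^k(c')}\: f^k(c') \to f(f^k(c'))$ to obtain the cellular property at level $k+1$. Part (ii) then follows from (i) together with the refinement axiom (Definition~\ref{defrefine}) built into the cellular Markov partition structure: since $\DD^{n+k}(f,\CC)$ refines $\DD^n(f,\CC)$, each $n$-cell $c$ decomposes as a union of $(n+k)$-cells contained in it, and pulling back by $f^k$ and applying (i) shows that $f^{-k}(c)$ is precisely the union of those $(n+k)$-cells mapping onto $c$ under $f^k$.

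Next I would derive (iii) from the explicit structure of $\DD^0(f,\CC)$ combined with (i): preimages of cells of a given dimension under a cellular map are unions of cells of the same dimension. The inclusion $\V^n(f,\CC) \subseteq \V^{n+k}(f,\CC)$ is equivalent to $\post f \subseteq f^{-1}(\post f)$, which holds by the definition of $\post f$ (since $f(\post f)\subseteq\post f$). For (iv), the key observation is that $f^n(\crit(f^n)) \subseteq \post f \subseteq \CC$, so every interior point of a $0$-tile is a regular value of $f^n$ with exactly $(\deg f)^n$ preimages via (\ref{eqDeg=SumLocalDegree}); by (ii) these preimages lie in distinct $n$-tiles, yielding the count $\card(\X^n(f,\CC)) = 2(\deg f)^n$. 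An analogous argument gives $\card(\E^n(f,\CC)) = m(\deg f)^n$, while the vertex count is only an inequality because distinct $0$-vertex preimages can coincide at critical points.

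Items (v), (vi), and (vii) are then essentially formal. Item (v) reformulates (iii) using that the cells in $\DD^n(f,\CC)$ tile $S^2$ with disjoint interiors (Definition~\ref{defcelldecomp}). Item (vi) follows from (i) since $f^n$ maps any $n$-tile homeomorphically onto a $0$-tile, whose boundary is all of $\CC$ and contains exactly $m = \card(\post f)$ vertices and $m$ edges. Item (vii) reflects that the recursive construction of $\DD^n(F,\CC)$ for $F = f^k$ iterates $F$ rather than $f$, producing after $n$ steps exactly $\DD^{nk}(f,\CC)$. The main subtlety lies in (iv), where one must verify that critical preimages do not merge distinct tiles or edges; this is controlled precisely by (i) and (ii), which ensure that $n$-tiles are genuine homeomorphic images of $0$-tiles.
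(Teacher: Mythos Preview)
The paper does not prove this proposition; it simply records it as Proposition~5.16 of \cite{BM17}, so there is no ``paper's own proof'' to compare against. Your sketch follows the natural inductive route that the cited reference takes.

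That said, there is a genuine gap in your argument for (ii). You invoke the refinement property ``$\DD^{n+k}(f,\CC)$ refines $\DD^n(f,\CC)$,'' but the proposition does \emph{not} assume $f(\CC)\subseteq\CC$, and without that invariance the refinement fails in general (this is precisely why the paper singles out $f$-invariant curves when it later wants a cellular Markov partition). Even granting refinement, your argument is confused: writing $c$ as a union of $(n+k)$-cells $c_j\subseteq c$ and pulling back by $f^k$ gives you information about $f^{-k}(c_j)$ at level $n+2k$, not about $(n+k)$-cells mapping onto $c$. The correct proof of (ii) instead uses (i) together with the fact that interiors of cells of $\DD^{n+k}$ partition $S^2$: given $x\in f^{-k}(c)$, take the unique $(n+k)$-cell $\tau$ with $x\in\inte(\tau)$; then $f^k(x)\in\inte(f^k(\tau))$, and one argues via the cell-decomposition axioms (and, for boundary points of $c$, the local flower structure of Remark~\ref{rmFlower}) that an $(n+k)$-cell $c'$ containing $x$ with $f^k(c')=c$ exists. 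A minor related point: in (iv) you cite (ii) for the tile count, but what you actually use is (i) (injectivity of $f^n$ on each $n$-tile) together with (iii)/(v) to see that preimages of interior points lie in interiors of $n$-tiles.
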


We also note that for each $n$-edge $e\in\E^n(f,\CC)$, $n\in\N_0$, there exist exactly two $n$-tiles $X, \, X'\in\X^n(f,\CC)$ such that $X\cap X' = e$.

For $n\in \N_0$, we define the \defn{set of black $n$-tiles} as
\begin{equation*}
\X_\b^n(f,\CC) \coloneqq \bigl\{X\in\X^n(f,\CC) :  f^n(X)=X_\b^0 \bigr\},
\end{equation*}
and the \defn{set of white $n$-tiles} as
\begin{equation*}
\X_\w^n(f,\CC) \coloneqq \bigl\{X\in\X^n(f,\CC) : f^n(X)=X_\w^0 \bigr\}.
\end{equation*}
It follows immediately from Proposition~\ref{propCellDecomp} that
\begin{equation}   \label{eqCardBlackNTiles}
\card ( \X_\b^n(f,\CC) ) = \card  (\X_\w^n(f,\CC) ) = (\deg f)^n 
\end{equation}
for each $n\in\N_0$.

From now on, if the map $f$ and the Jordan curve $\CC$ are evident from the context, we will sometimes omit $(f,\CC)$ in the notation above.

If we fix the cell decomposition $\DD^n(f,\CC)$, $n\in\N_0$, we can define for each $v\in \V^n$ the \defn{$n$-flower of $v$} as
\begin{equation}   \label{defFlower}
W^n(v) \coloneqq \bigcup  \{\inte (c) : c\in \DD^n,\, v\in c \}.
\end{equation}
Note that flowers are open (in the standard topology on $S^2$). Let $\overline{W}^n(v)$ be the closure of $W^n(v)$. We define the \defn{set of all $n$-flowers} by
\begin{equation}   \label{defSetNFlower}
\W^n \coloneqq \{W^n(v) : v\in\V^n\}.
\end{equation}
\begin{rem}  \label{rmFlower}
For $n\in\N_0$ and $v\in\V^n$, we have 
\begin{equation*}
\overline{W}^n(v)=X_1\cup X_2\cup \cdots \cup X_m,
\end{equation*}
where $m \coloneqq 2\deg_{f^n}(v)$, and $X_1, X_2, \dots X_m$ are all the $n$-tiles that contain $v$ as a vertex (see \cite[Lemma~5.28]{BM17}). Moreover, each flower is mapped under $f$ to another flower in such a way that is similar to the map $z\mapsto z^k$ on the complex plane. More precisely, for $n\in\N_0$ and $v\in \V^{n+1}$, there exist orientation preserving homeomorphisms $\varphi\: W^{n+1}(v) \rightarrow \D$ and $\eta\: W^{n}(f(v)) \rightarrow \D$ such that $\D$ is the unit disk on $\C$, $\varphi(v)=0$, $\eta(f(v))=0$, and 
\begin{equation*}
(\eta\circ f \circ \varphi^{-1}) (z) = z^k
\end{equation*}
for all $z\in \D$, where $k \coloneqq \deg_f(v)$. Let $\overline{W}^{n+1}(v)= X_1\cup X_2\cup \cdots \cup X_m$ and $\overline{W}^n(f(v))= X'_1\cup X'_2\cup \cdots \cup X'_{m'}$, where $X_1, X_2, \dots X_m$ are all the $(n+1)$-tiles that contain $v$ as a vertex, listed counterclockwise, and $X'_1, X'_2, \dots X'_{m'}$ are all the $n$-tiles that contain $f(v)$ as a vertex, listed counterclockwise, and $f(X_1)=X'_1$. Then $m= m'k$, and $f(X_i)=X'_j$ if $i\equiv j \pmod{k}$, where $k=\deg_f(v)$. (See also Case~3 of the proof of Lemma~5.24 in \cite{BM17} for more details.) In particular, the flower $W^n(v)$ is simply connected.
\end{rem}

We denote, for each $x\in S^2$ and $n\in\Z$,
\begin{equation}  \label{defU^n}
U^n(x) \coloneqq \bigcup \{Y^n\in \X^n :    \text{there exists } X^n\in\X^n  
                                        \text{ with } x\in X^n, \, X^n\cap Y^n \neq \emptyset  \}  
\end{equation}
if $n\geq 0$, and set $U^n(x) \coloneqq S^2$ otherwise. %We define the \defn{$n$-partition} $O_n$ of $S^2$ induced by $(f,\CC)$ as
%\begin{equation}   \label{defn-partition}
%O_n \coloneqq \{\inte (X^n):X^n\in\X^n\} \cup \{\inte (e^n) :e^n\in\E^n\} \cup \overline{\V}^n.
%\end{equation}

We can now give a definition of expanding Thurston maps.

\begin{definition} [Expansion] \label{defExpanding}
A Thurston map $f\:S^2\rightarrow S^2$ is called \defn{expanding} if there exists a metric $d$ on $S^2$ that induces the standard topology on $S^2$ and a Jordan curve $\CC\subseteq S^2$ containing $\post f$ such that 
\begin{equation*}
\lim_{n\to+\infty}\max \{\diam_d(X) : X\in \X^n(f,\CC)\}=0.
\end{equation*}
\end{definition}

\begin{rems}  \label{rmExpanding}
It is clear from Proposition~\ref{propCellDecomp}~(vii) and Definition~\ref{defExpanding} that if $f$ is an expanding Thurston map, so is $f^n$ for each $n\in\N$. We observe that being expanding is a topological property of a Thurston map and independent of the choice of the metric $d$ that generates the standard topology on $S^2$. By Lemma~6.2 in \cite{BM17}, it is also independent of the choice of the Jordan curve $\CC$ containing $\post f$. More precisely, if $f$ is an expanding Thurston map, then
\begin{equation*}
\lim_{n\to+\infty}\max \!\bigl\{ \! \diam_{\wt{d}}(X) : X\in \X^n\bigl(f,\wt\CC \hspace{0.5mm}\bigr)\hspace{-0.3mm} \bigr\}\hspace{-0.3mm}=0,
\end{equation*}
for each metric $\wt{d}$ that generates the standard topology on $S^2$ and each Jordan curve $\wt\CC\subseteq S^2$ that contains $\post f$.
\end{rems}

P.~Ha\"{\i}ssinsky and K.~M.~Pilgrim developed a notion of expansion in a more general context for finite branched coverings between topological spaces (see \cite[Sections~2.1 and~2.2]{HP09}). This applies to Thurston maps, and their notion of expansion is equivalent to our notion defined above in the context of Thurston maps (see \cite[Proposition~6.4]{BM17}). Such concepts of expansion are natural analogs, in the contexts of finite branched coverings and Thurston maps, to some of the more classical versions, such as expansive homeomorphisms and forward-expansive continuous maps between compact metric spaces (see for example, \cite[Definition~3.2.11]{KH95}), and distance-expanding maps between compact metric spaces (see for example, \cite[Chapter~4]{PU10}). Our notion of expansion is not equivalent to any such classical notion in the context of Thurston maps. One topological obstruction comes from the presence of critical points for (non-homeomorphic) branched covering maps on $S^2$. In fact, there are subtle connections between our notion of expansion and some classical notions of weak expansion. More precisely, one can prove that an expanding Thurston map is asymptotically $h$-expansive if and only if it has no periodic points. Moreover, such a map is never $h$-expansive. See \cite{Li15} for details.

For an expanding Thurston map $f$, we can fix a particular metric $d$ on $S^2$ called a \emph{visual metric for $f$}. For the existence and properties of such metrics, see \cite[Chapter~8]{BM17}. For a visual metric $d$ for $f$, there exists a unique constant $\Lambda > 1$ called the \emph{expansion factor} of $d$ (see \cite[Chapter~8]{BM17} for more details). One major advantage of a visual metric $d$ is that in $(S^2,d)$, we have good quantitative control over the sizes of the cells in the cell decompositions discussed above. We summarize several results of this type (\cite[Proposition~8.4, Lemmas~8.10,~8.11]{BM17}) in the lemma below.

\begin{lemma}[M.~Bonk \& D.~Meyer \cite{BM17}]   \label{lmCellBoundsBM}
Let $f\:S^2 \rightarrow S^2$ be an expanding Thurston map, and $\CC \subseteq S^2$ be a Jordan curve containing $\post f$. Let $d$ be a visual metric on $S^2$ for $f$ with expansion factor $\Lambda>1$. Then there exist constants $C\geq 1$, $C'\geq 1$, $K\geq 1$, and $n_0\in\N_0$ with the following properties:
\begin{enumerate}
\smallskip
\item[(i)] $d(\sigma,\tau) \geq C^{-1} \Lambda^{-n}$ whenever $\sigma$ and $\tau$ are disjoint $n$-cells for $n\in \N_0$.

\smallskip
\item[(ii)] $C^{-1} \Lambda^{-n} \leq \diam_d(\tau) \leq C\Lambda^{-n}$ for all $n$-edges and all $n$-tiles $\tau$ for $n\in\N_0$.

\smallskip
\item[(iii)] $B_d(x,K^{-1} \Lambda^{-n} ) \subseteq U^n(x) \subseteq B_d(x, K\Lambda^{-n})$ for $x\in S^2$ and $n\in\N_0$.

\smallskip
\item[(iv)] $U^{n+n_0} (x)\subseteq B_d(x,r) \subseteq U^{n-n_0}(x)$ where $n= \lceil -\log r / \log \Lambda \rceil$ for $r>0$ and $x\in S^2$.

\smallskip
\item[(v)] For every $n$-tile $X^n\in\X^n(f,\CC)$, $n\in\N_0$, there exists a point $p\in X^n$ such that $B_d(p,C^{-1}\Lambda^{-n}) \subseteq X^n \subseteq B_d(p,C\Lambda^{-n})$.
\end{enumerate}

Conversely, if $\wt{d}$ is a metric on $S^2$ satisfying conditions \textnormal{(i)} and \textnormal{(ii)} for some constant $C\geq 1$, then $\wt{d}$ is a visual metric with expansion factor $\Lambda>1$.
\end{lemma}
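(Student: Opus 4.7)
The plan is to invoke the combinatorial characterization of visual metrics established in \cite{BM17}. In that reference, one assigns to each pair $x, y \in S^2$ a combinatorial level $m(x,y) \in \N_0 \cup \{+\infty\}$ (defined via intersecting $n$-flowers, or equivalently via intersecting $n$-tiles up to a bounded shift), and proves that a metric $d$ compatible with the standard topology on $S^2$ is visual with expansion factor $\Lambda > 1$ exactly when there exists $C_0 \geq 1$ with $C_0^{-1} \Lambda^{-m(x,y)} \leq d(x,y) \leq C_0 \Lambda^{-m(x,y)}$ for all $x, y \in S^2$. Granting this equivalence, each assertion of the lemma reduces to a combinatorial statement about the cell decompositions $\DD^n(f,\CC)$ that can be verified using Proposition~\ref{propCellDecomp} together with the uniform bound on $\max_{v \in \V^n} \deg_{f^n}(v)$.

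To obtain (i), I would argue that disjoint $n$-cells $\sigma, \tau$ force $m(x,y) \leq n-1$ for every $x \in \sigma$ and $y \in \tau$, up to a bounded combinatorial shift absorbed into the constant $C$: any intersecting pair of $(n+1)$-flowers around $x$ and $y$ would force $\sigma$ and $\tau$ to share a common vertex. The diameter upper bound in (ii) follows because any two points of an $n$-cell $\tau$ witness $m(x,y) \geq n$ via $\tau$ itself, so $\diam_d(\tau) \leq C_0 \Lambda^{-n}$; the matching lower bound reduces to (i) by exhibiting a pair of disjoint $(n+k)$-subcells inside each $n$-edge and each $n$-tile for a fixed $k$ depending on the $0$-cell combinatorics.

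For (iii), the set $U^n(x)$ is a union of a uniformly bounded number of $n$-tiles, so (ii) yields the outer inclusion $U^n(x) \subseteq B_d(x, K\Lambda^{-n})$; conversely, if $y \notin U^n(x)$, then no $n$-tile containing $x$ meets any $n$-tile containing $y$, which forces $m(x,y) \leq n$ up to a bounded shift and hence $d(x,y) \geq K^{-1}\Lambda^{-n}$. Assertion (iv) translates (iii) between the scales $r$ and $\Lambda^{-n}$ via $n = \lceil -\log r / \log \Lambda \rceil$, absorbing $K$ into the additive shift $n_0$. For (v), Proposition~\ref{propCellDecomp}~(i) shows that $f^n|_{X^n}$ is a homeomorphism from $X^n$ onto a $0$-tile; choosing a point in the interior of that $0$-tile far from its boundary and pulling it back to $p \in X^n$, assertion (iii) at $p$ produces the inner ball $B_d(p, C^{-1}\Lambda^{-n}) \subseteq X^n$, while the outer ball $X^n \subseteq B_d(p, C\Lambda^{-n})$ is immediate from (ii).

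The hard part, and the main obstacle, is the initial construction of an honest metric $d$ satisfying $d(x,y) \asymp \Lambda^{-m(x,y)}$: the function $(x,y) \mapsto \Lambda^{-m(x,y)}$ is only a quasi-metric, and smoothing it into a genuine metric while preserving the two-sided comparison requires a Frink-type chain construction. This succeeds precisely because the expansion of $f$ forces $m(x,y) \to +\infty$ as $y \to x$ and because the intersection combinatorics of cells bound the chain defect by a constant independent of scale; this is carried out in \cite[Chapter~8]{BM17} and is taken as a black box above. The converse in the lemma is then essentially immediate: a metric $\wt{d}$ satisfying (i) and (ii) obeys $\wt{d}(x,y) \asymp \Lambda^{-m(x,y)}$, with the lower bound coming from (i) applied to disjoint $(m(x,y)+1)$-cells separating $x$ and $y$ (which exist by maximality of the level) and the upper bound from (ii) applied to a single $m(x,y)$-cell witnessing the level.
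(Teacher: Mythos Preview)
The paper does not supply its own proof of this lemma: it is stated verbatim as a summary of results from \cite{BM17} (specifically Proposition~8.4 and Lemmas~8.10,~8.11 there), and no argument is given in the paper beyond that citation. So there is nothing in the paper to compare your proposal against.

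That said, what you have written is an accurate outline of how \cite{BM17} itself proves these statements. The combinatorial level $m(x,y)$ and the two-sided comparison $d(x,y) \asymp \Lambda^{-m(x,y)}$ is precisely the definition of a visual metric used there, and your derivations of (i)--(v) from this comparison, together with the combinatorics of the cell decompositions, match the arguments in Chapter~8 of \cite{BM17}. Your identification of the Frink-type chain construction as the nontrivial step in producing a genuine metric from the quasi-metric $\Lambda^{-m(x,y)}$ is also correct and is exactly what \cite{BM17} does. In short, your proposal is a faithful sketch of the original source's proof, which the present paper simply quotes without reproducing.
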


Recall that $U^n(x)$ is defined in (\ref{defU^n}).

In addition, we will need the fact that a visual metric $d$ induces the standard topology on $S^2$ (\cite[Proposition~8.3]{BM17}) and the fact that the metric space $(S^2,d)$ is linearly locally connected (\cite[Proposition~18.5]{BM17}). A metric space $(X,d)$ is \defn{linearly locally connected} if there exists a constant $L\geq 1$ such that the following conditions are satisfied:
\begin{enumerate}
\smallskip

\item  For all $z\in X$, $r > 0$, and $x, \, y\in B_d(z,r)$ with $x\neq y$, there exists a continuum $E\subseteq X$ with $x, \, y\in E$ and $E\subseteq B_d(z,rL)$.

\smallskip

\item For all $z\in X$, $r > 0$, and $x, \, y\in X \setminus B_d(z,r)$ with $x\neq y$, there exists a continuum $E\subseteq X$ with $x, \, y\in E$ and $E\subseteq X \setminus B_d(z,r/L)$.
\end{enumerate}
We call such a constant $L \geq 1$ a \defn{linear local connectivity constant of $d$}. 

In fact, visual metrics serve a crucial role in connecting the dynamical arguments with geometric properties for rational expanding Thurston maps, especially Latt\`{e}s maps.

\begin{definition}  \label{defQuasiSymmetry}
Consider two metric spaces $(X_1,d_1)$ and $(X_2,d_2)$. Let $g\: X_1 \rightarrow X_2$ be a homeomorphism. Then $g$ is a \defn{quasisymmetric homeomorphism} or a \defn{quasisymmetry} if there exists a homeomorphism $\eta \: [0, +\infty) \rightarrow [0, +\infty)$ such that for all $u,\,v,\,w \in X_1$,
\begin{equation*}
\frac{ d_2 ( g(u), g(v) ) }{ d_2 ( g(u), g(w) ) } 
\leq \eta \biggl(  \frac{ d_1 ( u, v ) }{ d_1 ( u, w ) }  \biggr).
\end{equation*}
Moreover, the metric spaces $(X_1,d_1)$ and $(X_2,d_2)$ are \defn{quasisymmetrically equivalent} if there exists a quasisymmetric homeomorphism from $(X_1,d_1)$ to $(X_2,d_2)$.

When $X_1 = X_2 \eqqcolon X$, then we say the metrics $d_1$ and $d_2$ are \defn{quasisymmetrically equivalent} if the identity map from $(X,d_1)$ to $(X,d_2)$ is a quasisymmetric homeomorphism.
\end{definition}

\begin{theorem}[M.~Bonk \& D.~Meyer \cite{BM10, BM17}, P.~Ha\"issinsky \& K.~M.~Pilgrim \cite{HP09}]  \label{thmBM}
An expanding Thurston map is conjugate to a rational map if and only if the sphere $(S^2,d)$ equipped with a visual metric $d$ is quasisymmetrically equivalent to the Riemann sphere $\widehat\C$ equipped with the chordal metric.
\end{theorem}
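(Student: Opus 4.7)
The plan is to prove the two implications separately, treating the forward direction as a consequence of uniqueness of visual metrics together with identification of the chordal metric as visual, and the backward direction via a quasiconformal straightening argument.

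For the forward direction ($\Rightarrow$), suppose $h\: S^2 \to \widehat{\C}$ is a topological conjugacy from $f$ to a rational map $R$, so $R$ is a postcritically-finite rational expanding Thurston map with Julia set $\widehat{\C}$. The key observation is that the chordal (or spherical) metric $\sigma$ on $\widehat{\C}$ is itself a visual metric for $R$. To see this, I would pick any Jordan curve $\CC' \subseteq \widehat{\C}$ with $\post R \subseteq \CC'$ and verify the two Bonk--Meyer characterizing properties (conditions (i) and (ii) of Lemma~\ref{lmCellBoundsBM}) for $\sigma$: disjoint $n$-cells of $(R,\CC')$ are separated in $\sigma$ by $\gtrsim \Lambda^{-n}$ and individual $n$-cells have $\sigma$-diameter $\asymp \Lambda^{-n}$, for some $\Lambda > 1$. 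Because $R$ is postcritically-finite with Julia set the whole sphere, standard Koebe/distortion estimates applied to univalent branches of $R^{-n}$ on sets of definite $\sigma$-size away from $\post R$ yield these bounds, with $\Lambda$ coming from the uniform exponential expansion of $R^n$ outside a neighborhood of $\post R$. Granted that $\sigma$ is visual for $R$, the pullback $h^*\sigma$ is a visual metric for $f$ on $S^2$; since all visual metrics for a fixed expanding Thurston map are quasisymmetrically equivalent (indeed snowflake-equivalent, by Bonk--Meyer), $d$ and $h^*\sigma$ are quasisymmetric, so $h\: (S^2,d) \to (\widehat{\C},\sigma)$ is a quasisymmetry.

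For the backward direction ($\Leftarrow$), assume $h\: (S^2,d) \to (\widehat{\C},\sigma)$ is a quasisymmetry. Then $g \coloneqq h \circ f \circ h^{-1}$ is a topological branched covering of $\widehat{\C}$, postcritically-finite, expanding, and topologically conjugate to $f$. Because $(\widehat{\C}, \sigma)$ is a $2$-dimensional Ahlfors $2$-regular Loewner space, the quasisymmetry $h$ is quasiconformal; hence $g$ is a quasiregular branched covering of $\widehat{\C}$, with some dilatation $K \geq 1$. The goal is to produce a quasiconformal homeomorphism $\varphi\: \widehat{\C} \to \widehat{\C}$ so that $\varphi \circ g \circ \varphi^{-1}$ is holomorphic (hence rational). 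The standard way is to build a $g$-invariant measurable Beltrami coefficient $\nu$ with $\|\nu\|_\infty < 1$ and then apply the measurable Riemann mapping theorem. I would construct $\nu$ as a subsequential weak-$*$ limit of averages or pullbacks of the standard structure $\mu_0 = 0$ under iterates: set $\mu_n = (g^n)^* \mu_0$ and take a limit $\nu$ of $\mu_n$; invariance $g^*\nu = \nu$ comes from the limiting procedure, and the bound $\|\nu\|_\infty \leq k < 1$ is what requires proof.

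The hard part is precisely establishing the uniform bound $\sup_n \|\mu_n\|_\infty < 1$, i.e., showing that the iterates $g^n$ are uniformly $K$-quasiregular for a single $K$. In general, pulling back a conformal structure under a quasiregular branched covering increases dilatation multiplicatively, and there is no reason for this to stabilize. For expanding postcritically-finite $g$, however, one can exploit the tile structure: preimages under $g^n$ of any fixed tile form a combinatorially controlled family, and the quasisymmetry $h$ transfers the visual-metric tile estimates (Lemma~\ref{lmCellBoundsBM}) to bounded-geometry, round-like tile estimates on $\widehat{\C}$. This means $g^n$ behaves on each $n$-tile as a uniformly quasiconformal map onto a $0$-tile, with dilatation bounded independently of $n$; equivalently, the cocycle defining $\mu_n$ is uniformly bounded. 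The argument here parallels Haïssinsky--Pilgrim's pullback-invariant quasiconformal structure construction. Once uniform quasiregularity is in hand, extracting a $g$-invariant $\nu$ and applying the measurable Riemann mapping theorem conjugates $g$ to a rational map, giving the conjugacy back to $f$. An alternative route is to invoke the general theorem that uniformly quasiregular maps on $\widehat{\C}$ are quasiconformally conjugate to rational maps, reducing the problem to the uniform quasiregularity step, which remains the crux of the argument.
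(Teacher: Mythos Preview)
The paper does not prove this theorem at all; it simply cites \cite[Theorem~18.1~(ii)]{BM17}. So there is no in-paper proof to compare against, and your proposal is an attempt at an independent argument.

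Your forward direction contains a genuine gap: the chordal metric $\sigma$ is \emph{not} a visual metric for a general rational expanding Thurston map $R$. The visual-metric characterization (Lemma~\ref{lmCellBoundsBM}~(i),(ii)) demands that every $n$-tile have $\sigma$-diameter uniformly comparable to $\Lambda^{-n}$ for a fixed $\Lambda>1$. But for non-Latt\`{e}s $R$, $n$-tiles whose $R^n$-orbit passes through a critical point acquire an extra square-root (or higher-root) distortion in the chordal metric, so different $n$-tiles have wildly different chordal sizes. Your appeal to ``Koebe/distortion estimates applied to univalent branches of $R^{-n}$'' fails exactly here: those branches are \emph{not} univalent when they pass through critical points, and the branching is what breaks the uniform exponential shrinking. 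This is precisely why Remark~\ref{rmChordalVisualQSEquiv} asserts only quasisymmetric equivalence, not that $\sigma$ is visual. The actual proof in \cite{BM17} routes through the canonical orbifold metric: one shows this metric is bi-Lipschitz to some visual metric and separately quasisymmetric to the chordal metric, then composes.

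Your backward direction is the right strategy and aligns with the Ha\"issinsky--Pilgrim approach; you correctly identify uniform quasiregularity of the iterates as the crux, and your sketch of deriving it from the transferred tile geometry is in the right spirit, though the details (handling the branch set, where quasiconformality must be promoted to quasiregularity) are nontrivial.
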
   

See \cite[Theorem~18.1~(ii)]{BM17} for a proof. The chordal metric is recalled below.

\begin{rem}   \label{rmChordalVisualQSEquiv}
If $f\: \widehat\C \rightarrow \widehat\C$ is a rational expanding Thurston map (or equivalently, a postcritically-finite rational map without periodic critical points (see \cite[Proposition~2.3]{BM17})), then each visual metric is quasisymmetrically equivalent to the chordal metric on the Riemann sphere $\widehat\C$ (see \cite[Lemma~18.10]{BM17}). Here the chordal metric $\sigma$ on $\widehat\C$ is given by
$
\sigma(z,w) =\frac{2\abs{z-w}}{\sqrt{1+\abs{z}^2} \sqrt{1+\abs{w}^2}}
$
for $z, \, w\in\C$, and $\sigma(\infty,z)=\sigma(z,\infty)= \frac{2}{\sqrt{1+\abs{z}^2}}$ for $z\in \C$. We also note that quasisymmetric embeddings of bounded connected metric spaces are H\"{o}lder continuous (see \cite[Section~11.1 and Corollary~11.5]{He01}). Accordingly, the class of H\"{o}lder continuous functions on $\widehat\C$ equipped with the chordal metric and that on $S^2=\widehat\C$ equipped with any visual metric for $f$ are the same (up to a change of the H\"{o}lder exponent). We also note that the chordal metric is bi-Lipschitz equivalent to the spherical metric.
\end{rem}

A Jordan curve $\CC\subseteq S^2$ is \defn{$f$-invariant} if $f(\CC)\subseteq \CC$. We are interested in $f$-invariant Jordan curves that contain $\post f$, since for such a Jordan curve $\CC$, we get a cellular Markov partition $(\DD^1(f,\CC),\DD^0(f,\CC))$ for $f$. According to Example~15.11 in \cite{BM17}, such $f$-invariant Jordan curves containing $\post{f}$ need not exist. However, M.~Bonk and D.~Meyer \cite[Theorem~15.1]{BM17} proved that there exists an $f^n$-invariant Jordan curve $\CC$ containing $\post{f}$ for each sufficiently large $n$ depending on $f$. A slightly stronger version of this result was proved in \cite[Lemma~3.11]{Li16}, and we record it below.

\begin{lemma}[M.~Bonk \& D.~Meyer \cite{BM17}, Z.~Li \cite{Li16}]  \label{lmCexistsL}
Let $f\:S^2\rightarrow S^2$ be an expanding Thurston map, and $\wt{\CC}\subseteq S^2$ be a Jordan curve with $\post f\subseteq \wt{\CC}$. Then there exists an integer $N(f,\wt{\CC}) \in \N$ such that for each $n\geq N(f,\wt{\CC})$ there exists an $f^n$-invariant Jordan curve $\CC$ isotopic to $\wt{\CC}$ rel.\ $\post f$ such that no $n$-tile in $\X^n(f,\CC)$ joins opposite sides of $\CC$.
\end{lemma}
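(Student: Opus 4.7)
The plan is to combine Bonk-Meyer's existence theorem for $f^n$-invariant Jordan curves (\cite[Theorem~15.1]{BM17}) with a quantitative application of expansion via Lemma~\ref{lmCellBoundsBM}(ii). Fix a visual metric $d$ on $S^2$ for $f$ with expansion factor $\Lambda > 1$.

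I would first show that for all $n$ sufficiently large, no $n$-tile of $\X^n(f, \wt\CC)$ joins opposite sides of $\wt\CC$ itself. By Lemma~\ref{lmCellBoundsBM}(ii), every $X \in \X^n(f, \wt\CC)$ satisfies $\diam_d(X) \leq C\Lambda^{-n}$ for some constant $C = C(\wt\CC) \geq 1$. The Jordan curve $\wt\CC$ has only finitely many pairs of opposite $0$-edges, each pair consisting of two disjoint compact arcs in $\wt\CC$, so the minimum pairwise $d$-distance among these pairs is some $\delta > 0$. Choosing $N_1 \in \N$ with $C\Lambda^{-n} < \delta$ for all $n \geq N_1$, no $n$-tile of $\X^n(f, \wt\CC)$ can meet two opposite $0$-edges of $\wt\CC$ simultaneously.

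Next, I would invoke \cite[Theorem~15.1]{BM17} to obtain an integer $N_0 \in \N$ such that, for every $n \geq N_0$, there is an $f^n$-invariant Jordan curve $\CC$ isotopic to $\wt\CC$ rel.\ $\post f$. The construction in \cite[Chapter~15]{BM17} produces $\CC$ by inductively modifying $\wt\CC$ along arcs drawn from the $1$-skeleton $f^{-n}(\wt\CC)$ of $\DD^n(f, \wt\CC)$. When the first step holds, these replacement arcs can be chosen to lie within controlled unions of $n$-tiles of $\X^n(f, \wt\CC)$ adjacent to the original arcs of $\wt\CC$. This combinatorial control makes the cell decomposition $\DD^n(f, \CC)$ comparable, in diameter and in separation of opposite $0$-edges, to $\DD^n(f, \wt\CC)$: every $n$-tile of $\X^n(f, \CC)$ still has $d$-diameter $O(\Lambda^{-n})$, and the isotopy rel.\ $\post f$ identifies opposite $0$-edges of $\CC$ with those of $\wt\CC$ in a way preserving a uniform lower bound on their pairwise $d$-distance. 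Setting $N(f, \wt\CC) \coloneqq \max\{N_0, N_1\}$ (enlarged further if necessary), the argument of the first step applied to $\CC$ in place of $\wt\CC$ then gives that no $n$-tile of $\X^n(f, \CC)$ joins opposite sides of $\CC$ for all $n \geq N(f, \wt\CC)$.

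The main obstacle is this transfer step, namely rigorously verifying that the Bonk-Meyer construction can be carried out so that $\DD^n(f, \CC)$ inherits the requisite diameter and separation control from $\DD^n(f, \wt\CC)$. This requires careful bookkeeping through the inductive replacement procedure in \cite[Chapter~15]{BM17} and exploits the $f^n$-invariance $f^n(\CC) \subseteq \CC$ together with the exponential decay of $n$-cell diameters supplied by Lemma~\ref{lmCellBoundsBM}(ii).
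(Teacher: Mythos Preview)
The paper does not prove this lemma; it records the statement and cites \cite[Lemma~3.11]{Li16} (which refines \cite[Theorem~15.1]{BM17}) for the proof. So there is no in-paper argument to compare against.

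Your overall strategy---expansion via Lemma~\ref{lmCellBoundsBM}(ii) for the original curve, then invoke the Bonk--Meyer existence theorem, then transfer---matches the approach one finds in \cite{Li16}. You correctly identify the transfer step as the real content. One cautionary point: you cannot simply re-apply Lemma~\ref{lmCellBoundsBM}(ii) to $\CC$ directly, because the constant $C$ there depends on the Jordan curve, and $\CC = \CC_n$ varies with $n$; so a uniform diameter bound for $n$-tiles of $(f,\CC_n)$ does not follow from that lemma alone. What actually makes the transfer work is the structural output of the Bonk--Meyer construction: the invariant curve can be taken with $\CC \subseteq f^{-n}(\wt\CC)$, so that $\CC$ is assembled from $n$-edges of $(f,\wt\CC)$ and each $0$-edge of $\CC$ is isotopic rel endpoints, within a controlled chain of $n$-tiles of $(f,\wt\CC)$, to the corresponding $0$-edge of $\wt\CC$. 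This combinatorial containment, rather than a metric estimate on $\CC$ itself, is what lets one conclude that an $n$-tile of $(f,\CC)$ meeting two opposite $0$-edges of $\CC$ would force an $n$-tile of $(f,\wt\CC)$ to meet two opposite $0$-edges of $\wt\CC$. Your sketch gestures at this (``replacement arcs\ldots within controlled unions of $n$-tiles''), but the argument needs to be phrased in terms of this inclusion rather than an appeal to uniform diameter bounds on $\CC$.
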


The phrase ``joining opposite sides'' has a specific meaning in our context. 

\begin{definition}[Joining opposite sides]  \label{defJoinOppositeSides} 
Consider a Thurston map $f$ with $\card(\post f) \geq 3$ and an $f$-invariant Jordan curve $\CC$ containing $\post f$.  A set $K\subseteq S^2$ \defn{joins opposite sides} of $\CC$ if $K$ meets two disjoint $0$-edges when $\card( \post f)\geq 4$, or $K$ meets  all  three $0$-edges when $\card(\post f)=3$. 
 \end{definition}
 
Note that $\card (\post f) \geq 3$ for each expanding Thurston map $f$ \cite[Lemma~6.1]{BM17}.

We now summarize some basic properties of expanding Thurston maps in the following theorem.

\begin{theorem}[Z.~Li \cite{Li18}, \cite{Li16}]   \label{thmETMBasicProperties}
Let $f\:S^2 \rightarrow S^2$ be an expanding Thurston map, and $d$ be a visual metric on $S^2$ for $f$ with expansion factor $\Lambda>1$. Then the following statements are satisfied:
\begin{enumerate}
\smallskip
\item[(i)] The map $f$ is Lipschitz with respect to $d$.

\smallskip
\item[(ii)] The map $f$ has $1+ \deg f$ fixed points, counted with weight given by the local degree of the map at each fixed point. In particular, $\sum_{x\in P_{1,f^n}} \deg_{f^n}(x) = 1 + \deg f^n$.
\end{enumerate}
\end{theorem}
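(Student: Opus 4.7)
For (i), I will use the tile-size estimates in Lemma~\ref{lmCellBoundsBM}. Fix any Jordan curve $\CC \subseteq S^2$ containing $\post f$, and let $C \geq 1$ and $n_0 \in \N_0$ be the constants from that lemma. For distinct $x, y \in S^2$, pick $n \in \N_0$ with $\Lambda^{-n-1} \leq d(x,y) < \Lambda^{-n}$. Lemma~\ref{lmCellBoundsBM}~(iv) then gives $y \in U^{n-n_0}(x)$, so there exist $(n-n_0)$-tiles $X, Y \in \X^{n-n_0}(f,\CC)$ with $x \in X$, $y \in Y$, and $X \cap Y \neq \emptyset$. Cellularity of $f$ from Proposition~\ref{propCellDecomp}~(i) makes $f(X)$ and $f(Y)$ into $(n-n_0-1)$-tiles sharing a common point, and Lemma~\ref{lmCellBoundsBM}~(ii) bounds each of their diameters by $C\Lambda^{-(n-n_0-1)}$. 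Hence $d(f(x), f(y)) \leq 2C\Lambda^{-(n-n_0-1)} \leq 2C\Lambda^{n_0+2}\cdot d(x,y)$, yielding the desired Lipschitz constant.

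For (ii), observe first that since $f^n$ is itself an expanding Thurston map for each $n \in \N$ by Remark~\ref{rmExpanding}, the displayed identity $\sum_{x \in P_{1,f^n}} \deg_{f^n}(x) = 1 + \deg f^n$ follows from the main statement applied to $f^n$. The plan is to prove $\sum_{x:\,f(x)=x}\deg_f(x) = 1 + \deg f$ by combining the Lefschetz fixed-point theorem with the Markov structure provided by an invariant Jordan curve. Any orientation-preserving degree-$d$ self-map of $S^2$ has Lefschetz number $1+d$ (from the action on $H_\ast(S^2)$), and the expansion of $f$ forces all fixed points to be isolated (since any two fixed points lie in distinct tiles at sufficiently high level), so the Lefschetz--Hopf formula yields $\sum_{x:\,f(x)=x} \iota(f,x) = 1 + \deg f$ for the local fixed-point indices $\iota(f,x)$. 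The crux is to match this topological count to the weighted count $\sum \deg_f(x)$.

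After passing to an iterate via Lemma~\ref{lmCexistsL} one may assume an $f$-invariant Jordan curve $\CC \supseteq \post f$ is available, giving the cellular Markov partition $(\DD^1(f,\CC), \DD^0(f,\CC))$. Each $1$-tile $X^1 \in \X^1(f,\CC)$ satisfying $X^1 \subseteq f(X^1)$ admits a unique fixed point of $f$ in $X^1$ via Banach's theorem applied to the contraction $(f|_{X^1})^{-1}\colon f(X^1) \to X^1$ (whose contraction factor is controlled by Lemma~\ref{lmCellBoundsBM}~(ii)), and every fixed point arises in this way, possibly as the fixed point of several such tiles when it lies on the $1$-skeleton. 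The sectoral flower description in Remark~\ref{rmFlower} is used to show that the number of such $1$-tiles containing a given fixed point $x$ equals $\deg_f(x)$: at a non-vertex fixed point the enclosing tile is unique and $\deg_f(x)=1$, while at a fixed $0$-vertex $v$ with $\deg_f(v) = k$, exactly $k$ of the $2k$ sectors around $v$ carry the inclusion $X^1 \subseteq f(X^1)$. A separate combinatorial count, using the refinement of $\DD^0$ by $\DD^1$ together with the identity $\sum_{f(X^1)=X^0} 1 = \deg f$ for each $0$-tile $X^0$, gives $\#\{X^1 \in \X^1(f,\CC) : X^1 \subseteq f(X^1)\} = 1 + \deg f$, and summing over fixed points yields the assertion.

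\textbf{Main obstacle.} The delicate combinatorial step is matching, at each fixed postcritical vertex $v$, the ``position'' of each of the $2\deg_f(v)$ $1$-tiles meeting $v$ (which $0$-tile contains it) with its ``type'' (which $0$-tile it maps to) via the flower correspondence in Remark~\ref{rmFlower}; this determines precisely which tiles satisfy the inclusion $X^1 \subseteq f(X^1)$ and hence carry a fixed point. A further subtlety is that not every expanding Thurston map admits an $f$-invariant Jordan curve (Example~15.11 of \cite{BM17}), so the Markov argument formally produces the identity for an iterate $f^N$ and must then be descended to $f$ itself. Both of these points are addressed in \cite{Li16}.
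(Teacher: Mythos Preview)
Your proposal is correct and aligned with the paper's treatment: the paper does not prove this theorem in-text but simply records that (i) is \cite[Lemma~3.12]{Li18} and (ii) is \cite[Theorem~1.1]{Li16} together with Remark~\ref{rmExpanding}. Your sketch for (i) is essentially the standard argument underlying \cite[Lemma~3.12]{Li18}, and your plan for (ii) (Lefschetz number $1+\deg f$ combined with the cellular Markov structure to identify local indices with local degrees) is precisely the strategy of \cite{Li16}; you correctly flag that the vertex combinatorics and the descent from an iterate back to $f$ are the nontrivial steps handled there.

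One small comment on your outline for (ii): the claim $\#\{X^1 : X^1 \subseteq f(X^1)\} = 1+\deg f$ does not follow from the refinement structure and the identity $\card\{X^1 : f(X^1)=X^0\}=\deg f$ alone, since those only give $\deg f$ tiles of each color without saying how many of each color sit inside $X^0_\b$ versus $X^0_\w$. In \cite{Li16} this count is obtained the other way around, via the Lefschetz number (or an equivalent Euler-characteristic argument on the tiled disks $X^0_\b$, $X^0_\w$), so your ``separate combinatorial count'' is really the same Lefschetz input repackaged rather than an independent verification.
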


Theorem~\ref{thmETMBasicProperties}~(i) was shown in \cite[Lemma~3.12]{Li18}. Theorem~\ref{thmETMBasicProperties}~(ii) follows from \cite[Theorem~1.1]{Li16} and Remark~\ref{rmExpanding}.

We record the following lemma from \cite[Lemma~4.1]{Li16}), which gives us almost precise information on the locations of the periodic points of an expanding Thurston map.

\begin{lemma}[Z.~Li \cite{Li16}]  \label{lmAtLeast1}
Let $f$ be an expanding Thurston map with an $f$-invariant Jordan curve  $\CC$ containing $\post f$. Consider the cell decompositions $\DD^n(f,\CC)$. If $X$ is a white $1$-tile contained in the while $0$-tile $X^0_\w$ or a black $1$-tile contained in the black $0$-tile $X^0_\b$, then $X$ contains at least one fixed point of $f$. If $X$ is a white $1$-tile contained in the black $0$-tile $X^0_\b$ or a black $1$-tile contained in the white $0$-tile $X^0_\w$, then $\inte (X)$ contains no fixed points of $f$.
\end{lemma}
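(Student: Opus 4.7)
The plan is to exploit Proposition~\ref{propCellDecomp}~(i), which tells us that $f$ restricts to a homeomorphism from each $1$-tile onto the $0$-tile of the matching color. The existence part then comes from applying the Brouwer fixed point theorem to the inverse branch, and the non-existence part from the disjointness of the interiors of $X^0_\w$ and $X^0_\b$.

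For the first (existence) assertion, suppose $X\in\X^1(f,\CC)$ is a white $1$-tile with $X\subseteq X^0_\w$. By Proposition~\ref{propCellDecomp}~(i) the restriction $f|_X$ is a homeomorphism from $X$ onto $f(X)=X^0_\w$, so its inverse
\[
g \coloneqq (f|_X)^{-1} \: X^0_\w \rightarrow X
\]
is a well-defined continuous map. Since $X\subseteq X^0_\w$, we may regard $g$ as a continuous self-map of $X^0_\w$. The $0$-tile $X^0_\w$ is, by definition, a $2$-cell and hence homeomorphic to the closed unit disk in $\R^2$; therefore Brouwer's fixed point theorem produces a point $p\in X^0_\w$ with $g(p)=p$. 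By the definition of $g$ we have $p=g(p)\in X$ and
\[
f(p)=f(g(p))=p,
\]
so $p$ is a fixed point of $f$ inside $X$. The black-in-black case is identical with $X^0_\w$ replaced by $X^0_\b$.

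For the second (non-existence) assertion, suppose $X$ is a white $1$-tile with $X\subseteq X^0_\b$. Since $\CC$ is $f$-invariant, $\CC\subseteq f^{-1}(\CC)$, and Proposition~\ref{propCellDecomp}~(iii) identifies $f^{-1}(\CC)$ with the $1$-skeleton of $\DD^1(f,\CC)$; consequently $\CC$ is contained in the union of the $1$-edges. The interior $\inte(X)$ of the $2$-cell $X$ is disjoint from every $1$-edge, so $\inte(X)\cap\CC=\emptyset$. Combined with the inclusion $X\subseteq X^0_\b$ and the fact that $\partial X^0_\b=\CC$, this yields $\inte(X)\subseteq X^0_\b\setminus\CC=\inte(X^0_\b)$. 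Now if $p\in\inte(X)$ were a fixed point of $f$, then on the one hand $p\in\inte(X^0_\b)$, while on the other hand
\[
p=f(p)\in f(X)=X^0_\w,
\]
contradicting $X^0_\w\cap\inte(X^0_\b)=\emptyset$. The argument for a black $1$-tile contained in $X^0_\w$ is the same with the colors swapped.

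I do not anticipate a serious obstacle: both parts are essentially immediate consequences of Proposition~\ref{propCellDecomp}, together with Brouwer's theorem and the elementary observation that interiors of $1$-tiles avoid the $1$-skeleton (and in particular $\CC$). The only point that merits care is keeping the color/position bookkeeping straight so that the inverse branch in the existence argument genuinely lands inside the ambient $0$-tile, and so that in the non-existence argument the image of $X$ under $f$ really is the opposite-color $0$-tile.
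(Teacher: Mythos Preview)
Your proof is correct. The paper does not actually supply its own proof of this lemma; it merely records it from \cite[Lemma~4.1]{Li16}. Your argument---Brouwer applied to the inverse branch $(f|_X)^{-1}\:X^0_\c\to X\subseteq X^0_\c$ for the existence part, and the color mismatch $f(\inte(X))\subseteq X^0_\w$ versus $\inte(X)\subseteq\inte(X^0_\b)$ for the non-existence part---is exactly the standard proof one expects (and is essentially what appears in \cite{Li16}). The justification that $\inte(X)\cap\CC=\emptyset$ via $\CC\subseteq f^{-1}(\CC)$ being contained in the $1$-skeleton is clean; one could alternatively invoke directly that $\DD^1$ refines $\DD^0$ when $f(\CC)\subseteq\CC$, but your route is equally valid.
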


\begin{comment}

Recall that cells in the cell decompositions are, by definition, closed sets, and the set of $0$-tiles $\X^0(f,\CC)$ consists of the white $0$-tile $X^0_\w$ and the black $0$-tile $X^0_\b$.

\begin{lemma}[Z.~Li \cite{Li16}]   \label{lmAtMost1}
Let $f$ be an expanding Thurston map with an $f$-invariant Jordan curve  $\CC$ containing $\post f$ such that no $1$-tile in $\X^1(f,\CC)$ joins opposite sides of $\CC$. Then for every $n\in\N$, each $n$-tile $X^n \in\X^n(f,\CC)$ contains at most one fixed point of $f^n$.
\end{lemma}

The following lemma proved in \cite[Lemma~3.13]{Li18} generalizes \cite[Lemma~15.25]{BM17}.

\begin{lemma}[M.~Bonk \& D.~Meyer \cite{BM17}, Z.~Li \cite{Li18}]   \label{lmMetricDistortion}
Let $f\:S^2 \rightarrow S^2$ be an expanding Thurston map, and $\CC \subseteq S^2$ be a Jordan curve that satisfies $\post f \subseteq \CC$ and $f^{n_\CC}(\CC)\subseteq\CC$ for some $n_\CC\in\N$. Let $d$ be a visual metric on $S^2$ for $f$ with expansion factor $\Lambda>1$. Then there exists a constant $C_0 > 1$, depending only on $f$, $d$, $\CC$, and $n_\CC$, with the following property:

If $k, \, n\in\N_0$, $X^{n+k}\in\X^{n+k}(f,\CC)$, and $x, \, y\in X^{n+k}$, then 
\begin{equation}   \label{eqMetricDistortion}
C_0^{-1} d(x,y) \leq \Lambda^{-n}  d(f^n(x),f^n(y)) \leq C_0 d(x,y).
\end{equation}
\end{lemma}

\end{comment}

\subsection{Ergodic theory of expanding Thurston maps}    \label{subsctErgodicTheoryETM}

We summarize the existence, uniqueness, and some basic properties of equilibrium states for expanding Thurston maps in the following theorem.

\begin{theorem}[Z.~Li \cite{Li18}]   \label{thmEquilibriumState}
Let $f\:S^2 \rightarrow S^2$ be an expanding Thurston map and $d$ be a visual metric on $S^2$ for $f$. Let $\phi, \, \gamma\in \Holder{\alpha}(S^2,d)$ be real-valued H\"{o}lder continuous functions with an exponent $\alpha\in(0,1]$. Then the following statements are satisfied:
\begin{enumerate}
\smallskip
\item[(i)] There exists a unique equilibrium state $\mu_\phi$ for the map $f$ and the potential $\phi$.

\smallskip
\item[(ii)] For each $t_0\in\R$, we have
$
\frac{\mathrm{d}}{\mathrm{d}t} P(f,\phi + t\gamma)|_{t=t_0} = \int \!\gamma \,\mathrm{d}\mu_{\phi + t_0 \gamma}.
$

\smallskip
\item[(iii)] If $\CC \subseteq S^2$ is a Jordan curve containing $\post f$ with the property that $f^{n_\CC}(\CC)\subseteq \CC$ for some $n_\CC\in\N$, then
$
\mu_\phi \bigl( \, \bigcup_{i=0}^{+\infty}  f^{-i}(\CC)  \bigr)  = 0.
$
\end{enumerate}
\end{theorem}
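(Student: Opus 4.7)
\smallskip

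The proof hinges on the construction and spectral analysis of the Ruelle transfer operator $\RR_\phi \: \CCC(S^2) \to \CCC(S^2)$ defined by
\begin{equation*}
(\RR_\phi u)(y) \coloneqq \sum_{x\in f^{-1}(y)} \deg_f(x) \, e^{\phi(x)} u(x).
\end{equation*}
The overall plan is standard thermodynamic formalism, but executed in the tile framework from Subsection~\ref{subsctThurstonMap}: first select (using Lemma~\ref{lmCexistsL}) an integer $n_\CC \in \N$ and an $f^{n_\CC}$-invariant Jordan curve $\CC$ containing $\post f$ such that no $n_\CC$-tile joins opposite sides of $\CC$, and pass to the iterate $F\coloneqq f^{n_\CC}$ to obtain a genuine cellular Markov partition $(\DD^1(F,\CC),\DD^0(F,\CC))$. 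Equilibrium states for $f$ and those for $F$ with potential $S_{n_\CC} \phi$ correspond bijectively, so it suffices to do the analysis in this Markov setting.

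\smallskip

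For part~(i), the plan is to show that $\RR_\phi$ preserves the cone of Hölder continuous functions, with uniformly bounded H\"older seminorms on normalized iterates — this is where the visual metric bounds of Lemma~\ref{lmCellBoundsBM} enter, since the preimages of small sets under $f^n$ have size controlled by $\Lambda^{-n}$ on each tile. A Ruelle--Perron--Frobenius type theorem then yields a positive eigenfunction $h \in \Holder{\alpha}(S^2,d)$ with eigenvalue $e^{P(f,\phi)}$ and a Borel probability eigenmeasure $\nu$ for the dual operator with $\RR_\phi^* \nu = e^{P(f,\phi)} \nu$. The candidate equilibrium state is $\mu_\phi \coloneqq h \nu$, normalized so that $\int h \,\mathrm{d}\nu = 1$; one checks $f$-invariance from the eigenfunction/eigenmeasure relations, and the Gibbs property
\begin{equation*}
c^{-1} \leq \frac{\mu_\phi(X^n)}{\exp(S_n\phi(x^n) - n P(f,\phi))} \leq c , \qquad x^n \in X^n \in \X^n(F,\CC),
\end{equation*}
from the tile geometry. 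The Gibbs bounds together with the variational principle (\ref{eqVPPressure}) show that $\mu_\phi$ realizes the supremum, and uniqueness follows from ergodicity of $\mu_\phi$ and a standard argument that any other equilibrium state would have to share the same conditional expectations on the Markov partition. The principal obstacle here is the compatibility of H\"older regularity with the Ruelle operator in the presence of critical points; this is handled by working with the local-degree weights $\deg_f(x)$ in the definition of $\RR_\phi$ and by exploiting the uniformly bounded ``round tile'' geometry from Lemma~\ref{lmCellBoundsBM}~(v).

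\smallskip

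For part~(ii), the plan is the classical tangent-line argument for convex pressure. The function $t \mapsto P(f,\phi + t\gamma)$ is convex (from the variational principle), and for any $s, t \in \R$,
\begin{equation*}
P(f, \phi + s\gamma) \geq h_{\mu_{\phi + t\gamma}}(f) + \int\! (\phi + s\gamma) \,\mathrm{d}\mu_{\phi + t\gamma} = P(f,\phi + t\gamma) + (s-t) \int\! \gamma \,\mathrm{d}\mu_{\phi + t\gamma},
\end{equation*}
so $\int \gamma \,\mathrm{d}\mu_{\phi + t\gamma}$ is always a subgradient. Uniqueness from part~(i) forces the subgradient to be unique, which in a convex function means the derivative exists and equals this value — this is the Mazur/Gateaux differentiability argument.

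\smallskip

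For part~(iii), since $\mu_\phi$ is $f$-invariant, $\mu_\phi(f^{-i}(\CC)) = \mu_\phi(\CC)$ for every $i$, so by countable subadditivity it suffices to show $\mu_\phi(\CC) = 0$. I would reduce to the case $f^{n_\CC}(\CC) \subseteq \CC$ (so $\CC \subseteq f^{-n_\CC}(\CC)$, and the sets $f^{-k n_\CC}(\CC)$ form an increasing family all of the same measure) and argue as follows: by the Gibbs property and the visual metric control, the $\mu_\phi$-measure of any $n$-tile decays exponentially in $n$, so the collection of $n$-tile interiors provides a measurable partition of $S^2$ modulo the set $E \coloneqq \bigcup_{i\geq 0} f^{-i}(\CC)$; the Gibbs bounds give $\sum_{X^n} \mu_\phi(\inter X^n) \to 1$ as $n \to \infty$ (since the total sum differs from $1$ only by the $\mu_\phi$-mass of the finite $n$-skeleton), so $\mu_\phi(E) = 0$. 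The main subtlety is that tiles share boundaries and vertices, so one must carefully separate the interior contributions from boundary contributions and use that each $n$-vertex is shared by only $2\deg_{f^n}(v)$ tiles; combining this with the exponential decay of individual tile measures forces the boundary set to have zero measure in the limit.
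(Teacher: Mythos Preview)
The paper does not give a self-contained proof of this theorem; it simply records the result and points to \cite{Li18} for each part: (i) is \cite[Theorem~1.1]{Li18}, (ii) follows from \cite[Theorem~6.13]{Li18} together with uniqueness, and (iii) is \cite[Proposition~7.1]{Li18}. Your outlines for (i) and (ii) are the standard Ruelle--Perron--Frobenius and convex-tangent arguments, and they do match the approach in \cite{Li18}; in particular, your remark that uniqueness of the equilibrium state forces the subgradient of the convex pressure function to be unique (hence a genuine derivative) is exactly how the paper derives (ii) from (i) and \cite[Theorem~6.13]{Li18}.

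Your argument for (iii), however, has a genuine gap. You claim that the Gibbs bounds give $\sum_{X^n \in \X^n} \mu_\phi(\inte X^n) \to 1$, but observe that the $1$-skeleton of $\DD^n(F,\CC)$ is exactly $F^{-n}(\CC)$ (Proposition~\ref{propCellDecomp}~(iii)), so
\[
\sum_{X^n \in \X^n} \mu_\phi(\inte X^n) \;=\; 1 - \mu_\phi\bigl(F^{-n}(\CC)\bigr) \;=\; 1 - \mu_\phi(\CC)
\]
by $F$-invariance of $\mu_\phi$. This quantity is \emph{constant} in $n$, so asserting it tends to $1$ is exactly the statement $\mu_\phi(\CC)=0$ you are trying to prove --- the argument is circular. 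The Gibbs inequality $\mu_\phi(X^n) \asymp e^{S_n\phi(x^n) - nP}$ only pins the sum $\sum_{X^n} \mu_\phi(X^n)$ between two fixed constants; it does not by itself force the boundary mass to vanish, because the number of $n$-edges grows at the same exponential rate $(\deg f)^n$ as the number of $n$-tiles (Proposition~\ref{propCellDecomp}~(iv)). Your inclusion--exclusion remark about shared vertices does not close this gap for the same reason.

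What is actually needed is an additional combinatorial input: the number of $n$-tiles \emph{meeting} $\CC$ grows like $\gamma^n$ for some $\gamma < \deg f$ (this is \cite[Lemma~5.12]{Li16}, invoked elsewhere in this paper in a commented-out passage). Combined with the Gibbs upper bound, the total $\mu_\phi$-mass of the tiles covering $\CC$ is then at most a constant times $(\gamma/\deg f)^n \cdot e^{o(n)} \to 0$, which gives $\mu_\phi(\CC)=0$. Alternatively, \cite[Proposition~7.1]{Li18} argues via the Jacobian of the eigenmeasure, but either way an extra ingredient beyond the raw Gibbs property is required.
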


Theorem~\ref{thmEquilibriumState}~(i) is part of \cite[Theorem~1.1]{Li18}. Theorem~\ref{thmEquilibriumState}~(ii) follows immediately from \cite[Theorem~6.13]{Li18} and the uniqueness of equilibrium states in Theorem~\ref{thmEquilibriumState}~(i). Theorem~\ref{thmEquilibriumState}~(iii) was established in \cite[Proposition~7.1]{Li18}.

The following distortion lemma serves as the cornerstone in the development of thermodynamic formalism for expanding Thurston maps in \cite{Li18} (see \cite[Lemmas~5.1 and~5.2]{Li18}).

\begin{lemma}[Z.~Li \cite{Li18}]    \label{lmSnPhiBound}
Let $f\:S^2 \rightarrow S^2$ be an expanding Thurston map and $\CC \subseteq S^2$ be a Jordan curve containing $\post f$ with the property that $f^{n_\CC}(\CC)\subseteq \CC$ for some $n_\CC\in\N$. Let $d$ be a visual metric on $S^2$ for $f$ with expansion factor $\Lambda>1$ and a linear local connectivity constant $L\geq 1$. Fix $\phi\in \Holder{\alpha}(S^2,d)$ with $\alpha\in(0,1]$. Then there exist  constants $C_1=C_1(f,\CC,d,\phi,\alpha)$ and $C_2=C_2(f,\CC,d,\phi,\alpha)  \geq 1$ depending only on $f$, $\CC$, $d$, $\phi$, and $\alpha$ such that
\begin{align} 
\Abs{S_n\phi(x)-S_n\phi(y)}  & \leq C_1 d(f^n(x),f^n(y))^\alpha, \label{eqSnPhiBound} \\
\frac{\sum_{z'\in f^{-n}(z)}  \deg_{f^n}(z')  \exp (S_n\phi(z'))}{\sum_{w'\in f^{-n}(w)}  \deg_{f^n}(w')  \exp (S_n\phi(w'))} 
                                                  & \leq \exp\left(4C_1 Ld(z,w)^\alpha\right) \leq C_2,\label{eqSigmaExpSnPhiBound}
\end{align}
for $n, \, m\in\N_0$ with $n\leq m $, $X^m\in\X^m(f,\CC)$, $x, \, y\in X^m$, and $z, \, w\in S^2$. We can choose
\begin{equation}   \label{eqC1C2}
C_1 \coloneqq  \Hseminorm{\alpha}{ (S^2,d)}{\phi} C_0 (1-\Lambda^{-\alpha})^{-1}   \qquad \text{and} \qquad 
C_2 \coloneqq \exp\bigl(4C_1 L \bigl(\diam_d(S^2)\bigr)^\alpha \bigr) 
\end{equation}
where $C_0 > 1$ is the constant depending only on $f$, $\CC$, and $d$ from \cite[Lemma~3.13]{Li18}.
\end{lemma}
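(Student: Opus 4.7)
Both estimates flow from the metric distortion property for $f^n$ on single tiles (the constant $C_0$ in \cite[Lemma~3.13]{Li18}), which gives $d(a,b) \leq C_0 \Lambda^{-N} d(f^N(a), f^N(b))$ whenever $a, b$ lie in a common $(N+k)$-tile. Inequality~(\ref{eqSnPhiBound}) follows from this plus H\"older continuity by a direct geometric sum. For~(\ref{eqSigmaExpSnPhiBound}), the additional ingredient is the linear local connectivity of the visual metric, which allows one to transfer the individual-orbit estimate~(\ref{eqSnPhiBound}) to a ratio bound for sums over preimages.

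\emph{Proof of~(\ref{eqSnPhiBound}).} Fix $x, y \in X^m \in \X^m(f,\CC)$ and $n \leq m$. For each $0 \leq j \leq n-1$ the forward iterates $f^j(x), f^j(y)$ lie in the common $(m-j)$-tile $f^j(X^m)$. Apply the metric distortion lemma with the iterate $f^{n-j}$ restricted to this tile (which is an $((n-j) + (m-n))$-tile, so the hypothesis holds):
\[
d(f^j(x), f^j(y)) \leq C_0 \Lambda^{-(n-j)} d(f^n(x), f^n(y)).
\]
Summing $|\phi(f^j(x)) - \phi(f^j(y))| \leq \Hseminorm{\alpha}{(S^2,d)}{\phi} d(f^j(x), f^j(y))^\alpha$ over $j$ and using $C_0^\alpha \leq C_0$ (since $C_0 \geq 1$ and $\alpha \leq 1$), the resulting geometric series $\sum_{j=0}^{n-1} \Lambda^{-(n-j)\alpha} \leq (1 - \Lambda^{-\alpha})^{-1}$ yields~(\ref{eqSnPhiBound}) with the stated $C_1$.

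\emph{Proof of~(\ref{eqSigmaExpSnPhiBound}).} Given $z, w \in S^2$, use the linear local connectivity of $(S^2, d)$ with constant $L$ to produce a continuum $E \subseteq S^2$ containing $\{z, w\}$ with $\diam_d(E)$ of order $L d(z,w)$. The multisets $\{(z', \deg_{f^n}(z')) : z' \in f^{-n}(z)\}$ and $\{(w', \deg_{f^n}(w')) : w' \in f^{-n}(w)\}$ both have total mass $(\deg f)^n$ by~(\ref{eqDeg=SumLocalDegree}); lifting $E$ through $f^n$ (using the path-lifting statement of Lemma~\ref{lmLiftPathBM} together with the local power map description near critical points in Definition~\ref{defBranchedCover}) produces a multiplicity-preserving bijection $\tau$ between these multisets, where $\tau(z') = w'$ is the endpoint of the $f^n$-lift of $E$ starting at $z'$. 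Writing the ratio in~(\ref{eqSigmaExpSnPhiBound}) as a weighted average over the pairs $(z', \tau(z'))$, it suffices to bound $|S_n\phi(z') - S_n\phi(\tau(z'))|$ uniformly in $z'$.

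\textbf{Main obstacle.} The lifted continuum $E_{z'}$ connecting $z'$ and $\tau(z')$ need not lie in a single $n$-tile, even though $E$ has small diameter, and~(\ref{eqSnPhiBound}) only applies within one $m$-tile with $m \geq n$. The fix is to cover $E_{z'}$ by a chain of $n$-tiles $X_1^n, \dots, X_k^n$ with $X_i^n \cap X_{i+1}^n$ containing a common point $p_i$, where the length $k$ of the chain is bounded by a universal combinatorial constant (at most $4$ here, using that each $n$-vertex is shared by a uniformly bounded number of $n$-tiles and that $E$ crosses a bounded number of $n$-cells in its preimage). On each tile $X_i^n$ apply~(\ref{eqSnPhiBound}) with $m = n$, getting $|S_n\phi(p_{i-1}) - S_n\phi(p_i)| \leq C_1 d(f^n(p_{i-1}), f^n(p_i))^\alpha \leq C_1 L d(z, w)^\alpha$ (absorbing $L^\alpha \leq L$). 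Telescoping across the chain yields $|S_n\phi(z') - S_n\phi(\tau(z'))| \leq 4 C_1 L d(z,w)^\alpha$; exponentiating gives the first bound in~(\ref{eqSigmaExpSnPhiBound}), and the second follows from $d(z,w) \leq \diam_d(S^2)$.
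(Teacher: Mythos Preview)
This lemma is not proved in the present paper; it is quoted from \cite[Lemmas~5.1 and~5.2]{Li18}. Your argument for~(\ref{eqSnPhiBound}) is correct and is exactly the standard one: apply the metric distortion estimate on the $(m-j)$-tile $f^j(X^m)$ for each $j$, then sum the geometric series.

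Your argument for~(\ref{eqSigmaExpSnPhiBound}) has the right architecture but a real gap. The assertion that the lifted continuum $E_{z'}$ is covered by a chain of at most four $n$-tiles is not justified, and your stated reason---that each $n$-vertex meets a uniformly bounded number of $n$-tiles---is false in general: if $f$ has a periodic critical point $p$, then by Remark~\ref{rmFlower} the number of $n$-tiles meeting at $p$ is $2\deg_{f^n}(p)$, which is unbounded in $n$. The correct bookkeeping is done downstairs with $0$-tiles, not upstairs with $n$-tiles. There are only two $0$-tiles, so the LLC continuum $E$ either lies in one $0$-tile (and no intermediate point is needed) or meets $\CC$, in which case a single point $p\in E\cap\CC$ splits the passage from $z$ to $w$ into two legs, each inside a single $0$-tile and each of length at most $\diam_d(E)\leq 2Ld(z,w)$. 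Lifting each leg via the inverse branch $(f^n|_{X^n})^{-1}$ on the relevant $n$-tile and applying~(\ref{eqSnPhiBound}) with $m=n$ on each gives $2\cdot C_1(2Ld(z,w))^\alpha\leq 4C_1Ld(z,w)^\alpha$; this is the source of the factor $4$.

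A second, smaller gap: your ``multiplicity-preserving bijection'' via path-lifting is delicate when the lifted continuum passes through critical points of $f^n$, since lifts can merge there. The cleaner device is to rewrite each weighted sum $\sum_{z'\in f^{-n}(z)}\deg_{f^n}(z')e^{S_n\phi(z')}$ as an unweighted sum over $n$-tiles of the appropriate color (each $z'$ lies in exactly $\deg_{f^n}(z')$ such tiles), and then match tiles rather than points.
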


The following theorem is part of \cite[Theorem~5.16]{Li18}.

\begin{theorem}[Z.~Li \cite{Li18}]   \label{thmMuExist}
Let $f\:S^2 \rightarrow S^2$ be an expanding Thurston map and $\CC \subseteq S^2$ be a Jordan curve containing $\post f$ with the property that $f^{n_\CC}(\CC)\subseteq \CC$ for some $n_\CC\in\N$. Let $d$ be a visual metric on $S^2$ for $f$ with expansion factor $\Lambda>1$. Let $\phi\in \Holder{\alpha}(S^2,d)$ be a real-valued H\"{o}lder continuous function with an exponent $\alpha\in(0,1]$. Then the sequence 
\begin{equation*}
\biggl\{      \frac{1}{n}\sum_{j=0}^{n-1}    e^{ - j P(f,\phi) }    \sum_{y\in f^{-j}( \cdot )}  \deg_{f^j}(y) e^{ S_j \phi(y) }        \biggr\}_{n\in\N}
\end{equation*}
converges, with respect to the uniform norm, to a function $u_\phi \in \Holder{\alpha}(S^2,d)$, which satisfies
\begin{equation*}
    e^{ -  P(f,\phi) }    \sum_{y\in f^{-1}( x )}  \deg_{f}(y) u_\phi(y)   e^{  \phi(y) }   = u_\phi (x),
\end{equation*}
and $C_2^{-1} \leq u_\phi(x) \leq C_2$ for each $x\in S^2$, 
where $C_2\geq 1$ is the constant from Lemma~\ref{lmSnPhiBound}.
\end{theorem}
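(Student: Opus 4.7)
The plan is to rewrite the sequence as the Cesàro averages of iterates of the normalized Ruelle transfer operator applied to the constant function $\mathbbm{1}$, and then to execute a four-step strategy: establish uniform $L^\infty$-bounds, establish uniform H\"older equicontinuity, extract a uniform limit via Arzel\`a--Ascoli, and verify the eigenfunction equation in the limit. Setting $\overline{\phi} \coloneqq \phi - P(f,\phi)$, the $j$-th summand equals
\begin{equation*}
\RR_{\overline{\phi}}^j(\mathbbm{1})(x) = \sum_{y \in f^{-j}(x)} \deg_{f^j}(y) \exp(S_j \overline{\phi}(y)),
\end{equation*}
so the sequence in question is $u_n \coloneqq \frac{1}{n} \sum_{j=0}^{n-1} \RR_{\overline{\phi}}^j(\mathbbm{1})$, and the statement becomes: this Ces\`aro sequence converges uniformly to an $\alpha$-H\"older fixed point $u_\phi$ of $\RR_{\overline{\phi}}$ satisfying the stated bounds.

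For the first step, the ratio bound (\ref{eqSigmaExpSnPhiBound}) of Lemma~\ref{lmSnPhiBound} immediately yields $\max_x \RR_{\overline{\phi}}^n(\mathbbm{1})(x) / \min_x \RR_{\overline{\phi}}^n(\mathbbm{1})(x) \leq C_2$ for every $n$, so only the overall scale must be fixed. I would achieve this by first constructing, via a Schauder fixed-point argument on the weak-$\ast$ compact simplex of Borel probability measures on $S^2$ (applied to $\mu \mapsto \RR_{\overline{\phi}}^\ast \mu / (\RR_{\overline{\phi}}^\ast \mu)(\mathbbm{1})$), an eigenmeasure $\nu$ for the dual $\RR_{\overline{\phi}}^\ast$ with eigenvalue $1$. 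Testing against $\nu$ gives $\int \RR_{\overline{\phi}}^n(\mathbbm{1}) \,\mathrm{d}\nu = 1$ for all $n$, which combined with the ratio bound produces the absolute bounds $C_2^{-1} \leq \RR_{\overline{\phi}}^n(\mathbbm{1}) \leq C_2$, and these transfer to the Cesàro averages $u_n$.

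For the second step, the additive distortion estimate (\ref{eqSnPhiBound}) applied to each preimage branch, together with the linear local connectivity of $(S^2,d)$ recalled in Subsection~\ref{subsctThurstonMap}, upgrades the multiplicative ratio bound into an $\alpha$-H\"older bound
\begin{equation*}
\bigl|\RR_{\overline{\phi}}^n(\mathbbm{1})(x) - \RR_{\overline{\phi}}^n(\mathbbm{1})(y)\bigr| \leq C_3 \, d(x,y)^\alpha
\end{equation*}
with a constant $C_3$ independent of $n$, depending only on $f$, $\CC$, $d$, $\phi$, and $\alpha$. Hence $\{u_n\}$ is uniformly bounded and $\alpha$-H\"older equicontinuous, and Arzel\`a--Ascoli extracts a uniformly convergent subsequence $u_{n_k} \to u_\phi$ with $u_\phi \in \Holder{\alpha}(S^2,d)$ satisfying $C_2^{-1} \leq u_\phi \leq C_2$. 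The fixed-point equation then follows from the telescoping identity
\begin{equation*}
\RR_{\overline{\phi}} u_n - u_n = \tfrac{1}{n}\bigl(\RR_{\overline{\phi}}^n(\mathbbm{1}) - \mathbbm{1}\bigr),
\end{equation*}
whose uniform norm is at most $2C_2/n \to 0$, so passing to the limit along $n_k$ and invoking continuity of $\RR_{\overline{\phi}}$ on $\CCC(S^2)$ gives $\RR_{\overline{\phi}} u_\phi = u_\phi$, which is exactly the stated eigenfunction identity.

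To promote subsequential convergence to convergence of the full sequence, note that $\int u_n \,\mathrm{d}\nu = 1$ for every $n$, so every subsequential limit is a positive $\alpha$-H\"older fixed point of $\RR_{\overline{\phi}}$ with $\nu$-integral $1$; a Perron--Frobenius-type uniqueness statement for positive H\"older eigenfunctions of $\RR_{\overline{\phi}}$, derivable from the ratio bound in Lemma~\ref{lmSnPhiBound}, forces all such limits to coincide, so the whole sequence $u_n$ converges uniformly to $u_\phi$. The main obstacle is precisely this Perron--Frobenius package, namely (a) the existence of the dual eigenmeasure $\nu$ and (b) the uniqueness of the positive H\"older eigenfunction of $\RR_{\overline{\phi}}$: in the non-uniformly expanding setting the classical smooth machinery is unavailable, and one must assemble both ingredients from the quantitative metric-combinatorial control given by Lemma~\ref{lmSnPhiBound} and the cell decompositions $\DD^n(f,\CC)$ of Subsection~\ref{subsctThurstonMap}.
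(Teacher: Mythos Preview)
The paper does not prove this theorem; it is quoted as background from \cite[Theorem~5.16]{Li18}. Your outline is the standard Ruelle--Perron--Frobenius route and matches the argument in \cite{Li18}: the uniform $L^\infty$ and H\"older bounds on $\RR_{\overline{\phi}}^n(\mathbbm{1})$ that you extract from Lemma~\ref{lmSnPhiBound} are exactly the content of \cite[Lemma~5.15]{Li18}, and the Arzel\`a--Ascoli extraction together with the telescoping identity is the mechanism used there to produce and identify $u_\phi$.

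One step deserves more care. The Schauder fixed-point argument yields an eigenmeasure $\nu$ of $\RR_{\overline{\phi}}^*$ with some positive eigenvalue $c = \int \RR_{\overline{\phi}}(\mathbbm{1})\,\mathrm{d}\nu$, and you assert $c=1$ without justification. This identification --- equivalently, that the $\RR_\phi^*$-eigenvalue equals $e^{P(f,\phi)}$ --- is a genuine additional step: from the ratio bound one first gets $C_2^{-1} c^n \leq \RR_{\overline{\phi}}^n(\mathbbm{1}) \leq C_2 c^n$, and then $\log c = 0$ must be matched against an independent characterization of $P(f,\phi)$ (for instance via $(n,\epsilon)$-separated sets as in (\ref{defTopPressure}), using that distinct preimages under $f^n$ are $(n,\epsilon)$-separated for small $\epsilon$). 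You fold this into ``the Perron--Frobenius package'' at the end, which is fair, but as written the earlier assertion of eigenvalue $1$ appears as an unproven claim rather than as part of the acknowledged obstacle.
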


\begin{lemma}  \label{lmRR^nConvToTopPressureUniform}
Let $f\:S^2 \rightarrow S^2$ be an expanding Thurston map. Let $d$ be a visual metric on $S^2$ for $f$ with expansion factor $\Lambda>1$. Let $H$ be a bounded subset of $\Holder{\alpha}(S^2,d)$ for some $\alpha \in (0,1]$. Then for each $\phi\in H$ and each $x\in S^2$, we have
\begin{equation}   \label{eqRR^nConvToTopPressure}
P(f,\phi)   = \lim_{n\to +\infty} \frac{1}{n} \log \sum_{y\in f^{-n}(x)} \deg_{f^n}(y) \exp (S_n\phi(y)).
\end{equation} 
Moreover, the convergence in (\ref{eqRR^nConvToTopPressure}) is uniform in $\phi\in H$ and $x\in S^2$.
\end{lemma}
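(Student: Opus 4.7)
The plan is to sandwich the sum $Z_n(\phi,x) \coloneqq \sum_{y \in f^{-n}(x)} \deg_{f^n}(y) e^{S_n \phi(y)}$ between two quantities proportional to $e^{nP(f,\phi)}$, using the eigenfunction supplied by Theorem~\ref{thmMuExist}. First I would fix, by Lemma~\ref{lmCexistsL}, any Jordan curve $\CC \subseteq S^2$ with $\post f \subseteq \CC$ and $f^{n_\CC}(\CC) \subseteq \CC$ for some $n_\CC \in \N$, so that both Lemma~\ref{lmSnPhiBound} and Theorem~\ref{thmMuExist} are available for the auxiliary data $(f,\CC)$. For each $\phi \in H$, let $u_\phi \in \Holder{\alpha}(S^2,d)$ be the eigenfunction from Theorem~\ref{thmMuExist}; it satisfies
\begin{equation*}
e^{-P(f,\phi)} \sum_{y \in f^{-1}(x)} \deg_f(y) u_\phi(y) e^{\phi(y)} = u_\phi(x)
\qquad\text{and}\qquad C_2^{-1} \leq u_\phi \leq C_2,
\end{equation*}
where $C_2=C_2(f,\CC,d,\phi,\alpha)\geq 1$ is the constant from Lemma~\ref{lmSnPhiBound}.

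Iterating the eigenequation $n$ times gives the exact identity
\begin{equation*}
e^{nP(f,\phi)} u_\phi(x) = \sum_{y \in f^{-n}(x)} \deg_{f^n}(y) u_\phi(y) e^{S_n \phi(y)}.
\end{equation*}
Plugging the two-sided bound $C_2^{-1}\leq u_\phi\leq C_2$ into both sides of this identity immediately yields
\begin{equation*}
C_2^{-2} e^{nP(f,\phi)} \leq Z_n(\phi,x) \leq C_2^{2} e^{nP(f,\phi)},
\end{equation*}
and therefore $\bigl| \frac{1}{n} \log Z_n(\phi,x) - P(f,\phi) \bigr| \leq \frac{2 \log C_2}{n}$. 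This already gives pointwise convergence and, crucially, an error bound that depends on $\phi$ only through $C_2$.

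The last step is to promote this to uniform convergence. Inspecting the explicit expressions
\begin{equation*}
C_2 = \exp\bigl(4 C_1 L (\diam_d(S^2))^\alpha\bigr), \qquad C_1 = \Hseminorm{\alpha}{(S^2,d)}{\phi}\, C_0\, (1-\Lambda^{-\alpha})^{-1},
\end{equation*}
the dependence on $\phi$ enters only through the H\"older seminorm $\Hseminorm{\alpha}{(S^2,d)}{\phi}$. Since $H$ is bounded in $\Holder{\alpha}(S^2,d)$, one obtains a single constant $\overline{C}_2\geq 1$ dominating $C_2$ for every $\phi \in H$, and the error $2\log C_2/n$ is then dominated by $2 \log \overline{C}_2 / n$, which tends to $0$ independently of $\phi \in H$ and $x \in S^2$. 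I do not anticipate a genuine obstacle: once Theorem~\ref{thmMuExist} and the uniform distortion estimate of Lemma~\ref{lmSnPhiBound} are in hand, the lemma is essentially a corollary, and the only care needed is the routine verification that the constant $C_2$ is controlled uniformly by the H\"older norm bound defining $H$.
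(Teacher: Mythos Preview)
Your argument is correct and in fact more elementary than the paper's. The paper invokes the heavier convergence statement from \cite[Theorem~6.8 and Corollary~6.10]{Li18}, namely that $e^{-nP(f,\phi)} Z_n(\phi,x)$ converges to $u_\phi(x)$ uniformly in $\phi\in H$ and $x\in S^2$, and then uses the two-sided bound $C_2^{-1}\leq u_\phi\leq C_2$ only to conclude that the limit is bounded away from $0$ and $\infty$. You bypass that convergence result entirely: once Theorem~\ref{thmMuExist} hands you the eigenfunction $u_\phi$ with the bounds $C_2^{-1}\leq u_\phi\leq C_2$, iterating the eigenequation (which is immediate from the multiplicativity of local degree (\ref{eqLocalDegreeProduct}) and the definition of $S_n\phi$) gives the exact identity $e^{nP(f,\phi)} u_\phi(x) = \sum_{y\in f^{-n}(x)} \deg_{f^n}(y) u_\phi(y) e^{S_n\phi(y)}$, and the sandwich is automatic. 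Your route is shorter, stays within results already quoted in the paper, and even yields the explicit rate $\bigl|\tfrac{1}{n}\log Z_n(\phi,x) - P(f,\phi)\bigr|\leq \tfrac{2\log C_2}{n}$, which the paper's argument does not make explicit. The final uniformity step---bounding $C_2$ over $H$ via the explicit formula (\ref{eqC1C2})---is the same in both approaches.
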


\begin{proof}
Note that $e^{-nP(f,\phi)}  \sum_{y\in f^{-n}(x)} \deg_{f^n}(y) \exp (S_n\phi(y))$ converges to $u_\phi(x)$ uniformly in $\phi\in H$ and $x\in S^2$ as $n\to +\infty$ by \cite[Theorem~6.8 and Corollary~6.10]{Li18}. By Theorem~\ref{thmMuExist}, the function $u_\phi$ is continuous and $C_2^{-1} \leq u_\phi(x)\leq C_2$ for the same constant $C_2>1$ from Lemma~\ref{lmSnPhiBound}. Note that $C_2$ depends only on $f$, $\CC$, $d$, $\phi$, and $\alpha$, but it is bounded on $H$ by (\ref{eqC1C2}). Therefore, the difference between the two sides of (\ref{eqRR^nConvToTopPressure}) converges to $0$ uniformly in $\phi\in H$ and $x\in S^2$ as $n\to+\infty$. 
\end{proof}

Another characterization of the topological pressure in our context analogous to (\ref{eqRR^nConvToTopPressure}), but in terms of periodic points, was obtained in \cite[Proposition~6.8]{Li15}. We record it below.

\begin{prop}[Z.~Li \cite{Li15}]    \label{propTopPressureDefPeriodicPts}
Let $f\:S^2 \rightarrow S^2$ be an expanding Thurston map and $d$ be a visual metric on $S^2$ for $f$ with expansion factor $\Lambda>1$. Let $\phi\in \Holder{\alpha}(S^2,d)$ be a real-valued function. Fix an arbitrary sequence of functions $\{ w_n : S^2\rightarrow \R\}_{n\in\N}$ satisfying $w_n(y) \in [1,\deg_{f^n}(y)]$ for each $n\in\N$ and each $y\in S^2$. Then
\begin{equation}  \label{eqTopPressureDefPrePeriodPts}
P(f,\phi)  = \lim_{n\to +\infty} \frac{1}{n} \log \sum_{y\in P_{1,f^n}} w_n(y) \exp (S_n\phi(y)).
\end{equation}  
\end{prop}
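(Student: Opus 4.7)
The plan is to apply a sandwich estimate. Since $1 \leq w_n(y) \leq \deg_{f^n}(y)$,
\[
\sum_{y\in P_{1,f^n}} e^{S_n\phi(y)} \leq \sum_{y\in P_{1,f^n}} w_n(y) e^{S_n\phi(y)} \leq \sum_{y\in P_{1,f^n}} \deg_{f^n}(y) e^{S_n\phi(y)},
\]
so it suffices to show that the $\tfrac{1}{n}\log$ of both extreme sums converges to $P(f,\phi)$. I will compare both extremes to the preimage sum $Z_n(x) \coloneqq \sum_{y \in f^{-n}(x)} \deg_{f^n}(y) e^{S_n\phi(y)}$, for which $\tfrac{1}{n}\log Z_n(x) \to P(f,\phi)$ uniformly in $x$ by Lemma~\ref{lmRR^nConvToTopPressureUniform}.

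For the setup, by Lemma~\ref{lmCexistsL} I fix $N \in \N$ and a Jordan curve $\CC \supseteq \post f$ with $f^N(\CC) \subseteq \CC$ and no $N$-tile in $\X^N(f,\CC)$ joining opposite sides of $\CC$. Restricting first to $n \in N\N$, $\CC$ is $f^n$-invariant and by Proposition~\ref{propCellDecomp}~(vii) $\DD^n(f,\CC) = \DD^1(f^n,\CC)$ is a cellular Markov partition for $f^n$; since $\DD^n$ refines $\DD^N$, no $n$-tile joins opposite sides of $\CC$, so each $n$-tile contains at most one fixed point of $f^n$ (this is a standard consequence, cf.~\cite{Li16}). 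Pick reference points $x_0^\b \in \inte(X_\b^0)$ and $x_0^\w \in \inte(X_\w^0)$, and for each $X \in \X^n(f,\CC)$ let $\tilde z_X \in \inte(X)$ be the unique preimage under $f^n|_X$ of the matching-color reference point; then $\deg_{f^n}(\tilde z_X) = 1$, and
\[
Z_n(x_0^\b) + Z_n(x_0^\w) = \sum_{X \in \X^n(f,\CC)} e^{S_n\phi(\tilde z_X)}.
\]

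For the two-sided bound via distortion: the estimate (\ref{eqSnPhiBound}) applied to $y, \tilde z_X$ in the same $n$-tile $X$ (using that $f^n(y) = y$ and $f^n(\tilde z_X)$ both lie in the $0$-tile $f^n(X)$) gives $|S_n\phi(y) - S_n\phi(\tilde z_X)| \leq C_1 \diam_d(S^2)^\alpha$. For the upper bound, I sum over tiles using that any fixed point $y$ with $\deg_{f^n}(y) = k$ lies in exactly $2k$ tiles of its flower $\overline W^n(y)$ (Remark~\ref{rmFlower})—automatically absorbing the weight $\deg_{f^n}(y)$—to obtain
\[
\sum_{y \in P_{1,f^n}} \deg_{f^n}(y) e^{S_n\phi(y)} \leq \sum_{X \in \X^n(f,\CC)} e^{S_n\phi(y_X)} \leq C\bigl(Z_n(x_0^\b) + Z_n(x_0^\w)\bigr),
\]
where $y_X$ denotes the unique fixed point in $X$ when it exists. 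For the lower bound, Lemma~\ref{lmAtLeast1} applied to $f^n$ (viewed as an expanding Thurston map with $f^n$-invariant curve $\CC$) produces at least one fixed point of $f^n$ in each color-preserving $n$-tile, so
\[
\sum_{y \in P_{1,f^n}} e^{S_n\phi(y)} \geq C^{-1} \sum_{X \text{ color-preserving}} e^{S_n\phi(\tilde z_X)}.
\]
Applying Lemma~\ref{lmRR^nConvToTopPressureUniform} to both inequalities yields the limit $P(f,\phi)$ along $n \in N\N$, and the result for general $n$ follows by a bounded-gap argument, controlling the at-most-$N$ extra iterates by $N\|\phi\|_{\CCC^0(S^2)}$.

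The main obstacle is that the lower bound restricts to color-preserving $n$-tiles, while the full preimage sum $Z_n(x_0^\b) + Z_n(x_0^\w)$ runs over all $n$-tiles; to obtain matching $\tfrac{1}{n}\log$ rates, I must verify that the color-preserving sub-sum is a positive constant fraction of the total, uniformly in $n$. This is where the uniform distortion (\ref{eqSigmaExpSnPhiBound})—controlling ratios of terms across $n$-tiles—combines with the irreducibility of the cellular Markov structure, guaranteed by the existence (for every $n$, since $\CC$ is $f^n$-invariant) of at least one color-preserving $n$-tile of each color inside each $0$-tile, to show that the restricted and full tile sums agree in exponential growth rate.
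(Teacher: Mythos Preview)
The paper does not provide its own proof of this proposition; it simply records the statement and cites \cite[Proposition~6.8]{Li15}. So I will evaluate your argument on its own merits.

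Your overall strategy---sandwiching between the unweighted and degree-weighted periodic-point sums and comparing both to the preimage sums $Z_n(x)$---is sound, and your upper bound is clean: since a fixed point $y$ lies in exactly $2\deg_{f^n}(y)$ many $n$-tiles of its flower, summing tile-by-tile genuinely absorbs the factor $\deg_{f^n}(y)$, giving $\sum_y \deg_{f^n}(y)e^{S_n\phi(y)} \leq C\bigl(Z_n(x_0^\b)+Z_n(x_0^\w)\bigr)$.

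The gap is in the lower bound. From ``each color-preserving $n$-tile contains a fixed point'' you infer $\sum_{y\in P_{1,f^n}} e^{S_n\phi(y)} \geq C^{-1}\sum_{X \text{ c.p.}} e^{S_n\phi(\tilde z_X)}$, but this does not follow: the assignment $X\mapsto y_X$ can be highly non-injective. A periodic postcritical point $y\in\post f$ with $\deg_{f^n}(y)=k$ lies in $2k$ tiles, so it is counted once on the left but potentially $2k$ times on the right. Since $k$ can grow like a fixed power of $(\deg f)^{n/p}$ along a periodic critical cycle of length $p$, this over-counting factor is not uniformly bounded. Your ``main obstacle'' paragraph addresses a different (and real) issue---that color-preserving tiles form a positive fraction of all tiles---but does not touch this multiplicity problem; moreover, the distortion estimate (\ref{eqSigmaExpSnPhiBound}) bounds ratios of \emph{preimage sums} $Z_n(z)/Z_n(w)$, not ratios of individual tile contributions, so it does not directly compare terms across tiles as you suggest.

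A fix requires separating the at-most-$\card(\post f)$ periodic postcritical points from the rest: for fixed points $y\notin\post f$ one has $\deg_{f^n}(y)=1$ by Lemma~\ref{lmPeriodicPtsLocationCases}, so those lie in at most two tiles and the over-counting is harmless; one must then show the tiles whose distinguished fixed point lies in $\post f$ carry a negligible share of the color-preserving tile sum. This last step needs an input such as $\sum_{y\in\post f\cap P_{1,f^n}}\deg_{f^n}(y)=o\bigl((\deg f)^n\bigr)$ (cf.\ \cite[Lemma~5.11]{Li16}, used elsewhere in this paper), together with a comparison of the excluded tiles' contribution against the full sum---which is not entirely routine since individual terms $e^{S_n\phi(\tilde z_X)}$ are not mutually comparable.
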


\smallskip

The potentials that satisfy the following property are of particular interest in the considerations of Prime Orbit Theorems and the analytic study of dynamical zeta functions.

\begin{definition}[Eventually positive functions]  \label{defEventuallyPositive}
Let $g\: X\rightarrow X$ be a map on a set $X$, and $\varphi\:X\rightarrow\C$ be a complex-valued function on $X$. Then $\varphi$ is \defn{eventually positive} if there exists $N\in\N$ such that $S_n\varphi(x)>0$ for each $x\in X$ and each $n\in\N$ with $n\geq N$.
\end{definition}

\begin{lemma}  \label{lmSnPhiHolder}
Let $f\: S^2\rightarrow S^2$ be an expanding Thurston map and $d$ be a visual metric on $S^2$ for $f$. If $\psi \in \Holder{\alpha}((S^2,d),\C)$ is a complex-valued H\"{o}lder continuous function with an exponent $\alpha\in(0,1]$, then $S_n\psi$ also satisfies $S_n\psi \in \Holder{\alpha}((S^2,d),\C)$ for each $n\in\N$.
\end{lemma}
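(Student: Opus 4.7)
The plan is to reduce the lemma to two standard facts: that $f$ itself is Lipschitz with respect to $d$, and that finite sums of $\alpha$-H\"older functions are $\alpha$-H\"older. The Lipschitz property of $f$ is exactly Theorem~\ref{thmETMBasicProperties}~(i), which comes essentially from the scaling estimates in Lemma~\ref{lmCellBoundsBM}. Once we have a Lipschitz constant $L \geq 1$ with $d(f(x),f(y)) \leq L\, d(x,y)$ for all $x,y\in S^2$, iteration gives $d(f^j(x),f^j(y)) \leq L^j d(x,y)$ for every $j\in\N_0$.

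Then, for each $j \in \{0,1,\dots,n-1\}$, using that $\psi \in \Holder{\alpha}((S^2,d),\C)$,
\begin{equation*}
\abs{\psi(f^j(x)) - \psi(f^j(y))} \leq \Hseminorm{\alpha}{(S^2,d)}{\psi} \, d(f^j(x),f^j(y))^\alpha \leq \Hseminorm{\alpha}{(S^2,d)}{\psi} \, L^{j\alpha}\, d(x,y)^\alpha.
\end{equation*}
Summing this bound over $j = 0, 1, \dots, n-1$ and using the triangle inequality yields
\begin{equation*}
\abs{S_n\psi(x) - S_n\psi(y)} \leq \Hseminorm{\alpha}{(S^2,d)}{\psi} \Biggl(\sum_{j=0}^{n-1} L^{j\alpha}\Biggr) d(x,y)^\alpha,
\end{equation*}
so $\Hseminorm{\alpha}{(S^2,d)}{S_n\psi} \leq \Hseminorm{\alpha}{(S^2,d)}{\psi} \sum_{j=0}^{n-1} L^{j\alpha} < +\infty$. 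Combined with the trivial bound $\Norm{S_n\psi}_{\CCC^0(S^2)} \leq n \Norm{\psi}_{\CCC^0(S^2)}$, this gives $S_n\psi \in \Holder{\alpha}((S^2,d),\C)$.

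There is no real obstacle here; the only point to remark is that $\psi$ is complex-valued, but the inequalities above are formulated in terms of moduli and the H\"older seminorm and norm in (\ref{eqDef|.|alpha}) and (\ref{eqDefHolderNorm}) are defined identically for real- and complex-valued functions, so no splitting into real and imaginary parts is necessary. The argument is uniform in $n$ in the sense of producing an explicit $n$-dependent bound on $\Hnorm{\alpha}{S_n\psi}{(S^2,d)}$, though the statement of the lemma only claims membership in $\Holder{\alpha}((S^2,d),\C)$.
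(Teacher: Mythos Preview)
Your proof is correct and follows essentially the same approach as the paper: both invoke Theorem~\ref{thmETMBasicProperties}~(i) to get that $f$ is Lipschitz with respect to $d$, deduce that each $f^j$ is Lipschitz and hence each $\psi\circ f^j$ is $\alpha$-H\"older, and conclude that the finite sum $S_n\psi$ is $\alpha$-H\"older. Your version is simply more explicit about the constants, while the paper states the argument in two sentences.
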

\begin{proof}
Since $f$ is Lipschitz with respect to $d$ by Theorem~\ref{thmETMBasicProperties}~(i), so is $f^i$ for each $i\in\N$. Then $\psi\circ f^i \in \Holder{\alpha}((S^2,d),\C)$ for each $i\in\N$. Thus, by (\ref{eqDefSnPt}), $S_n\psi  \in \Holder{\alpha}((S^2,d),\C)$.
\end{proof}

Theorem~\ref{thmEquilibriumState}~(ii) leads to the following corollary that we frequently use, often implicitly, throughout this paper.

\begin{cor}   \label{corS0unique}
Let $f\:S^2 \rightarrow S^2$ be an expanding Thurston map, and $d$ be a visual metric on $S^2$ for $f$. Let $\phi \in \Holder{\alpha}(S^2,d)$ be an eventually positive real-valued H\"{o}lder continuous function with an exponent $\alpha\in(0,1]$. Then the function $t\mapsto P(f,-t\phi)$, $t\in\R$, is strictly decreasing and there exists a unique number $s_0 \in\R$ such that $P(f,-s_0\phi)=0$. Moreover, $s_0>0$.
\end{cor}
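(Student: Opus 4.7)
The plan is to leverage Theorem~\ref{thmEquilibriumState}~(ii) applied to the one-parameter family $t\mapsto -t\phi$. That result (taking the base potential to be $0$ and the perturbation direction to be $-\phi$) yields
\begin{equation*}
\frac{\mathrm{d}}{\mathrm{d}t}P(f,-t\phi) \,=\, -\!\int\! \phi \,\mathrm{d}\mu_{-t\phi}, \qquad t\in\R.
\end{equation*}
The first step I would take is to promote the eventual positivity of $\phi$ into a \emph{uniform} positive lower bound for $\int\!\phi\,\mathrm{d}\mu$ over all $\mu\in\MMM(S^2,f)$. Pick $N\in\N$ with $S_N\phi>0$ everywhere on $S^2$; since $S_N\phi$ is continuous on the compact space $S^2$, the quantity $c\coloneqq \frac{1}{N}\min_{x\in S^2}S_N\phi(x)$ is strictly positive. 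Invariance of $\mu$ then gives
\begin{equation*}
\int\!\phi\,\mathrm{d}\mu \,=\, \frac{1}{N}\!\int\! S_N\phi\,\mathrm{d}\mu \,\geq\, c \,>\,0
\end{equation*}
for every $\mu\in\MMM(S^2,f)$, and in particular for $\mu_{-t\phi}$. Hence the derivative displayed above is $\leq -c<0$ for every $t\in\R$, which proves the strict monotonicity of $t\mapsto P(f,-t\phi)$.

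For the existence and positivity of $s_0$, I would combine this derivative bound with the continuity of the topological pressure $P(f,\cdot)$ in the uniform norm on $\CCC(S^2)$ (a general fact, see e.g.\ \cite[Theorem~3.6.1]{PU10}); this makes $t\mapsto P(f,-t\phi)$ continuous on $\R$. At $t=0$ we have $P(f,0)=h_{\operatorname{top}}(f)>0$, since any expanding Thurston map has $\deg f\geq 2$ and topological entropy $\log\deg f$ (see \cite[Chapter~20]{BM17}). Integrating the uniform derivative bound $-c$ yields $P(f,-t\phi)\leq h_{\operatorname{top}}(f)-ct$ for all $t\geq 0$, so $P(f,-t\phi)\to -\infty$ as $t\to+\infty$. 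By the intermediate value theorem a zero exists, and by strict monotonicity it is unique; since the sign passes from positive at $t=0$ to negative for large $t$, the unique zero $s_0$ lies in $(0,+\infty)$.

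The only mildly delicate point is extracting the uniform positive lower bound $c$ on $\int\!\phi\,\mathrm{d}\mu$ from the qualitative hypothesis that $\phi$ is eventually positive; once this is in hand, the argument is a routine application of the variational-pressure machinery already recorded in Theorem~\ref{thmEquilibriumState}. I do not anticipate any serious obstacle beyond this step.
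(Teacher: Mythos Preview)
Your proposal is correct and follows essentially the same approach as the paper: both use Theorem~\ref{thmEquilibriumState}~(ii) for the derivative formula, the averaging trick $\int\phi\,\mathrm{d}\mu=\tfrac1N\int S_N\phi\,\mathrm{d}\mu$ to convert eventual positivity into a uniform positive lower bound, and $P(f,0)=\log(\deg f)>0$ together with continuity of pressure. The only minor variation is that you obtain $P(f,-t\phi)\to-\infty$ by integrating the uniform derivative bound $-c$, whereas the paper instead computes directly via Lemma~\ref{lmRR^nConvToTopPressureUniform} that $P(f,-t\phi)\leq \log(\deg f)-tA/m<0$ for large $t$; your route is slightly cleaner since it reuses the derivative estimate rather than invoking a separate pressure formula.
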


\begin{proof}
By Definition~\ref{defEventuallyPositive}, we can choose $m\in\N$ such that $\Phi \coloneqq S_m \phi$ is strictly positive. Denote $A \coloneqq \inf \{ \Phi(x) : x\in S^2 \} > 0$. Then by Theorem~\ref{thmEquilibriumState}~(ii) and the fact that the equilibrium state $\mu_{\minus t\phi}$ for $f$ and $-t\phi$ is an $f$-invariant probability measure (see Theorem~\ref{thmEquilibriumState}~(i) and Subsection~\ref{subsctThermodynFormalism}), we have that for each $t\in\R$,
\begin{equation}   \label{eqPfcorS0unique}
    \frac{\mathrm{d}}{\mathrm{d}t} P(f,- t\phi) 
= - \int \!\phi \,\mathrm{d}\mu_{\minus t\phi} 
= - \frac{1}{m} \int \! S_m \phi \,\mathrm{d}\mu_{\minus t\phi} 
< 0.
\end{equation}   
By Lemma~\ref{lmRR^nConvToTopPressureUniform}, (\ref{eqDeg=SumLocalDegree}), and (\ref{eqLocalDegreeProduct}), for each $t\in\R$ sufficiently large, we have
\begin{align*}
P(f, - t \phi) & =   \lim_{n\to+\infty} \frac{1}{mn} \log \sum_{y\in f^{-mn}(x)} \deg_{f^{mn}}(y)
                                \exp \biggl( -t \sum_{i=0}^{n-1} \bigl(\Phi\circ f^{mi} \bigr) (y) \biggr) \\
               & \leq \lim_{n\to+\infty} \frac{1}{mn} \log  ((\deg f)^{mn} \exp  ( -t n A ) )\\
                &=    \frac{1}{m} \log ( (\deg f )^m \exp ( -t A ) )\\
               &<0.
\end{align*}
Since the topological entropy $h_{\operatorname{top}} (f) = P(f,0) = \log (\deg f)>0$ (see \cite[Corollary~17.2]{BM17}), the corollary follows immediately from (\ref{eqPfcorS0unique}) and the fact that $P(f,\cdot) \: \CCC(S^2)\rightarrow \R$ is continuous (see for example, \cite[Theorem~3.6.1]{PU10}).
\end{proof}

\subsection{Symbolic dynamics for expanding Thurston maps}   \label{subsctSFT}

We give a brief review of the dynamics of one-sided subshifts of finite type in this subsection. We refer the readers to \cite{Ki98} for a beautiful introduction to symbolic dynamics. For a discussion on results on subshifts of finite type in our context, see \cite{PP90, Bal00}.

Let $S$ be a finite nonempty set, and $A \: S\times S \rightarrow \{0, \, 1\}$ be a matrix whose entries are either $0$ or $1$. For $n\in\N_0$, we denote by $A^n$ the usual matrix product of $n$ copies of $A$. We denote the \defn{set of admissible sequences defined by $A$} by
\begin{equation*}
\Sigma_A^+ = \{ \{x_i\}_{i\in\N_0} : x_i \in S, \, A(x_i,x_{i+1})=1  \text{ for each } i\in\N_0\}.
\end{equation*}
Given $\theta\in(0,1)$, we equip the set $\Sigma_A^+$ with a metric $d_\theta$ given by $d_\theta(\{x_i\}_{i\in\N_0},\{y_i\}_{i\in\N_0})=\theta^N$ for $\{x_i\}_{i\in\N_0} \neq \{y_i\}_{i\in\N_0}$, where $N$ is the smallest integer with $x_N \neq y_N$. The topology on the metric space $\left(\Sigma_A^+,d_\theta \right)$ coincides with that induced from the product topology, and is therefore compact.

The \defn{left-shift operator} $\sigma_A \: \Sigma_A^+ \rightarrow \Sigma_A^+$ (defined by $A$) is given by
\begin{equation*}
\sigma_A ( \{x_i\}_{i\in\N_0} ) = \{x_{i+1}\}_{i\in\N_0}  \qquad \text{for } \{x_i\}_{i\in\N_0} \in \Sigma_A^+.
\end{equation*}

The pair $\left(\Sigma_A^+, \sigma_A\right)$ is called the \defn{one-sided subshift of finite type} defined by $A$. The set $S$ is called the \defn{set of states} and the matrix $A\: S\times S \rightarrow \{0, \, 1\}$ is called the \defn{transition matrix}.

We say that a one-sided subshift of finite type $\bigl(\Sigma_A^+, \sigma_A \bigr)$ is \defn{topologically mixing} if there exists $N\in\N$ such that $A^n(x,y)>0$ for each $n\geq N$ and each pair of $x, \, y\in S$.

Let $X$ and $Y$ be topological spaces, and $f\:X\rightarrow X$ and $g\:Y\rightarrow Y$ be continuous maps. We say that the topological dynamical system $(X,f)$ is a \defn{factor} of the topological dynamical system $(Y,g)$ if there is a surjective continuous map $\pi\:Y\rightarrow X$ such that $\pi\circ g=f\circ\pi$. We call the map $\pi\: Y\rightarrow X$ a \emph{factor map}.

\begin{comment}

We get the following commutative diagram:
\begin{equation*}
\xymatrix{  Y  \ar[d]_\pi \ar[r]^{g  }  & Y   \ar[d]^\pi \\ 
            X  \ar[r]_{f}               & X.}
\end{equation*}
It follows immediately that $\pi\circ g^n = f^n \circ \pi$ for each $n\in\N$.

\end{comment}

\smallskip

We will now consider a one-sided subshift of finite type associated with an expanding Thurston map and an invariant Jordan curve on $S^2$ containing $\post f$. The construction of other related symbolic systems will be postponed to Section~\ref{sctDynOnC}. We will need the following lemma in the construction of these symbolic systems. Here, as well as in Section~\ref{sctDynOnC} in a similar fashion, the elements of the symbolic space are in bijective correspondences with descending chains $A^1 \supseteq \cdots \supseteq A^{n-1} \supseteq A^n \supseteq \cdots $, where $A^i$ is an $i$-tile (resp.~$i$-edge). One codes these chains by a sequence $\{B_i\}_{i\in\N_0}$ of $1$-tiles (resp.~$1$-edges) by setting $B_i \coloneqq f^i (A^{i+1})$, $i\in\N_0$.

\begin{lemma}   \label{lmCylinderIsTile}
Let $f\: S^2 \rightarrow S^2$ be an expanding Thurston map with a Jordan curve $\CC\subseteq S^2$ satisfying $f(\CC)\subseteq \CC$ and $\post f\subseteq \CC$. Let $\{X_i\}_{i\in\N_0}$ be a sequence of $1$-tiles in $\X^1(f,\CC)$ satisfying $f(X_i)\supseteq X_{i+1}$ for all $i\in\N_0$. Let $\{e_j\}_{j\in\N_0}$ be a sequence of $1$-edges in $\E^1(f,\CC)$ satisfying $f(e_j)\supseteq e_{j+1}$ for all $j\in\N_0$. Then for each $n\in\N$, we have
\begin{align}  
\bigl( (f|_{X_0})^{-1} \circ (f|_{X_1})^{-1} \circ \cdots \circ (f|_{X_{n-2}})^{-1} \bigr) (X_{n-1}) &=  \bigcap_{i=0}^{n-1}  f^{-i} (X_i) \in \X^n(f,\CC), \label{eqCylinderIsTile}\\
\bigl( (f|_{e_0})^{-1} \circ (f|_{e_1})^{-1} \circ \cdots \circ (f|_{e_{n-2}})^{-1} \bigr) (e_{n-1}) &=\bigcap_{j=0}^{n-1}  f^{-j} (e_j) \in \E^n(f,\CC). \label{eqCylinderIsEdge}
\end{align}
Moreover, both $\bigcap_{i\in\N_0} f^{-i}(X_i)$ and $\bigcap_{j\in\N_0} f^{-j}(e_j)$ are singleton sets.
\end{lemma}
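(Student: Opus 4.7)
The plan is to establish both equalities by induction, using the Markov structure: by Proposition~\ref{propCellDecomp}~(i), $f$ is cellular for $\bigl(\DD^{k+1}(f,\CC), \DD^{k}(f,\CC)\bigr)$ for each $k \in \N_0$. The key auxiliary fact I will need is the following: if $X \in \X^1(f,\CC)$ and $Y \subseteq f(X)$ is a $k$-cell of dimension $d \in \{1, 2\}$, then $(f|_{X})^{-1}(Y)$ is a $(k+1)$-cell of dimension $d$ contained in $X$. Indeed, the $(k+1)$-cells contained in $X$ form a cell decomposition of $X$, $f|_{X} \: X \rightarrow f(X)$ is a homeomorphism, and cellularity identifies this decomposition with the cell decomposition of $f(X)$ by the $k$-cells contained in $f(X)$; inverting the resulting bijection yields the claim.

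Define $Y_{n,k} \coloneqq \bigl((f|_{X_{k}})^{-1} \circ \cdots \circ (f|_{X_{n-2}})^{-1}\bigr)(X_{n-1})$ for $0 \leq k \leq n-1$ with the convention $Y_{n,n-1} \coloneqq X_{n-1}$. By reverse induction on $k$ (from $k = n-1$ down to $k = 0$), using at each step that $Y_{n,k+1} \subseteq X_{k+1} \subseteq f(X_{k})$ (via the hypothesis $f(X_{k}) \supseteq X_{k+1}$), the auxiliary fact yields that $Y_{n,k} = (f|_{X_{k}})^{-1}(Y_{n,k+1})$ is an $(n-k)$-tile contained in $X_{k}$; in particular $Y_{n,0}$, which is the left-hand side of (\ref{eqCylinderIsTile}), is an $n$-tile. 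For the equality $Y_{n,0} = \bigcap_{i=0}^{n-1} f^{-i}(X_{i})$: the inclusion ``$\subseteq$'' follows from $f^{i}(Y_{n,0}) = Y_{n,i} \subseteq X_{i}$. Conversely, if $x \in \bigcap_{i=0}^{n-1} f^{-i}(X_{i})$, then for $0 \leq i \leq n-2$ one has $f^{i}(x) \in X_{i}$ and $f^{i+1}(x) \in X_{i+1} \subseteq f(X_{i})$, so $f^{i}(x) = (f|_{X_{i}})^{-1}(f^{i+1}(x))$; iterating from $i = n-2$ down to $i = 0$ gives $x \in Y_{n,0}$. The edge case (\ref{eqCylinderIsEdge}) is proved verbatim, since the auxiliary fact applies equally to cells of dimension $1$.

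For the singleton claim, $\bigl\{ \bigcap_{i=0}^{n-1} f^{-i}(X_{i}) \bigr\}_{n \in \N}$ is a nested decreasing sequence of $n$-tiles, in particular of nonempty compact sets; by Lemma~\ref{lmCellBoundsBM}~(ii), applied to any visual metric $d$ for $f$ with expansion factor $\Lambda > 1$, their diameters are bounded by $C \Lambda^{-n} \to 0$, so the intersection is a singleton. The identical argument handles the intersection of $n$-edges via Lemma~\ref{lmCellBoundsBM}~(ii). I do not anticipate a serious obstacle; the main subtlety is the auxiliary fact above, which rests on the standard refinement property that every $1$-tile is tiled by the $(k+1)$-cells it contains, combined with the cellularity of $f$.
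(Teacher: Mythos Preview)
Your proof is correct and follows essentially the same approach as the paper's: both arguments use induction together with the cellularity of $f$ (Proposition~\ref{propCellDecomp}~(i), (ii)) to show that pulling back a $k$-cell along $(f|_{X_i})^{-1}$ yields a $(k+1)$-cell, and both conclude the singleton claim via Lemma~\ref{lmCellBoundsBM}~(ii) applied to the nested sequence of shrinking $n$-cells. The only organizational difference is that the paper does forward induction on $n$ by shifting the admissible sequence $\{X_i\}$ to $\{X_{i+1}\}$ and then pulling back once more through $X_0$, whereas you fix $n$ and perform reverse induction on $k$ in your auxiliary sets $Y_{n,k}$; these are two ways of packaging the same recursion.
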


The part of this lemma for tiles can be found in \cite[Lemma~3.24]{LZha23}, and the proof for the part for edges is verbatim the same.

A part of the following proposition is established in \cite[Proposition~3.25]{LZha23}. We need a stronger version.

\begin{prop}   \label{propTileSFT}
Let $f\: S^2 \rightarrow S^2$ be an expanding Thurston map with a Jordan curve $\CC\subseteq S^2$ satisfying $f(\CC)\subseteq \CC$ and $\post f\subseteq \CC$. Let $d$ be a visual metric on $S^2$ for $f$ with expansion factor $\Lambda>1$. Fix $\theta\in(0,1)$. We set $S_{\ti} \coloneqq \X^1(f,\CC)$, and define a transition matrix $A_{\ti}\: S_{\ti}\times S_{\ti} \rightarrow \{0, \, 1\}$ by
\begin{equation*}
A_{\ti}(X,X') = \begin{cases} 1 & \text{if } f(X)\supseteq X', \\ 0  & \text{otherwise}  \end{cases}
\end{equation*}
for $X, \, X'\in \X^1(f,\CC)$. Then $f$ is a factor of the one-sided subshift of finite type $\bigl(\Sigma_{A_{\ti}}^+, \sigma_{A_{\ti}}\bigr)$ defined by the transition matrix $A_{\ti}$ with a surjective and H\"{o}lder continuous factor map $\pi_{\ti}\: \Sigma_{A_{\ti}}^+ \rightarrow S^2$ given by 
\begin{equation}   \label{eqDefTileSFTFactorMap}
\pi_{\ti} \left( \{X_i\}_{i\in\N_0} \right)= x,  \text{ where } \{x\} = \bigcap_{i \in \N_0} f^{-i} (X_i).
\end{equation}
Here $\Sigma_{A_{\ti}}^+$ is equipped with the metric $d_\theta$ defined in Subsection~\ref{subsctSFT}, and $S^2$ is equipped with the visual metric $d$.

Moreover, $\bigl(\Sigma_{A_{\ti}}^+, \sigma_{A_{\ti}} \bigr)$ is topologically mixing and $\pi_{\ti}$ is injective on $\pi_{\ti}^{-1} \bigl( S^2 \setminus \bigcup_{i\in\N_0} f^{-i}(\CC) \bigr)$.
\end{prop}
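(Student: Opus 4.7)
The plan is to verify each assertion in turn, with topological mixing being the main work. Well-definedness of $\pi_{\ti}$ follows immediately from Lemma~\ref{lmCylinderIsTile} and Lemma~\ref{lmCellBoundsBM}~(ii): for each admissible sequence $\{X_i\}\in\Sigma_{A_{\ti}}^+$, the set $\bigcap_{i\in\N_0}f^{-i}(X_i)$ is a nested intersection of $(n+1)$-tiles $\bigcap_{i=0}^{n}f^{-i}(X_i)\in\X^{n+1}(f,\CC)$ whose diameters are bounded by $C\Lambda^{-(n+1)}$ and hence tend to zero. The semi-conjugacy $\pi_{\ti}\circ\sigma_{A_{\ti}}=f\circ\pi_{\ti}$ is immediate from the definition, since shifting the sequence by one drops the constraint $x\in X_0$ while preserving the relations $f^{i}(f(x))=f^{i+1}(x)\in X_{i+1}$ that pin down $f(x)=\pi_{\ti}(\sigma_{A_{\ti}}(\{X_i\}))$.

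For surjectivity, I will construct for each $x\in S^2$ an admissible sequence $\{X_i\}$ inductively: choose $X_0\in\X^1(f,\CC)$ to be any $1$-tile containing $x$, and given $X_i$ with $f^i(x)\in X_i$, select $X_{i+1}$ to be any $1$-tile contained in the $0$-tile $f(X_i)$ and containing $f^{i+1}(x)$. Such an $X_{i+1}$ exists because the $1$-tiles contained in $f(X_i)$ form a cover of $f(X_i)$ by the refinement property in Proposition~\ref{propCellDecomp}. Then $f^i(x)\in X_i$ for all $i$ gives $\pi_{\ti}(\{X_i\})=x$. For H\"{o}lder continuity, I will observe that if $\{X_i\},\{Y_i\}$ first disagree at index $N$, then both $\pi_{\ti}(\{X_i\})$ and $\pi_{\ti}(\{Y_i\})$ lie in the common $N$-tile $\bigcap_{i=0}^{N-1}f^{-i}(X_i)\in\X^N(f,\CC)$ of diameter at most $C\Lambda^{-N}$; setting $\alpha\coloneqq \log\Lambda/\log(1/\theta)>0$ yields $d(\pi_{\ti}(\{X_i\}),\pi_{\ti}(\{Y_i\}))\leq C\,d_\theta(\{X_i\},\{Y_i\})^\alpha$.

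The main obstacle will be topological mixing. The idea is to reduce it to a statement purely about $f$. Unwinding the definitions via Lemma~\ref{lmCylinderIsTile}, the condition $A_{\ti}^{n}(X,X')>0$ is equivalent to the existence of an $(n+1)$-tile $Y\in\X^{n+1}(f,\CC)$ with $Y\subseteq X$ and $f^n(Y)=X'$. Since the $(n+1)$-tiles contained in $X$ cover $X$ and each is carried by $f^n$ onto a $1$-tile by Proposition~\ref{propCellDecomp}~(i), their $f^n$-images together make up $f^n(X)$; any point in the interior of a $1$-tile $X'\subseteq f^n(X)$ forces some $Y\subseteq X$ with $f^n(Y)=X'$. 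Consequently, mixing reduces to showing that $f^n(X)=S^2$ for every $X\in\X^1(f,\CC)$ and every sufficiently large $n$. I will deduce this from topological exactness of expanding Thurston maps: for each non-empty open $U\subseteq S^2$ there exists $n_0\in\N$ with $f^n(U)=S^2$ for all $n\geq n_0$. This is a standard property provable from the shrinking of $n$-tiles in Lemma~\ref{lmCellBoundsBM}~(ii) combined with the density of iterated preimages. Applying this to the interior of each of the finitely many $1$-tiles in $\X^1(f,\CC)$ and taking the maximum yields a single threshold $N$ beyond which $A_{\ti}^{n}(X,X')>0$ for every pair $(X,X')$.

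Finally, for the injectivity claim, I will argue by contradiction: if two admissible sequences $\{X_i\},\{X'_i\}$ both map to $x\notin\bigcup_{i\in\N_0}f^{-i}(\CC)$ and disagree at some index $i\in\N_0$, then $f^i(x)\in X_i\cap X'_i$ lies in the intersection of two distinct $1$-tiles, which is contained in the $1$-skeleton $f^{-1}(\CC)$ of $\DD^1(f,\CC)$. This forces $f^{i+1}(x)\in\CC$, contradicting $x\notin f^{-(i+1)}(\CC)$.
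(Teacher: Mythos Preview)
Your proof is correct and follows essentially the same route as the paper: well-definedness via Lemma~\ref{lmCylinderIsTile}, H\"older continuity from the diameter bound in Lemma~\ref{lmCellBoundsBM}~(ii), surjectivity by explicitly producing an itinerary, the semi-conjugacy by unwinding the intersection, and injectivity off $\bigcup_i f^{-i}(\CC)$ by the $1$-skeleton argument. The only notable difference is in the mixing step: you first make the reduction $A_{\ti}^n(X,X')>0 \Leftrightarrow f^n(X)\supseteq X'$ explicit and then appeal to topological exactness of expanding Thurston maps as a black box, whereas the paper derives $f^n(X)=S^2$ directly and self-containedly from Lemma~\ref{lmCellBoundsBM}~(iii) (for large $m$ each $0$-tile contains both a black and a white $m$-tile, so $f^{m}$ maps each $0$-tile onto $S^2$, and since $f(X)$ is a $0$-tile this gives $f^{m+1}(X)=S^2$). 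Your appeal to exactness is valid, but since the paper never states that property, the paper's argument is the more economical choice here.
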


\begin{rem}
We can show that if $f$ has no periodic critical points, then $\pi$ is uniformly bounded-to-one (i.e., there exists $N\in\N_0$ depending only on $f$ such that $\card \left(\pi_{\ti}^{-1}(x)\right) \leq N$ for each $x\in S^2$); if $f$ has at least one periodic critical point, then $\pi_{\ti}$ is uncountable-to-one on a dense set. We will not use this fact in this paper.
\end{rem}

\begin{proof}
We denote by $\{X_i\}_{i\in\N_0} \in \Sigma_{A_{\ti}}^+$ an arbitrary admissible sequence.

Since $f(X_i)\supseteq X_{i+1}$ for each $i\in\N_0$, by Lemma~\ref{lmCylinderIsTile}, the map $\pi_{\ti}$ is well-defined.

Note that for each $m\in \N_0$ and each $\{X'_i\}_{i\in\N_0} \in \Sigma_{A_{\ti}}^+$ with $X_{m+1} \neq X'_{m+1}$ and $X_j=X'_j$ for each integer $j\in[0,m]$, we have $\{ \pi_{\ti}  (\{X_i\}_{i\in\N_0}  ),  \, \pi_{\ti} ( \{ X'_i \}_{i\in\N_0} ) \} \subseteq \bigcap_{i=0}^{m} f^{-i}(X_i) \in \X^{m+1}$ by Lemma~\ref{lmCylinderIsTile}. Thus, it follows from Lemma~\ref{lmCellBoundsBM}~(ii) that $\pi_{\ti}$ is H\"{o}lder continuous.

To see that $\pi_{\ti}$ is surjective, we observe that for each $x\in S^2$, we can find a sequence $\bigl\{ X^j(x) \bigr\}_{j\in\N}$ of tiles such that $X^j(x) \in \X^j$, $x\in X^j(x)$, and $X^j(x) \supseteq X^{j+1}(x)$ for each $j\in\N$. Then it is clear that $\bigl\{  f^i\bigl( X^{i+1}(x) \bigr) \bigr\}_{i\in\N_0} \in \Sigma_{A_{\ti}}^+$ and $\pi_{\ti} \bigl(  \bigl\{ f^i\bigl( X^{i+1}(x) \bigr) \bigr\}_{i\in\N_0} \bigr) = x$.

To check that $\pi_{\ti} \circ \sigma_{A_{\ti}} = f \circ \pi_{\ti}$, it suffices to observe that
\begin{align*}
            \{ (f\circ \pi_{\ti}) (\{X_i\}_{i\in\N_0}) \}
 & =        f \Bigl(  \bigcap_{j\in\N_0} f^{-j} (X_j) \Bigr) \\
& \subseteq  \bigcap_{j\in\N} f^{-(j-1)} (X_j) \\
 & =         \bigcap_{i\in\N_0} f^{-i} (X_{i+1})  \\
 & =         \{(\pi_{\ti} \circ \sigma_{A_{\ti}} ) (\{X_i\}_{i\in\N_0})  \}.
\end{align*}

To show that $\pi_{\ti}$ is injective on $\pi_{\ti}^{-1}(S^2\setminus \E)$, where we denote $\E \coloneqq \bigcup_{i\in\N_0} f^{-i}(\CC)$, we fix another arbitrary $\{Y_i\}_{i\in\N_0} \in \Sigma_{A_{\ti}}^+$ with $\{X_i\}_{i\in\N_0} \neq \{Y_i\}_{i\in\N_0}$. Suppose that $x = \pi_{\ti} (\{X_i\}_{i\in\N_0}) = \pi_{\ti} (\{Y_i\}_{i\in\N_0}) \notin \E$. Choose $n\in\N_0$ with $X_n\neq Y_n$. Then by Lemma~\ref{lmCylinderIsTile}, $x\in \bigcap_{i=0}^n f^{-i} (X_i) \in \X^{n+1}$ and $x\in \bigcap_{i=0}^n f^{-i} (Y_i) \in \X^{n+1}$. Thus, $f^n(x) \in X_n \cap Y_n \subseteq f^{-1}(\CC)$ by Proposition~\ref{propCellDecomp}~(v). This is a contradiction to the assumption that $x\notin \E$. 

We finally demonstrate that $\bigl( \Sigma_{A_{\ti}}^+, \sigma_{A_{\ti}} \bigr)$ is topologically mixing. It follows from Lemma~\ref{lmCellBoundsBM}~(iii) that there exists a number $M\in\N$ such that for each $m\geq M$, there exist white $m$-tiles $X^m_\w, \, Y^m_\w \in \X^m_\w$ and black $m$-tiles $X^m_\b,  \, Y^m_\b \in \X^m_\b$ satisfying $X^m_\w \cup X^m_\b \subseteq X^0_\b$ and $Y^m_\w \cup Y^m_\b \subseteq X^0_\w$, where $X^0_\b$ and $X^0_\w$ are the black $0$-tile and the white $0$-tile, respectively. Thus, for all $X, \, X'\in\X^1$, and all $n\geq M+1$, we have $f^n(X) = f^{n-1}(f(X)) \supseteq X^0_\b \cup X^0_\w = S^2 \supseteq X'$.
\end{proof}

\section{The Assumptions}      \label{sctAssumptions}
We state below the hypotheses under which we will develop our theory in most parts of this paper. We will repeatedly refer to such assumptions in the later sections. We emphasize again that not all of these assumptions are assumed in all the statements in this paper.

\begin{assumptions}
\quad

\begin{enumerate}

\smallskip

\item $f\:S^2 \rightarrow S^2$ is an expanding Thurston map.

\smallskip

\item $\CC\subseteq S^2$ is a Jordan curve containing $\post f$ with the property that there exists $n_\CC\in\N$ such that $f^{n_\CC} (\CC)\subseteq \CC$ and $f^m(\CC)\nsubseteq \CC$ for each $m\in\{1, \, 2, \, \dots, \, n_\CC-1\}$.

\smallskip

\item $d$ is a visual metric on $S^2$ for $f$ with expansion factor $\Lambda>1$ and a linear local connectivity constant $L\geq 1$.

\smallskip

\item $\alpha\in(0,1]$.

\smallskip

\item $\psi\in \Holder{\alpha}((S^2,d),\C)$ is a complex-valued H\"{o}lder continuous function with an exponent $\alpha$.

\smallskip

\item $\phi\in \Holder{\alpha}(S^2,d)$ is an eventually positive real-valued H\"{o}lder continuous function with an exponent $\alpha$, and $s_0\in\R$ is the unique positive real number satisfying $P(f, -s_0\phi)=0$.

%\smallskip

%\item $\mu_\phi$ is the unique equilibrium state for the  map $f$ and the potential $\phi$.

\end{enumerate}

\end{assumptions}

Note that the uniqueness of $s_0$ in~(6) is guaranteed by Corollary~\ref{corS0unique}. For a pair of $f$ in~(1) and $\phi$ in~(6), we will say that a quantity depends on $f$ and $\phi$ if it depends on $s_0$.

Observe that by Lemma~\ref{lmCexistsL}, for each $f$ in~(1), there exists at least one Jordan curve $\CC$ that satisfies~(2). Since for a fixed $f$, the number $n_\CC$ is uniquely determined by $\CC$ in~(2), in the remaining part of the paper, we will say that a quantity depends on $f$ and $\CC$ even if it also depends on $n_\CC$.

Recall that the expansion factor $\Lambda$ of a visual metric $d$ on $S^2$ for $f$ is uniquely determined by $d$ and $f$. We will say that a quantity depends on $f$ and $d$ if it depends on $\Lambda$.

Note that even though the value of $L$ is not uniquely determined by the metric $d$, in the remainder of this paper, for each visual metric $d$ on $S^2$ for $f$, we will fix a choice of linear local connectivity constant $L$. We will say that a quantity depends on the visual metric $d$ without mentioning the dependence on $L$, even though if we had not fixed a choice of $L$, it would have depended on $L$ as well.

In the discussion below, depending on the conditions we will need, we will sometimes say ``Let $f$, $\CC$, $d$, $\psi$, $\alpha$ satisfy the Assumptions.'', and sometimes say ``Let $f$ and $d$ satisfy the Assumptions.'', etc.

\section{Dynamical zeta functions and Dirichlet series}   \label{sctDynZetaFn}

Let $g\: X\rightarrow X$ be a map on a topological space $X$. Let $\psi \: X \rightarrow \C$ be a complex-valued function on $X$. We write
\begin{equation}  \label{eqDefZn}
Z_{g,\,\minus\psi}^{(n)} (s) \coloneqq  \sum_{x\in P_{1,g^n}} e^{-s S_n \psi(x)}  , \qquad n\in\N \text{ and }  s\in\C.
\end{equation}
Recall that $P_{1,g^n}$ defined in (\ref{eqDefSetPeriodicPts}) is the set of fixed points of $g^n$, and $S_n\psi$ is defined in (\ref{eqDefSnPt}). We denote by the formal infinite product
\begin{equation}  \label{eqDefZetaFn}
\zeta_{g,\,\minus\psi} (s) \coloneqq \exp \Biggl( \sum_{n=1}^{+\infty} \frac{Z_{g,\,\minus\psi}^{(n)} (s)}{n}  \Biggr) 
 = \exp \biggl( \sum_{n=1}^{+\infty} \frac{1}{n} \sum_{x\in P_{1,g^n}} e^{-s S_n \psi(x)} \biggr), \qquad s\in\C,
\end{equation}
the \defn{dynamical zeta function} for the map $g$ and the potential $\psi$. More generally, we can define dynamical Dirichlet series as analogs of Dirichlet series in analytic number theory.

\begin{definition}   \label{defDynDirichletSeries}
Let $g\: X\rightarrow X$ be a map on a topological space $X$. Let $\psi \: X \rightarrow \C$ and $w\: X\rightarrow \C$ be complex-valued functions on $X$. We denote by the formal infinite product
\begin{equation}  \label{eqDefDynDirichletSeries}
\DS_{g,\,\minus\psi,\,w} (s) \coloneqq \exp \biggl( \sum_{n=1}^{+\infty} \frac{1}{n} \sum_{x\in P_{1,g^n}} e^{-s S_n \psi(x)} \prod_{i=0}^{n-1} w \bigl( g^i(x) \bigr) \biggr), \qquad s\in\C,
\end{equation}
the \defn{dynamical Dirichlet series} with coefficient $w$ for the map $g$ and the potential $\psi$.

\end{definition}

\begin{rem}
Dynamical zeta functions are special cases of dynamical Dirichlet series, more precisely, $\zeta_{g,\,\minus\psi}  =  \DS_{g,\,\minus\psi,\,\mathbbm{1}_X}$. The Dynamical Dirichlet series defined above can be considered as analogs of the Dirichlet series equipped with a strongly multiplicative arithmetic function in analytic number theory. We can define a more general dynamical Dirichlet series by replacing $w$ by $w_n$, where $w_n\: X \rightarrow \C$ is a complex-valued function on $X$ for each $n\in\N$. We will not need such generality in this paper.
\end{rem}

\begin{lemma}   \label{lmDynDirichletSeriesConv_general}
Let $g\:X\rightarrow X$ be a map on a topological space $X$. Let $\varphi \: X\rightarrow\R$ and $w\: X\rightarrow \C$ be functions on $X$. Fix $a\in\R$. Suppose that the following conditions are satisfied:
\begin{enumerate}
\smallskip
\item[(i)] $\card  P_{1,g^n}  < +\infty$ for all $n\in\N$.

\smallskip
\item[(ii)] $\limsup_{n\to+\infty} \frac{1}{n} \log  \sum_{x\in P_{1,g^n}} \exp ( - a S_n \varphi(x)) \prod_{i=0}^{n-1} \Absbig{w \bigl( g^i(x) \bigr)} < 0$.
\end{enumerate}
Then the dynamical Dirichlet series $\DS_{g,\,\minus\varphi,\,w}(s)$ as an infinite product converges uniformly and absolutely on $\{ s\in\C : \Re(s) =a \}$, and with $l_\varphi (\tau) \coloneqq \sum_{x\in\tau} \varphi(x)$, we have
\begin{equation}   \label{eqZetaFnOrbForm}
\DS_{g,\,\minus\varphi,\,w} (s) = \prod_{\tau\in \Orb(g)} \biggl( 1- e^{ - s l_\varphi (\tau)} \prod_{x\in\tau} w(x) \biggr)^{-1}.
\end{equation} 

If, in addition, we assume that $\varphi$ is eventually positive, then $\DS_{g,\,\minus\varphi,\,w}(s)$ converges uniformly and absolutely to a non-vanishing continuous function on $\overline{\H}_a = \{s\in\C : \Re(s) \geq a \}$ that is holomorphic on $\H_a = \{s\in\C : \Re(s) > a \}$, and (\ref{eqZetaFnOrbForm}) holds on $\overline{\H}_a$.
\end{lemma}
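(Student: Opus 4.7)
\smallskip

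\noindent\textbf{Proof proposal.} The plan is to prove both halves by controlling the absolute convergence of the defining double sum
\[
\Xi(s) \coloneqq \sum_{n=1}^{+\infty} \frac{1}{n} \sum_{x\in P_{1,g^n}} e^{-s S_n \varphi(x)} \prod_{i=0}^{n-1} w \bigl( g^i(x) \bigr)
\]
and then exponentiating. For the first half, I would fix $s\in\C$ with $\Re(s) = a$; then $\Absbig{e^{-sS_n\varphi(x)}} = e^{-aS_n\varphi(x)}$, so hypothesis~(ii) furnishes constants $c>0$ and $N\in\N$ such that for all $n\geq N$,
\[
\sum_{x\in P_{1,g^n}} \Absbig{e^{-sS_n\varphi(x)}} \prod_{i=0}^{n-1} \Absbig{w \bigl( g^i(x) \bigr)} \leq e^{-cn}.
\]
This bound is uniform in $s\in\{\Re(s)=a\}$, which combined with (i) (finiteness of each inner sum) shows that $\Xi(s)$ converges uniformly and absolutely there. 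Consequently $\DS_{g,\,\minus\varphi,\,w}(s) = \exp(\Xi(s))$ is a well-defined, non-vanishing continuous function on that line.

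Next I would derive the Euler product (\ref{eqZetaFnOrbForm}) by rearrangement, which will be the main combinatorial step. Every $x\in P_{1,g^n}$ has a unique prime period $m\mid n$, and its orbit $\tau\in\Orb(m,g)$ contains exactly $m$ points; writing $n=km$ we get $S_n\varphi(x) = k\, l_\varphi(\tau)$ and $\prod_{i=0}^{n-1} w(g^i(x)) = \bigl(\prod_{x\in\tau} w(x)\bigr)^k$. Hence
\[
\Xi(s) = \sum_{n=1}^{+\infty} \frac{1}{n} \sum_{m\mid n} m \sum_{\tau\in\Orb(m,g)} e^{-(n/m) s\, l_\varphi(\tau)} \biggl(\prod_{x\in\tau} w(x)\biggr)^{n/m}.
\]
The absolute convergence established above justifies reindexing by $(m,k)$ with $n=km$, which collapses the factor of $m/n = 1/k$ and yields
\[
\Xi(s) = \sum_{\tau\in\Orb(g)} \sum_{k=1}^{+\infty} \frac{1}{k} \biggl( e^{-s\, l_\varphi(\tau)} \prod_{x\in\tau} w(x) \biggr)^k = -\sum_{\tau\in\Orb(g)} \log\!\biggl(1 - e^{-s\, l_\varphi(\tau)} \prod_{x\in\tau} w(x)\biggr),
\]
using $-\log(1-z) = \sum_{k\geq 1} z^k/k$ (valid termwise since the inner quantity has modulus $<1$ by the same bound). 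Exponentiating gives (\ref{eqZetaFnOrbForm}) on $\{\Re(s)=a\}$.

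For the second half, assume $\varphi$ is eventually positive and fix $N_0\in\N$ with $S_n\varphi(x)>0$ for every $x\in X$ and $n\geq N_0$. For $s\in\overline{\H}_a$ and $n\geq N_0$, the inequality $(\Re(s)-a)S_n\varphi(x)\geq 0$ gives
\[
\Absbig{e^{-sS_n\varphi(x)}} = e^{-\Re(s)\, S_n\varphi(x)} \leq e^{-a S_n\varphi(x)},
\]
so the same exponential-decay bound on the $n$-th inner sum holds uniformly for $s\in\overline{\H}_a$. Therefore $\Xi(s)$ converges uniformly and absolutely on $\overline{\H}_a$; since each truncated sum is a finite sum of entire functions of $s$, uniform convergence yields continuity on $\overline{\H}_a$ and, via Weierstrass' theorem on compact subsets of $\H_a$, holomorphy on $\H_a$. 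Exponentiating gives a non-vanishing continuous extension of $\DS_{g,\,\minus\varphi,\,w}$ to $\overline{\H}_a$, holomorphic on $\H_a$, and (\ref{eqZetaFnOrbForm}) persists on $\overline{\H}_a$ by the same rearrangement as before. The main subtlety, rather than any single estimate, is the bookkeeping in the orbit rearrangement; this is handled cleanly once absolute convergence is in hand.
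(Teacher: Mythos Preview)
Your proposal is correct and follows essentially the same approach as the paper's proof: establish absolute convergence of the defining double series via the exponential bound from~(ii), rearrange over primitive orbits to obtain the Euler product, and then, under eventual positivity, push the same estimate to all of~$\overline{\H}_a$. One small simplification the paper makes that you do not: for \emph{periodic} points $x\in P_{1,g^n}$ one has $S_{kn}\varphi(x)=k\,S_n\varphi(x)$, so eventual positivity forces $S_n\varphi(x)>0$ for \emph{every} $n\in\N$, not just $n\geq N_0$; this removes any need to treat small~$n$ separately when passing to~$\overline{\H}_a$.
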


Recall that $\Orb(g)$ denotes the set of all primitive periodic orbits of $g$ (see (\ref{eqDefSetAllPeriodicOrbits})). We recall that an infinite product of the form $\exp \sum a_i$, $a_i\in\C$, converges uniformly (resp.\ absolutely) if $\sum a_i$ converges uniformly (resp.\ absolutely).

\begin{rem}   \label{rmDynDirichletSeriesZetaFn}
It is often possible to verify condition~(ii) by showing $P(g, - a\varphi) <0$ and $P(g, - a\varphi) \geq \limsup_{n\to+\infty} \frac{1}{n} \log  \sum_{x\in P_{1,g^n}} \exp ( - a S_n \varphi(x)) \prod_{i=0}^{n-1} \Absbig{w \bigl( g^i(x) \bigr)}$ (when the topological pressure $P(g, - a\varphi)$ makes sense). This is how we are going to use Lemma~\ref{lmDynDirichletSeriesConv_general} in this paper. In particular, if $\card X < +\infty$, then it follows immediately from (\ref{defTopPressure}) that
\begin{equation*}
    P(g, - a\varphi) 
= \lim_{n\to+\infty} \frac{1}{n} \log  \sum_{x\in X} \exp ( - a S_n \varphi(x))  
\geq \limsup_{n\to+\infty} \frac{1}{n} \log  \sum_{x\in P_{1,g^n}} \exp ( - a S_n \varphi(x)) .
\end{equation*}
\end{rem}

\begin{proof}[Proof of Lemma~\ref{lmDynDirichletSeriesConv_general}]
Fix $s\in\C$ with $\Re(s)=a$.

By condition~(ii), we can choose constants $N\in\N$ and $\beta\in (0,1)$ such that 
\begin{equation*}
\sum_{x\in P_{1,g^n}} \exp( - a S_n\varphi(x)) \prod_{i=0}^{n-1} \Absbig{w \bigl( g^i(x) \bigr)}  \leq \beta^n
\end{equation*}
for each integer $n\geq N$. Thus,
\begin{align*}
    \sum_{n=N}^{+\infty} \frac{1}{n} \sum_{x\in P_{1,g^n}}  \Absbigg{ e^{ - s S_n\varphi(x)}  \prod_{i=0}^{n-1}w \bigl(g^i(x)\bigr)}
=  \sum_{n=N}^{+\infty} \frac{1}{n} \sum_{x\in P_{1,g^n}}       e^{- a S_n\varphi(x)} 
                                   \prod_{i=0}^{n-1} \Absbig{w \bigl( g^i(x) \bigr)}  
 \leq  \sum_{n=N}^{+\infty}  \beta^n.
\end{align*}
Combining the above inequalities with condition~(i), we can conclude that $\DS_{g,\,\minus\varphi,\,w} (s)$ converges absolutely. Moreover,
\begin{align*}
      \DS_{g,\,\minus\varphi,\,w} (s)
 &=   \exp \biggl( \sum_{n=1}^{+\infty} \frac{1}{n} \sum_{x\in P_{1,g^n}} e^{ - s S_n \varphi(x)} 
                                                           \prod_{i=0}^{n-1} w \bigl( g^i(x) \bigr)   \biggr)  \\ 
 &=   \exp \biggl( \sum_{m=1}^{+\infty} \frac{1}{m} \sum_{x\in P_{m,g}} \sum_{k=1}^{+\infty} \frac{e^{-sk S_m \varphi(x)}}{k} 
                                                           \prod_{i=0}^{km-1} w \bigl( g^i(x) \bigr)  \biggr)  \\
 &=    \exp \biggl( \sum_{\tau\in\Orb(g)} \sum_{k=1}^{+\infty} \frac{e^{ - sk l_\varphi(\tau)} }{k} 
                                                           \prod_{y\in\tau}  w^k  (y) \biggr) \\
 &=   \exp \biggl(-\sum_{\tau\in\Orb(g)} \log \biggl( 1- e^{ - s l_\varphi(\tau)} \prod_{y\in\tau} w(y)  \biggr) \biggr)  \\
 &=    \prod_{\tau\in \Orb(g)} \biggl( 1- e^{ - s l_\varphi (\tau)} \prod_{y\in\tau} w(y) \biggr)^{-1}.
\end{align*}

Now we assume, in addition, that $\varphi$ is eventually positive. Then it is clear from the definition that $S_n\varphi(x) > 0$ for all $n\in\N$ and $x\in P_{1,g^n}$. For each $z \in \overline{\H}_a$ and each $m\in\N$,
\begin{equation*}
  \sum_{n=m}^{+\infty} \frac{1}{n} \sum_{x\in P_{1,g^n}}  \Absbigg{\exp( - z S_n\varphi(x))  \prod_{i=0}^{n-1}w \bigl(g^i(x)\bigr)}  
  \leq \sum_{n=m}^{+\infty} \frac{1}{n} \sum_{x\in P_{1,g^n}}       \exp( - a S_n\varphi(x))
                                                                                         \prod_{i=0}^{n-1} \Absbig{w \bigl( g^i(x) \bigr)}.
\end{equation*}
Hence, $\DS_{g,\,\minus\varphi,\,w}(z)$ converges uniformly and absolutely to a non-vanishing continuous function on $\overline{\H}_a$ that is holomorphic on $\H_a$.

Finally, to verify (\ref{eqZetaFnOrbForm}) for $z\in \overline{\H}_a$ when $\varphi$ is eventually positive, it suffices to apply (\ref{eqZetaFnOrbForm}) to $a\coloneqq \Re(z)$ with the observation that
\begin{align*}
&            \limsup_{n\to+\infty} \frac{1}{n} \log  \sum_{x\in P_{1,g^n}} \exp ( - \Re(z) S_n \varphi(x)) 
                                                                                         \prod_{i=0}^{n-1} \Absbig{w \bigl( g^i(x) \bigr)}  \\
&\qquad \leq \limsup_{n\to+\infty} \frac{1}{n} \log  \sum_{x\in P_{1,g^n}} \exp ( - a      S_n \varphi(x))
                                                                                         \prod_{i=0}^{n-1} \Absbig{w \bigl( g^i(x) \bigr)}
 < 0,
\end{align*}
i.e., condition~(ii) holds with $a = \Re(z)$.
\end{proof}

\smallskip

We now consider the dynamical zeta functions and the Dirichlet series associated to expanding Thurston maps.

\begin{prop}   \label{propZetaFnConv_s0}
Let $f$, $\CC$, $d$, $\alpha$, $\phi$, $s_0$ satisfy the Assumptions in Section~\ref{sctAssumptions}. We assume, in addition, that $f(\CC)\subseteq\CC$. 
Let $\bigl(\Sigma_{A_{\ti}}^+,\sigma_{A_{\ti}}\bigr)$ be the one-sided subshift of finite type associated to $f$ and $\CC$ defined in Proposition~\ref{propTileSFT}, and let $\pi_{\ti}\: \Sigma_{A_{\ti}}^+\rightarrow S^2$ be the factor map defined in (\ref{eqDefTileSFTFactorMap}). Denote by $\deg_f(\cdot)$ the local degree of $f$. Then the following statements are satisfied:
\begin{enumerate}
\smallskip
\item[(i)] $P(\sigma_{A_{\ti}}, \varphi\circ\pi_{\ti}) = P(f,\varphi)$ for each $\varphi \in \Holder{\alpha}(S^2,d)$ . In particular, for an arbitrary number $t\in\R$, we have $P(\sigma_{A_{\ti}}, - t \phi \circ \pi_{\ti}) = 0$ if and only if $t=s_0$.

\smallskip
\item[(ii)] All three infinite products $\zeta_{f,\,\minus \phi}$, $\zeta_{\sigma_{A_{\ti}},\,\minus\phi\circsmall\pi_{\ti}}$, and $\DS_{f,\,\minus\phi,\,\deg_f}$ converge uniformly and absolutely to respective non-vanishing continuous functions on $\overline{\H}_a=\{ s\in\C : \Re(s) \geq a \}$ that are holomorphic on $\H_a=\{ s\in\C : \Re(s) > a \}$, for each $a\in (s_0, +\infty)$.

\smallskip
\item[(iii)] For all $s\in \C$ with $\Re(s)>s_0$, we have
\begin{align}
\zeta_{f,\,\minus\phi} (s)           & = \prod_{\tau\in \Orb(f)} \biggl( 1- \exp\biggl( - s \sum_{y\in\tau} \phi(y) \biggr) \biggr)^{-1},  \label{eqZetaFnOrbitForm_ThurstonMap}\\
\DS_{f,\,\minus\phi,\,\deg_f}(s)  & = \prod_{\tau\in \Orb(f)} \biggl( 1- \exp\biggl( - s \sum_{y\in\tau} \phi(y) \biggr) \prod_{z\in\tau} \deg_f(z) \biggr)^{-1}, \label{eqZetaFnOrbitForm_ThurstonMapDegree}\\
\zeta_{\sigma_{A_{\ti}},\,\minus\phi\circsmall\pi_{\ti}} (s)    & = \prod_{\tau\in \Orb(\sigma_{A_{\ti}}) } \biggl( 1- \exp\biggl( - s \sum_{y\in\tau} \phi\circ\pi_{\ti}(y) \biggr) \biggr)^{-1}  \label{eqZetaFnOrbitForm_Subshift}.
\end{align}
\end{enumerate}
\end{prop}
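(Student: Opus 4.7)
The plan is to establish (i) as the main step, which identifies the pressures between $f$ and its symbolic coding; parts~(ii) and~(iii) then follow from Lemma~\ref{lmDynDirichletSeriesConv_general} together with the periodic-point characterization of pressure in Proposition~\ref{propTopPressureDefPeriodicPts}.

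For~(i), I would use the cover-based expression~(\ref{eqEquivDefByCoverForTopPressure}) for $P(\sigma_{A_{\ti}},\varphi\circ\pi_{\ti})$ with respect to the cylinder cover of $\Sigma_{A_{\ti}}^+$. Lemma~\ref{lmCylinderIsTile} provides a bijection between admissible words $(X_0,\dots,X_{n-1})$ and $n$-tiles $Y\in\X^n(f,\CC)$ given by $Y = \bigcap_{i=0}^{n-1}f^{-i}(X_i)$, and a topological argument using $\pi_{\ti}\circ\sigma_{A_{\ti}} = f\circ\pi_{\ti}$ together with the surjectivity construction in Proposition~\ref{propTileSFT} shows $\pi_{\ti}([X_0\cdots X_{n-1}]) = Y$, whence
\begin{equation*}
P(\sigma_{A_{\ti}}, \varphi\circ\pi_{\ti}) = \lim_{n\to+\infty} \frac{1}{n} \log \sum_{Y\in\X^n(f,\CC)} \exp \bigl( \sup\nolimits_{y\in Y} S_n\varphi(y) \bigr).
\end{equation*}
The distortion bound~(\ref{eqSnPhiBound}) of Lemma~\ref{lmSnPhiBound} allows the supremum to be replaced by the value at any designated point of $Y$ at a bounded multiplicative cost. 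Since $\bigcup_{i\in\N_0} f^{-i}(\CC)$ is a countable union of relatively closed arcs and hence meager, the Baire category theorem furnishes points $x_0\in\inte\bigl(X^0_\b\bigr)$ and $x_0'\in\inte\bigl(X^0_\w\bigr)$ both outside $\bigcup_{i\in\N_0} f^{-i}(\CC)$. Then $f^{-n}(x_0)\cup f^{-n}(x_0')$ meets each $n$-tile in exactly one interior point, and every such preimage lies outside $\crit f^n\subseteq\bigcup_{i=0}^{n-1}f^{-i}(\crit f)\subseteq\bigcup_{i\in\N_0}f^{-i}(\CC)$, so the local-degree weights in Lemma~\ref{lmRR^nConvToTopPressureUniform} are all~$1$. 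Summing the two identities furnished by Lemma~\ref{lmRR^nConvToTopPressureUniform} applied at $x_0$ and $x_0'$ then yields $P(\sigma_{A_{\ti}},\varphi\circ\pi_{\ti}) = P(f,\varphi)$. The ``in particular'' clause follows by applying this to $\varphi\coloneqq -t\phi\in\Holder{\alpha}(S^2,d)$ and invoking the strict monotonicity of $t\mapsto P(f,-t\phi)$ from Corollary~\ref{corS0unique}.

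For~(ii), I would verify the hypotheses of Lemma~\ref{lmDynDirichletSeriesConv_general} with arbitrary $a>s_0$ for each of the three products. Finiteness of $P_{1,f^n}$ comes from Theorem~\ref{thmETMBasicProperties}~(ii), and $\card P_{1,\sigma_{A_{\ti}}^n}\leq (\card S_{\ti})^n<+\infty$ is immediate. The growth-rate condition is verified using Proposition~\ref{propTopPressureDefPeriodicPts}: taking $w_n\equiv 1$ identifies the rate for $\zeta_{f,\,\minus\phi}$ as $P(f,-a\phi)$, while $w_n\coloneqq\deg_{f^n}$ combined with $\prod_{i=0}^{n-1}\deg_f(f^i(x)) = \deg_{f^n}(x)$ on $P_{1,f^n}$ (from~(\ref{eqLocalDegreeProduct}) and periodicity) identifies the rate for $\DS_{f,\,\minus\phi,\,\deg_f}$ as the same quantity. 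For $\zeta_{\sigma_{A_{\ti}},\,\minus\phi\circsmall\pi_{\ti}}$ the periodic-point sum is dominated (up to a constant) by the partition sum appearing in the proof of~(i) via Lemma~\ref{lmSnPhiBound}, whose rate is $P(\sigma_{A_{\ti}},-a\phi\circ\pi_{\ti})$; by~(i) this equals $P(f,-a\phi)$. Corollary~\ref{corS0unique} gives $P(f,-a\phi)<0$ for $a>s_0$, completing the verification. Since $\phi$ is eventually positive, the final clause of Lemma~\ref{lmDynDirichletSeriesConv_general} delivers part~(ii). Part~(iii) then follows immediately from the Euler product~(\ref{eqZetaFnOrbForm}) in Lemma~\ref{lmDynDirichletSeriesConv_general}, specializing $w$ to $\mathbbm{1}$, $\deg_f$, and $\mathbbm{1}$ respectively, together with $\prod_{z\in\tau}\deg_f(z) = \deg_{f^n}(z_0)$ for any $z_0\in\tau$ of period~$n$.

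The main obstacle lies in part~(i): the inequality $P(\sigma_{A_{\ti}},\varphi\circ\pi_{\ti})\geq P(f,\varphi)$ is immediate from the factor-map structure, but the reverse inequality cannot be obtained from standard Ledrappier--Walters-type arguments because $\pi_{\ti}$ is not finite-to-one in general (as recalled in the remark following Proposition~\ref{propTileSFT}). Circumventing this obstruction via the tile/cylinder bijection of Lemma~\ref{lmCylinderIsTile}, the distortion control of Lemma~\ref{lmSnPhiBound}, and the judicious selection of base points avoiding $\bigcup_{i\in\N_0} f^{-i}(\CC)$ is what makes the quantitative identification of pressures go through.
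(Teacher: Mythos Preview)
Your proposal is correct. For parts~(ii) and~(iii) it coincides with the paper's argument (Lemma~\ref{lmDynDirichletSeriesConv_general} plus Proposition~\ref{propTopPressureDefPeriodicPts}, Theorem~\ref{thmETMBasicProperties}~(ii), and Proposition~\ref{propSFT}~(i),~(ii)). For part~(i) the paper takes a slightly shorter route than your tile-sum formulation: it picks a \emph{single} point $x\in S^2\setminus\bigcup_{i\in\N_0}f^{-i}(\CC)$ (via Theorem~\ref{thmEquilibriumState}~(iii) rather than Baire category), invokes Proposition~\ref{propSFT}~(iv) for the topologically mixing shift to write $P(\sigma_{A_{\ti}},\varphi\circ\pi_{\ti})$ directly as a preimage sum at $\pi_{\ti}^{-1}(x)$, and then uses the injectivity of $\pi_{\ti}$ on $\pi_{\ti}^{-1}\bigl(\{x\}\cup\bigcup_{n}f^{-n}(x)\bigr)$ from Proposition~\ref{propTileSFT} to identify this sum with the $f$-preimage sum in Lemma~\ref{lmRR^nConvToTopPressureUniform}. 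Your route via~(\ref{eqEquivDefByCoverForTopPressure}), the cylinder/tile bijection of Lemma~\ref{lmCylinderIsTile}, the distortion estimate, and two base points (one per $0$-tile colour) reaches the same conclusion; it is a bit longer but has the virtue of not requiring the topological-mixing input from Proposition~\ref{propSFT}~(iv).
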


\begin{proof}
We first claim that for each $\varphi \in \Holder{\alpha}(S^2,d)$, $P(\sigma_{A_{\ti}}, \varphi\circ\pi_{\ti}) = P(f,\varphi)$. Statement~(i) follows from this claim and Corollary~\ref{corS0unique} immediately.

Indeed, by Theorem~\ref{thmEquilibriumState}~(iii), we can choose $x\in S^2 \setminus \bigcup_{i\in\N_0} f^{-i}(\CC)$. By Proposition~\ref{propTileSFT}, the map $\pi_{\ti}$ is H\"{o}lder continuous on $\Sigma_{A_{\ti}}^+$ and injective on $\pi_{\ti}^{-1}(B)$, where $B= \{x\} \cup \bigcup_{i\in\N} f^{-i}(x)$. So we can consider $\pi_{\ti}^{-1}$ as a function from $B$ to $\pi_{\ti}^{-1}(B)$ in the calculation below. By Lemma~\ref{lmRR^nConvToTopPressureUniform}, Proposition~\ref{propSFT}~(iv), and the fact that $\bigl( \Sigma_{A_{\ti}}^+, \sigma_{A_{\ti}} \bigr)$ is topologically mixing (see Proposition~\ref{propTileSFT}),
\begin{align*}
    P(\sigma_{A_{\ti}} , \varphi\circ\pi_{\ti}) 
&=  \lim_{n\to+\infty} \frac{1}{n} \log \sum_{\underline{y}\in \sigma_{A_{\ti}}^{-n}\left(\pi_{\ti}^{-1}(x)\right)} \exp \bigl(S_n^{\sigma_{A_{\ti}}}  (\varphi\circ\pi_{\ti})(\underline{y})\bigr) \\
&=  \lim_{n\to+\infty} \frac{1}{n} \log \sum_{\underline{y}\in  \pi_{\ti}^{-1}( f^{-n} (x))  }      \exp \bigl(S_n^{\sigma_{A_{\ti}}}  (\varphi\circ\pi_{\ti}) (\underline{y})\bigr) \\
&=  \lim_{n\to+\infty} \frac{1}{n} \log \sum_{z\in    f^{-n} (x)   }              \exp \bigl(S_n^f  \varphi           (z) \bigr)  
=   P(f,\varphi).
\end{align*}
The claim is now established.

\smallskip
Next observe that by Corollary~\ref{corS0unique}, for each $a>s_0$,
$
 P(\sigma_{A_{\ti}}, -a\phi\circ \pi_{\ti})  = P(f,-a\phi) < 0.
$

By Theorem~\ref{thmETMBasicProperties}~(ii), Propositions~\ref{propTopPressureDefPeriodicPts}, and~\ref{propSFT}~(i),~(ii), 
we can apply Lemma~\ref{lmDynDirichletSeriesConv_general} and Remark~\ref{rmDynDirichletSeriesZetaFn} to establish statements~(ii) and (iii). 
\end{proof}

\section{Dynamics on the invariant Jordan curve}  \label{sctDynOnC}

One hopes to derive from Theorem~\ref{thmZetaAnalExt_SFT} similar results for the dynamical zeta function for $f$ itself (stated in Theorem~\ref{thmZetaAnalExt_InvC}). However, there is no one-to-one correspondence between the periodic points of $\sigma_{A_{\ti}}$ and those of $f$ through the factor map $\pi_{\ti} \: \Sigma_{A_{\ti}}^+ \rightarrow S^2$. A relation between the two dynamical zeta functions $\zeta_{f,\,\minus\phi}$ and $\zeta_{\sigma_{A_{\ti}},\,\minus\phi\circsmall\pi_{\ti}}$ can nevertheless be established through a careful investigation of the dynamics induced by $f$ on the Jordan curve $\CC$.

\subsection{Constructions}   \label{subsctDynOnC_Construction}

Let $f\: S^2\rightarrow S^2$ be an expanding Thurston map with a Jordan curve $\CC\subseteq S^2$ satisfying $f(\CC)\subseteq \CC$ and $\post f \subseteq \CC$.

Let $\bigl( \Sigma_{A_{\ti}}^+,\sigma_{A_{\ti}} \bigr)$ be the one-sided subshift of finite type associated to $f$ and $\CC$ defined in Proposition~\ref{propTileSFT}, and let $\pi_{\ti}\: \Sigma_{A_{\ti}}^+\rightarrow S^2$ be the factor map defined in (\ref{eqDefTileSFTFactorMap}).

We first construct two more one-sided subshifts of finite type that are related to the dynamics induced by $f$ on $\CC$.

Define the set of states $S_{\e} \coloneqq \bigl\{ e \in \E^1(f,\CC)   : e \subseteq \CC \bigr\}$, and the transition matrix $A_{\e}\: S_{\e} \times S_{\e}\rightarrow \{0, \, 1\}$ by
\begin{equation}    \label{eqDefA|}
A_{\e} \left(  e_1, e_2 \right)  
= \begin{cases} 1 & \text{if } f\left(e_1\right)\supseteq e_2, \\ 0  & \text{otherwise}  \end{cases}
\end{equation}
for $e_1,  \, e_2 \in S_{\e}$.

Define the set of states $S_{\ee} \coloneqq \bigl\{ (e,\c) \in \E^1(f,\CC) \times \{\b, \, \w\} : e \subseteq \CC \bigr\}$. For each $(e,\c)\in S_{\ee}$, we denote by $X^1(e,\c) \in \X^1(f,\CC)$ the unique $1$-tile satisfying 
\begin{equation}   \label{eqDefXec}
e \subseteq X^1(e,\c) \subseteq X^0_\c. 
\end{equation}
The existence and uniqueness of $X^1(f,\c)$ defined by (\ref{eqDefXec}) follows immediately from Proposition~\ref{propCellDecomp}~(iii), (v), and (vi) and the assumptions that $f(\CC)\subseteq \CC$ and $e\subseteq\CC$. We define the transition matrix $A_{\ee}\: S_{\ee} \times S_{\ee}\rightarrow \{0, \, 1\}$ by
\begin{equation}   \label{eqDefA||}
A_{\ee} \left( \left(e_1,\c_1 \right), \left(e_2,\c_2\right)\right)  
= \begin{cases} 1 & \text{if } f\left(e_1\right)\supseteq e_2 \text{ and } f\left(X^1\left(e_1,\c_1\right) \right) \supseteq X^1(e_2, \c_2), \\ 0  & \text{otherwise}  \end{cases}
\end{equation}
for $\left(e_1,\c_1 \right), \left(e_2,\c_2\right) \in S_{\ee}$.

We will consider the one-sided subshift of finite type $\bigl( \Sigma_{A_{\e}}^+, \sigma_{A_{\e}} \bigr)$ defined by the transition matrix $A_{\e}$ and $\bigl( \Sigma_{A_{\ee}}^+, \sigma_{A_{\ee}} \bigr)$ defined by the transition matrix $A_{\ee}$, where  
\begin{align*}
\Sigma_{A_{\e}}^+ & =  \{  \{   e_i  \}_{i\in\N_0}  \:  
             e_i \in S_{\e},\, A_{\e} (  e_i, e_{i+1} ) = 1 \text{ for each } i\in\N_0  \},  \\
\Sigma_{A_{\ee}}^+& =  \{  \{  (e_i,\c_i )  \}_{i\in\N_0}  \:  
            (e_i,\c_i)\in S_{\ee},\, A_{\ee} ( (e_i,\c_i ), (e_{i+1},\c_{i+1})) = 1 \text{ for each } i\in\N_0  \},
\end{align*}
$\sigma_{A_{\e}}$ and $\sigma_{A_{\ee}}$ are the left-shift operators on $\Sigma_{A_{\e}}^+$ and $\Sigma_{A_{\ee}}^+$, respectively (see Subsection~\ref{subsctSFT}).

See Figure~\ref{figDynOnC23} for the sets of states $S_{\ee}$ and $S_{\e}$ associated to an expanding Thurston map $f$ and an invariant Jordan curve $\CC$ whose cell decomposition $\DD^1(f,\CC)$ of $S^2$ is sketched in Figure~\ref{figDynOnC1}. Note that $S_{\ti} = \X^1(f,\CC)$. In this example, $f$ has three postcritical points $A$, $B$, and $C$. Intuitively, each $1$-edge in $S_{\ee}$ or $S_{\e}$ is mapped to the corresponding $0$-edge in the image of the $1$-tile containing it under $f$.

\begin{figure}
    \centering
    \begin{overpic}
    [width=6cm, %grid, 
    tics=20]{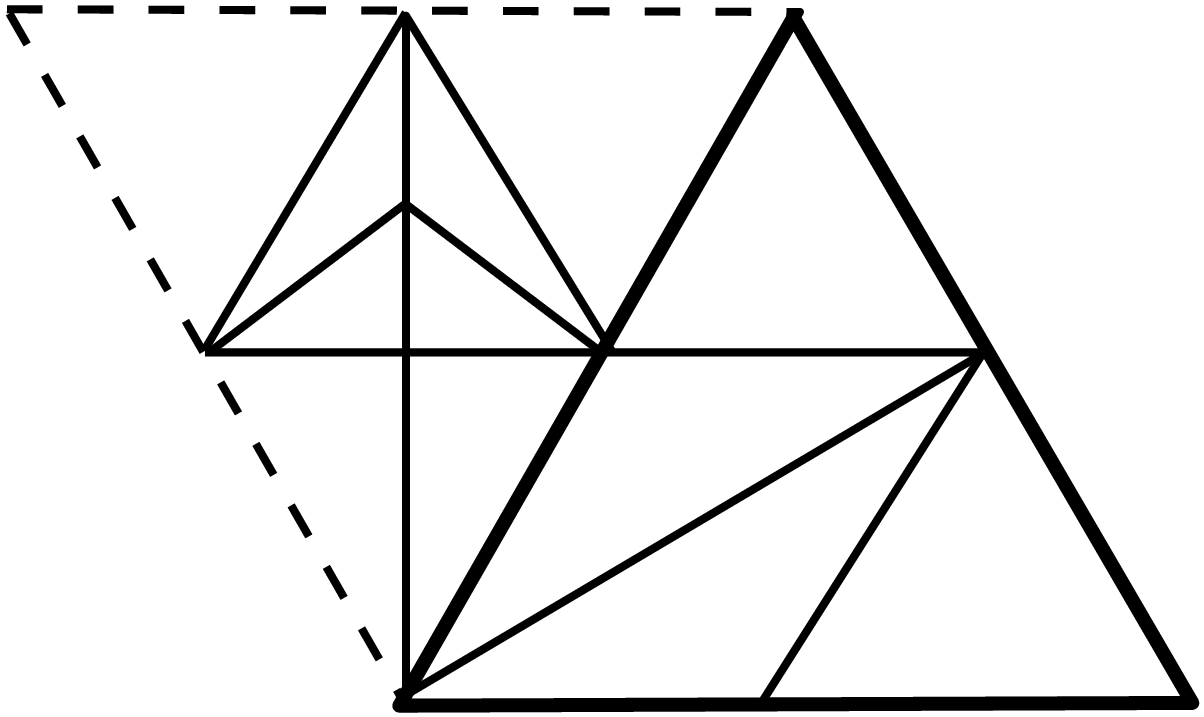}
    \put(109,101){$A$}
    \put(46,-8){$B$}
    \put(171,-8){$C$}
    \put(-9,100){$C$}
    \put(79,-10){$e^1_1$}
    \put(127,-10){$e^1_2$}
    \put(157,29){$e^1_3$}
    \put(130,78){$e^1_4$}
    \put(88,78){$e^1_5$}
    \put(64,36){$e^1_6$}
    \end{overpic}
    \caption{The cell decomposition $\DD^1(f,\CC)$. $S_{\ti} = \X^1(f,\CC)$.}
    \label{figDynOnC1}
%\end{figure}

%\begin{figure}
    \centering
    \begin{overpic}
    [width=11cm, %grid, 
    tics=20]{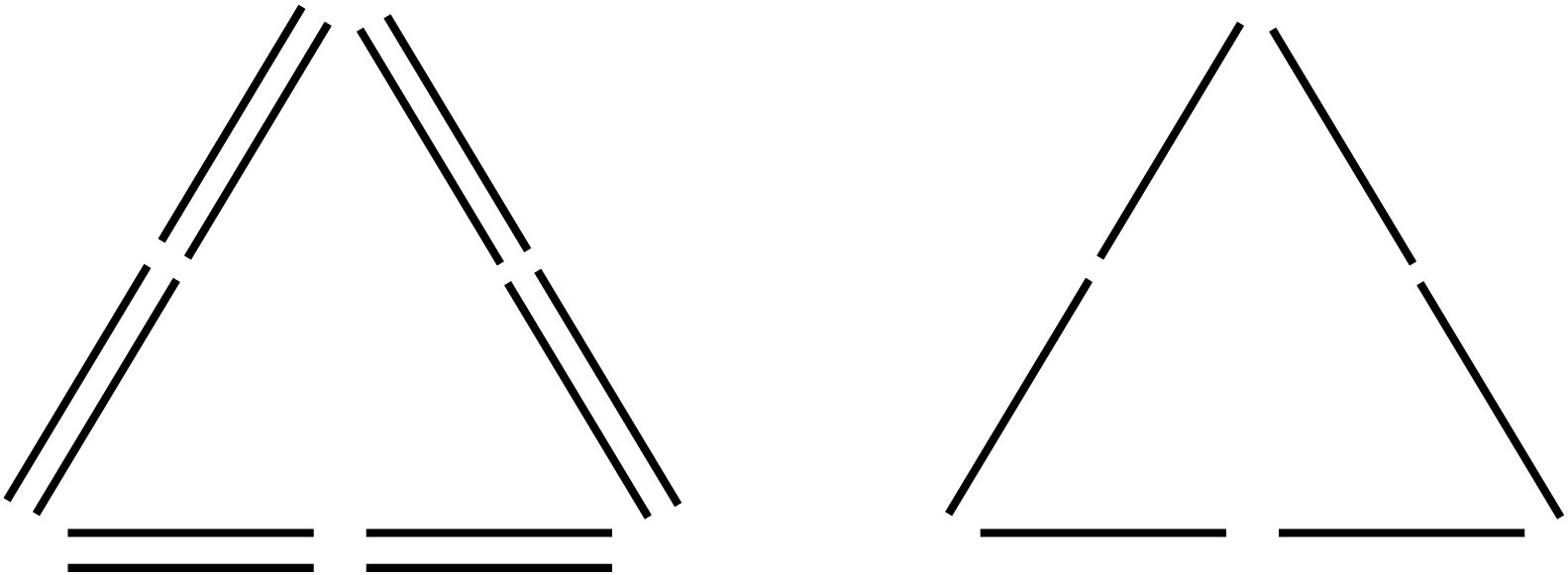}
    \put(24,-11){$(e^1_1,\b)$}
    \put(23, 14){$(e^1_1,\w)$}
    \put(83,-11){$(e^1_2,\b)$}
    \put(82, 14){$(e^1_2,\w)$}
    \put(125,36){$(e^1_3,\b)$}
    \put(77, 36){$(e^1_3,\w)$}
    \put(92,91){$(e^1_4,\b)$}
    \put(15,91){$(e^1_5,\b)$}
    \put(-18,36){$(e^1_6,\b)$}
    
    \put(215,-5){$e^1_1$}
    \put(273,-5){$e^1_2$}
    \put(299, 36){$e^1_3$}
    \put(268,90){$e^1_4$}
    \put(222,90){$e^1_5$}
    \put(190,36){$e^1_6$}
    \end{overpic}
    \caption{Elements in $S_{\ee}$ (left) and elements in $S_{\e}$ (right).}
    \label{figDynOnC23}
%\end{figure}

\end{figure}

\begin{prop}   \label{propSFTs_C}
Let $f$, $\CC$, $d$ satisfy the Assumptions in Section~\ref{sctAssumptions}. We assume, in addition, that $f(\CC)\subseteq\CC$. Let $\bigl( \Sigma_{A_{\ti}}^+,\sigma_{A_{\ti}} \bigr)$ be the one-sided subshift of finite type associated to $f$ and $\CC$ defined in Proposition~\ref{propTileSFT}, and let $\pi_{\ti}\: \Sigma_{A_{\ti}}^+\rightarrow S^2$ be the factor map defined in (\ref{eqDefTileSFTFactorMap}). Fix $\theta \in(0,1)$. Equip the spaces $\Sigma_{A_{\e}}^+$ and $\Sigma_{A_{\ee}}^+$ with the corresponding metrics $d_\theta$ constructed in Subsection~\ref{subsctSFT}. We write $\V(f,\CC) \coloneqq \bigcup_{i\in\N_0} \V^i(f,\CC)$. Then the following statements are satisfied:
\begin{enumerate}
\smallskip
\item[(i)] $\bigl( \Sigma_{A_{\e}}^+, \sigma_{A_{\e}} \bigr)$ is a factor of $\bigl( \Sigma_{A_{\ee}}^+, \sigma_{A_{\ee}} \bigr)$ with a Lipschitz continuous factor map $\pi_{\ee} \:  \Sigma_{A_{\ee}}^+ \rightarrow  \Sigma_{A_{\e}}^+$ defined by
\begin{equation}  \label{eqDefPi||}
 \pi_{\ee} ( \{ (e_i,\c_i) \}_{i\in\N_0} ) = \{ e_i \}_{i\in\N_0}
\end{equation}
for $\{ (e_i,\c_i) \}_{i\in\N_0} \in \Sigma_{A_{\ee}}^+$. Moreover, for each $\{  e_i  \}_{i\in\N_0} \in \Sigma_{A_{\e}}^+$, we have 
\begin{equation*}
\card \left(\pi_{\ee}^{-1} ( \{  e_i  \}_{i\in\N_0} )\right) = 2.
\end{equation*}

\smallskip
\item[(ii)] $( \CC, f|_\CC)$ is a factor of $\bigl( \Sigma_{A_{\e}}^+, \sigma_{A_{\e}} \bigr)$ with a H\"{o}lder continuous factor map $\pi_{\e} \:  \Sigma_{A_{\e}}^+ \rightarrow  \CC$ defined by
\begin{equation}  \label{eqDefPi|}
 \pi_{\e} ( \{  e_i  \}_{i\in\N_0} ) = x, \text{ where } \{x\} = \bigcap_{i \in \N_0} f^{-i}(e_i)
\end{equation}
for $\{  e_i  \}_{i\in\N_0} \in \Sigma_{A_{\e}}^+$. Moreover, for each $x\in\CC$, we have
\begin{equation}  \label{eqPi|Card}
\card \left(\pi_{\e}^{-1} (x)\right) = \begin{cases} 1 & \text{if } x\in \CC \setminus \V(f,\CC), \\ 2  & \text{if } x\in \CC\cap\V(f,\CC). \end{cases}
\end{equation}

\begin{comment}

\smallskip
\item[(iii)] The following identities hold:
\begin{equation*}
  \bigl\{ \tau \in \Orb\left(  \sigma_{A_{\ee}}  \right)   \:  (\pi_{\e}\circ\pi_{\ee}) (\tau) \cap \V(f,\CC) \neq \emptyset \bigr\}
= \bigl\{ \tau \in \Orb\left(  \sigma_{A_{\ee}}  \right)   \:  (\pi_{\e}\circ\pi_{\ee}) (\tau) \subseteq \post f \bigr\},
\end{equation*}
\begin{equation*}
  \bigl\{ \tau \in \Orb\left(  \sigma_{A_{\e}} \right)   \:   \pi_{\e}  (\tau) \cap \V(f,\CC) \neq \emptyset \bigr\}
= \bigl\{ \tau \in \Orb\left(  \sigma_{A_{\e}} \right)   \:   \pi_{\e}  (\tau) \subseteq \post f \bigr\},
\end{equation*}
\begin{align*}
    \{ \tau \in \Orb(f|_\CC)   \: \tau \cap \V(f,\CC) \neq \emptyset \}
= & \{ \tau \in \Orb(f|_\CC)   \: \tau \subseteq \post f \}\\
= & \{ \tau \in \Orb(f     )   \: \tau \subseteq \post f \}
=   \{ \tau \in \Orb(f     )   \: \tau \cap \V(f,\CC) \neq \emptyset \}.
\end{align*}
Moreover, all these sets are finite.
\end{comment}

\end{enumerate}
\end{prop}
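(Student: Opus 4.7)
The plan for (i) is direct verification supplemented by a counting argument. Well-definedness of $\pi_{\ee}$ follows because the transition matrix $A_{\ee}$ incorporates $A_{\e}$ (the condition $A_{\ee}((e_i,\c_i),(e_{i+1},\c_{i+1}))=1$ includes $f(e_i)\supseteq e_{i+1}$), so $\pi_{\ee}$ sends admissible sequences to admissible sequences; commutation with the shifts is immediate from (\ref{eqDefPi||}). Lipschitz continuity with constant $1$ is clear since the projection $\{(e_i,\c_i)\}\mapsto \{e_i\}$ can only preserve or increase the first index at which two sequences differ. For the cardinality-two claim, I would exploit that whenever $(e,\c)\in S_{\ee}$, the $1$-tile $X^1(e,\c)$ is uniquely specified by (\ref{eqDefXec}), and maps under $f$ to a single $0$-tile. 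Thus given $\c_i$, the $0$-tile $f(X^1(e_i,\c_i))$ has a unique color $\c'$, and the inclusion $X^1(e_{i+1},\c_{i+1})\subseteq f(X^1(e_i,\c_i))$ required by (\ref{eqDefA||}) forces $\c_{i+1}=\c'$. Hence the color sequence is determined by $\c_0$, and both initial choices $\c_0\in\{\b,\w\}$ yield admissible sequences (so $\pi_{\ee}$ is surjective as well).

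For (ii), well-definedness of $\pi_{\e}$ rests on the edge version of Lemma~\ref{lmCylinderIsTile}, which guarantees $\bigcap_{i\in\N_0}f^{-i}(e_i)$ is a singleton; the identity $f\circ\pi_{\e}=\pi_{\e}\circ\sigma_{A_{\e}}$ is then a direct computation. Surjectivity is obtained inductively: given $x\in\CC$, pick any $1$-edge $e_0\subseteq\CC$ with $x\in e_0$; since $f(\CC)\subseteq\CC$, the $0$-edge $f(e_0)\subseteq\CC$ contains $f(x)$, so one can pick a $1$-edge $e_1\subseteq f(e_0)\subseteq\CC$ with $f(x)\in e_1$; iterating yields an admissible preimage. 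For H\"{o}lder continuity, Lemma~\ref{lmCylinderIsTile} identifies $\bigcap_{i=0}^{N-1}f^{-i}(e_i)$ as an $N$-edge when $\{e_i\}$ and $\{e'_i\}$ agree on their first $N$ entries, and Lemma~\ref{lmCellBoundsBM}~(ii) then bounds $d(\pi_{\e}(\{e_i\}),\pi_{\e}(\{e'_i\}))\leq C\Lambda^{-N}$, giving H\"{o}lder exponent $-\log\Lambda/\log\theta$.

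The cardinality formula (\ref{eqPi|Card}) is the core of (ii). My plan is to set up a bijection between $\pi_{\e}^{-1}(x)$ and the inverse limit of the sets
$\mathcal{E}_n(x) \coloneqq \{E\in\E^{n+1}(f,\CC): E\subseteq\CC,\, x\in E\}$: an admissible $\{e_i\}$ gives the nested family $E_n=\bigcap_{i=0}^{n}f^{-i}(e_i)$, which is an $(n+1)$-edge in $\CC$ by Lemma~\ref{lmCylinderIsTile}, and conversely a consistent family recovers $\{e_i\}$ uniquely since a $k$-edge is contained in a unique $1$-edge ($k$-cells refine $1$-cells, and $e_i$ is forced to be the unique $1$-edge containing $f^i(E_n)$). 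Because $\CC$ is a Jordan curve partitioned by its $(n+1)$-vertices into $(n+1)$-edges, $\card\mathcal{E}_n(x)$ equals $1$ when $x\notin\V^{n+1}(f,\CC)$ and $2$ when $x\in\V^{n+1}(f,\CC)$. Since $\V^i(f,\CC)\subseteq\V^{i+1}(f,\CC)$, the former persists for every $n$ precisely when $x\notin\V(f,\CC)$, yielding cardinality $1$; in the vertex case $\card\mathcal{E}_n(x)$ stabilizes at $2$ from some level onward.

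The main technical point will be verifying that, in the vertex case, the two branches at consecutive levels are consistently nested rather than collapsing into a single branch. This reduces to the local combinatorial picture near an $n$-vertex on a Jordan curve: the two distinct $n$-edges of $\CC$ abut $x$ from opposite sides, and each of the two $(n+1)$-edges of $\CC$ containing $x$ must refine a distinct one of them (no $(n+1)$-edge can cross $x$, since $x$ is also an $(n+1)$-vertex). This produces two descending chains in the inverse system, and the inverse limit therefore has cardinality exactly $2$, completing (\ref{eqPi|Card}).
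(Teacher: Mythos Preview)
Your proposal is correct and follows essentially the same approach as the paper. Part~(i) is handled identically (recursive determination of the color sequence from $\c_0$), and for part~(ii) the well-definedness, surjectivity, H\"older continuity, and factor identity are argued just as in the paper via Lemma~\ref{lmCylinderIsTile} and Lemma~\ref{lmCellBoundsBM}.

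For the cardinality formula~(\ref{eqPi|Card}) your inverse-limit packaging is a clean reformulation of what the paper does by hand: the paper explicitly constructs nested families $e^j_1, e^j_2 \in \E^j$ with $e^j_1 = e^j_2$ for $j$ below the first vertex level and $e^j_1 \neq e^j_2$ thereafter, then performs a three-case analysis (before, at, and after the critical index) to show $\card(\pi_{\e}^{-1}(x)) \leq 2$. Your bijection $\{e_i\} \leftrightarrow \{E_n\}$ with $E_n = \bigcap_{i=0}^n f^{-i}(e_i)$ encodes exactly the same nested edges, and your ``consistent nesting'' observation (that the two $(n{+}1)$-edges at $x$ refine distinct $n$-edges once $x \in \V^n$) is precisely the combinatorial content of the paper's case analysis. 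The inverse-limit language makes the argument for the upper bound more uniform, while the paper's explicit construction makes the distinctness of the two preimages slightly more visible (via injectivity of $f^{n-1}$ near $x$); the underlying mathematics is the same.
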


Thus, we have the following commutative diagram:
\begin{equation*}
\xymatrix{ \Sigma_{A_{\ee}}^+ \ar[d]_{\sigma_{A_{\ee}}} \ar[r]^{\pi_{\ee}}  & \Sigma_{A_{\e}}^+ \ar[d]^{\sigma_{A_{\e}}} \ar[r]^{\pi_{\e}}  & \CC \ar[d]^{f|_\CC}\\ 
           \Sigma_{A_{\ee}}^+                           \ar[r]_{\pi_{\ee}}  & \Sigma_{A_{\e}}^+                          \ar[r]_{\pi_{\e}}  & \CC.}
\end{equation*}

\begin{proof}
(i) It follows immediately from (\ref{eqDefPi||}), (\ref{eqDefA||}), (\ref{eqDefA|}), and the definitions of $\Sigma_{A_{\ee}}^+$ and $\Sigma_{A_{\e}}^+$ that $\pi_{\ee} \bigl( \Sigma_{A_{\ee}}^+ \bigr) \subseteq \Sigma_{A_{\e}}^+$. By (\ref{eqDefPi||}), it is clear that $\pi_{\ee}$ is Lipschitz continuous.

Next, we show that $\card \bigl(\pi_{\ee}^{-1}  (  \{ e_i \}_{i\in\N_0} ) \bigr) = 2$ for each $\{ e_i \}_{i\in\N_0} \in \Sigma_{A_{\e}}^+$. The fact that $\pi_{\ee}$ is surjective then follows for free.

Fix arbitrary $\c\in\{\b, \, \w\}$ and $\{ e_i \}_{i\in\N_0} \in \Sigma_{A_{\e}}^+$.

We recursively construct $\c_i \in \{\b, \, \w\}$ for each $i\in\N_0$ such that $\c_0 = \c$ and  $\{ (e_i,\c_i) \}_{i\in\N_0} \in \Sigma_{A_{\ee}}^+$, and prove that such a sequence $\{ \c_i \}_{i\in\N_0}$ is unique. Let $\c_0 \coloneqq \c$. Assume that for some $k\in\N_0$, $\c_j$ is determined and is unique for all integer $j\in\N_0$ with $j\leq k$, in the sense that any other choice of $\c_j$ for any $j\in\N_0$ with $j\leq k$ would result in $\{ (e_i,\c_i) \}_{i\in\N_0} \notin \Sigma_{A_{\ee}}^+$ regardless of choices of $\c_j$ for $j>k$. Recall $X^1( e_k, \c_k )$ defined in (\ref{eqDefXec}). Since $f(e_k)\supseteq e_{k+1}$ and $f\bigl(X^1( e_k, \c_k )\bigr)$ is the black $0$-tile $X^0_\b$ or the white $0$-tile $X^0_\w$ by Proposition~\ref{propCellDecomp}~(i), we will have to choose $\c_{k+1} \coloneqq \b$ in the former case and $\c_{k+1} \coloneqq \w$ in the latter case due to (\ref{eqDefA||}). Hence, $\{ (e_i,\c_i) \}_{i\in\N_0} \in \pi_{\ee}^{-1}  (  \{  e_i \}_{i\in\N_0}  )$ is uniquely determined by $\{  e_i \}_{i\in\N_0}$ and $\c\in\{\b, \, \w\}$. This proves $\card \bigl(\pi_{\ee}^{-1}  (  \{ e_i \}_{i\in\N_0} ) \bigr) = 2$ for each $\{ e_i \}_{i\in\N_0} \in \Sigma_{A_{\e}}^+$.

Finally, it follows immediately from (\ref{eqDefPi||}) that $\pi_{\ee} \circ \sigma_{A_{\ee}} = \sigma_{A_{\e}} \circ \pi_{\ee}$.

\smallskip

(ii) Fix an arbitrary $\{e_i\}_{i\in\N_0} \in \Sigma_{A_{\e}}^+$.

Since $f(e_i)\supseteq e_{i+1}$ for each $i\in\N_0$, by Lemma~\ref{lmCylinderIsTile}, the map $\pi_{\e}$ is well-defined.

Note that for each $m\in \N_0$ and each $\{e'_i\}_{i\in\N_0} \in \Sigma_{A_{\e}}^+$ with $e_{m+1} \neq e'_{m+1}$ and $e_j=e'_j$ for each integer $j\in[0,m]$, we have $\{ \pi_{\e}  (\{e_i\}_{i\in\N_0}  ),  \pi_{\e}  ( \{ e'_i \}_{i\in\N_0}  ) \} \subseteq \bigcap_{i=0}^{m} f^{-i}(e_i) \in \E^{m+1}$ by Lemma~\ref{lmCylinderIsTile}. Thus, it follows from Lemma~\ref{lmCellBoundsBM}~(ii) that $\pi_{\e}$ is H\"{o}lder continuous.

To see that $\pi_{\e}$ is surjective, we observe that for each $x\in \CC$, we can find a sequence $\bigl\{ e^j(x) \bigr\}_{j\in\N}$ of edges such that $e^j(x) \in \E^j$, $e^j(x) \subseteq \CC$, $x\in e^j(x)$, and $e^j(x) \supseteq e^{j+1}(x)$ for each $j\in\N$. Then it is clear from Proposition~\ref{propCellDecomp}~(i) that $\bigl\{  f^i \bigl( e^{i+1}(x) \bigr) \bigr\}_{i\in\N_0} \in \Sigma_{A_{\e}}^+$ and $\pi_{\e} \bigl(  \bigl\{ f^i \bigl( e^{i+1}(x) \bigr) \bigr\}_{i\in\N_0} \bigr) = x$.

Next, to check that $\pi_{\e} \circ \sigma_{A_{\e}} = f \circ \pi_{\e}$, it suffices to observe that 
$
            \{ (f\circ \pi_{\e}) (\{e_i\}_{i\in\N_0}) \}
  =         f \bigl(  \bigcap_{j\in\N_0} f^{-j} (e_j) \bigr)
\subseteq        \bigcap_{j\in\N} f^{-(j-1)} (e_j) 
  =          \bigcap_{i\in\N_0} f^{-i} (e_{i+1})  
  =         \{(\pi_{\e} \circ \sigma_{A_{\e}} ) (\{e_i\}_{i\in\N_0})  \}.
$

Finally, we are going to establish (\ref{eqPi|Card}). Fix an arbitrary point $x\in\CC$.

\smallskip

\emph{Case~1.} $x\in \CC\setminus \V(f,\CC)$.

\smallskip
We argue by contradiction and assume that there exist distinct $\{e_i\}_{i\in\N_0},\, \{e'_i\}_{i\in\N_0} \in \pi_{\e}^{-1}(x)$. Choose $m\in\N_0$ to be the smallest non-negative integer with $e_m\neq e'_m$. Then by Lemma~\ref{lmCylinderIsTile}, $x\in \bigcap_{i=0}^m f^{-i} (e_i) \in \E^{m+1}$ and $x\in \bigcap_{i=0}^m f^{-i} (e'_i) \in \E^{m+1}$. Thus, $f^m(x) \in e_m \cap e'_m \subseteq \V^1$ by Proposition~\ref{propCellDecomp}~(v). This is a contradiction to the assumption that $x\in \CC\setminus \V(f,\CC)$. Hence, $\card \bigl( \pi_{\e}^{-1} (x) \bigr) = 1$.

\smallskip

\emph{Case~2.} $x\in \CC\cap \V(f,\CC)$.

\smallskip

Denote $n \coloneqq \min \bigl\{ i\in\N : x\in\V^i \bigr\} \in \N$.

For each $j\in\N$ with $j<n$, we define $e^j_1,e^j_2 \in \E^j$ to be the unique $j$-edge with $x\in e^j_1=e^j_2 \subseteq \CC$. For each $i\in\N$ with $i\geq n$, we choose the unique pair $e^i_i, \, e^i_2 \in \E^i$ of $i$-edges satisfying (1) $e^i_1 \cup e^i_2 \subseteq \CC$, (2) $e^i_1 \cap e^i_2 = \{x\}$, and (3) if $i \geq 2$, then $e^i_1 \subseteq e^{i-1}_1$ and $e^i_2 \subseteq e^{i-1}_2$. Then it is clear from Proposition~\ref{propCellDecomp}~(i) that for each $k\in\{1, \, 2\}$, $\bigl\{ f^i \bigl(e^{i+1}_k\bigr) \bigr\}_{i\in\N_0} \in \Sigma_{A_{\e}}^+$ and $\pi_{\e} \bigl(  \bigl\{ f^i \bigl(e^{i+1}_k\bigr) \bigr\}_{i\in\N_0} \bigr) = x$.

Note that if $n=1$, then $e^1_1 \neq e^1_2$. If $n\geq 2$, then $f^i(x) \notin \V^1$ and thus $f^i(x) \notin \crit f|_\CC$, for each $i\in\{0, \, 1, \, \dots, \, n-2\}$. So $f^{n-1}$ is injective on a neighborhood of $x$ and consequently by Proposition~\ref{propCellDecomp}~(i), $f^{n-1} \bigl(e^n_1\bigr) \neq f^{n-1} \bigl(e^n_2\bigr)$. Hence, $\card \bigl( \pi_{\e}^{-1} (x) \bigr) \geq 2$.

On the other hand, for each $\{ e_i \}_{i\in\N_0} \in \pi_{\e}^{-1} (x)$ and each $j\in\N_0$, (1) if $j<n-1$, then $e_j = f^j \bigl(e^{j+1}_1 \bigr) = f^j \bigl(e^{j+1}_2 \bigr)$ (since $e^{j+1}_1=e^{j+1}_2$) and $f^j(x)\in \inte \bigl( f^j  \bigl(e^{j+1}_1 \bigr) \bigr)$ (by the definition of $n$); (2) if $j=n-1$, then either $e_j= f^j  \bigl(e^{j+1}_1 \bigr)$ or $e_j= f^j  \bigl(e^{j+1}_2 \bigr)$ since $f^{j}(x) \in f^j  \bigl(e^{j+1}_1 \bigr) \cap f^j  \bigl(e^{j+1}_2 \bigr)$; (3) if $j\geq n$, then  $f^j(x) \in \V^0$ and consequently by Proposition~\ref{propCellDecomp}~(i) and (v), there exists exactly one $1$-edge $e_j\in\E^1$ such that $f^j(x) \in e_j$ and $f(e_{j-1}) \supseteq e_j$. Hence, $\card \bigl( \pi_{\e}^{-1} (x) \bigr) \leq 2$. 

Identity (\ref{eqPi|Card}) is now established.
\end{proof}

\subsection{Combinatorics}   \label{subsctDynOnC_Combinatorics}

\begin{lemma}   \label{lmPeriodicPtsLocationCases}
Let $f$ and $\CC$ satisfy the Assumptions in Section~\ref{sctAssumptions}. We assume, in addition, that $f(\CC)\subseteq\CC$. For each $n\in\N$ and each $x\in S^2$ with $f^n(x)=x$, exactly one of the following statements holds:
\begin{enumerate}
\smallskip
\item[(i)] $x\in\inte (X^n)$ for some $n$-tile $X^n\in\X^n(f,\CC)$, where $X^n$ is either a black $n$-tile contained in the black $0$-tile $X^0_\b$ or a white $n$-tile contained in the white $0$-tile $X^0_\w$. Moreover, $x\notin \bigcup_{i\in\N_0} \left( \bigcup \E^i(f,\CC) \right)$.

\smallskip
\item[(ii)] $x\in \inte (e^n)$ for some $n$-edge $e^n\in\E^n(f,\CC)$ satisfying $e^n \subseteq \CC$. Moreover, $x\notin \bigcup_{i\in\N_0}  \V^i(f,\CC)$.

\smallskip
\item[(iii)] $x\in\post f$.
\end{enumerate} 
\end{lemma}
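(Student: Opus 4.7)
The plan is to pin down, for each periodic point $x$ satisfying $f^n(x)=x$, the unique $n$-cell whose relative interior contains $x$, and then split by its dimension. By Definition~\ref{defcelldecomp}, the sets $\{\inte(c) : c\in\DD^n(f,\CC)\}$ partition $S^2$, so there is a unique $c^n\in\DD^n(f,\CC)$ with $x\in\inte(c^n)$, and the three possibilities (tile, edge, vertex) are mutually exclusive and will produce cases~(i), (ii), (iii) respectively. A recurring tool throughout will be the refinement chain: since $f(\CC)\subseteq\CC$, one has $f^{-k}(\CC)\subseteq f^{-(k+1)}(\CC)$ for all $k\in\N_0$, so by Proposition~\ref{propCellDecomp}~(iii) the $1$-skeletons are nested and $\DD^{k+1}(f,\CC)$ refines $\DD^k(f,\CC)$. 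Consequently $\CC$ is a union of $n$-edges, and every $n$-tile lies in either $X^0_\b$ or $X^0_\w$.

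First I would treat $c^n=X^n\in\X^n(f,\CC)$. Refinement gives $X^n\subseteq X^0_\c$ for some $\c\in\{\b,\w\}$; moreover $\inte(X^n)\subseteq\inte(X^0_\c)$ since $\inte(X^n)$ avoids the $1$-skeleton of $\DD^n$ and hence $\CC=\partial X^0_\c$. By Proposition~\ref{propCellDecomp}~(i), $f^n|_{X^n}$ is a homeomorphism onto the $0$-tile $f^n(X^n)$, which contains $x=f^n(x)\in\inte(X^0_\c)$ and must therefore equal $X^0_\c$; this yields the color condition in~(i). For the moreover clause, suppose $x\in e^i$ for some $e^i\in\E^i$ with $i\in\N_0$: writing $i=qn+r$ with $q\in\N_0$ and $0\le r<n$, periodicity gives $f^i(x)=f^r(x)\in\CC$, so $x\in f^{-r}(\CC)\subseteq f^{-n}(\CC)$, contradicting $x\in\inte(X^n)$.

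If instead $c^n=e^n\in\E^n(f,\CC)$, then $f^n|_{e^n}$ is a homeomorphism onto a $0$-edge, so $x=f^n(x)\in\inte(f^n(e^n))\subseteq\CC$; since $\CC$ is a union of $n$-edges and the $n$-edge whose interior contains $x$ is unique, that $n$-edge must be $e^n$, giving $e^n\subseteq\CC$. To establish $x\notin\bigcup_{i\in\N_0}\V^i$, I would again reduce $i=qn+r$: when $r=0$ one gets $x\in\post f\subseteq\V^n$, contradicting $x\in\inte(e^n)$, while for $1\le r<n$ cellularity of $f^r$ sends $x$ into $\inte(f^r(e^n))$, disjoint from $\V^{n-r}\supseteq\V^0\supseteq\post f$, contradicting $f^r(x)\in\post f$. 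The vertex case $c^n=\{v\}$ is immediate: $\V^n=f^{-n}(\post f)$ forces $x=f^n(v)\in\post f$. The principal (though mild) obstacle is the moreover clauses, which require propagating local information at level $n$ to every level $i\in\N_0$; this is precisely what the periodicity reduction $i\equiv r\pmod n$ with $r<n$ accomplishes, and it relies essentially on the refinement chain supplied by $f(\CC)\subseteq\CC$.
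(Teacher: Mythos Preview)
Your proof is correct and follows essentially the same approach as the paper's: both partition by the dimension of the unique $n$-cell whose interior contains $x$, and both handle the ``moreover'' clauses by reducing an arbitrary level $i$ modulo the period $n$ via the nested $1$-skeletons coming from $f(\CC)\subseteq\CC$. The only cosmetic difference is that you argue the color condition in case~(i) directly (noting $f^n(X^n)$ is the $0$-tile containing $x\in\inte(X^0_\c)$), whereas the paper invokes the contrapositive of Lemma~\ref{lmAtLeast1}; your direct argument is in fact exactly what underlies that lemma.
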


\begin{proof}
Fix $x\in S^2$ and $n\in\N$ with $f^n(x)=x$. It is easy to see that at most one of Cases~(i), (ii), and (iii) holds. By Proposition~\ref{propCellDecomp}~(iii) and (v), it is clear that exactly one of the following cases holds:
\begin{enumerate}
\smallskip
\item[(1)] $x\in\inte (X^n)$ for some $n$-tile $X^n\in\X^n$.

\smallskip
\item[(2)] $x\in \inte (e^n)$ for some $n$-edge $e^n\in\E^n$.

\smallskip
\item[(3)] $x\in \V^n$.
\end{enumerate}

Assume that case~(1) holds. We argue by contradiction and assume that there exists $j\in\N_0$ and $e\in\E^j$ such that $x\in e$. Then for $k \coloneqq \bigl\lceil\frac{j+1}{n}\bigr\rceil \in\N$, $x=f^{kn}(x)\in f^{kn}(e)\subseteq \CC$, contradicting with $x\in\inte(X^n)$. So $x\notin \bigcup_{i\in\N_0} \left( \bigcup \E^i \right)$. By Lemma~\ref{lmAtLeast1}, the rest of statement~(i) holds. Hence, statement~(i) holds in case~(1).

Assume that case~(2) holds. By Proposition~\ref{propCellDecomp}~(i), $x=f^n(x) \in \inte (e^0) \subseteq\CC$ where $e^0=f^n(e^n)\in \E^0$. Since $f(\CC)\subseteq \CC$, $\DD^n$ is a refinement of $\DD^0$ (see Definition~\ref{defrefine}). So we can choose an arbitrary $n$-edge $e^n_*\in\E^n$ contained in $e^0$ with $x\in e^n_*$. Since $x\notin \V^n$, we have $x\in \inte (e^n_*)$. By Definition~\ref{defcelldecomp}, $e^n = e^n_* \subseteq e^0\subseteq \CC$. To verify that $x\notin \bigcup_{i\in\N_0}  \V^i$, we argue by contradiction and assume that there exists $j\in\N_0$ such that $x\in\V^j$. Then for $k \coloneqq \bigl\lceil\frac{j+1}{n}\bigr\rceil \in\N$, $x=f^{kn}(x)\in \V^0$, contradicting with $x\in\inte(e^n)$. Thus, $x\notin \bigcup_{i\in\N_0}  \V^i$. Hence, statement~(ii) holds in case~(2).

Assume that case~(3) holds. By Proposition~\ref{propCellDecomp}~(i), $x = f^n(x) \subseteq \V^0 = \post f$. Hence, statement~(iii) holds in case~(3).
\end{proof}

Let $f$ be an expanding Thurston map with an $f$-invariant Jordan curve $\CC$ containing $\post f$. We orient $\CC$ in such a way that the white $0$-tile lies on the left of $\CC$. Let $p\in \CC$ be a fixed point of $f$. We say that $f|_\CC$ \defn{preserves the orientation at $p$} (resp.\ \defn{reverses the orientation at $p$}) if there exists an open arc $l\subseteq\CC$ with $p\in l$ such that $f$ maps $l$ homeomorphically to $f(l)$ and $f|_\CC$ preserves (resp.\ reverses) the orientation on $l$. Note that it may happen that $f|_\CC$ neither preserves nor reverses the orientation at $p$, because $f|_\CC$ need not be a local homeomorphism near $p$, where it may behave like a ``folding map''.

\begin{theorem}   \label{thmNoPeriodPtsIdentity}
Let $f$ and $\CC$ satisfy the Assumptions in Section~\ref{sctAssumptions}. We assume, in addition, that $f(\CC)\subseteq\CC$. Let $\bigl(\Sigma_{A_{\ti}}^+,\sigma_{A_{\ti}}\bigr)$ be the one-sided subshift of finite type associated to $f$ and $\CC$ defined in Proposition~\ref{propTileSFT}, and let $\pi_{\ti}\: \Sigma_{A_{\ti}}^+\rightarrow S^2$ be the factor map defined in (\ref{eqDefTileSFTFactorMap}). Recall the one-sided subshifts of finite type $\bigl( \Sigma_{A_{\e}}^+, \sigma_{A_{\e}} \bigr)$ and $\bigl( \Sigma_{A_{\ee}}^+, \sigma_{A_{\ee}} \bigr)$ constructed in Subsection~\ref{subsctDynOnC_Construction}, and the factor maps $\pi_{\e}\: \Sigma_{A_{\e}}^+\rightarrow S^2$, $\pi_{\ee}\: \Sigma_{A_{\ee}}^+\rightarrow \Sigma_{A_{\e}}^+$ defined in Proposition~\ref{propSFTs_C}. We denote by $\left(\V^0, f|_{\V^0}\right)$ the dynamical system on $\V^0=\V^0(f,\CC) = \post f$ induced by $f|_{\V^0} \: \V^0\rightarrow \V^0$. For each $y\in S^2$ and each $i\in\N$, we write
\begin{align*}
 M_{\po} (y,i)  &\coloneqq  \card \bigl( P_{1, (f|_{\V^0})^i }       \cap                                  \{y\} \bigr),  \\
 M_{\e}  (y,i)  &\coloneqq  \card \bigl( P_{1, \sigma_{A_{\e}}^i }   \cap \pi_{\e}^{-1}                   (y)\bigr),  \\
 M_{\ee} (y,i)  &\coloneqq  \card \bigl( P_{1, \sigma_{A_{\ee}}^i }  \cap (\pi_{\e} \circ \pi_{\ee})^{-1} (y)\bigr),  \\
 M_{\ti} (y,i)  &\coloneqq  \card \bigl( P_{1, \sigma_{A_{\ti}}^i }  \cap \pi_{\ti}^{-1}                  (y)\bigr). 
\end{align*}
Then for each $n\in\N$ and each $x\in P_{1,f^n}$, we have
\begin{equation}   \label{eqNoPeriodPtsIdentity}
M_{\ti} (x,n)  - M_{\ee} (x,n) + M_{\e} (x,n) + M_{\po} (x,n) = \deg_{f^n} (x).
\end{equation}
\end{theorem}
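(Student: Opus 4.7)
The plan is to invoke the trichotomy of Lemma~\ref{lmPeriodicPtsLocationCases}, which partitions $P_{1,f^n}$ into three disjoint cases according to whether $x$ sits in the interior of an $n$-tile (and outside $\bigcup_{i\in\N_0}\bigl(\bigcup\E^i\bigr)$), in the interior of an $n$-edge on $\CC$ (and outside $\bigcup_{i\in\N_0}\V^i$), or in $\post f=\V^0$. I would verify~(\ref{eqNoPeriodPtsIdentity}) in each case separately; the first two are relatively direct, while the third contains the main combinatorial content.

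In Case~(i), I would observe that $x\notin\CC$ forces $M_\e(x,n)=M_\ee(x,n)=0$, while $x\notin\V^0$ gives $M_\po(x,n)=0$; injectivity of $\pi_\ti$ on $\pi_\ti^{-1}\bigl(S^2\setminus\bigcup_{i\in\N_0}f^{-i}(\CC)\bigr)$ from Proposition~\ref{propTileSFT}, combined with $f^n(x)=x$, yields $M_\ti(x,n)=1$; and $\crit(f^n)\subseteq\V^n$ gives $\deg_{f^n}(x)=1$, so the identity reads $1-0+0+0=1$. In Case~(ii), by Proposition~\ref{propSFTs_C}~(ii), $\pi_\e^{-1}(x)$ is a singleton $\{e^1_i\}$ with $e^1_i$ the unique $1$-edge on $\CC$ through $f^i(x)$, automatically $\sigma_{A_\e}^n$-fixed, so $M_\e(x,n)=1$. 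The key observation is that the $\sigma$-equivariant map $\iota\: \Sigma_{A_\ee}^+\to\Sigma_{A_\ti}^+$, $\{(e_i,\c_i)\}\mapsto\{X^1(e_i,\c_i)\}$, restricts to a bijection $(\pi_\e\circ\pi_\ee)^{-1}(x)\to\pi_\ti^{-1}(x)$: since $f^i(x)$ lies in the interior of the unique $\CC$-edge $e^1_i$, every tile $X_i$ appearing in a $\pi_\ti^{-1}(x)$-sequence must contain $e^1_i$ in its boundary, so is uniquely written as $X^1(e^1_i,\c_i)$. This forces $M_\ee(x,n)=M_\ti(x,n)$, and the identity reduces to $M_\ti-M_\ti+1+0=1=\deg_{f^n}(x)$.

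For Case~(iii), I plan to reduce to a local calculation at the vertex $x\in\V^0$. Using the identity $\DD^n(f,\CC)=\DD^1(f^n,\CC)$ together with Lemma~\ref{lmCylinderIsTile}, each of the four quantities becomes a count of local cells: $M_\ti(x,n)$ is the number of $n$-tiles $Z\ni x$ with $Z\subseteq f^n(Z)$ (equivalently, $f^n$ sends $Z$ into the $0$-tile containing $Z$); $M_\e(x,n)$ counts $n$-edges $E\subseteq\CC$ at $x$ with $E\subseteq f^n(E)$; and $M_\ee(x,n)$ counts, via the fibers of $\iota$ over each such $Z$, the compatible $n$-periodic sequences of $\CC$-edges adjacent to the $1$-tiles along the orbit. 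By Definition~\ref{defBranchedCover} applied to $f^n$ and by Remark~\ref{rmFlower}, $f^n$ is locally conjugate to $z\mapsto z^d$ at $x$ with $d=\deg_{f^n}(x)$, and $\overline{W}^n(x)$ consists of $2d$ $n$-tiles cyclically arranged around $x$, split into two half-flowers of $d$ tiles each by the two local branches of $\CC$. The local behavior of $f^n|_\CC$ at $x$ falls into exactly three types: (P) $d$ odd with the two $\CC$-branches preserved, (R) $d$ odd with the branches swapped, and (F) $d$ even, where $d\pi\equiv 0\pmod{2\pi}$ forces both branches to fold onto one. A direct sector count in the local model would give $(M_\ti,M_\ee,M_\e)=(d+1,4,2)$ in type~(P), $(d-1,0,0)$ in type~(R), and $(d,2,1)$ in type~(F); combined with $M_\po=1$, each case yields $M_\ti-M_\ee+M_\e+M_\po=d=\deg_{f^n}(x)$.

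The main technical obstacle will lie in Case~(iii). Two items require careful attention: first, justifying that the sector count in the local model $z\mapsto z^d$ faithfully captures each $M$-quantity, including the fact that the $\iota$-fiber over a tile-sequence equals the number of compatible $\CC$-edge sequences along the orbit; and second, verifying that the trichotomy (P)/(R)/(F) exhausts all local configurations of $f^n|_\CC$ compatible with $f(\CC)\subseteq\CC$ and $f^n(x)=x$. Type~(F) is the most delicate, as $f^n|_\CC$ fails to be locally injective there; the counts $M_\ti$, $M_\ee$, and $M_\e$ each drop relative to type~(P) by specific amounts ($1$, $2$, and $1$ respectively) which must cancel precisely in the alternating sum.
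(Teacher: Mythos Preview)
Your approach via the trichotomy of Lemma~\ref{lmPeriodicPtsLocationCases} matches the paper's, and your treatment of Cases~(i) and~(ii) is correct and essentially identical to the paper's argument. Moreover, your final formulas $(M_\ti,M_\ee,M_\e)=(d+1,4,2)$, $(d-1,0,0)$, $(d,2,1)$ for the three behavioral types (P), (R), (F) in Case~(iii) are correct.

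However, there is a genuine gap in your justification of Case~(iii): the claim that the trichotomy (P)/(R)/(F) is governed by the parity of $d=\deg_{f^n}(x)$, and more fundamentally the assertion that the two $\CC$-branches at $x$ split the $n$-flower into ``two half-flowers of $d$ tiles each'', is false in general. In the local model of Remark~\ref{rmFlower}, the $2d$ radii in the source disk are the $n$-edges at $x$; only two of them lie on $\CC$, and nothing forces these two to be antipodal. Your argument that ``$d\pi\equiv 0\pmod{2\pi}$ forces both branches to fold onto one'' presupposes exactly this antipodality. Concretely, the paper's Figures~\ref{figPlota}--\ref{figPlotd} exhibit an orientation-preserving example (your type (P)) with $d=4$ even, and a folding example (your type (F)) with $d=3$ odd, directly contradicting your parity rule.

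The paper's fix is to parametrize not by $d$ alone but by a pair $(k,l)\in\N^2$ recording how the $2d$ $n$-tiles at $x$ are distributed between $X^0_\w$ and $X^0_\b$: the split is $(2k-1)+(2l-1)$ with $d=k+l-1$ in the orientation-preserving and orientation-reversing subcases, and $2k+2l$ with $d=k+l$ in the folding subcases. The case distinction is made by asking which $0$-edges $f^n(e_1)$ and $f^n(e_2)$ are, yielding subcases 2(a)--(d) (plus non-critical subcases 1(a),~1(b) when $d=1$); the counts $M_\ti=k+l$, $k+l-2$, or $k+l$ are then read off from the alternating color pattern. Your formulas follow from this combinatorial bookkeeping rather than from parity of~$d$.
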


\begin{proof}
Fix an arbitrary integer $n\in\N$ and an arbitrary fixed point $x\in P_{1,f^n}$ of $f^n$.

We establish (\ref{eqNoPeriodPtsIdentity}) by verifying it in each of the three cases of Lemma~\ref{lmPeriodicPtsLocationCases} depending on the location of $x$.

\smallskip

\emph{Case~(i) of Lemma~\ref{lmPeriodicPtsLocationCases}:} $x\in \inte(X^n)$ for some $n$-tile $X^n \in \X^n$, where $X^n$ is either a black $n$-tile contained in the black $0$-tile $X^0_\b$ or a white $n$-tile contained in the white $0$-tile $X^0_\w$. Moreover, $x\notin \bigcup_{i\in\N_0} \bigl( \bigcup \E^i \bigr) = \bigcup_{i\in\N_0} f^{-i} (\CC)$ (see Proposition~\ref{propCellDecomp}~(iii)). 

Thus, by Proposition~\ref{propTileSFT}, $\card \bigl(\pi_{\ti}^{-1}(x) \bigr) = 1$. For each $i\in\N_0$, we denote by $X^i(x)\in\X^i$ the unique $i$-tile containing $x$. Fix an arbitrary integer $j\in \N_0$. Then $f^j\bigl(X^{j+1}(x)\bigr) \in \X^1$ (see Propostion~\ref{propCellDecomp}~(i)) and $X^{j+1}(x)\subseteq X^j(x)$. Thus, $f \bigl( f^j\bigl(X^{j+1}(x)\bigr) \bigr) \supseteq f^{j+1}\bigl(X^{j+2}(x)\bigr)$. It follows from Lemma~\ref{lmCylinderIsTile} and (\ref{eqDefTileSFTFactorMap}) that $\pi_{\ti}^{-1} (x) = \bigl\{  \bigl\{ f^i\bigl(X^{i+1}(x)\bigr) \bigr\}_{i\in\N_0} \bigr\} \subseteq \Sigma_{A_{\ti}}^+$. Observe that $f^j\bigl(X^{j+1}(x)\bigr)$ is the unique $1$-tile containing $f^j(x)$, and that $f^{j+n}\bigl(X^{j+n+1}(x)\bigr)$ is the unique $1$-tile containing $f^{j+n}(x)$. Since $f^n(x)=x$, we can conclude from Definition~\ref{defcelldecomp} that $f^{j+n}\bigl(X^{j+n+1}(x)\bigr) = f^j\bigl(X^{j+1}(x)\bigr)$. Hence, $\bigl\{ f^i \bigl( X^{i+1}(x) \bigr) \bigr\}_{i\in\N_0} \in P_{1,\sigma_{A_{\ti}}^n}$ and $M_{\ti} = 1$. On the other hand, since $x\notin\CC$, we have $M_{\ee} (x,n) = M_{\e} (x,n) = M_{\po} (x,n) = 0$ by Proposition~\ref{propSFTs_C}. Since $x\in \inte (X^n)$, we have $\deg_{f^n} (x) = 1$. This establishes identity (\ref{eqNoPeriodPtsIdentity}) in Case~(i) of Lemma~\ref{lmPeriodicPtsLocationCases}.

\smallskip

\emph{Case~(ii) of Lemma~\ref{lmPeriodicPtsLocationCases}:} $x \in \inte(e^n)$ for some $n$-edge $e^n\in\E^n$ with $e^n \subseteq \CC$. Moreover, $x\notin \bigcup_{i\in\N_0} \V^i$. So $\deg_{f^n}(x) = 1$ and $M_{\po} (x,n) = 0$.

We will establish (\ref{eqNoPeriodPtsIdentity}) in this case by proving the following two claims.

\smallskip

\emph{Claim~1.} $M_{\e}(x,n) = 1$.

\smallskip

Since $\card \bigl( \pi_{\e}^{-1} (x) \bigr) = 1$ by Proposition~\ref{propSFTs_C}~(ii), it suffices to show that $\sigma_{A_{\e}}^n \bigl( \pi_{\e}^{-1} (x) \bigr) =  \pi_{\e}^{-1} (x)$. For each $y\in \CC \setminus \bigcup_{i\in\N_0} \V^i$ and $i\in\N_0$, we denote by $e^i(y) \in \E^i$ to be the unique $i$-edge containing $y$. Fix an arbitrary integer $j\in \N_0$. Then $f^j\bigl(e^{j+1}(x)\bigr) \in\E^1$ (see Proposition~\ref{propCellDecomp}~(i)) and $e^{j+1}(x) \subseteq e^j(x)$. Thus, $f \bigl( f^j \bigl( e^{j+1}(x) \bigr) \bigr) \supseteq f^{j+1} \bigl( e^{j+2}(x) \bigr)$. It follows from Lemma~\ref{lmCylinderIsTile} and (\ref{eqDefPi|}) that $\pi_{\e}^{-1} (x) = \bigl\{ \bigl\{ f^i \bigl( e^{i+1}(x) \bigr) \bigr\}_{i\in\N_0} \bigr\} \subseteq \Sigma_{A_{\e}}^+$. Observe that $f^j\bigl(e^{j+1}(x)\bigr)$ is the unique $1$-edge containing $f^j(x)$, and that $f^{j+n}\bigl(e^{j+n+1}(x)\bigr)$ is the unique $1$-edge containing $f^{j+n}(x)$. Since $f^n(x)=x$, we can conclude from Definition~\ref{defcelldecomp} that $f^{j+n}\bigl(e^{j+n+1}(x)\bigr) = f^j\bigl(e^{j+1}(x)\bigr)$. Hence, $\bigl\{ f^i \bigl( e^{i+1}(x) \bigr) \bigr\}_{i\in\N_0} \in P_{1,\sigma_{A_{\e}}^n}$ and $M_{\e}(x,n) = 1$, proving Claim~1.

\smallskip

\emph{Claim~2.} $M_{\ee}(x,n) = M_{\ti}(x,n)$.

\smallskip

We prove this claim by constructing a bijection $h\: \pi_{\ti}^{-1} (x) \rightarrow (\pi_{\e}\circ\pi_{\ee})^{-1} (x)$ explicitly and show that $h(\underline{z})\in P_{1,\sigma_{A_{\ee}}^n}$ if and only if $\underline{z}\in P_{1,\sigma_{A_{\ti}}^n}$.

Intuitively, using the point of view of ``descending chains'' as discussed before Lemma~\ref{lmCylinderIsTile}, it suffices to observe that it is effectively equivalent to use a descending chain of ``edges'' in the left figure of Figure~\ref{figDynOnC23} to identify a point in Case~(ii) versus to use a descending chain of tiles attached to the ``edges'' above to identify such a point.

For each $y\in \CC \setminus \bigcup_{i\in\N_0} \V^i$, each $\c\in\{\b, \, \w\}$, and each $i\in\N_0$, we denote by $X^{\c,i}(y) \in \X^i$ the unique $i$-tile satisfying $y\in X^{\c,i}(y)$ and $X^{\c,i}(y) \subseteq X^0_\c$. Here, $X^0_\b$ (resp.\ $X^0_\w$) is the unique black (resp.\ white) $0$-tile. Recall that as defined above, $e^i(x) \in \E^i$ is the unique $i$-edge containing $x$, for $i\in\N_0$. Then for each $\c\in\{\b, \, \w\}$ and each $i\in\N_0$, we have $e^i(x)\subseteq X^{\c,i}(x)$ (see Definition~\ref{defcelldecomp}), $f^i\bigl( X^{\c,i+1}(x) \bigr) \in \X^1$ (see Proposition~\ref{propCellDecomp}~(i)), and $X^{\c,i+1}(x) \subseteq X^{\c,i}(x)$. Thus, 
\begin{equation}   \label{eqPfthmNoPeriodPtsIdentity_X}
f \bigl( f^i \bigl( X^{\c,i+1}(x) \bigr) \bigr) \supseteq f^{i+1} \bigl( X^{\c,i+2}(x) \bigr).
\end{equation}
It follows from Lemma~\ref{lmCylinderIsTile} and (\ref{eqDefTileSFTFactorMap}) that $\bigl\{ f^i \bigl( X^{\c,i+1}(x) \bigr) \bigr\}_{i\in\N_0} \in \Sigma_{A_{\ti}}^+$ and 
\begin{equation*} 
\pi_{\ti} \bigl( \bigl\{ f^i \bigl( X^{\c,i+1}(x) \bigr) \bigr\}_{i\in\N_0} \bigr) = x.
\end{equation*}

Next, we show that $\card \bigl( \pi_{\ti}^{-1}(x) \bigr) = 2$. We argue by contradiction and assume that $\card \bigl( \pi_{\ti}^{-1}(x) \bigr) \geq 3$. We choose $\{X_i\}_{i\in\N_0} \in \pi_{\ti}^{-1}(x)$ different from $\bigl\{ f^i \bigl( X^{\b,i+1}(x) \bigr) \bigr\}_{i\in\N_0}$ and $\bigl\{ f^i \bigl( X^{\w,i+1}(x) \bigr) \bigr\}_{i\in\N_0}$. Since $x\in \CC \setminus \bigcup_{i\in\N_0} \V^i$, for each $j\in\N_0$, there exist exactly two $1$-tiles containing $f^j(x)$, namely, $X^{\b,1}\bigl(f^j(x)\bigr)$ and $X^{\w,1}\bigl(f^j(x)\bigr)$. Since $f^j (x) \in X_j$ for each $j\in\N_0$ (see (\ref{eqDefTileSFTFactorMap})), we get that there exists an integer $k\in\N_0$ and distinct $\c_1, \, \c_2\in\{\b, \, \w\}$ such that $X_k = f^k\bigl( X^{\c_1,k+1}(x) \bigr)$ and $X_{k+1} = f^{k+1}\bigl( X^{\c_2,k+2}(x) \bigr)$. Since $X^{\c_2,k+2}(x) \subseteq X^{\c_2,k+1}(x)$, $X^{\c_2,k+2}(x) \nsubseteq X^{\c_1,k+1}(x)$, and $f^{k+1}$ is injective on $\inte \bigl(  X^{\c_1,k+1}(x) \bigr) \cup \inte \bigl(  X^{\c_2,k+1}(x) \bigr)$, we get 
$
f(X_k) = f^{k+1} \bigl(  X^{\c_1,k+1}(x) \bigr) \nsupseteq f^{k+1} \bigl(  X^{\c_2,k+2}(x) \bigr) = X_{k+1}.
$
This is a contradiction. Hence, $\card \bigl( \pi_{\ti}^{-1}(x) \bigr) = 2$.

We define $h\: \pi_{\ti}^{-1}(x) \rightarrow (\pi_{\e} \circ \pi_{\ee})^{-1}(x)$ by
\begin{equation}   \label{eqPfthmNoPeriodPtsIdentity_Def_h}
   h \bigl( \bigl\{ f^i \bigl( X^{\c,i+1}(x) \bigr) \bigr\}_{i\in\N_0} \bigr) 
= \bigl\{ \bigl( f^i \bigl( e^{i+1} (x) \bigr), \c_i (\c)   \bigr)  \bigr\}_{i\in\N_0},   \qquad \c\in\{\b, \, \w\},
\end{equation}
where $\c_i (\c) \in \{\b, \, \w\}$ is the unique element in $\{\b, \, \w\}$ with the property that
\begin{equation}   \label{eqPfthmNoPeriodPtsIdentity_h2}
f^i \bigl(  X^{\c,i+1} (x) \bigr)  \subseteq X_{\c_i (\c)}^0
\end{equation}
for $i\in\N_0$. 

We first verify that $\bigl\{ \bigl( f^i \bigl( e^{i+1} (x) \bigr), \c_i (\c)   \bigr)  \bigr\}_{i\in\N_0} \in \Sigma_{A_{\ee}}^+$ for each $\c\in\{\b, \, \w\}$. Fix arbitrary $\c\in\{\b, \, \w\}$ and $j\in\N_0$. Since $e^{j+2}(x) \subseteq e^{j+1}(x) \subseteq \CC$, we get $f^j\bigl( e^{j+1}(x) \bigr) \subseteq \CC$ and 
\begin{equation}   \label{eqPfthmNoPeriodPtsIdentity_h1}
f \bigl( f^j \bigl( e^{j+1}(x) \bigr) \bigr) \supseteq f^{j+1} \bigl( e^{j+2}(x) \bigr).
\end{equation}
Recall that $X^1 \bigl(  f^j \bigl( e^{j+1} (x) \bigr), \c_j (\c) \bigr) \in \X^1$ denotes the unique $1$-tile satisfying 
\begin{equation*}
f^j \bigl( e^{j+1} (x) \bigr)    \subseteq     X^1 \bigl(  f^j \bigl( e^{j+1} (x) \bigr), \c_j (\c) \bigr)    \subseteq     X_{\c_j (\c)}^0
\end{equation*}
(see Proposition~\ref{propCellDecomp}~(iii), (v), and (vi) for its existence and uniqueness). Then by (\ref{eqPfthmNoPeriodPtsIdentity_h2}) and the fact that $e^{j+1}(x) \subseteq X^{\c,j+1} (x)$ (see Definition~\ref{defcelldecomp}), we get
\begin{equation}   \label{eqPfthmNoPeriodPtsIdentity_X=X}
   X^1 \bigl(  f^j \bigl( e^{j+1} (x) \bigr), \c_j (\c) \bigr)
=  f^j \bigl( X^{\c,j+1} (x) \bigr).
\end{equation}
Then by (\ref{eqDefA||}), (\ref{eqPfthmNoPeriodPtsIdentity_X=X}), (\ref{eqPfthmNoPeriodPtsIdentity_X}), and (\ref{eqPfthmNoPeriodPtsIdentity_h1}), $\bigl\{ \bigl( f^i \bigl( e^{i+1} (x) \bigr), \c_i (\c)   \bigr)  \bigr\}_{i\in\N_0} \in \Sigma_{A_{\ee}}^+$.

Note that since $X^{\c,1} (x) \subseteq X^0_\c$, we get $\c_0 (\c) = \c$ for $\c\in\{\b, \, \w\}$ from (\ref{eqPfthmNoPeriodPtsIdentity_h2}). Thus,
\begin{equation*}
     \bigl\{ \bigl( f^i \bigl( e^{i+1} (x) \bigr), \c_i (\b)   \bigr)  \bigr\}_{i\in\N_0}
\neq \bigl\{ \bigl( f^i \bigl( e^{i+1} (x) \bigr), \c_i (\w)   \bigr)  \bigr\}_{i\in\N_0},
\end{equation*}
i.e., $h$ is injective. By Proposition~\ref{propSFTs_C}~(i) and (ii), $\card \bigl( (\pi_{\e} \circ \pi_{\ee} )^{-1} ( x ) \bigr) = 2$. Thus, $h$ is a bijection.

It suffices now to show that for each $\underline{z} \in \pi_{\ti}^{-1}(x)$,  $h(\underline{z}) \in P_{1,\sigma_{A_{\ee}}^n }$ if and only if $\underline{z} \in P_{1,\sigma_{A_{\ti}}^n }$. Note that $e^i(x) \subseteq \CC$ for all $i\in\N_0$. Fix arbitrary $\c\in\{\b, \, \w\}$ and $i\in\N$. Note that since $f^i(x) \in f^i \bigl( e^{i+1}(x) \bigr)$, $f^i(x) = f^{i+n}(x) \in f^{i+n} \bigl( e^{i+n+1}(x) \bigr)$, and $f^i(x) \notin \bigcup_{i\in\N_0} \V^i$, we have
\begin{equation}  \label{eqPfthmNoPeriodPtsIdentity_Periodic}
\bigl( f^i \bigl( e^{i+1} (x) \bigr), \c_i (\c) \bigr) = \bigl( f^{i+n} \bigl( e^{i+n+1} (x) \bigr), \c_{i+n} (\c) \bigr)
\end{equation}
if and only if $X^1 \bigl(  f^i \bigl( e^{i+1} (x) \bigr), \c_i (\c) \bigr) = X^1  \bigl( f^{i+n} \bigl( e^{i+n+1} (x) \bigr), \c_{i+n} (\c) \bigr)$. Thus, by (\ref{eqPfthmNoPeriodPtsIdentity_X=X}), we get that (\ref{eqPfthmNoPeriodPtsIdentity_Periodic}) holds if and only if $f^i \bigl( X^{\c,i+1} (x) \bigr) = f^{i+n} \bigl( X^{\c,i+n+1} (x) \bigr)$. Hence, by (\ref{eqPfthmNoPeriodPtsIdentity_Def_h}), $h(\underline{z}) \in P_{1,\sigma_{A_{\ee}}^n }$ if and only if $\underline{z} \in P_{1,\sigma_{A_{\ti}}^n }$, for each $\underline{z} \in \pi_{\ti}^{-1}(x)$.

Claim~2 is now established. Therefore, (\ref{eqNoPeriodPtsIdentity}) holds in Case~(ii) of Lemma~\ref{lmPeriodicPtsLocationCases}.

\smallskip

\emph{Case~(iii) of Lemma~\ref{lmPeriodicPtsLocationCases}:} $x\in\post f$.

We will establish (\ref{eqNoPeriodPtsIdentity}) in this case by verifying it in each of the following subcases.

\begin{figure}
    \centering
    \begin{overpic}
    [width=6cm, %grid, 
    tics=20]{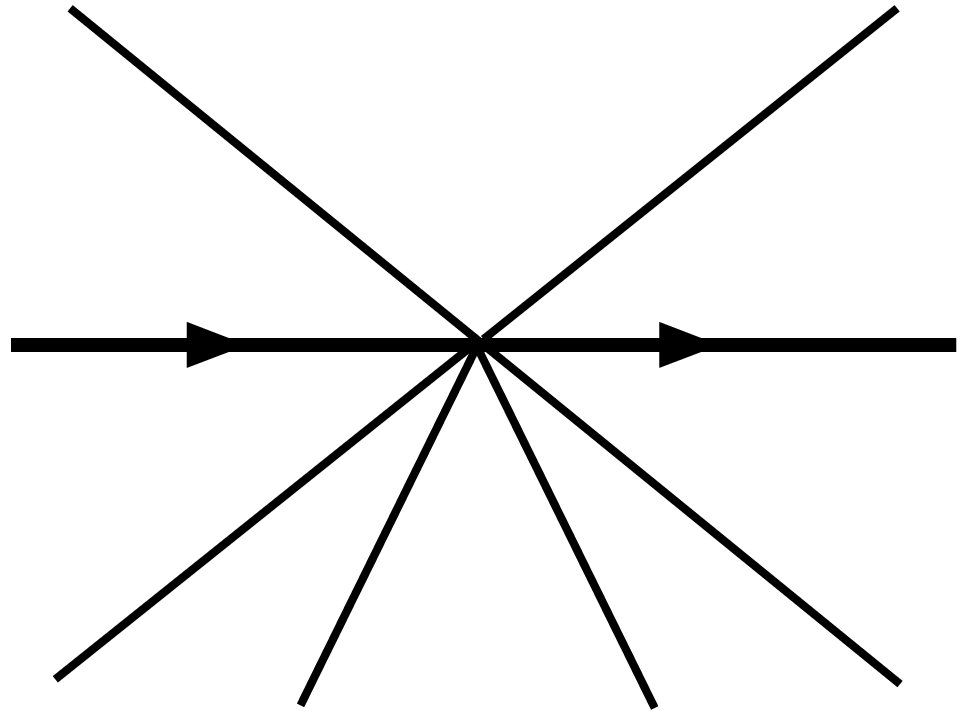}
    \put(75,100){$\b\w$}
    \put(25,80){$\w\w$}
    \put(130,80){$\w\w$}
    \put(25,33){$\b\b$}
    \put(40,20){$\w\b$}
    \put(79,12){$\b\b$}
    \put(112,20){$\w\b$}
    \put(135,33){$\b\b$}
    \put(20,57){$e_1$}
    \put(140,57){$e_2$}
    \put(80,50){$x$}
    \put(173,60){$\CC$}
    \put(163,80){$X^0_\w$}
    \put(163,40){$X^0_\b$}
    \end{overpic}
    \caption{Subcase (2)(a) where $f^n(e_1)\supseteq e_1$ and $f^n(e_2)\supseteq e_2$. $k=2$, $l=3$.}
    \label{figPlota}
%\end{figure}

%\begin{figure}
    \centering
    \begin{overpic}
    [width=6cm, %grid, 
    tics=20]{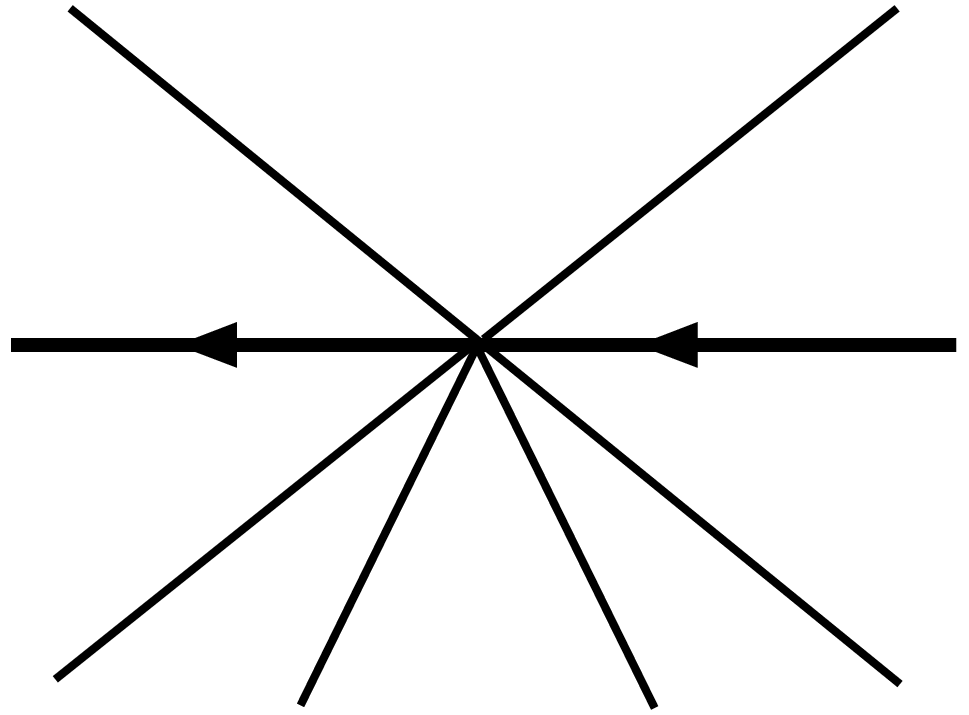}
    \put(75,100){$\w\w$}
    \put(25,80){$\b\w$}
    \put(130,80){$\b\w$}
    \put(25,33){$\w\b$}
    \put(40,20){$\b\b$}
    \put(79,12){$\w\b$}
    \put(112,20){$\b\b$}
    \put(135,33){$\w\b$}
    \put(20,57){$e_1$}
    \put(140,57){$e_2$}
    \put(80,50){$x$}
    \put(173,60){$\CC$}
    \put(163,80){$X^0_\w$}
    \put(163,40){$X^0_\b$}
    \end{overpic}
    \caption{Subcase (2)(b) where $f^n(e_1)\supseteq e_2$ and $f^n(e_2)\supseteq e_1$. $k=2$, $l=3$.}
    \label{figPlotb}
%\end{figure}

%\begin{figure}
    \centering
    \begin{overpic}
    [width=6cm, %grid, 
    tics=20]{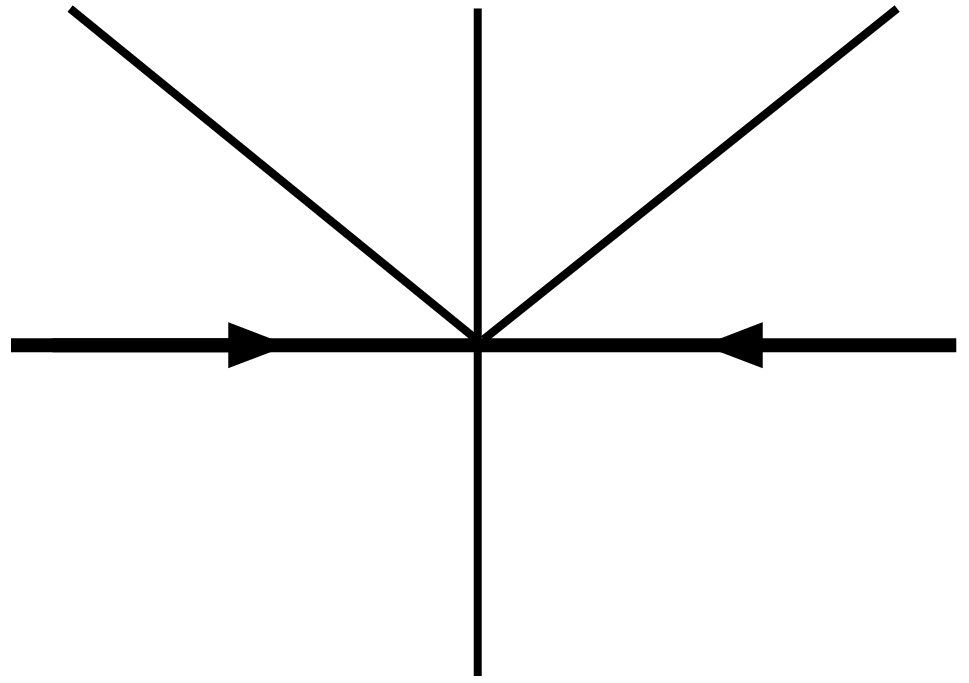}
    \put(55,100){$\b\w$}
    \put(95,100){$\w\w$}
    \put(25,70){$\w\w$}
    \put(130,70){$\b\w$}
    \put(37,22){$\b\b$}
    \put(117,22){$\w\b$}
    \put(25,48){$e_1$}
    \put(140,48){$e_2$}
    \put(75,50){$x$}
    \put(173,57){$\CC$}
    \put(163,77){$X^0_\w$}
    \put(163,37){$X^0_\b$}
    \end{overpic}
    \caption{Subcase (2)(c) where $f^n(e_1)=f^n(e_2)\supseteq e_1$. $k=2$, $l=1$.}
    \label{figPlotc}
%\end{figure}

%\begin{figure}
    \centering
    \begin{overpic}
    [width=6cm, %grid, 
    tics=20]{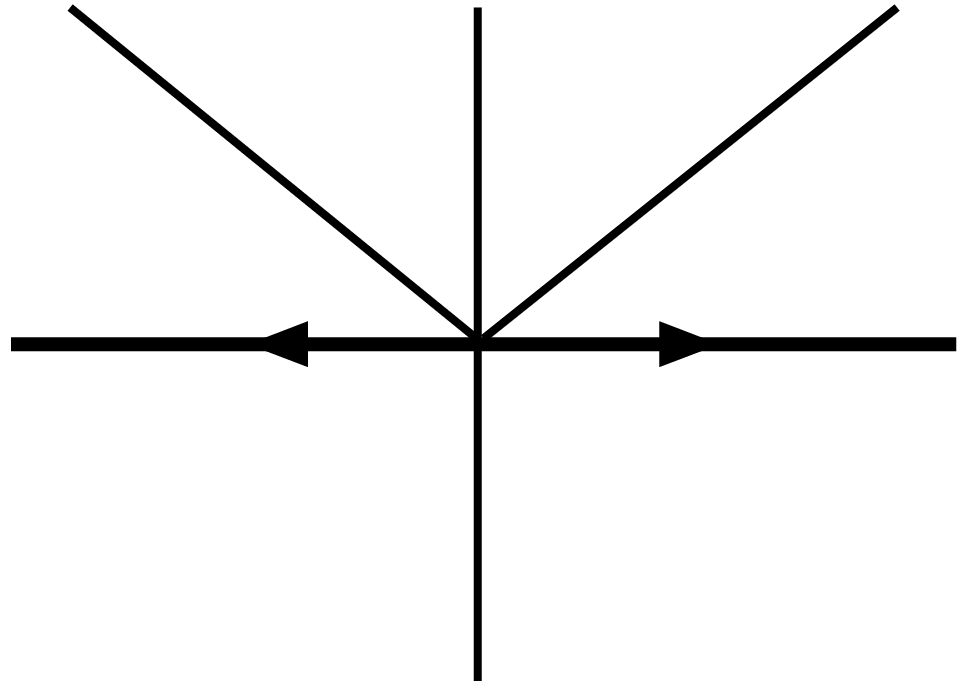}
    \put(55,100){$\w\w$}
    \put(95,100){$\b\w$}
    \put(25,70){$\b\w$}
    \put(130,70){$\w\w$}
    \put(37,22){$\w\b$}
    \put(117,22){$\b\b$}
    \put(25,48){$e_1$}
    \put(140,48){$e_2$}
    \put(75,50){$x$}
    \put(173,57){$\CC$}
    \put(163,77){$X^0_\w$}
    \put(163,37){$X^0_\b$}
    \end{overpic}
    \caption{Subcase (2)(d) where $f^n(e_1)=f^n(e_2)\supseteq e_2$. $k=2$, $l=1$.}
    \label{figPlotd}
\end{figure}

\begin{enumerate}

\smallskip
\item[(1)] If $x \notin \crit f^n$, then $(f^n)|_\CC$ either preserves or reverses the orientation at $x$ and the point $x$ is contained in exactly one white $n$-tile $X^n_\w$ and one black $n$-tile $X^n_\b$.
\begin{enumerate}

\smallskip
\item[(a)] If $(f^n)|_\CC$ preserves the orientation at $x$, then $X^n_\w \subseteq X^0_\w$ and $X^n_\b \subseteq X^0_\b$.

In this subcase, $M_{\ti} = 2$, $M_{\ee} = 4$, $M_{\e} = 2$, $M_{\po} = 1$, and $\deg_{f^n}(x) = 1$.

\smallskip
\item[(b)] If $(f^n)|_\CC$ reverses the orientation at $x$, then $X^n_\w \subseteq X^0_\b$ and $X^n_\b \subseteq X^0_\w$.

In this subcase, $M_{\ti} = 0$, $M_{\ee} = 0$, $M_{\e} = 0$, $M_{\po} = 1$, and $\deg_{f^n}(x) = 1$.
\end{enumerate}

\smallskip
\item[(2)] If $x \in \crit f^n$, then $x=f^n(x)\in\post f$ and so there are two distinct $n$-edges $e_1,e_2\subseteq\CC$ such that $\{x\}=e_1\cap e_2$. We refer to Figures \ref{figPlota} to \ref{figPlotd}.
\begin{enumerate}

\smallskip
\item[(a)] If $e_1\subseteq f^n(e_1)$ and $e_2\subseteq f^n(e_2)$, then $x$ is contained in exactly $k$ white and $k-1$ black $n$-tiles that are contained in the white $0$-tile, as well as in exactly $l-1$ white and $l$ black $n$-tiles that are contained in the black $0$-tile, for some $k, \, l\in\N$ with $k+l-1=\deg_{f^n}(x)$. Note that in this case $(f^n)|_\CC$ preserves the orientation at $x$.

In this subcase, $M_{\ti} = k+l$, $M_{\ee} = 4$, $M_{\e} = 2$, $M_{\po} = 1$, and $\deg_{f^n}(x) = k+l-1$.

\smallskip
\item[(b)] If $e_2\subseteq f^n(e_1)$ and $e_1\subseteq f^n(e_2)$, then $x$ is contained in exactly $k-1$ white and $k$ black $n$-tiles that are contained in the white $0$-tile, as well as in exactly $l$ white and $l-1$ black $n$-tiles that are contained in the black $0$-tile, for some $k, \, l\in\N$ with $k+l-1=\deg_{f^n}(x)$. Note that in this case $(f^n)|_\CC$ reverses the orientation at $x$.

In this subcase, $M_{\ti} = k+l-2$, $M_{\ee} = 0$, $M_{\e} = 0$, $M_{\po} = 1$, and $\deg_{f^n}(x) = k+l-1$.

\smallskip
\item[(c)] If $e_1\subseteq f^n(e_1)= f^n(e_2)$, then $x$ is contained in exactly $k$ white and $k$ black $n$-tiles that are contained in the white $0$-tile, as well as in exactly $l$ white and $l$ black $n$-tiles that are contained in the black $0$-tile, for some $k, \, l\in\N$ with $k+l=\deg_{f^n}(x)$. Note that in this case, $(f^n)|_\CC$ neither preserves nor reverses the orientation at $x$.

In this subcase, $M_{\ti} = k+l$, $M_{\ee} = 2$, $M_{\e} = 1$, $M_{\po} = 1$, and $\deg_{f^n}(x) = k+l$.

\smallskip
\item[(d)] If $e_2\subseteq f^n(e_1)= f^n(e_2)$, then $x$ is contained in exactly $k$ white and $k$ black $n$-tiles that are contained in the white $0$-tile, as well as in exactly $l$ white and $l$ black $n$-tiles that are contained in the black $0$-tile, for some $k, \, l\in\N$ with $k+l=\deg_{f^n}(x)$. Note that in this case, $(f^n)|_\CC$ neither preserves nor reverses the orientation at $x$.

In this subcase, $M_{\ti} = k+l$, $M_{\ee} = 2$, $M_{\e} = 1$, $M_{\po} = 1$, and $\deg_{f^n}(x) = k+l$.
\end{enumerate}
\end{enumerate}

This finishes the verification of (\ref{eqNoPeriodPtsIdentity}) in Case~(iii) of Lemma~\ref{lmPeriodicPtsLocationCases}.

\smallskip 

The proof of the theorem is now complete.
\end{proof}

Since all periodic points of $\bigl( \Sigma_{A_{\e}}^+, \sigma_{A_{\e}} \bigr)$ and $\bigl( \Sigma_{A_{\ee}}^+, \sigma_{A_{\ee}} \bigr)$ are mapped to periodic points of $f$ by the corresponding factor maps, we can write the dynamical Dirichlet series $\DS_{f,\,\minus\phi,\,\deg_f} (s)$ formally as a combination of products and quotient of the dynamical zeta functions for $\bigl( \Sigma_{A_{\ti}}^+, \sigma_{A_{\ti}} \bigr)$, $\bigl( \Sigma_{A_{\ee}}^+, \sigma_{A_{\ee}} \bigr)$, $\bigl( \Sigma_{A_{\e}}^+, \sigma_{A_{\e}} \bigr)$, and $\left(\V^0, f|_{\V^0}\right)$. In order to deduce Theorem~\ref{thmZetaAnalExt_InvC} from Theorem~\ref{thmZetaAnalExt_SFT}, we will need to verify that the zeta functions for the last three systems converge on an open half-plane on $\C$ containing $\{ s\in\C: \Re(s) \geq s_0 \}$.

\subsection{Calculation of topological pressure}  \label{subsctDynOnC_TopPressure}

Let $f\:S^2\rightarrow S^2$ be an expanding Thurston map with a Jordan curve $\CC\subseteq S^2$ satisfying $f(\CC)\subseteq \CC$ and $\post f\subseteq \CC$. We define for $m\in \N_0$ and $p \in\CC \cap \V^m(f,\CC)$,
\begin{equation}   \label{eqDefEdgePair}
\ae^m(p) \coloneqq \inte (e_1) \cup \{p\} \cup \inte (e_2) \quad \qquad  \text{and} \quad  \qquad
\overline\ae^m(p) \coloneqq e_1  \cup e_2,
\end{equation} 
where $e_1, \, e_2\in\E^m(f,\CC)$ are the unique pair of $m$-edges with $e_1\cup e_2 \subseteq \CC$ and $e_1\cap e_2 = \{p\}$. We denote for $m\in \N_0$, $n\in\N$, $q\in\CC$, and $q_j\in\CC \cap \V^m(f,\CC)$ for $j\in\{1, \, 2, \, \dots, \, n\}$,
\begin{align}  \label{eqDefEm}
 E_m(q_n,\,q_{n-1},\,\dots,\,q_1;\,q)  
 & = \bigl\{ x\in (f|_\CC)^{-n} (q)  :  (f|_\CC)^i(x) \in \overline\ae^m(q_{n-i}), \, i\in\{0, \, 1, \, \dots, \, n-1\}  \bigr\} \notag\\
 & = (f|_\CC)^{-n} (q)  \cap \biggl( \bigcap_{i=0}^{n-1}  (f|_\CC)^{-i} (\overline\ae^m (q_{n-i}) ) \biggr)   
   \subseteq \CC \cap \V^{m+n}(f,\CC) .
\end{align}

Intuitively, the set $E_m(q_n,\,q_{n-1},\,\dots,\,q_1;\,q)$ captures the preimages of $q$ under $(f|_\CC)^{n}$ that ``travel'' along the prescribed ``itinerary'' $q_n,\,q_{n-1},\,\dots,\,q_1$ along forward iterations of $f|_\CC$.

\begin{lemma}   \label{lmEmInductive}
Let $f$ and $\CC$ satisfy the Assumptions in Section~\ref{sctAssumptions}. We assume, in addition, that $f(\CC)\subseteq \CC$. Then
\begin{equation*}
\bigcup_{x\in E_m(p_n,\,p_{n-1},\,\dots,\,p_1;\,p_0)} E_m(p_{n+1};\,x) =  E_m(p_{n+1},\,p_n,\,\dots,\,p_1;\,p_0).
\end{equation*}
for $m\in\N_0$, $n\in\N$, and $p_i \in\CC\cap \V^m(f,\CC)$ for $i\in\{1, \, 2, \, \dots, \, n+1\}$. Here $E_m$ is defined in (\ref{eqDefEm}).
\end{lemma}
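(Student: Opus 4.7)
The plan is to prove the stated set-equality by showing both inclusions, in each case simply unwinding the definition \eqref{eqDefEm} of $E_m$. The key observation is that membership in $E_m(q_n, \dots, q_1; q)$ is a conjunction of conditions on the forward iterates of $x$ under $f|_\CC$, and the right-hand side of the identity is a ``one-step longer'' such conjunction, which decomposes exactly as a single-step extension composed with the original conditions on $f|_\CC(y)$.

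For the inclusion $\subseteq$, I would fix $x \in E_m(p_n, p_{n-1}, \dots, p_1; p_0)$ and $y \in E_m(p_{n+1}; x)$. From the definition, $y \in (f|_\CC)^{-1}(x)$ and $y \in \overline{\ae}^m(p_{n+1})$, while $x \in (f|_\CC)^{-n}(p_0)$ with $(f|_\CC)^i(x) \in \overline{\ae}^m(p_{n-i})$ for $i \in \{0, 1, \dots, n-1\}$. Hence $(f|_\CC)^{n+1}(y) = (f|_\CC)^n(x) = p_0$, so $y \in (f|_\CC)^{-(n+1)}(p_0)$; the index-$0$ condition on $y$ is $y \in \overline{\ae}^m(p_{n+1})$, which holds by assumption; and for $i \in \{1, \dots, n\}$ we have $(f|_\CC)^i(y) = (f|_\CC)^{i-1}(x) \in \overline{\ae}^m(p_{n-(i-1)}) = \overline{\ae}^m(p_{(n+1)-i})$. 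Therefore $y \in E_m(p_{n+1}, p_n, \dots, p_1; p_0)$.

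For the reverse inclusion $\supseteq$, I would take $y \in E_m(p_{n+1}, p_n, \dots, p_1; p_0)$ and set $x \coloneqq f|_\CC(y)$. Shifting the index by one, the conditions $(f|_\CC)^i(y) \in \overline{\ae}^m(p_{(n+1)-i})$ for $i \in \{1, \dots, n\}$ become $(f|_\CC)^j(x) \in \overline{\ae}^m(p_{n-j})$ for $j \in \{0, \dots, n-1\}$, and $(f|_\CC)^{n+1}(y) = p_0$ becomes $(f|_\CC)^n(x) = p_0$. Thus $x \in E_m(p_n, \dots, p_1; p_0)$. Meanwhile $y \in (f|_\CC)^{-1}(x)$ and the index-$0$ condition $y \in \overline{\ae}^m(p_{n+1})$ is given, so $y \in E_m(p_{n+1}; x)$, placing $y$ in the left-hand union.

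I do not anticipate any obstacle here beyond careful index bookkeeping; in particular no properties of expanding Thurston maps, of the visual metric, or of the cell decompositions beyond the very definition of $\overline{\ae}^m$ and $E_m$ are needed. The only subtlety is that one must check $x = f|_\CC(y)$ lands in $\CC \cap \V^m(f,\CC)$ when setting up $E_m(p_{n+1}; x)$, which is automatic since $x \in E_m(p_n, \dots, p_1; p_0) \subseteq \CC \cap \V^{m+n}(f,\CC)$ by the final containment in \eqref{eqDefEm}, but the definition of $E_m(p_{n+1}; x)$ only requires $x \in \CC$ together with the single-step condition on $y$.
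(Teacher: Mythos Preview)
Your proof is correct and is essentially the same argument as the paper's: both unwind the definition \eqref{eqDefEm} to see that the $(n{+}1)$-step condition decomposes as a single step followed by the $n$-step condition on $f|_\CC(y)$. The paper presents this as a single chain of set equalities using the intersection form of \eqref{eqDefEm}, whereas you do element chasing using the iterate form; the content is identical.
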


\begin{proof}
By (\ref{eqDefEm}), we get
\begin{align*}
&            \bigcup_{x\in E_m(p_n,\,p_{n-1},\,\dots,\,p_1;\,p_0)} E_m(p_{n+1};\,x)     \\
&\qquad =    \biggl\{  y\in (f|_\CC)^{-1} (x) :  y\in \overline{\ae}^m(p_{n+1}), \, 
                           x\in (f|_\CC)^{-n}(p_0) \cap \biggl(  \bigcap_{i=0}^{n-1} (f|_\CC)^{-i} (\overline{\ae}^m(p_{n-i}) ) \biggr)\biggr\}\\
&\qquad =    \biggl\{  y\in (f|_\CC)^{-n-1} (p_0) :  y\in \overline{\ae}^m(p_{n+1}), \, 
                           f(y) \in  \bigcap_{i=0}^{n-1} (f|_\CC)^{-i} (\overline{\ae}^m(p_{n-i}) ) \biggr\}\\
&\qquad =    E_m(p_{n+1},\,p_n,\,\dots,\,p_1;\,p_0).                       
\end{align*}
The lemma is now established.
\end{proof}

\begin{lemma}   \label{lmEdgeInPair}
Let $f$ and $\CC$ satisfy the Assumptions in Section~\ref{sctAssumptions}. Fix $m, \, n\in\N_0$ with $m\leq n$. If $f(\CC)\subseteq\CC$, then the following statements hold:
\begin{enumerate}
\smallskip
\item[(i)] For each $n$-edge $e^n\in\E^n(f,\CC)$ and each $m$-edge $e^m\in\E^m(f,\CC)$, if $e^m \cap \inte(e^n) \neq \emptyset$, then $e^n\subseteq e^m$.

\smallskip
\item[(ii)] For each $n$-vertex $v\in\CC\cap \V^n(f,\CC)$ and each $m$-vertex $w\in\CC\cap\V^m(f,\CC)$ on the curve $\CC$, if $v\notin \overline{\ae}^m(w)$, then $\overline{\ae}^m(w) \cap \ae^n(v) = \emptyset$.

\smallskip
\item[(iii)] Assume that $m\geq 1$ and no $1$-tile in $\X^1(f,\CC)$ joins opposite sides of $\CC$. For each pair $v_0, \, v_1 \in \CC\cap\V^m(f,\CC)$ of $m$-vertices on $\CC$, we denote by $e^1_i, \, e^2_i \in \E^m(f,\CC)$ the unique pair of $m$-edges with $e^1_i \cup e^2_i = \overline{\ae}^m(v_i)$ and $e^1_i \cap e^2_i = \{v_i\}$, for each $i\in\{0, \, 1\}$. Then $f$ is injective on $e^j_1$ for each $j\in\{1, \, 2\}$, and exactly one of the following cases is satisfied:

\begin{enumerate}
\smallskip
\item[(1)] $f( \ae^m(v_1) ) \cap \ae^m (v_0) = \emptyset$. In this case, 
$\card\{ x\in \overline{\ae}^m(v_1) : f(x) \in \overline{\ae}^m(v_0)\} \leq 2$.

\smallskip
\item[(2)] There exist $j,\, k\in\{1, \, 2\}$ such that 
\begin{itemize}
\smallskip
\item $f\bigl(e^j_1 \bigr) \supseteq e^k_0$, 

\smallskip
\item $f\bigl(e^j_1\bigr) \cap e^{k'}_0 \setminus \{v_0\} = \emptyset$ for $k'\in \{1, \, 2\}\setminus \{k\}$, 

\smallskip
\item $f\bigl(e^{j'}_1\bigr) \cap \overline{\ae}^m(v_0) = \emptyset$ for $j'\in \{1, \, 2\}\setminus\{j\}$, and

\smallskip
\item $v_1\notin\crit f|_\CC$.
\end{itemize}

\smallskip
\item[(3)] There exists $j\in\{1, \, 2\}$ such that
\begin{itemize}
\smallskip
\item $f\bigl(e^j_1 \bigr) \supseteq \overline{\ae}^m(v_0) = e^1_0 \cup e^2_0$, 

\smallskip
\item $f\bigl(e^{j'}_1 \setminus \{v_1\} \bigr) \cap \overline{\ae}^m(v_0) = \emptyset$ for $j'\in \{1, \, 2\}\setminus \{j\}$,

\smallskip
\item $v_1\notin\crit f|_\CC$, and $f(v_1)\neq v_0$.
\end{itemize}

\smallskip
\item[(4)] There exists $k\in\{1, \, 2\}$ such that
\begin{itemize}
\smallskip
\item $f\bigl(e^1_1 \bigr) \supseteq e^k_0$, $f\bigl(e^2_1 \bigr) \supseteq e^{k'}_0$, 

\smallskip
\item $f\bigl(e^1_1 \setminus \{v_1\} \bigr) \cap e^{k'}_0 = \emptyset$, $f\bigl(e^2_1 \setminus \{v_1\} \bigr) \cap e^k_0 = \emptyset$,

\smallskip
\item $v_1\notin\crit f|_\CC$, and $f(v_1)=v_0$,
\end{itemize}
\smallskip
where $k'\in \{1, \, 2\} \setminus \{k\}$.

\smallskip
\item[(5)] There exists $k\in\{1, \, 2\}$ such that 
\begin{itemize}
\smallskip
\item $f\bigl(e^1_1 \bigr) \cap f\bigl(e^2_1 \bigr) \supseteq e^k_0$, 

\smallskip
\item $f\bigl(e^1_1\bigr) \cap e^{k'}_0  \setminus \{v_0\} = \emptyset$ for $k'\in \{1, \, 2\}\setminus \{k\}$,

\smallskip
\item $f(e^1_1)=f(e^2_1)$, and $v_1\in\crit f|_\CC$.
\end{itemize}

\smallskip
\item[(6)] For each $j\in\{1, \, 2\}$, $f\bigl(e^j_1\bigr) \supseteq \overline{\ae}^m(v_0)$. In this case, we have $f(e^1_1)=f(e^2_1)$ and $v_1\in\crit f|_\CC$.
\end{enumerate}
 
\end{enumerate}
\end{lemma}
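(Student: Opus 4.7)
The plan is to first dispense with parts~(i) and~(ii), which are essentially consequences of the refinement structure of the cell decompositions on $\CC$, and then to carry out the case analysis in~(iii) by studying how $f$ acts near $v_1$ relative to the cell decomposition $\DD^m(f,\CC)$. Throughout, I will use that since $f(\CC)\subseteq\CC$, the restriction $f|_\CC\: \CC\to\CC$ is cellular for the pair $\bigl(\DD^m(f,\CC)\cap \CC, \DD^{m-1}(f,\CC)\cap\CC\bigr)$ (in the one-dimensional sense), and that $\DD^n(f,\CC)$ refines $\DD^m(f,\CC)$ for $n\geq m$ by Proposition~\ref{propCellDecomp}~(iii).

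\textbf{Part~(i).} Because $n\geq m$, the cell decomposition $\DD^n(f,\CC)$ refines $\DD^m(f,\CC)$, so $e^n$ is contained in some cell $c\in\DD^m(f,\CC)$. Since $\inte(e^n)\cap e^m\neq\emptyset$ and interiors of distinct cells of a cell decomposition are disjoint (Definition~\ref{defcelldecomp}~(iii)), the cell $c$ must be $e^m$ itself (it cannot be a lower-dimensional cell because $\inte(e^n)$ is one-dimensional). Hence $e^n\subseteq e^m$.

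\textbf{Part~(ii).} Suppose $\overline{\ae}^m(w)\cap \ae^n(v)\neq\emptyset$. The set $\ae^n(v)$ equals $\inte(e_1')\cup\{v\}\cup\inte(e_2')$ for the unique pair of $n$-edges with $e_1'\cap e_2'=\{v\}$. If $v\in\overline{\ae}^m(w)$ we are done; otherwise the intersection must meet $\inte(e_j')$ for some $j$, and by part~(i), one of the two $m$-edges comprising $\overline{\ae}^m(w)$ contains $e_j'$, hence contains $v$. This contradicts $v\notin\overline{\ae}^m(w)$.

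\textbf{Part~(iii), the main work.} I would start with injectivity. For $j\in\{1,2\}$, each $m$-edge $e_1^j$ is mapped by $f$ onto an $(m-1)$-edge $f(e_1^j)\in\E^{m-1}(f,\CC)$; since $f|_{e_1^j}$ is a homeomorphism onto its image by Proposition~\ref{propCellDecomp}~(i), injectivity on each $e_1^j$ is automatic. The assumption that no $1$-tile in $\X^1(f,\CC)$ joins opposite sides of $\CC$ propagates (by a standard pull-back argument) to the statement that no $(m+1)$-tile joins opposite sides of any $m$-edge, which in particular prevents $f$ from mapping $\overline{\ae}^m(v_1)$ so that its image traverses $v_0$ twice except in the degenerate ``folding'' situation when $v_1$ is a critical point.

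The enumeration of cases~(1)--(6) is then obtained as follows. Let $\epsilon_1\coloneqq f(e_1^1)$ and $\epsilon_2\coloneqq f(e_1^2)$; these are $(m-1)$-edges on $\CC$. The pair $(\epsilon_1,\epsilon_2)$ falls into two situations: either $\epsilon_1\neq \epsilon_2$, in which case $f$ is a local homeomorphism at $v_1$ and hence $v_1\notin\crit f|_\CC$; or $\epsilon_1=\epsilon_2$, in which case $f|_\CC$ folds both edges onto the same image edge and $v_1\in\crit f|_\CC$. In the non-folding situation, I split further by how $\overline{\ae}^m(v_0)$ sits inside $\epsilon_1\cup\epsilon_2$: if $\overline{\ae}^m(v_0)\cap(\epsilon_1\cup\epsilon_2)$ is empty we get case~(1); if it lies inside a single $\epsilon_j$ but misses $v_0$, a refinement argument via part~(i) gives case~(2) with $f(v_1)\neq v_0$; if $f(v_1)=v_0$, then each $\epsilon_j$ contains one of the two $m$-edges at $v_0$, giving case~(3) or case~(4) depending on whether a single $\epsilon_j$ already covers all of $\overline{\ae}^m(v_0)$ or the two $\epsilon_j$'s share the coverage. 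The folding situation $\epsilon_1=\epsilon_2$ similarly splits into case~(5) when the common image covers only one $e_0^k$ and case~(6) when it covers both. The bound $\card\{x\in\overline{\ae}^m(v_1):f(x)\in\overline{\ae}^m(v_0)\}\leq 2$ in case~(1) uses that $f|_{e_1^j}$ is injective and that each $e_1^j$ meets $f^{-1}(\overline{\ae}^m(v_0))\setminus\ae^m(v_1)$ in at most one endpoint.

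\textbf{Main obstacle.} The routine parts are the refinement arguments in~(i) and~(ii); the genuine work is the verification in~(iii) that the ``no $1$-tile joins opposite sides of $\CC$'' hypothesis rules out any configuration outside the six listed cases — in particular that it prevents an $f$-image of $\overline{\ae}^m(v_1)$ from traversing $v_0$ in a way inconsistent with the discrete list. I expect this combinatorial exclusion, using the pullback of the ``opposite-sides'' property from level~$1$ to level~$m$, to be the step requiring the most care.
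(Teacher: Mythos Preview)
Your treatment of parts~(i) and~(ii) is essentially the paper's argument. One small gap in~(i): you argue that the minimal $m$-cell $c$ containing $e^n$ cannot be a $0$-cell, but you do not exclude the possibility that $c$ is a $2$-cell (an $m$-tile). The paper avoids this by working in the other direction, writing $e^m$ as a union of $n$-edges and using disjointness of interiors of $n$-cells.

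For part~(iii), your overall strategy (a case analysis on how $f$ behaves near $v_1$) is in the same spirit as the paper's, but your organization and your understanding of the role of the ``no $1$-tile joins opposite sides'' hypothesis diverge from the paper in ways that lead to genuine errors. The paper organizes the six cases via the index set $I\coloneqq\{(j,k):f(e^j_1)\supseteq e^k_0\}$ and shows that $\card I\in\{0,1,2,4\}$ correspond precisely to Cases~(1), (2), (3)--(5), (6), with the $\card I=2$ situation further split by whether $v_1\in\crit f|_\CC$ and whether $f(v_1)=v_0$. Your primary split on folding versus non-folding is related but incomplete: Case~(1) can also arise in the folding situation (when $\epsilon_1=\epsilon_2$ contains neither $e^k_0$), which your analysis misses. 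More seriously, you place Case~(3) under $f(v_1)=v_0$, but the lemma explicitly requires $f(v_1)\neq v_0$ in Case~(3); indeed, if $f(v_1)=v_0$ then $v_0$ is an $(m-1)$-vertex, hence an endpoint of each $\epsilon_j$, and no single $(m-1)$-edge can then contain both $e^1_0$ and $e^2_0$. So with $f(v_1)=v_0$ and $v_1\notin\crit f|_\CC$ only Case~(4) occurs, while Case~(3) is the $\card I=2$ subcase with $f(v_1)\neq v_0$ and $v_1\notin\crit f|_\CC$.

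Finally, your description of the ``no $1$-tile joins opposite sides'' hypothesis as ``propagating to no $(m+1)$-tile joins opposite sides of any $m$-edge'' does not match how the paper uses it and, as stated, does not make sense (an $m$-edge has no ``opposite sides''). The paper's actual mechanism is different: at each step where an impossible configuration must be excluded, one shows that the configuration would force a union of at most two $(m-1)$-edges (namely $f(e^1_1)\cup f(e^2_1)$), possibly together with $\overline{\ae}^m(v_0)$, to cover all of $\CC$, and then derives a contradiction from $\card(\post f)\geq 3$ together with the no-joining condition. This is where the real work lies, and your sketch does not capture it.
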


We say that a point $x\in\CC$ is a \defn{critical point of $f|_\CC$}, denoted by $x\in\crit f|_\CC$, if there is no open neighborhood $U\subseteq \CC$ of $x$ on which $f$ is injective. Clearly, $\crit f|_\CC \subseteq \crit f$. Our proof below relies crucially on the fact that $\CC$ is a Jordan curve.

To help understand the proof, the reader wants to keep in mind the geometric picture that adjacent $m$-edges are either mapped onto distinct adjacent $(m-1)$-edges or ``folded together'' to an $(m-1)$-edge. On the other hand, the role of the condition that no $1$-tile joins opposite sides is essentially to guarantee that three edges among which at least one is not a $0$-edge cannot form the entire Jordan curve $\CC$ as a union.

\begin{proof}
(i) Fix arbitrary $e^n\in\E^n$ and $e^m\in\E^m$. Since $f(\CC)\subseteq \CC$, $e^m = \bigcup \{e\in\E^n : e\subseteq e^m\}$. If $e^m \cap \inte(e^n) \neq \emptyset$, then there exists $e\in\E^n$ with $e\subseteq e^m$ and $\inte(e) \cap \inte(e^n) \neq \emptyset$. Then $e=e^n$ (see Definition~\ref{defcelldecomp}). Hence, $e^n\subseteq e^m$.

\smallskip

(ii) Fix $v\in\CC\cap\V^n$ and $w\in\CC\cap\V^m$ with $v\notin \overline{\ae}^m(w)$. Suppose $\overline{\ae}^m(w) \cap \ae^n(v) \neq \emptyset$. Then there exist $e^n\in\E^n$ and $e^m\in\E^m$ such that $e^n\subseteq \overline{\ae}^n(v)$, $e^m\subseteq \overline{\ae}^m(w)$, and $\inte(e^n)\cap e^m \neq\emptyset$. Then by Lemma~\ref{lmEdgeInPair}~(i), $e^n\subseteq e^m$. Hence, $v\in e^n \subseteq \overline{\ae}^m(w)$, a contradiction.

\smallskip

(iii) Fix arbitrary $v_0, \, v_1\in\CC\cap\V^m$. Recall $m\geq 1$.

By Proposition~\ref{propCellDecomp}~(i), the map $f$ is injective on $e^j_1$ for each $j\in\{1, \, 2\}$.

Recall that $\card (\post f) \geq 3$ (see \cite[Lemma~6.1]{BM17}).

We denote $I \coloneqq \bigl\{ (j,k) \in \{1, \, 2\}\times\{1, \, 2\} : f\bigl( e^j_1 \bigr) \supseteq e^k_0 \bigr\}$. Then $\card I \in \{0, \, 1, \, 2, \, 3, \, 4\}$. 

We will establish Lemma~\ref{lmEdgeInPair}~(iii) by proving the following statements:

\begin{enumerate}
\smallskip
\item[(a)] $\card I \neq 3$.

\smallskip
\item[(b)] Case~(1), (2), or (6) holds if and only if $\card I = 0$, $1$, or $4$, respectively.

\smallskip
\item[(c)] Case~(3) holds if and only if $\card I = 2$, $v_1 \notin \crit f|_\CC$, and $f(v_1) \neq v_0$.

\smallskip
\item[(d)] Case~(4) holds if and only if $\card I = 2$, $v_1 \notin \crit f|_\CC$, and $f(v_1) = v_0$.

\smallskip
\item[(e)] Case~(5) holds if and only if $\card I = 2$, $v_1 \in \crit f|_\CC$.
\end{enumerate}

\smallskip

(a) Suppose $\card I = 3$. Without loss of generality, we assume that $f\bigl( e^1_1 \bigr) \supseteq e^1_0 \cup e^2_0$, $f\bigl( e^2_1 \bigr) \supseteq e^1_0$, and $f\bigl( e^2_1 \bigr) \nsupseteq e^2_0$. Since $f\bigl( e^1_1 \bigr),  \, f\bigl( e^2_1 \bigr) \in \E^{m-1}$ (see Proposition~\ref{propCellDecomp}~(i)), $f\bigl( e^1_1 \bigr) \cap f\bigl( e^2_1 \bigr) \supseteq e^1_0$, we get $f\bigl( e^1_1 \bigr) \cap \inte \bigl( f\bigl( e^2_1 \bigr) \bigr) \neq \emptyset$, thus by Lemma~\ref{lmEdgeInPair}~(i), $f\bigl( e^1_1 \bigr) = f\bigl( e^2_1 \bigr)$. Hence, $\card I = 4$. This is a contradiction.

\smallskip

(b) If Case~(1) holds, then clearly $\card I = 0$. Conversely, we assume that $\card I = 0$. Fix arbitrary $j, \, k\in\{1, \, 2\}$. Since $f\bigl( e^j_1 \bigr) \in \E^{m-1}$ and $f$ is injective on $e^j_1$ (see Proposition~\ref{propCellDecomp}~(i)), by Lemma~\ref{lmEdgeInPair}~(i), $f\bigl( \inte \bigl(  e^j_1 \bigr) \bigr) \cap   e^k_0  =  f\bigl( e^j_1 \bigr) \cap \inte \bigl(  e^k_0 \bigr) = \emptyset$. Observe that it follows from $\card I = 0$ and Lemma~\ref{lmEdgeInPair}~(i) that $f(v_1)\neq v_0$. Thus, $f( \ae^m(v_1) ) \cap \ae^m (v_0) = \emptyset$. In order to show $\card\{ x\in \overline{\ae}^m(v_1) : f(x) \in \overline{\ae}^m(v_0)\} \leq 2$, it suffices to prove $\card \bigl\{ x\in e^j_1 \setminus \inte\bigl(e^j_1\bigr) : f(x)\in \overline{\ae}^m(v_0) \bigr\} \leq 1$. Suppose not, then since $f\bigl( \inte \bigl(  e^j_1 \bigr) \bigr) \cap   \overline{\ae}^m(v_0) = \emptyset$, $f$ is injective on $e^j_1$, and $\CC$ is a Jordan curve, we get $f\bigl(  e^j_1 \bigr) \cup \overline{\ae}^m(v_0) = \CC$. This contradicts the fact that $\card (\post f) \geq 3$ and the condition that no $1$-tile in $\X^1$ joins opposite sides of $\CC$. Therefore, Case~(1) holds.

\smallskip

If Case~(2) holds, then clearly $\card I = 1$. Conversely, we assume that $\card I = 1$. Without loss of generality, we assume that $f\bigl( e^1_1 \bigr) \supseteq e^1_0$. We observe that $v_1\notin \crit f|_\CC$. For otherwise, $f\bigl( e^1_1 \bigr) = f\bigl( e^2_1 \bigr) \in\E^{m-1}$ (see Proposition~\ref{propCellDecomp}~(i)), thus $\card I \neq 1$, which is a contradiction. 

To show $f\bigl( e^1_1 \bigr) \cap e^2_0 \setminus \{v_0\} = \emptyset$, we argue by contradiction and assume that $f\bigl( e^1_1 \bigr) \cap e^2_0 \setminus \{v_0\} \neq \emptyset$. Since $e^2_0 \nsubseteq f\bigl( e^1_1 \bigr) \in \E^{m-1}$ (see Proposition~\ref{propCellDecomp}~(i)), by Lemma~\ref{lmEdgeInPair}~(i), $f\bigl( e^1_1 \bigr) \cap \inte \bigl(  e^2_0 \bigr) = \emptyset$. Note that $v_0\in e_0^1 \subseteq f \bigl( e_1^1 \bigr)$. Since $f\bigl( e^1_1 \bigr)$ is connected and $\CC$ is a Jordan curve, we get $f\bigl( e^1_1 \bigr) \cup e^2_0 = \CC$. This contradicts the fact that $\card (\post f) \geq 3$.

Next, we verify that $f(v_1) \notin e^1_0$ as follows. We argue by contradiction and assume that $f(v_1)\in e^1_0$. Since $f(v_1) \in \V^{m-1}$ (see Proposition~\ref{propCellDecomp}~(i)), we get $f(v_1) \in e^1_0 \setminus \inte\bigl(e^1_0\bigr)$. Since $\card I = 1$, it is clear that $f(v_1) \neq v_0$. Thus, $f(v_1) \in e^1_0 \setminus \bigl( \inte\bigl(e^1_0 \bigr) \cup \{v_0\} \bigr)$. Since $v_1\notin \crit f|_\CC$ and no $1$-tile in $\X^1$ joins opposite sides of $\CC$, we get from Proposition~\ref{propCellDecomp}~(i) that either $f\bigl(e^1_1 \bigr) \supseteq e^1_0 \cup e^2_0$ or $f\bigl(e^2_1 \bigr) \supseteq e^1_0 \cup e^2_0$. This contradicts the assumption that $\card I = 1$. Hence, $f(v_1) \notin e^1_0$.

Finally we show that $f\bigl(e^2_1\bigr) \cap \overline{\ae}^m(v_0) = \emptyset$. To see this, we argue by contradiction and assume that $f\bigl(e^2_1\bigr) \cap \overline{\ae}^m(v_0) \neq \emptyset$. Since $\CC$ is a Jordan curve, $v_1\notin \crit f|_\CC$, $f(v_1) \notin e^1_0$, $f\bigl( e^1_1 \bigr) \supseteq e^1_0$, and $f \bigl( e_1^1 \bigr) \nsupseteq e_0^2$, we get that $f\bigl( e^1_1 \bigr) \cup f\bigl( e^2_1 \bigr) \cup e^2_0 = \CC$. This contradicts the fact that $\card (\post f) \geq 3$ and the condition that no $1$-tile in $\X^1$ joins opposite sides of $\CC$.

Therefore, Case~(2) holds.

\smallskip

If Case~(6) holds, then clearly $\card I = 4$. Conversely, we assume that $\card = 4$. Then $f\bigl( e^j_1\bigr) \supseteq e^1_0 \cup e^2_0 = \overline{\ae}^m(v_0)$ for each $j\in\{1, \, 2\}$. We show that $v_1\in\crit f|_\CC$ as follows. We argue by contradiction and assume that $v_1\notin\crit f|_\CC$. Then Since $\CC$ is a Jordan curve and $f\bigl( e^1_1\bigr) \cap f\bigl( e^2_1\bigr) \supseteq e^1_0$, we get that $f\bigl( e^1_1\bigr) \cup f\bigl( e^2_1\bigr) = \CC$. This contradicts the fact that $\card (\post f) \geq 3$. Hence, $v_1\in\crit f|_\CC$, and consequently $f\bigl( e^1_1\bigr) = f\bigl( e^2_1\bigr) \in \E^{m-1}$ by Proposition~\ref{propCellDecomp}~(i). Therefore, Case~(6) holds.

\smallskip

(c) If Case~(3) holds, then clearly $\card I = 2$, $v_1 \notin \crit f|_\CC$, and $f(v_1) \neq v_0$. Conversely, we assume that $\card I = 2$, $v_1 \notin \crit f|_\CC$, and $f(v_1) \neq v_0$. Suppose that $f \bigl( e_1^1 \bigr) \supseteq e_0^k$ and $f \bigl( e_1^2 \bigr) \supseteq e_0^{k'}$ for some $k, \, k'\in\{1, \, 2\}$ with $k\neq k'$, then since $v_1 \notin  \crit f|_\CC$ and $f(v_1) \neq v_0$, we get from Proposition~\ref{propCellDecomp}~(i) that $f \bigl( e_1^1 \bigr) \cup f \bigl( e_1^2 \bigr) = \CC$. This contradicts the fact that $\card (\post f) \geq 3$. Thus, without loss of generality, we can assume that $f\bigl(e^1_1\bigr) \supseteq e^1_0 \cup e^2_0$. In order to show that Case~(3) holds, it suffices now to verify that $f\bigl( e^2_1 \setminus \{v_1\} \bigr) \cap \overline{\ae}^m(v_0) = \emptyset$. We argue by contradiction and assume that $f\bigl( e^2_1 \setminus \{v_1\} \bigr) \cap \overline{\ae}^m(v_0) \neq \emptyset$. Then $f\bigl( e^2_1 \setminus \{v_1\} \bigr) \cap f\bigl( e^1_1  \bigr) \neq \emptyset$. Since $\CC$ is a Jordan curve and $v_1\notin\crit f|_\CC$, we get from Proposition~\ref{propCellDecomp}~(i) that $f\bigl( e^2_1   \bigr) \cup f\bigl( e^1_1  \bigr) = \CC$. This contradicts the fact that $\card (\post f) \geq 3$. Therefore, Case~(3) holds.

\smallskip

(d) If Case~(4) holds, then clearly $\card I = 2$, $v_1 \notin \crit f|_\CC$, and $f(v_1) = v_0$. Conversely, we assume that $\card I = 2$, $v_1 \notin \crit f|_\CC$, and $f(v_1) = v_0$. Without loss of generality, we assume that $f\bigl(e^1_1\bigr) \supseteq e^1_0$ and $f\bigl(e^2_1\bigr) \supseteq e^2_0$. In order to show that Case~(4) holds, by symmetry, it suffices to show that $f\bigl( e^1_1 \setminus \{v_1\} \bigr) \cap e^2_0 = \emptyset$. We argue by contradiction and assume that $f\bigl( e^1_1 \setminus \{v_1\} \bigr) \cap e^2_0 \neq \emptyset$. Then $f\bigl( e^1_1 \setminus \{v_1\} \bigr) \cap f\bigl( e^2_1  \bigr) \neq \emptyset$. Since $\CC$ is a Jordan curve and $v_1\notin\crit f|_\CC$, we get from Proposition~\ref{propCellDecomp}~(i) that $f\bigl( e^2_1   \bigr) \cup f\bigl( e^1_1  \bigr) = \CC$. This contradicts the fact that $\card (\post f) \geq 3$. Therefore, Case~(4) holds.

\smallskip

(e) If Case~(5) holds, then clearly $\card I = 2$ and $v_1 \in \crit f|_\CC$. Conversely, we assume that $\card I = 2$ and $v_1 \in \crit f|_\CC$. Since $v_1 \in \crit f|_\CC$, $f\bigl(e^1_1\bigr) = f\bigl(e^2_1\bigr) \in \E^{m-1}$ by Proposition~\ref{propCellDecomp}~(i). Without loss of generality, we assume that $f\bigl(e^1_1\bigr) \cap f\bigl(e^2_1\bigr) \supseteq e^1_0$. In order to show that Case~(5) holds, it suffices now to show that $f\bigl( e^1_1  \bigr) \cap e^2_0 \setminus \{v_0\}= \emptyset$. We argue by contradiction and assume that $f\bigl( e^1_1 \bigr) \cap e^2_0 \setminus \{v_0\} \neq \emptyset$.  Since $e^2_0 \nsubseteq f\bigl( e^1_1 \bigr) \in \E^{m-1}$ (see Proposition~\ref{propCellDecomp}~(i)), by Lemma~\ref{lmEdgeInPair}~(i), $f\bigl( e^1_1 \bigr) \cap \inte \bigl(  e^2_0 \bigr) = \emptyset$. Thus, $e^2_0\setminus \inte\bigl(e^2_0\bigr) \subseteq f\bigl( e^1_1 \bigr)$ as we already know $v_0\in e^1_0 \subseteq f\bigl( e^1_1 \bigr)$. Since $f\bigl( e^1_1 \bigr)$ is connected and $\CC$ is a Jordan curve, we get $f\bigl( e^1_1 \bigr) \cup e^2_0 = \CC$. This contradicts the fact that $\card (\post f) \geq 3$. Therefore, Case~(5) holds.
\end{proof}

\begin{lemma}   \label{lmCoverByEdgePair}
Let $f$ and $\CC$ satisfy the Assumptions in Section~\ref{sctAssumptions}. We assume, in addition, that $f(\CC)\subseteq \CC$. Then
\begin{equation*}
\bigcap_{i=0}^n (f|_\CC)^{-i} (\ae^m(p_{n-i}))  \subseteq \bigcup_{x\in E_m(p_n,\,p_{n-1},\,\dots,\,p_1;\,p_0)}  \ae^{m+n} (x),
\end{equation*}
for all $m\in\N_0$, $n\in\N$, and $p_i \in \CC \cap \V^m(f,\CC)$ for each $i\in \{0, \, 1, \, \dots, \, n\}$. Here $\ae^m$ is defined in (\ref{eqDefEdgePair}) and $E_m$ in (\ref{eqDefEm}).
\end{lemma}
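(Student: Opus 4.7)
The plan is to fix an arbitrary $y$ in the left-hand side, so that $(f|_\CC)^i(y) \in \ae^m(p_{n-i})$ for every $i \in \{0, 1, \ldots, n\}$, and to produce an $x \in E_m(p_n, \ldots, p_1; p_0)$ with $y \in \ae^{m+n}(x)$. I would split into two cases according to whether $y$ belongs to $\V^{m+n}(f,\CC)$ or not.

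In the case $y \in \V^{m+n}(f,\CC)$, I would simply take $x \coloneqq y$, so that $y = x \in \ae^{m+n}(x)$ holds by definition of $\ae^{m+n}(x)$. By Proposition~\ref{propCellDecomp}~(i) and~(iii), $(f|_\CC)^i(y) \in \V^{m+n-i}(f,\CC)$ for each $i \in \{0, 1, \ldots, n\}$. In particular, $(f|_\CC)^n(y) \in \V^m(f,\CC)$; since the only $m$-vertex lying in $\ae^m(p_0)$ is $p_0$ itself (the interior of an $m$-edge contains no $m$-vertex), this forces $(f|_\CC)^n(y) = p_0$. The remaining conditions $(f|_\CC)^i(y) \in \overline{\ae}^m(p_{n-i})$ for $i < n$ follow trivially from $\ae^m \subseteq \overline{\ae}^m$, so $y \in E_m(p_n, \ldots, p_1; p_0)$ and this case is done.

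In the case $y \notin \V^{m+n}(f,\CC)$, the point $y$ lies in the interior of a unique $(m+n)$-edge $\tilde e \in \E^{m+n}(f,\CC)$; let $u, v \in \V^{m+n}(f,\CC)$ denote its two endpoints. By Proposition~\ref{propCellDecomp}~(i), each iterate $f^i$ restricts to a homeomorphism from $\tilde e$ onto an $(m+n-i)$-edge $f^i(\tilde e) \in \E^{m+n-i}(f,\CC)$ sending interior to interior, so $(f|_\CC)^i(y) \in \inte(f^i(\tilde e))$. The strategy is to select $x \in \{u, v\}$ that lies in $E_m(p_n, \ldots, p_1; p_0)$; since $\tilde e$ is then one of the two $(m+n)$-edges meeting at $x$, the containment $y \in \inte(\tilde e) \subseteq \ae^{m+n}(x)$ will be automatic.

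To choose $x$, I would invoke Lemma~\ref{lmEdgeInPair}~(i). For each $i \in \{0, 1, \ldots, n-1\}$, the $m$-vertex $p_{n-i}$ is an $(m+n-i)$-vertex that cannot sit in the interior of the $(m+n-i)$-edge $f^i(\tilde e)$, so the point $(f|_\CC)^i(y) \in \ae^m(p_{n-i}) \cap \inte(f^i(\tilde e))$ must lie in the interior of one of the two $m$-edges at $p_{n-i}$; Lemma~\ref{lmEdgeInPair}~(i) then yields $f^i(\tilde e) \subseteq \overline{\ae}^m(p_{n-i})$, and in particular $f^i(u), f^i(v) \in \overline{\ae}^m(p_{n-i})$. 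For $i = n$, the analogous reasoning identifies $f^n(\tilde e)$ with one of the two $m$-edges at $p_0$, so $p_0$ is an endpoint of $f^n(\tilde e)$, forcing $p_0 \in \{f^n(u), f^n(v)\}$. Without loss of generality $f^n(u) = p_0$; then $x \coloneqq u$ belongs to $E_m(p_n, \ldots, p_1; p_0)$, completing the argument. The only mildly delicate step is verifying that $(f|_\CC)^i(y)$ must lie in the interior of an $m$-edge at $p_{n-i}$ rather than at the vertex $p_{n-i}$ itself, and this is handled by the simple observation that $p_{n-i} \in \V^{m+n-i}(f,\CC)$ while $(f|_\CC)^i(y) \in \inte(f^i(\tilde e))$ avoids all $(m+n-i)$-vertices.
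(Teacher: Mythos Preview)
Your proof is correct and takes a genuinely different route from the paper's. The paper proceeds by induction on $n$: the base case $n=1$ is established by combining Proposition~\ref{propCellDecomp}~(ii) with Lemma~\ref{lmEdgeInPair}~(ii) (the statement that $\ae^{m+1}(x)\cap\ae^m(p_1)=\emptyset$ whenever $x\notin\overline{\ae}^m(p_1)$), and the inductive step reduces the intersection of depth $l+1$ to one of depth $l$ and invokes Lemma~\ref{lmEmInductive} to reassemble the index sets $E_m$. Your argument is direct: given $y$ in the left-hand side, you either observe $y$ itself is the required vertex when $y\in\V^{m+n}$, or you track the unique $(m+n)$-edge through $y$ under the iterates $f^i$ and use Lemma~\ref{lmEdgeInPair}~(i) to pin each $f^i(\tilde e)$ inside $\overline{\ae}^m(p_{n-i})$, extracting the required endpoint at level $i=n$. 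Your approach is more elementary in that it avoids induction and does not need Lemma~\ref{lmEmInductive} or part~(ii) of Lemma~\ref{lmEdgeInPair}; the paper's approach, by contrast, interfaces more naturally with the recursive structure of $E_m$ that is exploited elsewhere (e.g.\ in Proposition~\ref{propEmBound}).
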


\begin{proof}
We fix $m\in\N_0$ and an arbitrary sequence $\{p_i\}_{i\in\N_0}$ in $\CC\cap\V^m$. We prove the lemma by induction on $n\in\N$.

For $n=1$, we get
\begin{align*}
               \ae^m(p_1) \cap (f|_\CC)^{-1} (\ae^m(p_0)) 
&\subseteq      \bigcup \bigl\{ \ae^{m+1}(x) : x\in (f|_\CC)^{-1} (p_0),\, x\in \overline{\ae}^m(p_1)  \bigr\} 
   =           \bigcup_{x\in E_m(p_1;\,p_0)} \ae^{m+1}(x)
\end{align*}
by (\ref{eqDefEm}), Proposition~\ref{propCellDecomp}~(ii), and the fact that $\ae^{m+1}(x) \cap \ae^m(p_1) = \emptyset$ if both $x\in\CC \cap \V^{m+1}$ and $x\notin \overline{\ae}^m(p_1)$ are satisfied (see Lemma~\ref{lmEdgeInPair}~(ii)).

We now assume that the lemma holds for $n=l$ for some integer $l\in\N$. Then by the induction hypothesis, we have
\begin{align*}
              \bigcap_{i=0}^{l+1}  (f|_\CC)^{-i} (\ae^m(p_{l+1-i}))  
& =          \ae^m(p_{l+1}) \cap (f|_\CC)^{-1} \biggl( \bigcap_{i=1}^{l+1}  (f|_\CC)^{-(i-1)} (\ae^m(p_{l+1-i})) \biggr) \\
& \subseteq   \ae^m(p_{l+1}) \cap (f|_\CC)^{-1} \biggl( \bigcup_{x\in E_m(p_l,\,p_{l-1},\,\dots,\,p_1;\,p_0)}  \ae^{m+l}(x) \biggr) \\
&  =          \bigcup_{x\in E_m(p_l,\,p_{l-1},\,\dots,\,p_1;\,p_0)}  \bigl(  \ae^m(p_{l+1}) \cap (f|_\CC)^{-1} \bigl(\ae^{m+l}(x)\bigr)\bigr) \\
& \subseteq   \bigcup_{x\in E_m(p_l,\,p_{l-1},\,\dots,\,p_1;\,p_0)}  
                           \Bigl(  \bigcup \bigl\{ \ae^{m+l+1}(y) : y\in(f|_\CC)^{-1} (x),\, y\in \overline{\ae}^m(p_{l+1}) \bigr\} \Bigr)  \\
&  =          \bigcup_{x\in E_m(p_l,\,p_{l-1},\,\dots,\,p_1;\,p_0)}  \bigcup_{y\in E_m(p_{l+1};\,x)} \ae^{m+l+1}(y),
\end{align*}
where the last two lines are due to (\ref{eqDefEm}), Proposition~\ref{propCellDecomp}~(ii), and the fact that $\ae^{m+l+1}(y) \cap \ae^m(p_{l+1}) = \emptyset$ if both $y\in\CC \cap \V^{m+l+1}$ and $y\notin \overline{\ae}^m(p_{l+1})$ are satisfied (see Lemma~\ref{lmEdgeInPair}~(ii)).

By Lemma~\ref{lmEmInductive}, we get
$
\bigcap_{i=0}^{l+1} (f|_\CC)^{-i} (\ae^m(p_{l+1-i}))  \subseteq \bigcup_{x\in E_m(p_{l+1},\,p_l,\,\dots,\,p_1;\,p_0)}  \ae^{m+l+1} (x).
$

The induction step is now complete.
\end{proof}

\begin{prop}    \label{propEmBound}
Let $f$ and $\CC$ satisfy the Assumptions in Section~\ref{sctAssumptions}. We assume, in addition, that $f(\CC)\subseteq \CC$ and no $1$-tile in $\X^1(f,\CC)$ joins opposite sides of $\CC$. Then
\begin{equation}   \label{eqEmBound}
\card (  E_m(p_n,\,p_{n-1},\,\dots,\,p_1;\,p_0)  )  \leq m 2^{\frac{n}{m}}
\end{equation}
for all $m,  \, n\in\N$ with $m\geq 14$, and $p_i \in \CC \cap \V^m(f,\CC)$ for each $i\in \{0, \, 1, \, \dots, \, n\}$. Here $E_m$ is defined in (\ref{eqDefEm}).
\end{prop}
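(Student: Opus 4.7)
The plan is to induct on $n$. By Lemma~\ref{lmEmInductive},
\begin{equation*}
E_m(p_{n+1}, p_n, \ldots, p_1; p_0) = \bigcup_{x \in E_m(p_n, \ldots, p_1; p_0)} E_m(p_{n+1}; x),
\end{equation*}
so it suffices to bound $\card E_m(p_{n+1}; x)$ uniformly for each $x \in E_m(p_n, \ldots, p_1; p_0)$. Since by Proposition~\ref{propCellDecomp}~(i) the map $f$ restricts to a homeomorphism of each of the two $m$-edges $e^1_{n+1}$ and $e^2_{n+1}$ forming $\overline{\ae}^m(p_{n+1})$ onto an $(m-1)$-edge, there is at most one $f$-preimage of $x$ on each of them, giving the uniform bound $\card E_m(p_{n+1}; x) \leq 2$.

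The naive iteration of this bound yields only $\card E_m \leq 2^n$, which is far weaker than $m\cdot 2^{n/m}$. To sharpen it, I would note that $\card E_m(p_{n+1}; x) = 2$ requires $x \in f(e^1_{n+1}) \cap f(e^2_{n+1})$, and then apply the case analysis of Lemma~\ref{lmEdgeInPair}~(iii) with $v_0 = p_n$ and $v_1 = p_{n+1}$. In Cases~(1)--(2) no $x \in \overline{\ae}^m(p_n)$ admits two preimages in $\overline{\ae}^m(p_{n+1})$, so the branching factor is at most 1; in Cases~(3)--(4) the overlap $f(e^1_{n+1}) \cap f(e^2_{n+1}) \cap \overline{\ae}^m(p_n)$ is exhausted by a single exceptional point (either $f(p_{n+1})$ or $p_n$), so at most one $x \in E_m(p_n, \ldots, p_1; p_0)$ can branch by 2; only in Cases~(5)--(6), which force $p_{n+1} \in \crit f|_\CC$, can branching by 2 occur for a whole $(m-1)$-edge's worth of $x$.

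The heart of the argument is then to upgrade this per-step refinement to the block inequality
\begin{equation*}
\card E_m(p_{n+m}, \ldots, p_1; p_0) \leq 2\cdot \card E_m(p_n, \ldots, p_1; p_0) \qquad \text{for every } n\geq 0,
\end{equation*}
which I would prove by showing that the cumulative multiplicative branching over any $m$ consecutive steps is bounded by $2$. The key inputs are: (a)~the finiteness of $\crit f|_\CC \subseteq \crit f$, which limits the number of ``heavy branching'' events of Cases~(5)/(6); (b)~the dynamical consistency $f(x_i) = x_{i+1}$ along an orbit, which forces a compatible relationship between consecutive $p_i$'s and prevents heavy branching from being freely repeated; and (c)~the Markov structure of $f$ on $m$-edges of $\CC$ guaranteed by $f(\CC) \subseteq \CC$ together with the hypothesis that no $1$-tile in $\X^1(f,\CC)$ joins opposite sides of $\CC$, which rigidly constrains how $f$ permutes the two $m$-edges of an edge-pair. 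Once the block inequality is established, iterating it $\lfloor n/m \rfloor$ times and absorbing a remainder of length $r < m$ via the naive bound $\card E_m \leq 2^r$ (which for $m\geq 14$ comfortably fits inside the prefactor $m$ in $m\cdot 2^{n/m}$) yields the desired estimate (\ref{eqEmBound}).

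The main obstacle will be the block inequality. The delicate issue is that the $p_i$ are arbitrary $m$-vertices on $\CC$, so in principle an adversary could attempt to place $p_i \in \crit f|_\CC \cap \V^m$ at every step; the proof must therefore exploit the \emph{admissibility} of the itinerary (coming from the requirement that $E_m$ be non-empty) to show that after a heavy branching step the next several steps must fall into the non-branching Cases~(1)/(2). The threshold $m \geq 14$ is expected to appear precisely here, as the minimal level at which the combinatorial overhead coming from $\card\crit f$ and from the finite edge-structure of $\DD^1(f,\CC)$ can be absorbed into the slack $2^{1/m}$ per step.
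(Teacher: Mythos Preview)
Your outline correctly identifies the overall architecture (induction via Lemma~\ref{lmEmInductive}, the naive bound $\card E_{m,n+1}\le 2\card E_{m,n}$, and the case analysis of Lemma~\ref{lmEdgeInPair}~(iii)), but the heart of your proposal --- the mechanism forcing ``heavy branching'' events to be $m$-rare --- is misidentified, and the block inequality as you state it is neither what the paper proves nor obviously true.

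Your reasons (a)--(c) do not work. Finiteness of $\crit f|_\CC$ is irrelevant because the $p_i$ are \emph{arbitrary} $m$-vertices and may repeat; an adversary can set $p_i\in\crit f|_\CC$ at every step. There is no relation ``$f(x_i)=x_{i+1}$'' among the $p_i$; they are not an orbit. And the ``admissibility'' you invoke (nonemptiness of $E_m$) imposes no direct constraint on how often $p_i$ is critical. The paper also does not prove a clean block inequality $\card E_{m,n+m}\le 2\card E_{m,n}$; the count can halve as well as double, and one must track both directions.

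The actual mechanism is a \emph{level-counting} argument on edges. First one shows (Claim~1 in the paper) that whenever $E_{m,n}$ meets the interiors of both $m$-edges of $\overline\ae^m(p_n)$, it does so with equal cardinality on each side. This yields (Claim~2): if $\card E_{m,n+1}>\card E_{m,n}\ge 4$ then $p_{n+1}\in\crit f|_\CC$ and, crucially, $E_{m,n}$ is contained in the single $(m-1)$-edge $e^{m-1}\coloneqq f(e_{n+1,1})=f(e_{n+1,2})$. Now suppose two successive increase events occur, the later at step $l+1$ and the earlier (or more precisely, the last critical step before $l$) at step $k'\le l$. Since $f$ maps each $j$-edge onto a $(j-1)$-edge, the containment propagates: $E_{m,k'}=f^{l-k'}(E_{m,l})\subseteq f^{l-k'}(e^{m-1})\in\E^{\,m-1-(l-k')}$. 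But $p_{k'}\in\crit f|_\CC\subseteq\V^1$ lies in the interior of this edge (using the equal-cardinality claim), and a $1$-vertex cannot sit in the interior of a $j$-edge for $j\ge 1$. Hence $m-1-(l-k')\le 0$, i.e.\ $l-k'\ge m-1$: increase events are at least $m-1$ steps apart. The threshold $m\ge 14$ enters only in the base regime $\card E_{m,n}<8$ and in the elementary inequality $1+m2^{k/m}\le m2^{(k+2)/m}$, not via $\card\crit f$.
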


 In order to obtain the upper bound in (\ref{eqEmBound}), we use in the proof below careful inductive arguments to discuss how the left-hand side of (\ref{eqEmBound}) grows as $n$ increases in a case-by-case fashion according to the combinatorial information obtained in Lemma~\ref{lmEdgeInPair}.
 
\begin{proof}
We fix $m\in\N$ with $m\geq 14$, and fix an arbitrary sequence $\{p_i\}_{i\in\N_0}$ of $m$-vertices in $\CC\cap\V^m$.

For each $n\in\N$, we write $E_{m,n} \coloneqq E_m (p_n,\,p_{n-1},\,\dots,\,p_1;\,p_0)$. Note that for each $n\in\N$, by (\ref{eqDefEm}),
\begin{equation}  \label{eqPfpropEmBound_EmnInAEpn}
E_{m,n} = E_m (p_n,\,p_{n-1},\,\dots,\,p_1;\,p_0) \subseteq \overline{\ae}^m(p_n),
\end{equation}
where $\overline{\ae}^m$ is defined in (\ref{eqDefEdgePair}). We denote by $e_{n,1},  \, e_{n,2}\in\E^m$ the unique pair of $m$-edges with $e_{n,1} \cup e_{n,2} = \overline{\ae}^m(p_n)$ and $e_{n,1} \cap e_{n,2} = \{p_n\}$. For each $i\in\{1, \, 2\}$, we define
\begin{equation} \label{eqPfpropEmBound_Lni}
L_{n,i} \coloneqq \begin{cases} \card (E_{m,n} \cap e_{n,i}) & \text{if } E_{m,n} \cap e_{n,i} \setminus \{p_n\} \neq \emptyset, \\ 0  & \text{otherwise}.  \end{cases}
\end{equation}

\smallskip

We first observe that by (\ref{eqDefEm}), (\ref{eqPfpropEmBound_EmnInAEpn}), Lemma~\ref{lmEmInductive}, and the fact that $f$ is injective on each $m$-edge (see Proposition~\ref{propCellDecomp}~(i)), we get that
\begin{equation}   \label{eqPfpropEmBound_EmDouble}
\card E_{m,1} \leq 2  \qquad \text{ and } \qquad  \card E_{m,n+1} \leq 2 \card E_{m,n},
\end{equation}
for each $n\in\N$.

Next, we need to establish two claims.

\smallskip

\emph{Claim~1.} For each $n\in\N$, if $L_{n,1}L_{n,2}\neq 0$ then $L_{n,1}=L_{n,2}$.

\smallskip

We will establish Claim~1 by induction on $n\in\N$.

For $n=1$, we apply Lemma~\ref{lmEdgeInPair}~(iii) with $v_0=p_0$ and $v_1=p_1$. By (\ref{eqDefEm}), it is easy to verify that in Cases~(1) through (4) discussed in Lemma~\ref{lmEdgeInPair}~(iii), we have $L_{1,1}L_{1,2} = 0$, and in Cases~(5) and (6), we have $L_{1,1}=L_{1,2}$.

We now assume that Claim~1 holds for $n=l$ for some integer $l\in\N$. We apply Lemma~\ref{lmEdgeInPair}~(iii) with $v_0=p_l$ and $v_1=p_{l+1}$. Then by (\ref{eqPfpropEmBound_EmnInAEpn}) and Lemma~\ref{lmEmInductive} with $n=l$, it is easy to verify that in Case~(1) discussed in Lemma~\ref{lmEdgeInPair}~(iii), we have either $L_{l+1,1}L_{l+1,2} = 0$ or $L_{l+1,1}=L_{l+1,2}=1$; in Cases~(2) and (3), we have $L_{l+1,1}L_{l+1,2} = 0$; in Case~(4), we have $L_{l+1,1}=L_{l,k}$ and $L_{l+1,2}=L_{l,k'}$, where $k, \, k'\in\{1, \, 2\}$ satisfy $f(e_{l+1,1})\supseteq e_{l,k}$, $f(e_{l+1,2})\supseteq e_{l,k'}$, and $k\neq k'$; and in Cases~(5) and (6), we have $L_{l+1,1}=L_{l+1,2}$.

The induction step is now complete. Claim~1 follows.

\smallskip

\emph{Claim~2.} For each $n\in\N$ with $\card E_{m,n} \geq 4$, the following statements hold:
\begin{enumerate}
\smallskip
\item[(i)] If $\card E_{m,n+1} < \card E_{m,n}$, then 
$
\card E_{m,n+1} \leq \bigl\lceil \frac{1}{2} \card E_{m,n} \bigr\rceil.
$

\smallskip
\item[(ii)] If $\card E_{m,n+1} = \card E_{m,n}$ and $p_{n+1} \in \crit f|_\CC$, then
$
\card (e_{n+1,1} \cap E_{m,n+1}) = \card (e_{n+1,2} \cap E_{m,n+1}).
$

\smallskip
\item[(iii)] If $\card E_{m,n+1} > \card E_{m,n}$, then
\begin{enumerate}
\smallskip
\item[(a)] $\card (e_{n+1,1} \cap E_{m,n+1}) = \card (e_{n+1,2} \cap E_{m,n+1})$,

\smallskip
\item[(b)] $p_{n+1} \in \crit f|_\CC$, and

\smallskip
\item[(c)] $E_{m,n} \subseteq e^{m-1} \in \E^{m-1}$, where $e^{m-1} \coloneqq f(e_{n+1,1}) = f(e_{n+1,2})$.
\end{enumerate}
\end{enumerate}

\smallskip
To prove Claim~2, we first note that by (\ref{eqPfpropEmBound_EmDouble}), $\card E_{m,1} \leq 2$, so it suffices to consider $n\geq 2$. We fix an integer $n\geq 2$ with $\card E_{m,n} \geq 4$. If such $n$ does not exist, then Claim~2 holds trivially.

We will verify statements~(i) through (iii) according to the cases discussed in Lemma~\ref{lmEdgeInPair}~(iii) with $v_0 = p_n$, $v_1 = p_{n+1}$, $e^i_0=e_{n,i}$, and $e^i_1=e_{n+1,i}$ for each $i\in\{1, \, 2\}$.

\smallskip

\emph{Case~(1).} It is easy to see that $\card E_{m,n+1} \leq 2 \leq \bigl\lceil \frac{1}{2} \card E_{m,n} \bigr\rceil$.

\smallskip

\emph{Case~(2).} We have $p_{n+1}\notin \crit f|_\CC$. Without loss of generality, we assume that $f(e_{n+1,1}) \supseteq e_{n,1}$, $f(e_{n+1,1}) \cap e_{n,2} \setminus \{p_n\} = \emptyset$, and $f(e_{n+1,2}) \cap \overline{\ae}^m(p_n) = \emptyset$. Since $f$ is injective on each $m$-edge (see Proposition~\ref{propCellDecomp}~(i)), $E_{m,n} \subseteq \overline{\ae}^m(p_{n})$ (see (\ref{eqDefEm})), and either $L_{n,1}  L_{n,2} = 0$ or $L_{n,1} = L_{n,2}$ by Claim~1, it is easy to verify from Lemma~\ref{lmEmInductive} that either $\card E_{m,n+1} = \card E_{m,n}$ or $\card E_{m,n+1} \leq \bigl\lceil \frac{1}{2} \card E_{m,n} \bigr\rceil$.

\smallskip

\emph{Case~(3).} We have $p_{n+1}\notin \crit f|_\CC$. Without loss of generality, we assume that $f(e_{n+1,1}) \supseteq \overline{\ae}^m (p_n)$ and $f(e_{n+1,2} \setminus \{ p_{n+1} \} ) \cap \overline{\ae}^m(p_n) = \emptyset$. Since $f$ is injective on each $m$-edge (see Proposition~\ref{propCellDecomp}~(i)), $E_{m,n} \subseteq \overline{\ae}^m(p_{n})$ (see (\ref{eqDefEm})), and either $L_{n,1}  L_{n,2} = 0$ or $L_{n,1} = L_{n,2}$ by Claim~1, it is easy to verify from Lemma~\ref{lmEmInductive} that $\card E_{m,n+1} = \card E_{m,n}$.

\smallskip

\emph{Case~(4).} We have $p_{n+1} \notin \crit f|_\CC$. By Proposition~\ref{propCellDecomp}~(i), $f$ maps $\overline{\ae}^m(p_{n+1})$ bijectively onto $f( \overline{\ae}^m(p_{n+1}) )$. Since $f( \overline{\ae}^m(p_{n+1}) ) \supseteq \overline{\ae}^m(p_{n})$ and $E_{m,n} \subseteq \overline{\ae}^m(p_{n})$ (see (\ref{eqDefEm})), we get $\card E_{m,n+1} = \card E_{m,n}$ by Lemma~\ref{lmEmInductive}.

\smallskip

\emph{Case~(5).} We have $p_{n+1}\in \crit f|_\CC  \subseteq \crit f$. Without loss of generality, we assume that $f(e_{n+1,j}) \supseteq e_{n,1}$ and $f(e_{n+1,j}) \cap e_{n,2} \setminus \{p_n\} = \emptyset$ for each $j\in\{1, \, 2\}$. Since $E_{m,n} \subseteq \overline{\ae}^m(p_{n})$ (see (\ref{eqDefEm})) and either $L_{n,1}  L_{n,2} = 0$ or $L_{n,1} = L_{n,2}$ by Claim~1, it is easy to verify from Lemma~\ref{lmEmInductive} that either $\card E_{m,n+1} \leq 2 \leq \bigl\lceil \frac{1}{2} \card E_{m,n} \bigr\rceil$ or $\card E_{m,n+1} \geq \card E_{m,n}$. Note that in either case, we have $\card (e_{n+1,1} \cap E_{m,n+1}) = \card (e_{n+1,2} \cap E_{m,n+1})$. Moreover, if $\card E_{m,n+1} > \card E_{m,n}$, then it follows that $L_{n,2} = 0$, and thus $E_{m,n} \subseteq e_{n,1} \subseteq f(e_{n+1,1}) = f(e_{n+1,2}) \in \E^{m-1}$ (see Proposition~\ref{propCellDecomp}~(i)). 

\smallskip

\emph{Case~(6).} We have $p_{n+1}\in \crit f|_\CC  \subseteq \crit f$. Since $f$ is injective on each $m$-edge (see Proposition~\ref{propCellDecomp}~(i)) and $E_{m,n} \subseteq \overline{\ae}^m(p_{n})$ (see (\ref{eqDefEm})), it is easy to verify that $\card E_{m,n+1} > \card E_{m,n}$ and $E_{m,n} \subseteq \overline{\ae}^m(p_n)  \subseteq f(e_{n+1,1}) = f(e_{n+1,2}) \in \E^{m-1}$ (see Proposition~\ref{propCellDecomp}~(i)).

\smallskip

Claim~2 now follows.

\smallskip

Finally, we will establish (\ref{eqEmBound}) by induction on $n\in\N$. Recall that we assume $m\geq 14$.

For $n=1$, by (\ref{eqPfpropEmBound_EmDouble}), $\card E_{m,1} \leq 2 < m 2^{\frac{n}{m}}$. 

We now assume that (\ref{eqEmBound}) holds for all $n\leq l$ for some integer $l\in\N$. If $\card E_{m,l} < 8$, then (\ref{eqEmBound}) holds for $n=l+1$ by (\ref{eqPfpropEmBound_EmDouble}) and the assumption that $m\geq 14$. So we can assume that $\card E_{m,l} \geq 8$. Moreover, if $\card E_{m,l+1} \leq \card E_{m,l}$, then (\ref{eqEmBound}) holds for $n=l+1$ trivially from the induction hypothesis. Thus, we can also assume that $\card E_{m,l+1} > \card E_{m,l}$.

Since $\card E_{m,1} \leq 2$ (see (\ref{eqPfpropEmBound_EmDouble})) and $\card E_{m,l} \geq 8$, we can define a number
\begin{equation}  \label{eqPfpropEmBound_Defk}
k \coloneqq \max \{ i\in\N : i<l, \, \card E_{m,i} \neq \card E_{m,l} \} \leq l-1.
\end{equation}
Note that $l\geq 3$ and $\card E_{m,k} \geq \frac{1}{2} \card E_{m,k+1} = \frac{1}{2} \card E_{m,l} \geq 4$ by (\ref{eqPfpropEmBound_EmDouble}).

We will establish (\ref{eqEmBound}) for $n=l+1$ by considering the following two cases:

\smallskip

\emph{Case I.} $\card E_{m,k} > \card E_{m,l}=\card E_{m,k+1}$. Then by (\ref{eqPfpropEmBound_EmDouble}) and Claim~2(i), we have
\begin{align*}
 \card E_{m,l+1} & \leq 2 \card E_{m,l}
                    =   2 \card E_{m,k+1}
                   \leq 2 \Bigl\lceil \frac{1}{2} \card E_{m,k} \Bigr\rceil \\
                 & \leq 1 + \card E_{m,k} 
                   \leq 1 + m 2^{\frac{k}{m}}
                   \leq m 2^{\frac{k+2}{m}}
                   \leq m 2^{\frac{l+1}{m}},
\end{align*}
where the second-to-last inequality follows from the fact that the function $h(x) \coloneqq x\bigl(2^{\frac{2}{x}} - 1 \bigr)$, $x>1$, satisfies $\lim_{x\to+\infty} h(x) = \log 4 >1$, $\lim_{x\to+\infty} \frac{\mathrm{d}}{\mathrm{d} x} h(x) = 0$, and $\frac{\mathrm{d}^2}{\mathrm{d} x^2} h(x) > 0$ for $x>1$.

\smallskip

\emph{Case II.} $\card E_{m,k} < \card E_{m,l}=\card E_{m,k+1}$. Then by Claim~2(iii), we have $p_{k+1} \in \crit f|_\CC$. Put
\begin{equation}  \label{eqPfpropEmBound_Defk'}
k' \coloneqq \max \{ i\in \N : i\leq l, \, p_i\in\crit f|_\CC \}  \in [k+1, l].
\end{equation}
Note that by (\ref{eqPfpropEmBound_Defk}), (\ref{eqPfpropEmBound_Defk'}), and (\ref{eqPfpropEmBound_EmDouble}), we get $\card E_{m,k'-1} \geq \frac{1}{2} \card E_{m,l} \geq 4$. By Claim~2(ii) and (iii), regardless of whether $k'=k+1$ or not, we have
\begin{equation}  \label{eqPfpropEmBound_k'TwoSides}
\card (e_{k',1} \cap E_{m,k'} ) = \card (e_{k',2} \cap E_{m,k'} ) \geq 2.
\end{equation}
By Claim~2(iii), we have $p_{l+1} \in \crit f|_\CC \subseteq \V^1$, $E_{m,l} \subseteq e^{m-1} \in \E^{m-1}$ where $e^{m-1} \coloneqq f(e_{l+1,1}) = f(e_{l+1,2})$. Note that $f(p_{l+1}) \in \V^0 \cap e^{m-1}$. We now show that $k'< l$. We argue by contradiction and assume that $k'\geq l$. By (\ref{eqPfpropEmBound_Defk'}), $k'=l$. Then $p_l = p_{k'} \in \crit f|_\CC \subseteq \V^1$ and $p_l = p_{k'} \in \inte \bigl( e^{m-1} \bigr)$ (see (\ref{eqPfpropEmBound_k'TwoSides})). This contradicts the fact that no $(m-1)$-edge can contain a $1$-vertex in its interior. Thus, $k'<l$.

We now show that
\begin{equation}  \label{eqPfpropEmBound_jump}
l-k'\geq m-1.
\end{equation}
Fix an arbitrary integer $i\in[k'+1,l]$. Since $\card E_{m,i} = \card E_{m,i-1} \in [8,+\infty)$ (see (\ref{eqPfpropEmBound_Defk'}) and (\ref{eqPfpropEmBound_Defk})), $f(E_{m,i}) \subseteq E_{m,i-1}$ (see (\ref{eqDefEm})), and $f^{l-k'}$ is injective on $e^{m-1} \supseteq E_{m,l}$ (see Proposition~\ref{propCellDecomp}~(i)), we get $f(E_{m,i}) = E_{m,i-1}$. By Proposition~\ref{propCellDecomp}~(i), $f^{l-k'}\bigl( e^{m-1} \bigr) \in \E^{m-1-l+k'}$. Since $f^{l-k'}\bigl( e^{m-1} \bigr) \supseteq f^{l-k'}(E_{m,l}) = E_{m,k'}$ and $f^{l-k'}$ is injective and continuous on $e^{m-1}$ (see Proposition~\ref{propCellDecomp}~(i)), it follows from (\ref{eqPfpropEmBound_k'TwoSides}) that $p_{k'} \in \inte \bigl( f^{l-k'}\bigl( e^{m-1} \bigr) \bigr)$. Since $p_{k'} \in \crit f|_\CC \subseteq \V^1$, we get $m-1-l+k' \leq 0$, proving (\ref{eqPfpropEmBound_jump}).

Hence, by (\ref{eqPfpropEmBound_EmDouble}), (\ref{eqPfpropEmBound_Defk}), (\ref{eqPfpropEmBound_Defk'}), (\ref{eqPfpropEmBound_jump}), and the induction hypothesis, we get 
\begin{equation*}
\card E_{m,l+1} \leq 2 \card E_{m,l} = 2 \card E_{m,k+1} \leq 2 m 2^{\frac{k+1}{m}}  \leq 2 m 2^{\frac{k'}{m}} \leq m 2^{\frac{l+1}{m}}.
\end{equation*}
The induction step is now complete, establishing Proposition~\ref{propEmBound}.
\end{proof}

\begin{theorem}  \label{thmPressureOnC}
Let $f$, $\CC$, $d$, $\alpha$ satisfy the Assumptions in Section~\ref{sctAssumptions}. We assume, in addition, that $f(\CC)\subseteq \CC$. Let $\varphi \in \Holder{\alpha}(S^2,d)$ be a real-valued H\"{o}lder continuous function with an exponent $\alpha$. Recall the one-sided subshifts of finite type $\bigl( \Sigma_{A_{\e}}^+, \sigma_{A_{\e}} \bigr)$ and $\bigl( \Sigma_{A_{\ee}}^+, \sigma_{A_{\ee}} \bigr)$ constructed in Subsection~\ref{subsctDynOnC_Construction}. We denote by $\left(\V^0, f|_{\V^0}\right)$ the dynamical system on $\V^0=\V^0(f,\CC) = \post f$ induced by $f|_{\V^0} \: \V^0\rightarrow \V^0$. Then the following relations between the topological pressure of these systems hold:
\begin{enumerate}
	\smallskip
	\item[(i)] $P(f,\varphi) > P  (   f|_{\V^0} ,       \varphi|_{\V^0}                         )$,
	\smallskip
	\item[(ii)] $P(f,\varphi) > P  (   f|_\CC,           \varphi|_\CC                            )     
	=   P  (   \sigma_{A_{\e}},  \varphi \circ \pi_{\e}                  )
	=   P  (   \sigma_{A_{\ee}}, \varphi \circ \pi_{\e} \circ \pi_{\ee}  )$.
\end{enumerate}
\end{theorem}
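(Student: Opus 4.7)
The plan is to establish the two equalities via a uniformly bounded-to-one factor map argument, and the two strict inequalities via the uniqueness and support properties of the equilibrium state from Theorem~\ref{thmEquilibriumState}.

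For the equalities, recall from Proposition~\ref{propSFTs_C} that $\pi_{\ee}\:\Sigma_{A_{\ee}}^+\to\Sigma_{A_{\e}}^+$ is exactly $2$-to-$1$ and $\pi_{\e}\:\Sigma_{A_{\e}}^+\to\CC$ is at most $2$-to-$1$. A classical theorem (due to Bowen for entropy, extending to pressure with continuous potentials) asserts that whenever $\pi\:(Y,g)\to(X,f)$ is a continuous factor map between compact metric topological dynamical systems with uniformly bounded fiber cardinality, one has $P(g,\psi\circ\pi)=P(f,\psi)$ for every $\psi\in\CCC(X)$. This can be verified through the Variational Principle~(\ref{eqVPPressure}): the pushforward $\pi_*\:\MMM(Y,g)\to\MMM(X,f)$ is surjective (average iterates of any Borel lift and extract a weak-$*$ accumulation point), the relative fiber contribution in the Abramov--Rokhlin decomposition vanishes because fibers are uniformly finite, and $\int(\psi\circ\pi)\,\mathrm{d}\nu=\int\psi\,\mathrm{d}(\pi_*\nu)$ for any $\nu\in\MMM(Y,g)$. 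Applying this fact to $\pi_{\ee}$ with the potential $\varphi\circ\pi_{\e}$ and to $\pi_{\e}$ with the potential $\varphi|_\CC$ will yield the two equalities.

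For the strict inequality $P(f|_\CC,\varphi|_\CC)<P(f,\varphi)$, I will first observe that every $f|_\CC$-invariant Borel probability measure $\mu$ on $\CC$ extends by zero to an $f$-invariant probability measure on $S^2$ with identical measure-theoretic entropy. Theorem~\ref{thmEquilibriumState}(iii) applied with $n_\CC=1$ gives $\mu_\varphi(\CC)=0$, so the unique equilibrium state $\mu_\varphi$ from Theorem~\ref{thmEquilibriumState}(i) is not supported on $\CC$. Uniqueness thus yields the pointwise strict inequality $h_\mu(f)+\int\varphi\,\mathrm{d}\mu<P(f,\varphi)$ for every such $\mu$. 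To upgrade this to a strict inequality of the suprema, take a maximizing sequence $\{\mu_n\}$ of $f|_\CC$-invariant measures and extract a weak-$*$ subsequential limit $\mu^*$, which is $f$-invariant and supported on the closed set $\CC$. Upper semi-continuity of the entropy map $\nu\mapsto h_\nu(f)$ on $\MMM(S^2,f)$ for expanding Thurston maps will then give $h_{\mu^*}(f)+\int\varphi\,\mathrm{d}\mu^*\geq P(f|_\CC,\varphi|_\CC)$; equality with $P(f,\varphi)$ would force $\mu^*=\mu_\varphi$ by uniqueness of the equilibrium state, contradicting $\mu^*(\CC)=1>0=\mu_\varphi(\CC)$.

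The final strict inequality $P(f|_{\V^0},\varphi|_{\V^0})<P(f,\varphi)$ is essentially straightforward because $\V^0=\post f$ is finite. Every $f|_{\V^0}$-invariant probability measure has zero measure-theoretic entropy and decomposes as a convex combination of finitely many periodic-orbit measures $\mu_\tau$ with $\int\varphi\,\mathrm{d}\mu_\tau=\frac{1}{\card(\tau)}\sum_{x\in\tau}\varphi(x)$, so $P(f|_{\V^0},\varphi|_{\V^0})$ is realized as the maximum over this finite collection. Each such $\mu_\tau$ is supported in $\V^0\subseteq\CC$ and hence, by Theorem~\ref{thmEquilibriumState}(iii), distinct from $\mu_\varphi$; strict inequality for each $\mu_\tau$ therefore passes to their finite maximum without requiring any compactness or semi-continuity argument. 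The hard step in the plan is the supremum argument in the $\CC$-case, which rests on upper semi-continuity of the entropy map for expanding Thurston maps---a property not automatic from the topological definitions but that can be extracted from the thermodynamic formalism developed in \cite{Li18} (and, for maps without periodic critical points, from asymptotic $h$-expansivity established in \cite{Li15}).
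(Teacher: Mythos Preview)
Your approach is largely correct and, for the equality $P(\sigma_{A_{\e}}, \varphi \circ \pi_{\e}) = P(f|_\CC, \varphi|_\CC)$, takes a genuinely different and more economical route than the paper. The paper establishes only the inequality $\geq$ from the factor map, and then proves $\leq$ through an elaborate combinatorial analysis: it develops the edge-pair machinery of Subsection~\ref{subsctDynOnC_TopPressure} (Lemmas~\ref{lmEdgeInPair} and~\ref{lmCoverByEdgePair}, and the key cardinality bound Proposition~\ref{propEmBound}) in order to control, via the open-cover definition~(\ref{eqEquivDefByCoverForTopPressure}) of pressure, how refined covers of $\CC$ relate to cylinders in the symbolic space. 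Your at-most-$2$-to-$1$ argument bypasses all of this. The paper's route buys self-containment---everything is derived from the cell-decomposition structure without appealing to Bowen's fiber-entropy inequality or the Abramov--Rokhlin relative-entropy formula---while your route buys brevity. Note that the paper's own bounded-to-one Lemma~\ref{lmUnifBddToOneFactorPressure} is stated only for factor maps \emph{between} subshifts of finite type, so it does not apply to $\pi_{\e}$; you are correctly invoking a more general principle.

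There is, however, one soft spot. For $P(f|_\CC, \varphi|_\CC) < P(f,\varphi)$ you invoke upper semi-continuity of $\nu \mapsto h_\nu(f)$ on all of $\MMM(S^2,f)$, sourced vaguely from \cite{Li18}. That reference does not establish this, and \cite{Li15} shows that expanding Thurston maps with periodic critical points are \emph{not} asymptotically $h$-expansive, so the standard route to semi-continuity is blocked in that case. (The paper's one-sentence proof of the strict inequality is equally terse on this point.) There is a clean fix using your own ingredients: once you have the equality $P(\sigma_{A_{\e}}, \varphi \circ \pi_{\e}) = P(f|_\CC, \varphi|_\CC)$, take an equilibrium state $\nu^*$ for $(\sigma_{A_{\e}}, \varphi\circ\pi_{\e})$---which exists since subshifts of finite type are expansive---and push it forward to $\mu^* \coloneqq (\pi_{\e})_*\nu^*$. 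Your bounded-to-one argument gives $h_{\mu^*}(f|_\CC) = h_{\nu^*}(\sigma_{A_{\e}})$, so $\mu^*$ realizes $P(f|_\CC,\varphi|_\CC)$; then $\mu^*(\CC)=1\neq 0=\mu_\varphi(\CC)$ and uniqueness of $\mu_\varphi$ yield the strict inequality directly, with no semi-continuity on $S^2$ needed.
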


\begin{proof}
The identity $P  (   \sigma_{A_{\ee}}, \varphi \circ \pi_{\e} \circ \pi_{\ee}  )  =  P  (   \sigma_{A_{\e}},  \varphi \circ \pi_{\e}  )$ follows directly from Lemma~\ref{lmUnifBddToOneFactorPressure} and H\"{o}lder continuity of $\pi_{\e}$ and $\pi_{\ee}$ (see Proposition~\ref{propSFTs_C}).

The strict inequalities $P (f|_\CC, \varphi|_\CC) < P(f,\varphi)$ and $P ( f|_{\V^0}), \varphi|_{\V^0} ) < P(f,\varphi)$ follow from the uniqueness of the equilibrium state $\mu_\phi$ for the map $f$ and the potential $\varphi$ (see Theorem~\ref{thmEquilibriumState}~(i)), the fact that $\mu_\phi (\CC) = 0$ (see Theorem~\ref{thmEquilibriumState}~(iii)), the Variational Principle (\ref{eqVPPressure}), and the observation that any equilibrium state for the map $f|_\CC$ (resp.\ $f|_{\V^0}$) and the potential $\varphi|_\CC$ (resp.\ $\varphi|_{\V^0}$) is also invariant under $f$.

We observe that since $(\CC,f|_\CC)$ is a factor of $\bigl(\Sigma_{A_{\e}}^+, \sigma_{A_{\e}}\bigr)$ with a factor map $\pi_{\e} \: \Sigma_{A_{\e}}^+ \rightarrow \CC$ (Proposition~\ref{propSFTs_C}~(ii)), it follows from \cite[Lemma~3.2.8]{PU10} that $P  ( \sigma_{A_{\e}},  \varphi \circ \pi_{\e} ) \geq P  (  f|_\CC,  \varphi|_\CC )$. It remains to show $P  ( \sigma_{A_{\e}},  \varphi \circ \pi_{\e} ) \leq P  (  f|_\CC,  \varphi|_\CC )$. 

By Lemma~\ref{lmCellBoundsBM}~(ii) and Proposition~\ref{propCellDecomp}~(vii), no $1$-tile in $\X^1(f^n,\CC)$ joins opposite sides of $\CC$ for all sufficiently large $n\in\N$. Note that for all $m\in\N$, $P\bigl( f^m|_\CC, S^f_m \varphi|_\CC \bigr) = m P ( f|_\CC,  \varphi|_\CC  )$ and $P  \bigl( \sigma_{A_{\e}}^m,  S^{\sigma_{A_{\e}}}_m ( \varphi \circ \pi_{\e}) \bigr)  = mP  ( \sigma_{A_{\e}},  \varphi \circ \pi_{\e} )$ (see for example, \cite[Theorem~9.8]{Wal82}). It is clear that, without loss of generality, we can assume that no $1$-tile in $\X^1(f,\CC)$ joins opposite sides of $\CC$.

We define a sequence of finite open covers $\{\eta_i\}_{i\in\N_0}$ of $\CC$ by 
\begin{equation*}
\eta_i \coloneqq \bigl\{  \ae^i(v)  : v\in \CC \cap \V^i  \bigr\}
\end{equation*}
for $i\in\N_0$. We note that since we are considering the metric space $(\CC,d)$, $\ae^i(v)$ is indeed an open set for each $i\in\N_0$ and each $v \in \CC \cap \V^i$. By Lemma~\ref{lmCellBoundsBM}~(ii), 
\begin{equation*}
\lim_{i\to+\infty} \max \{ \diam_d(V) : V\in\eta_i \} = 0.
\end{equation*}

Fix arbitrary integers $l, \, m, \, n\in\N$ with $l\geq m\geq 14$. Choose $U\in \bigvee_{i=0}^n (f|_\CC)^{-i} (\eta_m)$ arbitrarily, say
\begin{equation*}
U = \bigcap_{i=0}^n (f|_\CC)^{-i} (\ae^m(p_{n-i})),
\end{equation*}
where $p_0, \, p_1, \, \dots, \, p_n \in \CC\cap\V^m$. By Lemma~\ref{lmCoverByEdgePair},
\begin{equation*}
U \subseteq \bigcup_{x\in E_m(p_n,\,p_{n-1},\,\dots,\,p_1;\,p_0)}  \ae^{m+n} (x),
\end{equation*}
where $E_m$ is defined in (\ref{eqDefEm}). It follows immediately from (\ref{eqDeg=SumLocalDegree}) and Proposition~\ref{propCellDecomp}~(i) and (v) that
\begin{equation*}
\card \bigl\{ e^{l+n} \in \E^{l+n} : e^{l+n} \subseteq e \bigr\}  \leq (\deg f)^{l-m} \card(\post f)
\end{equation*}
for each $(m+n)$-edge $e\in\E^{m+n}$. Thus, we can construct a collection $\mathcal{E}^{l+n}(U) \subseteq \E^{l+n}$ of $(l+n)$-edges such that $U\subseteq \bigcup \mathcal{E}^{l+n}(U)$ by setting
\begin{equation*}
\mathcal{E}^{l+n}(U)  \coloneqq \bigcup_{x\in E_m(p_n,\,p_{n-1},\,\dots,\,p_1;\,p_0)}   \bigl\{ e^{l+n} \in \E^{l+n} 
                                                                                  : e^{l+n} \subseteq \overline{\ae}^{m+n}(x)  \bigr\}
\end{equation*}
Then 
\begin{align}   \label{eqPfthmPressureOnC_EdgeNoBound}
       \card \bigl(  \mathcal{E}^{l+n}(U) \bigr) 
& \leq 2 (\deg f)^{l-m} \card(\post f) \card (E_m(p_n,\,p_{n-1},\,\dots,\,p_1;\,p_0))  \notag \\
& \leq 2 m 2^{\frac{n}{m}} (\deg f)^{l-m} \card(\post f),
\end{align}
where the last inequality follows from Proposition~\ref{propEmBound}.

Hence, by (\ref{eqEquivDefByCoverForTopPressure}), Lemma~\ref{lmSnPhiBound}, and (\ref{eqPfthmPressureOnC_EdgeNoBound}), we get
\begin{align*}
            P(f|_\CC, \varphi|_\CC)  
&    = \lim_{m\to+\infty}   \lim_{l\to+\infty}   \lim_{n\to+\infty}  \frac{1}{n} \log
                 \inf_\xi \biggl\{ \sum_{U\in\xi} \exp  \bigl( \sup  \bigl\{S_n^f\varphi(x) : x\in U \bigr\}  \bigr)    \biggr\}     \\
&\geq  \liminf_{m\to+\infty}   \liminf_{l\to+\infty}   \liminf_{n\to+\infty}  \frac{1}{n} \log
                 \inf_\xi \biggl\{ \sum_{U\in\xi} \sum_{e\in\mathcal{E}^{l+n}(U)}
                   \frac{  \exp  ( \sup \{S_n^f\varphi(x) : x\in e\cap U \}  )}
                        {  \card  (  \mathcal{E}^{l+n}(U)   ) }  \biggr\}  \\        
&\geq  \liminf_{m\to+\infty}   \liminf_{l\to+\infty}   \liminf_{n\to+\infty}  \frac{1}{n} \log
                 \inf_\xi \biggl\{ \sum_{U\in\xi} \sum_{e\in\mathcal{E}^{l+n}(U)}
                   \frac{  \exp  ( \sup    \{S_n^f\varphi(x) : x\in e \}  )}
                        { 2 m 2^{\frac{n}{m}} (\deg f)^{l-m}   D } \biggr\}     \\ 
&\geq  \liminf_{m\to+\infty}   \liminf_{l\to+\infty}   \liminf_{n\to+\infty}  \frac{1}{n} \log
                   \sum\limits_{\substack{e\in\E^{l+n} \\ e\subseteq \CC  }}
                   \frac{  \exp  ( \sup    \{S_n^f\varphi(x) : x\in e \}  )}
                        { 2 m 2^{\frac{n}{m}} (\deg f)^{l-m}   D }     \\  
&  =   \liminf_{m\to+\infty}   \liminf_{l\to+\infty}   \liminf_{n\to+\infty}  \frac{1}{n} 
              \biggl( \log   \sum\limits_{\substack{e\in\E^{l+n} \\ e\subseteq \CC  }}
                              \exp  ( \sup    \{S_n^f\varphi(x) : x\in e \}  ) \\
 &\qquad\qquad\qquad\qquad\qquad\qquad\qquad     - \log \bigl( 2 m 2^{\frac{n}{m}} (\deg f)^{l-m}   D \bigr)   \biggr)\\
&  =   \liminf_{l\to+\infty}   \liminf_{n\to+\infty}  \frac{1}{n} \log
                \sum\limits_{\substack{e\in\E^{l+n} \\ e\subseteq \CC  }}
                              \exp  \bigl( \sup \bigl\{ S_n^f\varphi(x) : x\in e  \bigr\}  \bigr)  ,                   
\end{align*}
where the constant $D>1$ is defined to be 
\begin{equation*}
	D \coloneqq \card(\post f)  e^{C_1 (\diam_d(S^2) )^\alpha}
\end{equation*}
with $C_1 = C_1(f,\CC,d,\varphi,\alpha)>0$ depending only on $f$, $\CC$, $d$, $\varphi$, and $\alpha$ defined in (\ref{eqC1C2}), and the infima are taken over all finite open subcovers $\xi$ of $\bigvee_{i=0}^n (f|_\CC)^{-i}(\eta_m)$, i.e., 
\begin{equation*}
	\xi \in \biggl\{ \varsigma: \varsigma \subseteq \bigvee_{i=0}^n (f|_\CC)^{-i}(\eta_m), \, \bigcup \varsigma = \CC \biggr\}.
\end{equation*}
The second inequality follows from Lemma~\ref{lmSnPhiBound}, and the last inequality follows from the fact that 
\begin{equation*}
\CC\supseteq \bigcup \Bigl( \bigcup_{U\in\xi}  \mathcal{E}^{l+n}(U) \Bigr) =  \bigcup_{U\in\xi} \Bigl( \bigcup  \mathcal{E}^{l+n}(U)\Bigr) \supseteq  \bigcup \xi = \CC
\end{equation*}
and thus $\bigcup_{U\in\xi}  \mathcal{E}^{l+n}(U) = \bigl\{ e\in\E^{l+n} : e\subseteq \CC \}$.

Finally, we will show that 
\begin{equation*}
 P  ( \sigma_{A_{\e}},  \varphi \circ \pi_{\e} ) 
 =  \lim_{l\to+\infty}   \lim_{n\to+\infty}  \frac{1}{n} \log
                \sum\limits_{\substack{e\in\E^{l+n} \\ e\subseteq \CC  }}
                              \exp  \bigl( \sup \bigl\{ S_n^f\varphi(x) : x\in e \bigr\}  \bigr).
\end{equation*}
We denote by $C_n(e_0,e_1,\dots,e_n)$ the $n$-cylinder set 
\begin{equation*}
C_n(e_0,e_1,\dots,e_n) \coloneqq \bigl\{ \{e'_i\}_{i\in\N_0} \in \Sigma_{A_{\e}}^+ : e'_i=e_i \text{ for all } i\in\N_0 \text{ with } i\leq n  \bigr\}
\end{equation*}  
in $\Sigma_{A_{\e}}^+$ containing $\{e_i\}_{i\in\N_0}$, for each $n\in \N_0$ and each $\{e_i\}_{i\in\N_0} \in \Sigma_{A_{\e}}^+$. For each $n\in\N_0$, we denote by $\mathfrak{C}_n$ the set all $n$-cylinder sets in $\Sigma_{A_{\e}}^+$, i.e., 
\begin{equation*}
	\mathfrak{C}_n = \bigl\{ C_n(e_0,e_1,\dots,e_n) : \{e_i\}_{i\in\N_0} \in \Sigma_{A_{\e}}^+ \bigr\}.
\end{equation*}
Then it is easy to verify that for all $n, \, l\in\N_0$, $\mathfrak{C}_n$ is a finite open cover of $\Sigma_{A_{\e}}^+$, and $\bigvee_{i=0}^n \sigma_{A_{\e}}^{-i} (\mathfrak{C}_l)  = \mathfrak{C}_{l+n}$. Hence, by (\ref{eqEquivDefByCoverForTopPressure}), Lemma~\ref{lmCylinderIsTile}, and Proposition~\ref{propSFTs_C}~(i), we have
\begin{align*}
&           P  ( \sigma_{A_{\e}},  \varphi \circ \pi_{\e} )  \\
&\qquad  =  \lim_{l\to +\infty}  \lim_{n\to +\infty}  \frac{1}{n} \log 
               \inf\biggl\{ \sum_{V\in\mathcal{V}}  \exp 
                      \biggl( \sup_{\underline{z}\in V} S_n^{\sigma_{A_{\e}}} (\varphi \circ \pi_{\e})(\underline{z})   \biggr)
                             :  \mathcal{V} \subseteq \bigvee_{i=0}^{n} \sigma_{A_{\e}}^{-i}(\mathfrak{C}_l),\,
                                                        \bigcup \mathcal{V} = \Sigma_{A_{\e}}^+ \biggr\}  \\
&\qquad  =  \lim_{l\to +\infty}  \lim_{n\to +\infty}  \frac{1}{n} \log 
              \sum_{V\in\mathfrak{C}_{l+n}}  \exp 
                      \Bigl( \sup  \Bigl\{ S_n^{\sigma_{A_{\e}}} (\varphi \circ \pi_{\e})(\underline{z}) : \underline{z}\in V \Bigr\}   \Bigr)  \\
&\qquad  =  \lim_{l\to+\infty}   \lim_{n\to+\infty}  \frac{1}{n} \log
                \sum\limits_{\substack{e\in\E^{l+n+1} \\ e\subseteq \CC  }}
                              \exp  \bigl( \sup \bigl\{ S_n^f \varphi(x) : x\in e \bigr\}  \bigr) .                               
\end{align*}
Hence, $P(f|_\CC, \varphi|_\CC) \geq P  ( \sigma_{A_{\e}},  \varphi \circ \pi_{\e} )$. The proof is, therefore, complete.
\end{proof}

\section{Orbifolds and non-local integrability}  \label{sctNLI}
This section is devoted to characterizations of a necessary condition, called \emph{non-local integrability condition}, on the potential $\phi\: S^2 \rightarrow \R$ for the Prime Orbit Theorems for expanding Thurston maps. The characterizations are summarized in Theorem~\ref{thmNLI}. In particular, a real-valued H\"{o}lder continuous $\phi$ on $S^2$ is non-locally integrable if and only if $\phi$ is cohomologous to a constant in the set of real-valued continuous functions on $S^2$. As we will eventually show, such a condition is actually equivalent in our context to the Prime Orbit Theorem (see Theorem~\ref{thmPrimeOrbitTheorem}). In our proof of Theorem~\ref{thmNLI}, we use the notion of orbifolds introduced in general by W.~P.~Thurston in the 1970s in his study of the geometry of $3$-manifolds (see \cite[Chapter~13]{Th80}).

\subsection{Non-local integrability}   \label{subsctNLI_Def}

Let $f\: S^2\rightarrow S^2$ be an expanding Thurston map, $d$ be a visual metric on $S^2$ for $f$, and $\CC\subseteq S^2$ be a Jordan curve satisfying $f(\CC)\subseteq \CC$ and $\post f\subseteq \CC$. Recall the one-sided subshift of finite type $\bigl(\Sigma_{A_{\ti}}^+,\sigma_{A_{\ti}} \bigr)$ associated to $f$ and $\CC$ defined in Proposition~\ref{propTileSFT}. In this section, we write $\Sigma_{f,\, \CC}^+ \coloneqq \Sigma_{A_{\ti}}^+$ and $\sigma \coloneqq \sigma_{A_{\ti}}$, i.e.,
\begin{equation}    \label{eqDefSigma+}
\Sigma_{f,\,\CC}^+ = \bigl\{\{X_i\}_{i\in\N_0} : X_i\in \X^1(f,\CC) \text{ and } f(X_i) \supseteq X_{i+1},\text{ for } i\in\N_0 \bigr\},
\end{equation}
and $\sigma$ is the left-shift operator defined by $\sigma(\{X_i\}_{i\in\N_0} ) = \{X_{i+1}\}_{i\in\N_0}$ for $\{X_i\}_{i\in\N_0} \in \Sigma_{f,\,\CC}^+$.

Similarly, we define
\begin{equation}   \label{eqDefSigma-}
\Sigma_{f,\,\CC}^- \coloneqq \bigl\{\{X_{\minus i}\}_{i\in\N_0} : X_{\minus i}\in \X^1(f,\CC) \text{ and } f \bigl(X_{\minus (i+1)}\bigr) \supseteq X_{\minus i},\text{ for } i\in\N_0 \bigr\}.
\end{equation}
For each $X\in\X^1(f,\CC)$, since $f$ is injective on $X$ (see Proposition~\ref{propCellDecomp}~(i)), we denote the inverse branch of $f$ restricted on $X$ by $f_X^{-1}\: f(X) \rightarrow X$, i.e., $f_X^{-1} \coloneqq (f|_X)^{-1}$.

Let $\psi\in \Holder{\alpha}((S^2,d),\C)$ be a complex-valued H\"{o}lder continuous function with an exponent $\alpha\in (0,1]$. For each $\xi=\{ \xi_{\minus i} \}_{i\in\N_0} \in  \Sigma_{f,\,\CC}^-$, we define the function
\begin{equation}   \label{eqDelta}
\Delta^{f,\,\CC}_{\psi,\,\xi} (x,y) \coloneqq \sum_{i=0}^{+\infty} \bigl( \bigl(\psi \circ f^{-1}_{\xi_{\minus i}} \circ \cdots \circ f^{-1}_{\xi_{0}}\bigr) (x) -  \bigl( \psi \circ f^{-1}_{\xi_{\minus i}} \circ \cdots \circ f^{-1}_{\xi_{0}}\bigr) (y)   \bigr)
\end{equation}
for $(x,y)\in \bigcup\limits_{\substack{X\in\X^1(f,\CC) \\ X\subseteq f(\xi_0)}}X \times X$.  

We will see in the following lemma that the series in (\ref{eqDelta}) converges.

\begin{lemma}  \label{lmDeltaHolder}
Let $f$, $\CC$, $d$, $\psi$, $\alpha$ satisfy the Assumptions in Section~\ref{sctAssumptions}. We assume, in addition, that $f(\CC)\subseteq \CC$. Let $\xi=\{ \xi_{\minus i} \}_{i\in\N_0} \in  \Sigma_{f,\,\CC}^-$. Then for each $X\in\X^1(f,\CC)$ with $X\subseteq f(\xi_0)$, we get that $\Delta^{f,\,\CC}_{\psi,\,\xi} (x,y)$ as a series defined in (\ref{eqDelta}) converges absolutely and uniformly in $x, \, y\in X$, and moreover, for each triple of $x, \, y,z\in X$, the identity 
\begin{equation}   \label{eqDeltaDifference}
\Delta^{f,\,\CC}_{\psi,\,\xi} (x,y)   = \Delta^{f,\,\CC}_{\psi,\,\xi} (z,y) - \Delta^{f,\,\CC}_{\psi,\,\xi} (z,x)
\end{equation}
holds with
$
\Absbig{ \Delta^{f,\,\CC}_{\psi,\,\xi} (x,y) }   \leq C_1 d(x,y)^\alpha,
$
where $C_1=C_1(f,\CC,d,\psi,\alpha)$ is the constant depending on $f$, $\CC$, $d$, $\psi$, and $\alpha$ from Lemma~\ref{lmSnPhiBound}.
\end{lemma}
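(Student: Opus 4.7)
The plan is to establish both absolute/uniform convergence of the series in (\ref{eqDelta}) and the bound $\Absbig{\Delta^{f,\CC}_{\psi,\xi}(x,y)} \leq C_1 d(x,y)^\alpha$ by rewriting partial sums as differences of Birkhoff sums $S_n^f \psi$ along backward $f$-orbits and then invoking the distortion estimate of Lemma~\ref{lmSnPhiBound}. The identity (\ref{eqDeltaDifference}) will be immediate from linearity of an absolutely convergent sum.

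First I would abbreviate $g_i \coloneqq f^{-1}_{\xi_{\minus i}} \circ \cdots \circ f^{-1}_{\xi_{0}}$ for $i \in \N_0$, and set $g_{\minus 1} \coloneqq \id$. Iterating Lemma~\ref{lmCylinderIsTile} (applied to a one-sided sequence built from $\xi$ together with a $1$-tile $X \subseteq f(\xi_0)$) identifies $g_i(X)$ as an $(i{+}2)$-tile in $\X^{i+2}(f,\CC)$ and shows that $f \circ g_i = g_{i-1}$ on $X$. In particular $f^n \circ g_{n-1} = \id$ on $X$, so for $z \in X$ the sequence $\{g_i(z)\}_{i \in \N_0}$ is a backward $f$-orbit of $z$ and the partial sum telescopes to
\begin{equation*}
\sum_{i=0}^{n-1}\bigl(\psi(g_i(x)) - \psi(g_i(y))\bigr) = S_n^f \psi(g_{n-1}(x)) - S_n^f \psi(g_{n-1}(y)),
\end{equation*}
where both $g_{n-1}(x)$ and $g_{n-1}(y)$ lie in the $(n{+}1)$-tile $g_{n-1}(X)$.

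I would then apply the distortion bound (\ref{eqSnPhiBound}) with $m = n+1$. Although Lemma~\ref{lmSnPhiBound} is stated for real-valued potentials, applying it separately to $\Re \psi$ and $\Im \psi$ (or observing that the proof goes through with $\abs{\cdot}$ interpreted as the complex modulus as in (\ref{eqDef|.|alpha})) yields
\begin{equation*}
\Abs{S_n^f \psi(g_{n-1}(x)) - S_n^f \psi(g_{n-1}(y))} \leq C_1\, d\bigl(f^n(g_{n-1}(x)),\, f^n(g_{n-1}(y))\bigr)^\alpha = C_1\, d(x,y)^\alpha
\end{equation*}
with the constant $C_1 = C_1(f,\CC,d,\psi,\alpha)$ from Lemma~\ref{lmSnPhiBound}. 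This is a bound on every partial sum that is uniform in $n$ and in $x,y \in X$.

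Finally, to upgrade this to absolute and uniform convergence of the defining series, I would estimate each individual term via H\"older continuity of $\psi$ and Lemma~\ref{lmCellBoundsBM}~(ii):
\begin{equation*}
\abs{\psi(g_i(x)) - \psi(g_i(y))} \leq \Hseminorm{\alpha}{(S^2,d)}{\psi}\bigl(\diam_d g_i(X)\bigr)^\alpha \leq \Hseminorm{\alpha}{(S^2,d)}{\psi}\, C^\alpha \Lambda^{-(i+2)\alpha},
\end{equation*}
which decays geometrically in $i$. This gives absolute and uniform convergence on $X \times X$; passing to the limit in the telescoping identity then transfers the partial-sum bound to $\Delta^{f,\CC}_{\psi,\xi}(x,y)$, and (\ref{eqDeltaDifference}) follows by linearity of the absolutely convergent sum. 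I do not anticipate a serious obstacle: the essential structural input is the identification $g_i(X) \in \X^{i+2}(f,\CC)$, which makes both the Birkhoff distortion estimate and the diameter estimate available without loss.
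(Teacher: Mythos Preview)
Your proposal is correct and follows essentially the same approach as the paper: both arguments combine the term-by-term geometric decay $\abs{\psi(g_i(x))-\psi(g_i(y))}\leq \Hseminorm{\alpha}{(S^2,d)}{\psi}\,C^\alpha \Lambda^{-(i+O(1))\alpha}$ (via Lemma~\ref{lmCellBoundsBM}~(ii)) with the Birkhoff-sum distortion bound of Lemma~\ref{lmSnPhiBound} on the partial sums, and then pass to the limit. Your telescoping presentation and your explicit identification $g_i(X)\in\X^{i+2}(f,\CC)$ are slightly cleaner than the paper's split into partial sum plus tail, and your remark that Lemma~\ref{lmSnPhiBound} extends to complex $\psi$ (either componentwise or by reading $\abs{\cdot}$ as the complex modulus in the proof) is a point the paper passes over silently.
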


\begin{proof}
We fix $X\in\X^1(f,\CC)$ with $X\subseteq f(\xi_0)$. By Proposition~\ref{propCellDecomp}~(i) and Lemma~\ref{lmCellBoundsBM}~(ii), for each $i\in\N_0$,
\begin{align} \label{eqPflmDeltaHolder}
&         \Absbig{    \bigl( \psi \circ f^{-1}_{\xi_{\minus i}} \circ \cdots \circ f^{-1}_{\xi_{0}}\bigr) (x) 
                   -  \bigl( \psi \circ f^{-1}_{\xi_{\minus i}} \circ \cdots \circ f^{-1}_{\xi_{0}}\bigr) (y)   } \notag \\
&\qquad   \leq \Hseminorm{\alpha}{ (S^2,d)}{\psi} \diam_d\bigl( \bigl(  f^{-1}_{\xi_{\minus i}} \circ \cdots \circ f^{-1}_{\xi_{0}} \bigr) (X) \bigr)^{\alpha} 
    \leq \Hseminorm{\alpha}{ (S^2,d)}{\psi}  C^\alpha \Lambda^{-i\alpha-\alpha}  
\end{align}
for $x, \, y\in X$, where $C\geq 1$ is the constant from Lemma~\ref{lmCellBoundsBM}. Thus, the series on the right-hand side of (\ref{eqDelta}) converges absolutely. Hence, by (\ref{eqDelta}),  $\Delta^{f,\,\CC}_{\psi,\,\xi} (x,y) =  \Delta^{f,\,\CC}_{\psi,\,\xi} (z,y) - \Delta^{f,\,\CC}_{\psi,\,\xi} (z,x)$. Moreover, by Proposition~\ref{propCellDecomp}~(i), Lemma~\ref{lmSnPhiBound}, and (\ref{eqPflmDeltaHolder}), for each pair of $x, \, y\in X$, and each $j\in\N$,
\begin{align*}
&                \AbsBig{ \Delta^{f,\,\CC}_{\psi,\,\xi} (x,y)}   
          =      \Absbigg{\sum_{i=0}^{+\infty} \bigl(   \bigl( \psi \circ f^{-1}_{\xi_{\minus i}} \circ \cdots \circ f^{-1}_{\xi_{0}}\bigr) (x) 
                                                                         -  \bigl( \psi \circ f^{-1}_{\xi_{\minus i}} \circ \cdots \circ f^{-1}_{\xi_{0}}\bigr) (y) \bigr) }\\
&\qquad \leq  \Absbigg{\sum_{i=0}^{j-1}     \bigl(   \bigl( \psi \circ f^{-1}_{\xi_{\minus i}} \circ \cdots \circ f^{-1}_{\xi_{0}}\bigr) (x) 
                                                                          -  \bigl( \psi \circ f^{-1}_{\xi_{\minus i}} \circ \cdots \circ f^{-1}_{\xi_{0}}\bigr) (y) \bigr) }
          + \sum_{i=j}^{+\infty}  \frac{  \Hseminorm{\alpha}{ (S^2,d)}{\psi}  C^\alpha}{ \Lambda^{i\alpha+\alpha}} \\
&\qquad   \leq C_1 d(x,y)^\alpha + \Hseminorm{\alpha}{(S^2,d)}{\psi} C^\alpha (1-\Lambda^\alpha)^{-1}  \Lambda^{-j\alpha-\alpha}.
\end{align*}
We complete our proof by taking $j$ to infinity.
\end{proof}

\begin{definition}[Temporal distance]  \label{defTemporalDist}
Let $f$, $\CC$, $d$, $\psi$, $\alpha$ satisfy the Assumptions in Section~\ref{sctAssumptions}. We assume, in addition, that $f(\CC)\subseteq \CC$. For $\xi=\{ \xi_{\minus i} \}_{i\in\N_0} \in  \Sigma_{f,\,\CC}^-$ and $\eta=\{ \eta_{\minus i} \}_{i\in\N_0} \in  \Sigma_{f,\,\CC}^-$ with $f(\xi_0) = f(\eta_0)$, we define the \defn{temporal distance} $\psi^{f,\,\CC}_{\xi,\,\eta}$ as 
\begin{equation*}
\psi^{f,\,\CC}_{\xi,\,\eta}(x,y) \coloneqq \Delta^{f,\,\CC}_{\psi,\,\xi} (x,y) - \Delta^{f,\,\CC}_{\psi,\,\eta} (x,y)
\end{equation*}
for each
$
(x,y)\in \bigcup\limits_{\substack{X\in\X^1(f,\CC) \\ X\subseteq f(\xi_0)}}X \times X.
$
\end{definition}

Recall that $f^n$ is an expanding Thurston map with $\post f^n = \post f$ for each expanding Thurston map $f\: S^2\rightarrow S^2$ and each $n\in\N$ (see Remark~\ref{rmExpanding}).

\begin{definition}[Local integrability]   \label{defLI}
Let $f\: S^2\rightarrow S^2$ be an expanding Thurston map and $d$ a visual metric on $S^2$ for $f$. A complex-valued H\"{o}lder continuous function $\psi \in \Holder{\alpha}((S^2,d),\C)$ is \defn{locally integrable} (with respect to $f$ and $d$) if for each natural number $n\in\N$, and each Jordan curve $\CC\subseteq S^2$ satisfying $f^n(\CC)\subseteq \CC$ and $\post f \subseteq \CC$, we have 
\begin{equation*}
\bigl(S_n^f \psi\bigr)^{f^n,\,\CC}_{\xi,\,\eta}(x,y)=0
\end{equation*}
for all $\xi=\{ \xi_{\minus i} \}_{i\in\N_0} \in  \Sigma_{f^n,\,\CC}^-$ and $\eta=\{ \eta_{\minus i} \}_{i\in\N_0} \in  \Sigma_{f^n,\,\CC}^-$ satisfying $f^n(\xi_0) = f^n(\eta_0)$, and all
$
(x,y)\in \bigcup\limits_{\substack{X\in\X^1(f^n,\CC) \\ X\subseteq f^n(\xi_0)}}X \times X.
$

The function $\psi$ is \defn{non-locally integrable} if it is not locally integrable.
\end{definition}

\subsection{Orbifolds and universal orbifold covers}   \label{subsctNLI_Orbifold}

In order to establish Theorem~\ref{thmNLI}, we need to consider orbifolds associated to Thurston maps. An orbifold is a space that is locally represented as a quotient of a model space by a group action (see \cite[Chapter~13]{Th80}). For the purpose of this work, we restrict ourselves to orbifolds on $S^2$. In this context, only cyclic groups can occur, so a simpler definition will be used. We follow closely the setup from \cite{BM17}.

An \defn{orbifold} is a pair $\mathcal{O} = (S, \alpha)$, where $S$ is a surface and $\alpha\: S \rightarrow \widehat{\N} = \N \cup \{+\infty\}$ is a map such that the set of points $p\in S$ with $\alpha(p) \neq 1$ is a discrete set in $S$, i.e., it has no limit points in $S$. We call such a function $\alpha$ a \defn{ramification function} on $S$. The set $\supp(\alpha) \coloneqq \{p\in S : \alpha(p)\geq 2 \}$ is the \defn{support} of $\alpha$. We will only consider orbifolds with $S=S^2$, an oriented $2$-sphere, in this paper.

The \defn{Euler characteristic} of an orbifold $\mathcal{O}= (S^2,\alpha)$ is defined as
\begin{equation*}
\chi(\mathcal{O}) \coloneqq 2 - \sum_{x\in S^2} \biggl(1- \frac{1}{\alpha(x)} \biggr),
\end{equation*}
where we use the convention $\frac{1}{+\infty} = 0$, and note that the terms in the summation are nonzero on a finite set of points. The orbifold $\mathcal{O}$ is \defn{parabolic} if $\chi(\mathcal{O}) = 0$ and \defn{hyperbolic} if $\chi(\mathcal{O}) < 0$.

Every Thurston map $f$ has an associated orbifold $\mathcal{O}_f = (S^2,\alpha_f)$, which plays an essential role in this section.

\begin{definition}  \label{defRamificationFn}
Let $f\: S^2\rightarrow S^2$ be a Thurston map. The \defn{ramification function} of $f$ is the map $\alpha_f\: S^2\rightarrow\widehat{\N}$ defined as
\begin{equation} \label{eqDefRamificationFn}
\alpha_f(x) \coloneqq \lcm \bigl\{\deg_{f^n}(y) : y\in S^2,  \, n\in \N, \text{ and } f^n(y)=x \bigr\}
\end{equation}
for $x\in S^2$.
\end{definition}
Here $\widehat{\N} = \N\cup \{+\infty\}$ with the order relations $<$, $\leq$, $>$, $\geq$ extended in the obvious way, and $\lcm$ denotes the least common multiple on $\widehat{\N}$ defined by $\lcm(A)=+\infty$ if $A\subseteq \widehat{\N}$ is not a bounded set of natural numbers, and otherwise $\lcm(A)$ is calculated in the usually way. Note that different Thurston maps can share the same ramification function. In particular, we have the following fact from \cite[Proposition~2.16]{BM17}.

\begin{prop}   \label{propSameRamificationFn}
Let $f\: S^2\rightarrow S^2$ be a Thurston map. Then $\alpha_f = \alpha_{f^n}$ for each $n\in \N$.
\end{prop}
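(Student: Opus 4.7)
The plan is to prove both divisibility relations $\alpha_{f^n}(x) \mid \alpha_f(x)$ and $\alpha_f(x) \mid \alpha_{f^n}(x)$ for each $x \in S^2$, which by the definition of $\lcm$ yields equality (both sides lie in $\widehat{\N}$, so I interpret divisibility in the natural way, with $+\infty$ absorbing any factor). The first direction is essentially by restriction of the indexing set: $\alpha_{f^n}(x)$ is an $\lcm$ taken over the set of local degrees $\deg_{f^{nm}}(y)$ with $y \in (f^{nm})^{-1}(x)$ and $m \in \N$, which is a subset of the set $\{\deg_{f^k}(y) : k \in \N,\, y \in f^{-k}(x)\}$ appearing in the definition of $\alpha_f(x)$ (take $k = nm$). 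Hence $\alpha_{f^n}(x) \mid \alpha_f(x)$.

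For the reverse direction, I would fix arbitrary $m \in \N$ and $y \in f^{-m}(x)$ and show $\deg_{f^m}(y) \mid \alpha_{f^n}(x)$; taking the $\lcm$ over all such $(m,y)$ then gives $\alpha_f(x) \mid \alpha_{f^n}(x)$. The key trick is to push $y$ back by a multiple of $n$ so as to realize $\deg_{f^m}(y)$ as a factor of some $\deg_{f^{n \ell}}(y')$. Concretely, since $f$ (being a branched covering map of degree $\geq 2$) is surjective, so is every iterate $f^j$; choose any $\ell \in \N$ with $n\ell \geq m$ (for instance $\ell = m$, so that $n\ell - m = m(n-1) \geq 0$), and pick any $y' \in (f^{n\ell - m})^{-1}(y)$. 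Then $f^{n\ell}(y') = f^m(f^{n\ell - m}(y')) = f^m(y) = x$.

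Applying the chain rule for local degrees \eqref{eqLocalDegreeProduct} iteratively gives
\begin{equation*}
\deg_{f^{n\ell}}(y') = \deg_{f^{n\ell - m}}(y') \cdot \deg_{f^m}(y),
\end{equation*}
so $\deg_{f^m}(y)$ divides $\deg_{f^{n\ell}}(y') = \deg_{(f^n)^\ell}(y')$, which in turn divides $\alpha_{f^n}(x)$ by Definition~\ref{defRamificationFn} applied to the Thurston map $f^n$ (which is indeed a Thurston map by Definition~\ref{defThurstonMap} together with $\deg(f^n) = (\deg f)^n \geq 2$ and $\post f^n = \post f$). Taking the $\lcm$ over all $(m,y)$ completes the argument.

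There is no real obstacle here beyond recording the definitions carefully; the only minor subtlety is handling the case $n\ell = m$ (take $y' = y$) and ensuring the iterated chain rule is applied correctly, but both are immediate consequences of \eqref{eqLocalDegreeProduct} and induction on the number of composed factors.
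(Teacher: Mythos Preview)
Your argument is correct. The paper does not give its own proof of this proposition; it simply records it as a known fact and cites \cite[Proposition~2.16]{BM17}. Your divisibility-in-both-directions argument, using the multiplicativity of local degree \eqref{eqLocalDegreeProduct} together with surjectivity of $f$ to pull back an arbitrary preimage to one whose depth is a multiple of $n$, is the standard proof and matches what one finds in \cite{BM17}.
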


\begin{definition}[Orbifolds associated to Thurston maps]  \label{defOrbifoldThurstonMaps}
Let $f\: S^2\rightarrow S^2$ be a Thurston map. The \defn{orbifold associated to $f$} is a pair $\mathcal{O}_f \coloneqq (S^2,\alpha_f)$, where $S^2$ is an oriented $2$-sphere and $\alpha_f\: S^2\rightarrow \widehat{\N}$ is the ramification function of $f$.
\end{definition}

Orbifolds associated to Thurston maps are either parabolic or hyperbolic (see \cite[Proposition~2.12]{BM17}).

\smallskip

For an orbifold $\mathcal{O} = (S^2,\alpha)$, we set
\begin{equation}  \label{eqDefS0}
S_0^2 \coloneqq S^2 \setminus \bigl\{x\in S^2 : \alpha(x) = +\infty \bigr\}.
\end{equation}

We record the following facts from \cite{BM17}, whose proofs can be found in \cite{BM17} and references therein (see Theorem~A.26 and Corollary~A.29 in \cite{BM17}).

\begin{theorem}  \label{thmUniOrbCoverBM}
Let $\mathcal{O}=(S^2,\alpha)$ be an orbifold that is parabolic or hyperbolic. Then the following statements are satisfied:
\begin{enumerate}
\smallskip
\item[(i)] There exists a simply connected surface $\XX$ and a branched covering map $\Theta\: \XX\rightarrow S_0^2$ such that $ \deg_\Theta(x) = \alpha(\Theta(x)) $ for each $x\in \XX$.

\smallskip
\item[(ii)] The branched covering map $\Theta$ in $\operatorname{(i)}$ is unique. More precisely, if $\wt \XX$ is a simply connected surface and $\wt\Theta \: \wt \XX \rightarrow S_0^2$ satisfies $\deg_{\wt\Theta}(y) = \alpha\bigl(\wt\Theta(y)\bigr)$ for each $y\in \wt \XX$, then for all points $x_0\in \XX$ and $\wt{x}_0 \in \wt \XX$ with $\Theta(x_0)=\wt\Theta(\wt{x}_0)$ there exists orientation-preserving homeomorphism $A\: \XX\rightarrow \wt \XX$ with $A(x_0)= \wt{x}_0$ and $\Theta=\wt\Theta\circ A$. Moreover, if $\alpha(\Theta(x_0)) = 1$, then $A$ is unique.
\end{enumerate}
\end{theorem}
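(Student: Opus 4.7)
\smallskip

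\noindent\emph{Proof proposal.} The plan is to treat the two statements separately using universal covering space theory adapted to the orbifold setting. Throughout, write $B \coloneqq \{x \in S_0^2 : 2 \leq \alpha(x) < +\infty\}$, which by hypothesis is a discrete subset of $S_0^2$, and set $U \coloneqq S_0^2 \setminus B$, a connected surface to which ordinary covering space theory directly applies.

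For statement~(i), I would construct $\XX$ from the orbifold fundamental group. Fix a basepoint $y_0 \in U$ and for each $x \in B$ choose a small loop $\gamma_x \in \pi_1(U, y_0)$ encircling $x$ once. Let $\Gamma \coloneqq \pi_1(U, y_0) / N$, where $N$ is the normal subgroup generated by $\{\gamma_x^{\alpha(x)} : x \in B\}$, and let $q \: \wt U \to U$ be the connected covering map corresponding to the kernel of the surjection $\pi_1(U, y_0) \to \Gamma$. For each $x \in B$, each end of $\wt U$ lying over a punctured neighborhood of $x$ has cyclic monodromy of order $\alpha(x)$ by construction, so that end can be compactified by gluing in an open disk and extending $q$ there as the local power map $z \mapsto z^{\alpha(x)}$. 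Let $\XX$ denote the resulting surface and $\Theta \: \XX \to S_0^2$ the extended map; then $\Theta$ is a branched covering with $\deg_\Theta(x) = \alpha(\Theta(x))$ at every $x \in \XX$ by construction. The classification of $2$-orbifolds shows that when $\chi(\mathcal{O}) \leq 0$, the group $\Gamma$ is infinite and acts freely and properly discontinuously on $\XX$ with quotient homeomorphic to $\mathcal{O}$, and $\XX$ is simply connected. Alternatively one may invoke uniformization directly to identify $\XX$ with $\C$ when $\chi(\mathcal{O}) = 0$ and with $\D$ when $\chi(\mathcal{O}) < 0$, with $\Theta$ realized as the developing map of the natural flat or hyperbolic cone metric on $\mathcal{O}$.

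For statement~(ii), I would construct $A$ by path lifting. Given $x \in \XX$, choose a path $\gamma \: [0, 1] \to \XX$ from $x_0$ to $x$; then $\Theta \circ \gamma$ is a path in $S_0^2$ starting at $\Theta(x_0) = \wt\Theta(\wt{x}_0)$, and by Lemma~\ref{lmLiftPathBM} applied to $\wt\Theta$ it admits a lift $\wt\gamma \: [0, 1] \to \wt\XX$ with $\wt\gamma(0) = \wt{x}_0$. Set $A(x) \coloneqq \wt\gamma(1)$. To see that $A(x)$ is independent of $\gamma$, observe that if $\gamma'$ is another such path then $\gamma \cdot (\gamma')^{-1}$ is a loop at $x_0$ in $\XX$, null-homotopic by simple connectedness of $\XX$; after a small perturbation of the null-homotopy to avoid the discrete critical locus of $\Theta$ (using openness and discreteness from Subsection~\ref{subsctBranchedCoveringMaps}), its image in $S_0^2$ is a null-homotopy avoiding critical values of $\wt\Theta$, which lifts through $\wt\Theta$ by homotopy lifting for the covering map of Lemma~\ref{lmBranchCoverToCover}, forcing $\wt\gamma(1) = \wt{\gamma'}(1)$. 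Local continuity of $A$ follows by inverting matching local power-map representations of $\Theta$ and $\wt\Theta$ at corresponding ramification points. Swapping $(\XX, \Theta, x_0)$ with $(\wt\XX, \wt\Theta, \wt{x}_0)$ yields $A' \: \wt\XX \to \XX$ with $A \circ A' = \id_{\wt\XX}$ and $A' \circ A = \id_\XX$ by the uniqueness just proved, so $A$ is a homeomorphism; orientation preservation of $A$ reduces to that of $\Theta$ and $\wt\Theta$ at a regular point. Finally, when $\alpha(\Theta(x_0)) = 1$ both $\Theta$ and $\wt\Theta$ are local homeomorphisms near $x_0$ and $\wt{x}_0$, so any lift $A$ with $A(x_0) = \wt{x}_0$ is locally determined, and uniqueness on the connected set $\XX$ follows by standard propagation.

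The main obstacle will be the perturbation-and-lift step in (ii): I need to carefully show that homotopies in $\XX$ can be arranged to be transverse to the critical locus of $\Theta$ while keeping their endpoints fixed, and that the resulting homotopies in $S_0^2$ then lift through $\wt\Theta$ in a way that extends continuously across critical values. The openness of branched covers and discreteness of critical values recorded in Subsection~\ref{subsctBranchedCoveringMaps} reduce this to homotopy lifting for the ordinary covering map on $U$ together with an explicit local computation with power maps at each branch point, but verifying these two pieces glue to give a well-defined continuous $A$ is the crux of the uniqueness argument.
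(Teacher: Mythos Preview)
The paper does not prove Theorem~\ref{thmUniOrbCoverBM}. The sentence immediately preceding the theorem statement reads: ``We record the following facts from \cite{BM17}, whose proofs can be found in \cite{BM17} and references therein (see Theorem~A.26 and Corollary~A.29 in \cite{BM17}).'' So this result is quoted as background from Bonk--Meyer's monograph, not established in the paper itself; there is no in-paper proof for your proposal to be compared against.

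That said, your sketch follows the standard route one finds in the orbifold literature (and essentially in \cite{BM17}): build the cover from the orbifold fundamental group by taking the regular cover of $U = S_0^2 \setminus B$ associated to the normal closure of the $\gamma_x^{\alpha(x)}$ and then filling in branch points, and prove uniqueness by path lifting. The outline is sound, and you have correctly isolated the genuine technical point in~(ii), namely that one must justify lifting homotopies through a \emph{branched} cover rather than an honest cover. One remark on~(i): your argument that $\XX$ is simply connected leans on ``the classification of $2$-orbifolds shows that when $\chi(\mathcal{O}) \leq 0$, the group $\Gamma$ is infinite \ldots\ and $\XX$ is simply connected,'' but this is circular as stated --- the classification is typically \emph{proved} via uniformization, which presupposes the universal orbifold cover. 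A self-contained argument would instead verify directly that any loop in $\XX$ is null-homotopic by pushing it off the (discrete) branch locus and then using that its image in $U$ represents an element of $N$, which lifts trivially by construction; the hypothesis $\chi(\mathcal{O}) \leq 0$ enters only to guarantee that $\Gamma$ is infinite (equivalently, that the quotient map has infinite degree, ruling out the bad spherical cases).
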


\begin{definition}[Universal orbifold covering maps]  \label{defUniOrbCover}
Let $\mathcal{O}=(S^2,\alpha)$ be an orbifold that is parabolic or hyperbolic. The map $\Theta\: \XX \rightarrow S_0^2$ from Theorem~\ref{thmUniOrbCoverBM} is called the \defn{universal orbifold covering map} of $\mathcal{O}$.
\end{definition}

We now discuss the deck transformations of the universal orbifold covering map. 

\begin{definition}[Deck transformations]  \label{defDeckTransf}
Let $\mathcal{O}=(S^2,\alpha)$ be an orbifold that is parabolic or hyperbolic, and $\Theta\: \XX \rightarrow S_0^2$ be the universal orbifold covering map of $\mathcal{O}$. A homeomorphism $\wt\sigma\: \XX\rightarrow\XX$ is called a \defn{deck transformation} of $\Theta$ if $\Theta\circ \wt\sigma = \Theta$. The group of deck transformations with composition as the group operation, denoted by $\pi_1(\mathcal{O})$, is called the \defn{fundamental group} of the orbifold $\mathcal{O}$. 
\end{definition}

Note that deck transformations are orientation-preserving. We record the following proposition from \cite[Proposition~A.31]{BM17}. 

\begin{prop}  \label{propDeckTransf}
Let $\mathcal{O}=(S^2,\alpha)$ be an orbifold that is parabolic or hyperbolic, and $\Theta\: \XX \rightarrow S_0^2$ be the universal orbifold covering map of $\mathcal{O}$. Fix $u, \, v\in \XX$. Then $\Theta(u)=\Theta(v)$ if and only if there exists a deck transformation $\wt\sigma\in \pi_1(\mathcal{O})$ with $v=\wt\sigma(u)$.
\end{prop}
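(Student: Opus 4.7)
The forward implication is immediate from the definition: if $v = \wt\sigma(u)$ for some $\wt\sigma \in \pi_1(\mathcal{O})$, then $\Theta(v) = \Theta(\wt\sigma(u)) = \Theta(u)$ by Definition~\ref{defDeckTransf}. The content of the proposition is thus the converse.

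For the converse, the plan is to invoke the uniqueness statement in Theorem~\ref{thmUniOrbCoverBM}~(ii) with the covering $\wt\Theta \: \wt\XX \rightarrow S_0^2$ taken to be the same universal orbifold covering $\Theta \: \XX \rightarrow S_0^2$. Suppose $u, \, v \in \XX$ satisfy $\Theta(u) = \Theta(v)$. Set $\wt\XX \coloneqq \XX$ and $\wt\Theta \coloneqq \Theta$, and choose basepoints $x_0 \coloneqq u \in \XX$ and $\wt{x}_0 \coloneqq v \in \wt\XX$. The hypothesis $\Theta(u) = \Theta(v)$ is exactly the compatibility condition $\Theta(x_0) = \wt\Theta(\wt{x}_0)$ required by Theorem~\ref{thmUniOrbCoverBM}~(ii). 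Moreover, $\wt\XX$ is simply connected and $\wt\Theta$ satisfies $\deg_{\wt\Theta}(y) = \alpha\bigl(\wt\Theta(y)\bigr)$ for all $y \in \wt\XX$ by our choice. The theorem then yields an orientation-preserving homeomorphism $A \: \XX \rightarrow \XX$ with $A(u) = v$ and $\Theta \circ A = \Theta$. By Definition~\ref{defDeckTransf}, this $A$ is precisely a deck transformation in $\pi_1(\mathcal{O})$, so setting $\wt\sigma \coloneqq A$ gives the desired element.

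Since the proof is essentially an application of the universal property of the orbifold covering, there is no substantive obstacle; the only thing to confirm is that Theorem~\ref{thmUniOrbCoverBM}~(ii) is legitimately applicable with $\wt\XX = \XX$ and $\wt\Theta = \Theta$, which it is, since the theorem makes no restriction requiring the two covers to be distinct. The uniqueness clause at the end of Theorem~\ref{thmUniOrbCoverBM}~(ii) (for the case $\alpha(\Theta(x_0)) = 1$) is not needed here: we only require the \emph{existence} of $A$, not its uniqueness, to exhibit some deck transformation carrying $u$ to $v$.
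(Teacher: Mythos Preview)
Your proof is correct. The paper itself does not prove this proposition but simply records it from \cite[Proposition~A.31]{BM17}; your argument via Theorem~\ref{thmUniOrbCoverBM}~(ii) applied with $\wt\XX=\XX$ and $\wt\Theta=\Theta$ is the standard way to deduce transitivity of the deck group on fibers from the uniqueness clause of the universal covering, and it goes through without issue here.
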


\smallskip

We now focus on the orbifold $\mathcal{O}_f = (S^2,\alpha_f)$ associated to a Thurston map $f\: S^2 \rightarrow S^2$.

Orbifolds enable us to lift branches of the inverse map $f^{-1}$ by the universal orbifold covering map. See Lemma~A.32 in \cite{BM17} for a proof of the following Lemma.

\begin{lemma} \label{lmLiftInverse}
Let $f\: S^2\rightarrow S^2$ be a Thurston map, $\mathcal{O}_f=(S^2,\alpha_f)$ be the orbifold associated to $f$, and $\Theta\: \XX \rightarrow S_0^2$ be the universal orbifold covering map of $\mathcal{O}_f$. Fix $u_0, \, v_0\in \XX$ with $(f\circ \Theta)(v_0) = \Theta(u_0)$. Then there exists a branched covering map $\wt{g} \: \XX\rightarrow\XX$ with $\wt{g}(u_0)=v_0$ and $f\circ \Theta \circ \wt{g} = \Theta$. If $u_0\notin \crit \Theta$, then the map $\wt{g}$ is unique.
\end{lemma}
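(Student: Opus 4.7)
The strategy is to recognize $f\circ\Theta\:\XX\to S^2_0$ as a second universal orbifold covering of $\mathcal{O}_f$, and then invoke the uniqueness statement in Theorem~\ref{thmUniOrbCoverBM}~(ii) to produce the required lift.

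The one substantive ingredient is the multiplicativity
\[
\alpha_f(f(z)) = \deg_f(z)\cdot\alpha_f(z), \qquad z\in S^2.
\]
The divisibility $\deg_f(z)\alpha_f(z)\mid \alpha_f(f(z))$ is immediate from the definition~(\ref{eqDefRamificationFn}): for any $w\in f^{-n}(z)$, the chain rule for local degrees gives $\deg_{f^{n+1}}(w)=\deg_f(z)\deg_{f^n}(w)$, and this divides $\alpha_f(f(z))$. The reverse divisibility is a standard preimage-matching argument for ramification functions of Thurston maps, and is the main technical point of the proof.

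Granted this identity, for each $x\in\XX$ the composition formula for local degrees (Lemma~\ref{lmBranchedCoverCompositionBM}~(i)) together with the defining property $\deg_\Theta(x)=\alpha_f(\Theta(x))$ gives
\[
\deg_{f\circ\Theta}(x) = \deg_f(\Theta(x))\cdot\deg_\Theta(x) = \deg_f(\Theta(x))\cdot\alpha_f(\Theta(x)) = \alpha_f((f\circ\Theta)(x)).
\]
In particular $(f\circ\Theta)(x)\in S^2_0$, so $f\circ\Theta$ is a well-defined map from $\XX$ into $S^2_0$; it is a branched covering since the local normal form $z\mapsto z^k$ of Definition~\ref{defBranchedCover} is preserved under composition. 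Thus $f\circ\Theta$ is itself a universal orbifold covering map of $\mathcal{O}_f$ in the sense of Theorem~\ref{thmUniOrbCoverBM}~(i).

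I now apply Theorem~\ref{thmUniOrbCoverBM}~(ii) with $\wt\XX\coloneqq \XX$, $\wt\Theta\coloneqq f\circ\Theta$, $x_0\coloneqq u_0$, and $\wt x_0\coloneqq v_0$. The hypothesis $\Theta(x_0)=\wt\Theta(\wt x_0)$ is exactly the given identity $\Theta(u_0)=(f\circ\Theta)(v_0)$, so Theorem~\ref{thmUniOrbCoverBM}~(ii) produces an orientation-preserving homeomorphism $A\:\XX\to\XX$ with $A(u_0)=v_0$ and $\Theta=(f\circ\Theta)\circ A$. Setting $\wt g\coloneqq A$ yields the desired branched covering (indeed, a homeomorphism) with $\wt g(u_0)=v_0$ and $f\circ\Theta\circ\wt g=\Theta$. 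When $u_0\notin\crit\Theta$, we have $\alpha_f(\Theta(u_0))=\deg_\Theta(u_0)=1$, and the uniqueness clause of Theorem~\ref{thmUniOrbCoverBM}~(ii) then upgrades $A$ to be unique, giving uniqueness of $\wt g$.
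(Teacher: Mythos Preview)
The paper does not give its own proof of this lemma; it simply cites \cite[Lemma~A.32]{BM17}. Your argument is essentially the standard one and is almost certainly the same as the one in \cite{BM17}: recognize $f\circ\Theta$ as another universal orbifold covering of $\mathcal{O}_f$ via the multiplicativity $\alpha_f(f(z))=\deg_f(z)\,\alpha_f(z)$ (which is \cite[Proposition~2.8]{BM17}), and then invoke Theorem~\ref{thmUniOrbCoverBM}~(ii).

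One point you gloss over in the uniqueness step: Theorem~\ref{thmUniOrbCoverBM}~(ii) gives uniqueness among \emph{orientation-preserving homeomorphisms}, whereas the lemma asserts uniqueness among \emph{branched covering maps} $\wt g$ with $f\circ\Theta\circ\wt g=\Theta$. You need to observe that any such $\wt g$ is automatically a homeomorphism. This follows because the local degree computation $\deg_\Theta(x)=\deg_{f\circ\Theta}(\wt g(x))\cdot\deg_{\wt g}(x)=\deg_\Theta(x)\cdot\deg_{\wt g}(x)$ forces $\deg_{\wt g}\equiv 1$, so $\wt g$ is a covering map; since $\XX$ is simply connected, this covering is a homeomorphism. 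With that remark, your proof is complete. A secondary technicality: Lemma~\ref{lmBranchedCoverCompositionBM}~(i) as stated requires compactness of the intermediate and target surfaces, which $S^2_0$ may lack; but the local degree formula is purely local and holds regardless, and one can check directly that $f\circ\Theta\:\XX\to S^2_0$ satisfies Definition~\ref{defBranchedCover}.
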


\begin{definition}  \label{defInverseBranch}
Let $f\: S^2\rightarrow S^2$ be a Thurston map, $\mathcal{O}_f=(S^2,\alpha_f)$ be the orbifold associated to $f$, and $\Theta\: \XX \rightarrow S_0^2$ be the universal orbifold covering map of $\mathcal{O}_f$.  A branched covering map $\wt{g}\:\XX\rightarrow\XX$ is called an \defn{inverse branch of $f$ on $\XX$} if $f\circ \Theta\circ\wt{g}=\Theta$. We denote the set of inverse branches of $f$ on $\XX$ by $\Inv(f)$.
\end{definition}

By the definition of branched covering maps, $\wt{g} \: \XX\rightarrow \XX$ is surjective for each $\wt{g}\in \Inv(f)$.

\begin{lemma}   \label{lmHomotopyCurveOnX}
Let $f$ and $\CC$ satisfy the Assumptions in Section~\ref{sctAssumptions}. Let $\mathcal{O}_f=(S^2,\alpha_f)$ be the orbifold associated to $f$, and $\Theta\: \XX \rightarrow S_0^2$ be the universal orbifold covering map of $\mathcal{O}_f$. Then there exists $N\in\N$ such that for each $n\in\N$ with $n\geq N$ and each continuous path $\wt\gamma\: [0,1]\rightarrow \XX\setminus \Theta^{-1}(\post f)$, there exists a continuous path $\gamma\: [0,1]\rightarrow \XX\setminus \Theta^{-1}(\post f)$ with the following properties:
\begin{enumerate}
\smallskip
\item[(i)] $\gamma$ is homotopic to $\wt\gamma$ relative to $\{0, \, 1\}$ in $\XX\setminus \Theta^{-1}(\post f)$.

\smallskip
\item[(ii)] There exists a number $k\in\N$, a strictly increasing sequence of numbers $0\eqqcolon a_0<a_1<\cdots<a_{k-1}<a_k\coloneqq 1$, and a sequence $\{X^n_i\}_{i\in \{1, \, 2, \, \dots, \, k\}}$ of $n$-tiles in $\X^n(f,\CC)$ such that for each $i\in\{1, \, 2, \, \dots, \, k\}$, we have $(\Theta \circ \gamma)((a_{i-1},a_i)) \subseteq \inte(X^n_i)$.
\end{enumerate} 
\end{lemma}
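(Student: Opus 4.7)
The plan is to project $\wt\gamma$ from $\XX$ down to $S^2$, homotope the projection into general position with respect to the cell decomposition $\DD^n(f,\CC)$, and lift the resulting homotopy back to $\XX$ via the covering property of $\Theta$.

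First I would observe that $\Theta$ restricts to a genuine covering map on the complement of the postcritical fiber. Indeed, by Definition~\ref{defRamificationFn}, $\alpha_f(x)\geq 2$ forces $x \in \post f$, so $\supp(\alpha_f)\subseteq\post f$. Since $\deg_\Theta(y) = \alpha_f(\Theta(y))$ by Theorem~\ref{thmUniOrbCoverBM}~(i), the set of critical values of the branched cover $\Theta$ is contained in $\post f$, and Lemma~\ref{lmBranchCoverToCover} (applied with $P \coloneqq \post f \cap S_0^2$, which is discrete in $S_0^2$) yields that $\Theta\colon \XX\setminus\Theta^{-1}(\post f) \to S^2\setminus \post f$ is a covering map. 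The homotopy-lifting property then permits lifting any homotopy in $S^2\setminus\post f$ starting at $\Theta\circ\wt\gamma$ to a homotopy in $\XX\setminus\Theta^{-1}(\post f)$ starting at $\wt\gamma$.

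For the general position step, I would take $N\coloneqq 1$ (no metric smallness is needed for this argument). Fix $n\geq 1$ and set $\beta \coloneqq \Theta \circ \wt\gamma\colon [0,1] \to S^2\setminus \post f$. The 1-skeleton $f^{-n}(\CC)$ is a finite graph embedded in $S^2$, the images of the finitely many $n$-edges (each a Jordan arc), with vertex set $\V^n(f,\CC)$. By topological general position for continuous paths against a finite embedded graph in a surface, one can homotope $\beta$ relative to $\{0,1\}$ by an arbitrarily small uniform perturbation to a path $\beta'$ such that $(\beta')^{-1}(f^{-n}(\CC)) \cap (0,1)$ is a finite set $\{a_1<\cdots<a_{k-1}\}$. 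Since $\post f$ is finite and has positive distance from the compact set $\beta([0,1])$, the perturbation can be chosen small enough for the entire homotopy to remain in $S^2\setminus \post f$. Lifting through the covering map and setting $\gamma$ to be the value of the lifted homotopy at time $1$ yields property (i). For property (ii), take $a_0 \coloneqq 0$ and $a_k\coloneqq 1$; on each open interval $(a_{i-1}, a_i)$, the projection $\Theta\circ\gamma = \beta'$ avoids $f^{-n}(\CC)$, so by Proposition~\ref{propCellDecomp}~(v) its image lies in the disjoint union of the open $n$-tile interiors, and by connectedness it lies in a single $\inte(X^n_i)$.

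The main technical obstacle is a rigorous execution of the topological general position step, since the Jordan curve $\CC$ and its preimages $f^{-n}(\CC)$ carry no a priori smoothness or piecewise-linear structure. To handle this I would cover a neighborhood of $\beta([0,1])$ in $S^2\setminus\post f$ by finitely many topological disks (small enough to each be disjoint from $\post f$ and to meet $f^{-n}(\CC)$ in a controlled way), choose topological coordinates in which the restriction of the graph to each disk becomes a PL embedded $1$-complex, approximate $\beta$ uniformly by a path that is PL in these charts, and then apply standard PL transversality locally and glue the local perturbations using the subdivision of $[0,1]$ induced by the cover. The positive distance from $\beta([0,1])$ to $\post f$ again controls the size of the allowable perturbation, ensuring the lifted homotopy remains inside $\XX\setminus\Theta^{-1}(\post f)$.
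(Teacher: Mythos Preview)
Your approach is correct in outline but takes a different route from the paper, and the hardest step in your route is left as a sketch.

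The paper does not invoke transversality or PL approximation at all. Instead it exploits the flower structure of $\DD^n(f,\CC)$. One first picks $N$ large enough that every $n$-tile contains at most one point of $\post f$; this guarantees that for every $x\in[0,1]$ one can find an $n$-vertex $v^n_x\in\V^n\setminus\post f$ whose flower $W^n(v^n_x)$ contains $(\Theta\circ\wt\gamma)(x)$. Compactness yields a subdivision $0=b_0<\cdots<b_m=1$ with each $(\Theta\circ\wt\gamma)([b_{i-1},b_i])$ inside a single such flower. Since a flower is simply connected (Remark~\ref{rmFlower}) and its intersection with the $1$-skeleton is just the finite set of $n$-edges incident to its center, one can replace $(\Theta\circ\wt\gamma)|_{[b_{i-1},b_i]}$ by an injective path in the same flower that crosses each incident $n$-edge at most once; the homotopy is automatic by simple connectedness, and the lift is exactly the covering-map lift you describe. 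No general-position machinery is needed because the flowers already give coordinate disks in which $f^{-n}(\CC)$ is a finite star.

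Your argument, by contrast, works for $N=1$ (which the paper's does not) and is more portable, but the price is the transversality step. Your proposed fix --- choosing charts in which $f^{-n}(\CC)$ becomes PL --- is really just reconstructing, disk by disk, what the flowers of Remark~\ref{rmFlower} hand you directly: a simply connected neighborhood in which the $1$-skeleton has an explicit finite model. Making your version rigorous would require proving local tameness of $f^{-n}(\CC)$ at every point (including the $n$-vertices), which in the end comes back to the flower description anyway. So both routes are valid, but the paper's is shorter and self-contained given the cell-decomposition apparatus already in place.
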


Let $Z$ and $X$ be two topological spaces and $Y\subseteq Z$ be a subset of $Z$. A continuous function $f\: Z\rightarrow X$ is \defn{homotopic to} a continuous function $g\: Z\rightarrow X$ \defn{relative to} $Y$ (in $X$) if there exists a continuous function $H\: Z\times[0,1] \rightarrow X$ such that for each $z\in Z$, each $y\in Y$, and each $t\in[0,1]$, $H(z,0)= f(z)$, $H(z,1)=g(z)$, and $H(y,t)=f(y)=g(y)$.

\begin{remark}
We can choose $N$ to be the smallest number satisfying that no $n$-tile joins opposite sides of $\CC$ for all $n\geq N$.
\end{remark}

\begin{proof}
Since $\post f$ is a finite set, by Lemma~\ref{lmCellBoundsBM}~(ii), we can choose $N\in\N$ large enough such that for each $n\in\N$ with $n\geq N$ and each $n$-tile $X^n\in\X^n$ we have
\begin{equation}   \label{eqPflmHomotopyCurveOnX_AtMost1}
\card(X^n\cap \post f) \leq 1.
\end{equation}

Fix $n\geq N$ and a continuous path $\wt\gamma\:[0,1]\rightarrow \XX\setminus \Theta^{-1}(\post f)$.

We first claim that for each $x\in[0,1]$, there exists an $n$-vertex $v^n_x\in \V^n\setminus \post f$ and an open interval $I_x\subseteq \R$ such that $x\in I_x$ and $(\Theta\circ\wt\gamma)(I_x) \subseteq W^n(v^n_x) \subseteq S^2_0$.

We establish the claim by explicit construction in the following three cases:
\begin{enumerate}
\smallskip
\item[(1)] Assume $(\Theta\circ\wt\gamma)(x) \in \V^n$. Then we let $v^n_x\coloneqq (\Theta\circ\wt\gamma)(x)$. Since $(\Theta\circ\wt\gamma)(x)$ is contained in the open set $W^n(v^n_x)$, we can choose an open interval $I_x\subseteq \R$ containing $x$ with $(\Theta\circ\wt\gamma)(I_x) \subseteq W^n(v^n_x) \subseteq S^2_0$.

\smallskip
\item[(2)] Assume $(\Theta\circ\wt\gamma)(x) \in \inte(e^n)$ for $e^n\in\E^n$. Since $\card(e^n\cap \post f) \leq 1$ by (\ref{eqPflmHomotopyCurveOnX_AtMost1}), we can choose $v^n_x \in e^n \cap \V^n \setminus \post f \subseteq S^2_0$. Then $(\Theta\circ\wt\gamma)(x) \in \inte(e^n) \subseteq W^n(v^n_x) \subseteq S^2_0$. Thus, we can choose an open interval $I_x\subseteq \R$ containing $x$ with $(\Theta\circ\wt\gamma)(I_x) \subseteq W^n(v^n_x) \subseteq S^2_0$.

\smallskip
\item[(3)] Assume $(\Theta\circ\wt\gamma)(x) \in \inte(X^n)$ for $X^n\in\X^n$. By (\ref{eqPflmHomotopyCurveOnX_AtMost1}), we can choose $v^n_x \in X^n \cap \V^n \setminus \post f \subseteq S^2_0$. Then $(\Theta\circ\wt\gamma)(x) \subseteq \inte(X^n) \subseteq W^n(v^n_x) \subseteq S^2_0$. Thus, we can choose an open interval $I_x\subseteq \R$ containing $x$ with $(\Theta\circ\wt\gamma)(I_x) \subseteq W^n(v^n_x) \subseteq S^2_0$. 
\end{enumerate}

The claim is now established.

\smallskip

Since $[0,1]$ is compact, we can choose finitely many numbers $0\eqqcolon x_0 < x_1 < \cdots < x_{m'-1} < x_{m'} \coloneqq 1$ for some $m'\in\N$ such that $\bigcup_{i=1}^m I_{x_i} \supseteq [0,1]$. Then it is clear that we can choose $m\leq m'$ and $0\eqqcolon b_0 < b_1 < \cdots < b_{m-1} < b_m \coloneqq 1$ such that for each $i\in\{1, \, 2, \, \dots, \, m\}$, $[b_{i-1},b_i]\subseteq I_{x_{j(i)}}$ for some $j(i) \in \{1, \, 2, \, \dots, \,  m'\}$.

Fix an arbitrary $i\in\{1, \, 2, \, \dots, \, m\}$. From the discussion above, we have
\begin{equation*}   \label{eqPflmHomotopyCurveOnX_InFlower}
(\Theta\circ\wt\gamma) ([b_{i-1},b_i]) \subseteq (\Theta\circ\wt\gamma) \bigl( I_{x_{j(i)}} \bigr)  \subseteq W^n\bigl( v^n_{x_{j(i)}} \bigr)  \subseteq S^2_0.
\end{equation*}

It follows from Remark~\ref{rmFlower} that we can choose a continuous path $\gamma_i\: [b_{i-1},b_i]\rightarrow W^n\bigl(v^n_{x_{j(i)}}\bigr)$ such that $\gamma_i$ is injective, $\gamma_i(b_{i-1}) = (\Theta \circ \wt\gamma) (b_{i-1})$, $\gamma_i(b_i) = (\Theta \circ \wt\gamma) (b_i)$, and that for each $n$-tile $X^n\in\X^n$ with $X^n\subseteq \overline{W}^n\bigl(v^n_{x_{j(i)}}\bigr)$, $\gamma_i^{-1}( \inte(X^n))$ is connected and $\card \bigl( \gamma_i^{-1}( \partial X^n ) \bigr) \leq 2$. See Figure~\ref{figHomotopy}. Since $W^n\bigl(v^n_{x_{j(i)}}\bigr)$ is simply connected (see Remark~\ref{rmFlower}), $(\Theta\circ\wt\gamma)|_{[b_{i-1},b_i]}$ is homotopic to $\gamma_i$ relative to $\{b_{i-1}, \, b_i\}$ in $W^n\bigl(v^n_{x_{j(i)}}\bigr)$. It follows from Definition~\ref{defRamificationFn}, Definition~\ref{defUniOrbCover}, Lemma~\ref{lmBranchCoverToCover}, and the lifting property of covering maps (see \cite[Lemma~A.6]{BM17} or \cite[Section~1.3, Propositions~1.33, and~1.34]{Ha02}) that there exists a unique continuous path $\wt\gamma_i\: [b_{i-1},b_i] \rightarrow \XX\setminus\Theta^{-1}(\post f)$ such that $\Theta\circ \wt\gamma_i = \gamma_i$ and $\wt\gamma_i$ is homotopic to $\wt\gamma|_{[b_{i-1},b_i]}$ relative to $\{b_{i-1}, \, b_i\}$ in $\XX\setminus\Theta^{-1}(\post f)$.

\begin{figure}
    \centering
    \begin{overpic}
    [width=6cm, %grid, 
    tics=20]{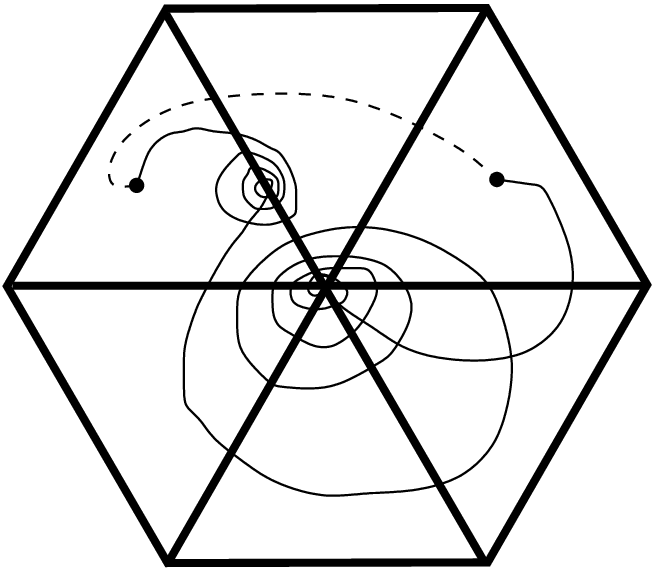}
    \put(80,133){$\gamma_i$}
    \put(55,9){$\Theta \circ \wt\gamma|_{[b_{i-1},b_i]}$}
    \end{overpic}
    \caption{Homotopic curves in $W^n\bigl(v^n_{x_{j(i)}}\bigr)$.}
    \label{figHomotopy}
\end{figure}

We define $\gamma\:[0,1]\rightarrow \XX\setminus\Theta^{-1}(\post f)$ by setting $\gamma|_{[b_{i-1},b_i]} = \wt\gamma_i$. Then it is clear that $\gamma$ is continuous and homotopic to $\wt\gamma$ relative to $\{0, \, 1\}$ in $\XX\setminus\Theta^{-1}(\post f)$. It also follows immediately from our construction that Property~(ii) is satisfied.
\end{proof}

\begin{cor} \label{corGoodPath}
Let $f$ and $\CC$ satisfy the Assumptions in Section~\ref{sctAssumptions}. Let $\mathcal{O}_f=(S^2,\alpha_f)$ be the orbifold associated to $f$, and $\Theta\: \XX \rightarrow S_0^2$ be the universal orbifold covering map of $\mathcal{O}_f$. For each pair of points $x, \, y\in\XX$, there exists a continuous path $\wt\gamma\:[0,1] \rightarrow \XX$, numbers $k, \, n\in\N$, a strictly increasing sequence of numbers $0\eqqcolon a_0<a_1<\cdots<a_{k-1}<a_k\coloneqq 1$, and a sequence $\{X^n_i\}_{i\in \{1, \, 2, \, \dots, \, k\}}$ of $n$-tiles in $\X^n(f,\CC)$ such that $\wt\gamma(0)=x$, $\wt\gamma(1)=y$, and
\begin{equation}   \label{eqGoodPathInTileInte}
(\Theta \circ \wt\gamma)((a_{i-1},a_i)) \subseteq \inte(X^n_i)
\end{equation}
for each $i\in\{1, \, 2, \, \dots, \, k\}$. 

Moreover, if $\{ \wt{g}_j \}_{j\in\N}$ is a sequence in $\Inv(f)$ of inverse branches of $f$ on $\XX$, (i.e., $f\circ \Theta \circ \wt{g}_j = \Theta$ for each $j\in\N$) then for each $m\in\N$, there exists a sequence  $\{X^{n+m}_i\}_{i\in \{1, \, 2, \, \dots, \, k\}}$ of $(n+m)$-tiles in $\X^{n+m} (f,\CC)$ such that
\begin{equation}    \label{eqGoodPathInTileInteSmaller}
(\Theta \circ \wt{g}_m \circ \cdots \circ \wt{g}_1 \circ \wt\gamma )((a_{i-1},a_i)) \subseteq \inte ( X^{n+m}_i )
\end{equation}
for each $i\in\{1, \, 2, \, \dots, \, k\}$. If $d$ is a visual metric on $S^2$ for $f$ with expansion factor $\Lambda >1$, then
\begin{equation}   \label{eqGoodPathLimitLength0}
 \diam_d  (  ( \Theta  \circ \wt{g}_m \circ \cdots \circ \wt{g}_1 \circ \wt\gamma  ) ([0,1])  )  \leq kC \Lambda^{-(n+m)}
\end{equation}
for $m\in\N$, where $C\geq 1$ is the constant from Lemma~\ref{lmCellBoundsBM} depending only on $f$, $\CC$, and $d$.
\end{cor}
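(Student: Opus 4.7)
The plan is to first reduce to the setting of Lemma~\ref{lmHomotopyCurveOnX} by attaching short tails to a path near each endpoint, then to apply that lemma, and finally to lift the resulting combinatorial structure through inverse branches using the relation $f \circ \Theta \circ \wt{g}_j = \Theta$ and the cellular structure on $S^2$.

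To construct $\wt\gamma$, since $\XX$ is simply connected (hence path-connected) and $\Theta^{-1}(\post f)$ is a discrete subset of $\XX$, I would connect $x$ to $y$ by an arbitrary continuous path in $\XX$ and perturb its interior so as to miss $\Theta^{-1}(\post f)\setminus\{x,y\}$. To handle the possibility that $x$ or $y$ themselves lie in $\Theta^{-1}(\post f)$, observe that $\Theta(x), \Theta(y) \in S_0^2$ each lie in either the interior of some $n$-tile or in a flower $W^n(v)$ around some $n$-vertex $v$ (see Remark~\ref{rmFlower} and Proposition~\ref{propCellDecomp}), and thus for $n$ large enough one may shorten the path near each endpoint so that its initial and final subintervals project into the interior of a single $n$-tile, with the truncation points chosen outside $\Theta^{-1}(\post f)$. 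Applying Lemma~\ref{lmHomotopyCurveOnX} to the middle portion (increasing $n$ if necessary to exceed the threshold $N$ in that lemma) then produces a homotopic middle path yielding a partition $0 \eqqcolon a_0 < a_1 < \cdots < a_k \coloneqq 1$ and $n$-tiles $X^n_1, \dots, X^n_k$ satisfying (\ref{eqGoodPathInTileInte}), after reindexing to incorporate the two tail segments.

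For (\ref{eqGoodPathInTileInteSmaller}), I would proceed by induction on $m$, the base case $m=0$ being (\ref{eqGoodPathInTileInte}). For the inductive step, assume that $(\Theta \circ \wt{g}_{m-1} \circ \cdots \circ \wt{g}_1 \circ \wt\gamma)((a_{i-1}, a_i)) \subseteq \inte(X^{n+m-1}_i)$ for some $(n+m-1)$-tile $X^{n+m-1}_i$. Using $f \circ \Theta \circ \wt{g}_m = \Theta$, the set $(\Theta \circ \wt{g}_m \circ \cdots \circ \wt{g}_1 \circ \wt\gamma)((a_{i-1}, a_i))$ is carried by $f$ into $\inte(X^{n+m-1}_i)$, and hence lies in $f^{-1}(\inte(X^{n+m-1}_i))$. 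By Proposition~\ref{propCellDecomp}~(i) and (ii), the latter preimage is a disjoint union of interiors of $(n+m)$-tiles; since the image of an open interval under a continuous map is connected, it must lie in a single such interior $\inte(X^{n+m}_i)$.

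Finally, (\ref{eqGoodPathLimitLength0}) follows from Lemma~\ref{lmCellBoundsBM}~(ii), which yields $\diam_d(X^{n+m}_i) \leq C\Lambda^{-(n+m)}$ for each $i$; the full image $(\Theta \circ \wt{g}_m \circ \cdots \circ \wt{g}_1 \circ \wt\gamma)([0,1])$ is the connected union of the $k$ closed $(n+m)$-tiles $X^{n+m}_1, \dots, X^{n+m}_k$, with consecutive tiles meeting at the image of some $a_i$, so the triangle inequality gives the bound $kC\Lambda^{-(n+m)}$. The main technical obstacle is the first step, namely arranging the initial data so that Lemma~\ref{lmHomotopyCurveOnX} applies while preserving the exact endpoints $x$ and $y$ (which may both lie in $\Theta^{-1}(\post f)$); once this is accomplished via the flower structure around each endpoint, the inductive lifting and the diameter estimate are direct consequences of the cellular and visual-metric geometry already established.
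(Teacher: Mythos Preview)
Your proposal is correct and follows essentially the same approach as the paper. The paper constructs the tail segments near $x$ and $y$ by first building short paths in $S^2$ (from $\Theta(x)$ into $\inte(X^n_1)$ and similarly near $\Theta(y)$) and then lifting them to $\XX$ via Lemma~\ref{lmLiftPathBM}, rather than by modifying a pre-existing path in $\XX$; and for (\ref{eqGoodPathInTileInteSmaller}) the paper argues in one step with $f^{-m}$ instead of by induction on $m$, but these are only presentational differences.
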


\begin{proof}
Let $\CC \subseteq S^2$ be a Jordan curve on $S^2$ with $\post f \subseteq \CC$. Fix an arbitrary number $n\geq N$, where $N\in\N$ is the constant depending only on $f$ and $\CC$ from Lemma~\ref{lmHomotopyCurveOnX}.

Choose $n$-tiles $X^n_1, \, X^n_{\minus 1} \in \X^n(f,\CC)$ with $\Theta(x)\in X^n_1$ and $\Theta(y)\in X^n_{\minus 1}$. Since $n$-tiles are cells of dimension $2$ as discussed in Subsection~\ref{subsctThurstonMap}, we can choose continuous paths $\gamma_x \: \bigl[0,\frac14\bigr] \rightarrow S^2_0$ and $\gamma_y \: \bigl[\frac34, 1\bigr] \rightarrow S^2_0$ with $\gamma_x(0) = \Theta(x)$, $\gamma_y(1) = \Theta(y)$, $\gamma_x\bigl( \bigl(0, \frac14 \bigr] \bigr) \subseteq \inte (X^n_1)$, and $\gamma_y\bigl( \bigl(\frac34, 1 \bigr] \bigr) \subseteq \inte (X^n_{\minus 1})$. Since $\Theta$ is a branched covering map (see Theorem~\ref{thmUniOrbCoverBM}), by Lemma~\ref{lmLiftPathBM} we can lift $\gamma_x$ (resp.\ $\gamma_y$) to $\wt\gamma_x \: \bigl[ 0, \frac14 \bigr] \rightarrow \XX$ (resp.\ $\wt\gamma_y \: \bigl[ \frac34,  1 \bigr] \rightarrow \XX$) such that $\wt\gamma_x(0) = x$ and $\Theta \circ \wt\gamma_x = \gamma_x$ (resp.\ $\wt\gamma_y(1) = y$ and $\Theta \circ \wt\gamma_y = \gamma_y$).

Since $u\coloneqq \wt\gamma_x \bigl( \frac14 \bigr) \in \Theta^{-1} (\inte(X^n_1))$ and $v\coloneqq \wt\gamma_y \bigl( \frac34 \bigr) \in \Theta^{-1} (\inte(X^n_{\minus 1}))$, we have $\{u, \, v\} \subseteq \XX \setminus \Theta^{-1}(\post f)$. Since $\post f$ is a finite set and $\Theta$ is a branched covering map (and is, consequently, discrete), we can choose a continuous path $\widehat\gamma\: \bigl[\frac14, \frac34\bigr] \rightarrow \XX \setminus \Theta^{-1}(\post f)$ with $\widehat\gamma \bigl(\frac14\bigr) = u$ and $\widehat\gamma \bigl(\frac34\bigr) = v$. By Lemma~\ref{lmHomotopyCurveOnX}, there exists a number $k\in\N$, a continuous path $\gamma \: \bigl[ \frac14, \frac34 \bigr] \rightarrow \XX \setminus \Theta^{-1}(\post f)$, a sequence of numbers $\frac14 \eqqcolon a_1 < a_2 < \cdots < a_{k-2} < a_{k-1} \coloneqq \frac34$, and a sequence $\{X^n_i\}_{i\in\{2, \, 3, \, \dots,  \, k-1\}}$ of $n$-tiles in $\X^n(f,\CC)$ such that $\gamma \bigl( \frac14 \bigr) = u$, $\gamma \bigl( \frac34 \bigr) = v$, and $(\Theta \circ \gamma) ( (a_{i-1},a_i) ) \subseteq \inte(X^n_i)$ for each $i\in \{2, \, 3, \, \dots, \,  k-1\}$.

We define a continuous path $\wt\gamma \: [0,1] \rightarrow \XX$ by
\begin{equation*}
\wt\gamma(t) \coloneqq \begin{cases} 
\wt\gamma_x(t) & \text{if } t\in \bigl[ 0, \frac14 \bigr), \\ 
   \gamma (t)  & \text{if } t\in \bigl[ \frac14, \frac34 \bigr], \\
\wt\gamma_y(t) & \text{if } t\in \bigl( \frac34, 1 \bigr].   \end{cases}
\end{equation*}
Let $X^n_k \coloneqq X^n_{\minus 1}$, $a_0 \coloneqq 0$, and $a_k\coloneqq 1$. By our construction, we have $\wt\gamma(0) = x$, $\wt\gamma(1) = y$, and $(\Theta \circ \wt\gamma) ( (a_{i-1},a_i) ) \subseteq \inte(X^n_i)$ for each $i\in \{1, \, 2, \, \dots,  \, k\}$, establishing (\ref{eqGoodPathInTileInte}).

\smallskip

Fix a  sequence $\{ \wt{g}_j \}_{j\in\N}$ of inverse branches of $f$ on $\XX$ in $\Inv(f)$. Fix arbitrary integers $m\in\N$ and  $i\in \{1, \, 2, \, \dots,  \, k\}$. Denote $I_i \coloneqq (a_{i-1}, a_i)$.

By (\ref{eqGoodPathInTileInte}), each connected component of $f^{-m}((\Theta \circ \wt\gamma)(I_i))$ is contained in some connected component of $f^{-m}(\inte(X^n_i))$. Since both $( \Theta \circ \wt{g}_m \circ \cdots \circ \wt{g}_1 \circ \wt \gamma ) (I_i)$ and $f^m ( ( \Theta \circ \wt{g}_m \circ \cdots \circ \wt{g}_1\circ \wt\gamma ) (I_i) ) = (\Theta \circ \wt\gamma)(I_i)$ are connected, by Proposition~\ref{propCellDecomp}~(i), (ii), and (v), there exists an $(n+m)$-tile $X^{n+m}_i \in \X^{n+m}(f,\CC)$ such that $( \Theta \circ \wt{g}_m \circ \cdots \circ \wt{g}_1 \circ \wt\gamma ) (I_i) \subseteq \inte ( X^{n+m}_i )$. Since $m\in\N$ and  $i\in \{1, \, 2, \, \dots, \,  k\}$ are arbitrary, (\ref{eqGoodPathInTileInteSmaller}) is established. Finally, it is clear that (\ref{eqGoodPathLimitLength0}) follows immediately from (\ref{eqGoodPathInTileInteSmaller}) and Lemma~\ref{lmCellBoundsBM}~(ii).
\end{proof}

If we assume that $f$ is expanding, then roughly speaking, each inverse branch on the universal orbifold cover has a unique attracting fixed point (possibly at infinity). The precise statement is formulated in the following proposition.

\begin{prop}  \label{propInvBranchFixPt}
Let $f$, $d$, $\Lambda$ satisfy the Assumptions in Section~\ref{sctAssumptions}. Let $\mathcal{O}_f=(S^2,\alpha_f)$ be the orbifold associated to $f$, and $\Theta\: \XX \rightarrow S_0^2$ be the universal orbifold covering map of $\mathcal{O}_f$. Fix a branched covering map $\wt{g}\: \XX\rightarrow \XX$ satisfying $f\circ \Theta \circ \wt{g} = \Theta$. Then the map $\wt{g}$ has at most one fixed point. Moreover, 
\begin{enumerate}
\smallskip
\item[(i)] if $w\in \XX$ is a fixed point of $\wt{g}$, then $\lim_{i\to +\infty} \wt{g}^i(u) = w$ for all $u\in\XX$;

\smallskip
\item[(ii)] if $\wt{g}$ has no fixed point in $\XX$, then $f$ has a fixed critical point $z\in S^2$ such that $\lim_{i\to +\infty} \Theta\bigl(\wt{g}^i(u)\bigr) = z$ for all $u\in \XX$.
\end{enumerate}
\end{prop}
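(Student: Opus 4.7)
The main input will be Corollary~\ref{corGoodPath}: given any two points $x,y\in\XX$, there is a path $\wt\gamma:[0,1]\to\XX$ joining them such that, for every $m\in\N$, the composition $\Theta\circ\wt{g}^m\circ\wt\gamma$ maps $[0,1]$ into an image of $d$-diameter at most $kC\Lambda^{-(n+m)}$. Thus iterated application of any inverse branch $\wt{g}$ of $f$ contracts exponentially when measured through $\Theta$. The overall strategy is to combine this quantitative contraction with the local branched-covering structure of $\Theta$ supplied by Theorem~\ref{thmUniOrbCoverBM}, in order to transfer the downstairs convergence in $(S^2,d)$ up to a convergence statement in $\XX$.

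First I would prove uniqueness and part~(i) together. If $w_1,w_2$ are two fixed points of $\wt{g}$, then applying Corollary~\ref{corGoodPath} to a path $\wt\gamma$ from $w_1$ to $w_2$ yields a sequence of paths $\wt{g}^m\circ\wt\gamma$, each still running from $w_1$ to $w_2$, whose $\Theta$-images shrink to a single point, forcing $\Theta(w_1)=\Theta(w_2)=:z$. For small enough $\epsilon>0$ the preimage $\Theta^{-1}(B_d(z,\epsilon))$ splits as a disjoint union of open sets, each containing exactly one preimage of $z$ (using that $\Theta$ is locally modeled on $\zeta\mapsto\zeta^{\alpha_f(z)}$ near every preimage of $z$, by Theorem~\ref{thmUniOrbCoverBM}~(i)). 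For $m$ large the connected image of $\wt{g}^m\circ\wt\gamma$ lies in a single such component, forcing $w_1=w_2$. Exactly the same argument with a path from $w$ to $u$ proves part~(i): the component of $\Theta^{-1}(B_d(\Theta(w),\epsilon))$ containing $w$ shrinks to $\{w\}$ as $\epsilon\to 0$, and $\wt{g}^m(u)$ must lie in that component for large $m$.

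For part~(ii), assume $\wt{g}$ has no fixed point. Fix $u\in\XX$ and set $u_m\coloneqq\wt{g}^m(u)$, $x_m\coloneqq\Theta(u_m)$; the relation $f\circ\Theta\circ\wt{g}=\Theta$ gives $f(x_{m+1})=x_m$. Applying Corollary~\ref{corGoodPath} to a fixed path from $u$ to $\wt{g}(u)$ produces $d(x_m,x_{m+1})\leq kC\Lambda^{-(n+m)}$, so $\{x_m\}$ is Cauchy in $(S^2,d)$, hence converges to some $z\in S^2$ with $f(z)=z$. It remains to show $z\in\crit f$. Suppose not: then the definition of $\alpha_f$ combined with the structure of postcritically-finite branched coverings forces $\alpha_f(z)<+\infty$, so $z\in S^2_0$. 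Reproducing the nested-component argument above, the sequence $\{u_m\}$ eventually lies in a single component of $\Theta^{-1}(B_d(z,\epsilon))$ for every small $\epsilon$, and as $\epsilon\to 0$ these components contract to a unique preimage $\tilde z\in\Theta^{-1}(z)$. Hence $u_m\to\tilde z$ in $\XX$, and continuity of $\wt{g}$ gives $\wt{g}(\tilde z)=\tilde z$, contradicting the standing assumption. Therefore $z\in\crit f$, and the limit $\lim_m\Theta(\wt{g}^m(u))=z$ is independent of $u$ because any two choices of $u$ can be connected by a single path $\wt\gamma$ whose $\Theta\circ\wt{g}^m$-image has vanishing diameter.

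The technical heart, and the step I expect to be the main obstacle, is the passage from convergence in $(S^2,d)$ to convergence in $\XX$. This hinges on two delicate points: (a) the local model of $\Theta$ near a ramification point in $\XX$ must be used to ensure that small $d$-balls in $S^2_0$ lift to connected neighborhoods shrinking to a given preimage, and (b) at a putative non-critical fixed point $z$, one must rule out $\alpha_f(z)=+\infty$, for which I will use that the only periodic points of a Thurston map with $\alpha_f=+\infty$ are those lying in critical cycles. Any possible degeneracy coming from $z\in\post f\setminus\crit f$ has to be handled through this observation before the lifting argument can be deployed.
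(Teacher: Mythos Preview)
Your proposal is correct and follows essentially the same approach as the paper: both rely on Corollary~\ref{corGoodPath} for the exponential contraction of $\Theta\circ\wt{g}^m\circ\wt\gamma$, and both lift the resulting convergence in $(S^2,d)$ to $\XX$ via the local branched-cover charts for $\Theta$. The only cosmetic difference is that the paper phrases the lifting step as a contradiction (if $\wt{g}^m(u)$ fails to converge, the path $\wt{g}^{m_j}\circ\wt\gamma$ must jump between components of $\Theta^{-1}(V)$ and hence its $\Theta$-image crosses a fixed annulus between two nested flowers, bounding the diameter from below), whereas you argue directly that consecutive iterates lie in the same shrinking component; and the paper derives uniqueness from~(i) rather than proving them simultaneously.
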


\begin{proof}
Fix an arbitrary Jordan curve $\CC\subseteq S^2$ on $S^2$ with $\post f\subseteq\CC$.

We observe that it follows immediately from statement~(i) that $\wt{g}$ has at most one fixed point.

\smallskip

(i) We assume that $w\in \XX$ is a fixed point of $\wt{g}$. We argue by contradiction and assume that $\wt{g}^i(u)$ does not converge to $w$  as $i\to+\infty$ for some $u\in\XX$. By Corollary~\ref{corGoodPath} (with $\wt{g}_j\coloneqq \wt{g}$ for each $j\in\N$), we choose a continuous path $\wt\gamma \: [0,1]\rightarrow \XX$ with $\wt\gamma(0)=u$, $\wt\gamma(1)=w$, and
\begin{equation}   \label{eqPfpropInvBranchFixPt_CurveDiamLimit}
\lim_{i\to+\infty} \diam_d \bigl( \bigl(\Theta\circ\wt{g}^i\circ\wt\gamma \bigr) ([0,1]) \bigr) = 0.
\end{equation}
Denote $q\coloneqq  \Theta(w)$. Since $\Theta$ is a branched covering map (see Theorem~\ref{thmUniOrbCoverBM}), we can choose open sets $V\subseteq S^2_0$, $U_i\subseteq\XX$, and homeomorphisms $\varphi_i\: U_i \rightarrow \D$, $\psi_i\: V\rightarrow \D$, for $i\in I$, as in Definition~\ref{defBranchedCover} (with $X\coloneqq \XX$, $Y \coloneqq S^2_0$, and $f \coloneqq \Theta$). We choose $i_0 \in I$ such that $w \in U_{i_0}$. Then by our assumption there exists $r \in (0,1)$ and a strictly increasing sequence $\{k_j\}_{j\in\N}$ of positive integers such that 
$
\wt{g}^{k_j} (u) \notin \varphi_{i_0}^{-1} ( \{z\in\C : \abs{z} < r \} )
$
for each $j\in\N$.

For each $j\in\N$, since $\bigl( \wt{g}^{k_j} \circ \wt\gamma \bigr) ([0,1])$ is a path on $\XX$ connecting $\wt{g}^{k_j}(u)$ and $\wt{g}^{k_j}(w)=w$, we have
$
\bigl( \wt{g}^{k_j} \circ \wt\gamma \bigr) ([0,1])    \cap \varphi_{i_0}^{-1} ( \{z\in\C : \abs{z} = r \} )  \neq \emptyset.
$
Combining the above with (\ref{eqBranchCoverMapLocalPowerMap}) in Definition~\ref{defBranchedCover}, we get
\begin{equation*}
\diam_{\rho}   \bigl(  \bigl( \psi_{i_0} \circ \Theta \circ \wt{g}^{k_j} \circ \wt\gamma \bigr) ([0,1]) \bigr) 
\geq  \rho \bigl(0, \bigl( \psi_{i_0} \circ \Theta \circ \varphi_{i_0}^{-1} \bigr) ( \{z\in\C : \abs{z} = r \} ) \bigr) =  r^{d_{i_0}} >0
\end{equation*}
for $j\in\N$, where $d_{i_0} \coloneqq \deg_\Theta (w)$ as in Definition~\ref{defBranchedCover} and $\rho$ is the Euclidean metric on $\C$. This immediately leads to a contradiction with the fact that $\wt\gamma$ satisfies (\ref{eqPfpropInvBranchFixPt_CurveDiamLimit}), proving statement~(i).

\smallskip
(ii) We assume that $\wt{g}$ has no fixed point in $\XX$. Fix an arbitrary point $v\in\XX$. Let $x \coloneqq u$ and $y \coloneqq \wt{g}(u)$.

By Corollary~\ref{corGoodPath} (with $\wt{g}_j \coloneqq \wt{g}$ for each $j\in\N$), there exists a continuous path $\wt\gamma\:[0,1] \rightarrow \XX$, numbers $k, \, n\in\N$, a strictly increasing sequence of numbers $0\eqqcolon a_0<a_1<\cdots<a_{k-1}<a_k\coloneqq 1$, and for each $m\in\N_0$ there exists a sequence $\{X^{n+m}_i\}_{i\in \{0,1,\dots,k\}}$ of $(n+m)$-tiles in $\X^{(n+m)}$ such that $\wt\gamma(0)=x$, $\wt\gamma(1)=y$, and
\begin{equation}   \label{eqPfpropInvBranchFixPt_InTileInte}
(\Theta \circ \wt{g}^m \circ \wt\gamma)((a_{i-1},a_i)) \subseteq \inte(X^{n+m}_i)
\end{equation}
for each $i\in\{1, \, 2, \, \dots, \, k\}$ and each $m\in\N_0$.  Moreover, for each $m\in\N_0$,
\begin{equation}   \label{eqPfpropInvBranchFixPt_GoodPathLimitLength0}
 \diam_d  ( ( \Theta  \circ \wt{g}^m \circ \wt\gamma  ) ([0,1]) ) \leq kC \Lambda^{-(n+m)}.
\end{equation}
where $C\geq 1$ is the constant from Lemma~\ref{lmCellBoundsBM} depending only on $f$, $\CC$, and $d$.

Since $\wt\gamma(0)=x$ and $\wt\gamma(1)=\wt{g}(x)$, by (\ref{eqPfpropInvBranchFixPt_GoodPathLimitLength0}), for each $m\in\N$ we have  
\begin{equation}  \label{eqPfpropInvBranchFixPt_gmxIterateDistance}
 d  \bigl( \Theta  ( \wt{g}^m(x)  ), \Theta \bigl( \wt{g}^{m+1}(x) \bigr) \bigr)  \leq kC \Lambda^{-(n+m)}.
\end{equation}
Since $S^2$ is compact, we get $\lim_{m\to+\infty}  \Theta  ( \wt{g}^m(x)  ) = z$ for some $z\in S^2$. Since $\bigl( f \circ \Theta \circ \wt{g}^{m+1} \bigr) = \Theta \circ \wt{g}^m$ for each $m\in\N$, we have $f(z) = z$. To see that $z$ is independent of $x$, we choose arbitrary points $x'\in\XX$ and $z'\in S^2$ with $\lim_{m\to+\infty} \Theta  ( \wt{g}^m(x')) = z'$. Then by the same argument as above, we get $f(z')=z'$. Applying Corollary~\ref{corGoodPath} (with $y\coloneqq x'$), we get $z=z'$.

It suffices to show $z\in \crit f$ now. We observe that it follows from (\ref{eqDefS0}), (\ref{eqDefRamificationFn}), and $f(z)=z$ that it suffices to prove $z\notin S^2_0$. We argue by contradiction and assume that $z\in S^2_0$. Since $\Theta$ is a branched covering map (see Theorem~\ref{thmUniOrbCoverBM}), we can choose open sets $V\subseteq S^2_0$ and $U_i\subseteq \XX$, $i\in I$, as in Definition~\ref{defBranchedCover} (with $X \coloneqq \XX$, $Y \coloneqq S^2_0$, $q \coloneqq z$, and $f \coloneqq \Theta$). By Lemma~\ref{lmCellBoundsBM}~(ii) and the fact that flowers are open sets (see Remark~\ref{rmFlower}), it is clear that there exist numbers $l,\, L\in\N$, an $l$-vertex $v^l\in \V^l$, and an $L$-vertex $v^L\in\V^L$ such that $l<L$ and
\begin{equation} \label{eqPfpropInvBranchFixPt_zInVInV}
z \in W^L \bigl( v^L \bigr) \subseteq \overline{W}^L \bigl( v^L \bigr) \subseteq W^l \bigl( v^l \bigr) \subseteq V.
\end{equation}  
By (\ref{eqPfpropInvBranchFixPt_gmxIterateDistance}) there exists $N\in\N$ large enough so that for each $m\in\N$ with $m\geq N$, we have $\Theta  ( \wt{g}^m(x)  ) \subseteq W^L \bigl( v^L \bigr) \subseteq V$. Since $\wt{g}$ has no fixed points in $\XX$, $\wt{g}^m(x)$ does not converge to any point in $\Theta^{-1}(z)$ as $m\to+\infty$, for otherwise, suppose $\lim_{m\to+\infty} \wt{g}^m(x) \eqqcolon p \in \XX$, then $\wt{g}(p)=p$, a contradiction. Hence, there exists a strictly increasing sequence $\{m_j\}_{j\in\N}$ of positive integers such that $\wt{g}^{m_j}(x)$ and $\wt{g}^{m_j + 1}(x)$ are contained in different connected components of $\Theta^{-1}(V)$. Since $\wt{g}^{m_j} ( \wt\gamma(0) ) = \wt{g}^{m_j} (x)$, $\wt{g}^{m_j} ( \wt\gamma(1) ) = \wt{g}^{m_j + 1} (x)$, and the set $\wt{g}^{m_j} (\wt\gamma ( [0,1] ))$ is connected, we get from (\ref{eqPfpropInvBranchFixPt_zInVInV}) that
\begin{equation*}
 ( \Theta \circ \wt{g}^{m_j} \circ \wt\gamma  ) ( [0,1] )  \cap \partial W^L \bigl( v^L \bigr)     \neq  \emptyset \neq ( \Theta \circ \wt{g}^{m_j} \circ \wt\gamma  ) ( [0,1] )  \cap \partial W^l \bigl( v^l \bigr)    .
\end{equation*}
This contradicts with (\ref{eqPfpropInvBranchFixPt_GoodPathLimitLength0}). Therefore, $z\notin S^2_0$ and $z\in \crit f$.
\end{proof}

\subsection{Proof of the characterization Theorem~\ref{thmNLI}}     \label{subsctNLI_Proof}

We first lift the local integrability condition by the universal orbifold covering map.

\begin{lemma}   \label{lmLiftLI}
Let $f$,  $\CC$, $d$, $\psi$ satisfy the Assumptions in Section~\ref{sctAssumptions}. We assume, in addition, that $f(\CC)\subseteq \CC$. Let $\mathcal{O}_f=(S^2, \alpha_f)$ be the orbifold associated to $f$, and $\Theta\: \XX\rightarrow S^2_0$ the universal orbifold covering map of $\mathcal{O}_f$. Assume that $\psi^{f,\,\CC}_{\xi,\,\eta}(x,y) = 0$ for all $\xi \coloneqq \{ \xi_{\minus i} \}_{i\in\N_0} \in  \Sigma_{f,\,\CC}^-$ and $\eta \coloneqq \{ \eta_{\minus i} \}_{i\in\N_0} \in  \Sigma_{f,\,\CC}^-$ with $f(\xi_0) = f(\eta_0)$, and all $(x,y)\in \bigcup\limits_{\substack{X\in\X^1(f,\CC) \\ X\subseteq f(\xi_0)}}X \times X$. Then for each pair of sequences $\{ \wt{g}_i \}_{i\in\N}$ and $\{ \wt{h}_i \}_{i\in\N}$ of inverse branches of $f$ on $\XX$, (i.e., $f\circ\Theta\circ\wt{g}_i = \Theta$ and $f\circ\Theta\circ\wt{h}_i = \Theta$ for $i\in\N$,) we have
\begin{align}  \label{eqLiftLI}
	\begin{aligned}
		&  \sum_{i=1}^{+\infty} \bigl(  \bigl(   \wt{\psi} \circ \wt{g}_i \circ\cdots\circ \wt{g}_1 \bigr)(u)  
		- \bigl(   \wt{\psi} \circ \wt{g}_i \circ\cdots\circ \wt{g}_1 \bigr)(v) \bigr)  \\
		&\qquad =  \sum_{i=1}^{+\infty}\bigl(  \bigl(   \wt{\psi} \circ \wt{h}_i \circ\cdots\circ \wt{h}_1 \bigr)(u) 
		- \bigl(   \wt{\psi} \circ \wt{h}_i \circ\cdots\circ \wt{h}_1 \bigr)(v) \bigr)   
	\end{aligned}
\end{align}
for $u, \, v\in\XX$, where $\wt{\psi} \coloneqq \psi\circ \Theta$.
\end{lemma}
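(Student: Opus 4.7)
The plan is to reduce the identity~(\ref{eqLiftLI}) on $\XX$ to the hypothesis on $S^2$ by interpolating between $u$ and $v$ along a carefully chosen path whose segments pull back cleanly under each composition of inverse branches. First I would apply Corollary~\ref{corGoodPath} with $n=1$ to obtain a path $\wt\gamma\:[0,1]\to\XX$ from $u$ to $v$, a subdivision $0=a_0<a_1<\cdots<a_k=1$, and $1$-tiles $T_1,\dots,T_k\in\X^1(f,\CC)$ such that $(\Theta\circ\wt\gamma)((a_{j-1},a_j))\subseteq\inte(T_j)$ for each $j\in\{1,\dots,k\}$. The ``Moreover'' clause of the same corollary then furnishes, for each of the two sequences of inverse branches and each $m\in\N_0$, an $(m+1)$-tile $\wt X^{(m+1)}_j$ (respectively $\wt Y^{(m+1)}_j$ for $\{\wt{h}_i\}$) whose interior contains the image of $(a_{j-1},a_j)$ under $\Theta\circ\wt{g}_m\circ\cdots\circ\wt{g}_1\circ\wt\gamma$ (respectively under the $\wt h$-composition), with $\wt X^{(1)}_j=\wt Y^{(1)}_j=T_j$.

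Next, for each $j\in\{1,\dots,k\}$, I would define $\xi^{(j)}_{-i}\in\X^1(f,\CC)$ to be the unique $1$-tile containing the $(i+2)$-tile $\wt X^{(i+2)}_j$, and analogously $\eta^{(j)}$ from $\wt Y^{(i+2)}_j$. Proposition~\ref{propCellDecomp}~(i) together with the nesting of cell decompositions gives admissibility $\xi^{(j)},\eta^{(j)}\in\Sigma_{f,\,\CC}^-$ and forces $f(\xi^{(j)}_0)=f(\eta^{(j)}_0)$, since both coincide with the unique $0$-tile containing $T_j$. Using $f\circ\Theta\circ\wt{g}_i=\Theta$ together with the injectivity of $f$ on each $1$-tile, a straightforward induction on $i$ identifies
\begin{equation*}
\Theta((\wt{g}_{i+1}\circ\cdots\circ\wt{g}_1)(w))=\bigl(f^{-1}_{\xi^{(j)}_{-i}}\circ\cdots\circ f^{-1}_{\xi^{(j)}_0}\bigr)(\Theta(w))
\end{equation*}
for $w\in\{w_{j-1},w_j\}$ with $w_j\coloneqq\wt\gamma(a_j)$, and analogously on the $\wt h$-side for $\eta^{(j)}$.

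The final step is to telescope each infinite sum in~(\ref{eqLiftLI}) through the intermediate points $w_0=u,w_1,\dots,w_k=v$, and then to interchange the order of summation. The geometric bound $\diam_d((\Theta\circ\wt{g}_m\circ\cdots\circ\wt{g}_1\circ\wt\gamma)([0,1]))\leq kC\Lambda^{-(1+m)}$ from~(\ref{eqGoodPathLimitLength0}), combined with the $\alpha$-H\"older regularity of $\psi$, renders both double sums absolutely convergent, so the swap is legitimate. Each piece of the telescoped $\wt g$-sum then equals $\Delta^{f,\,\CC}_{\psi,\,\xi^{(j)}}(\Theta(w_{j-1}),\Theta(w_j))$, and the corresponding $\wt h$-piece equals $\Delta^{f,\,\CC}_{\psi,\,\eta^{(j)}}(\Theta(w_{j-1}),\Theta(w_j))$. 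Since $(\Theta(w_{j-1}),\Theta(w_j))\in T_j\times T_j$ with $T_j\in\X^1(f,\CC)$ and $T_j\subseteq f(\xi^{(j)}_0)=f(\eta^{(j)}_0)$, the hypothesis $\psi^{f,\,\CC}_{\xi^{(j)},\,\eta^{(j)}}(\Theta(w_{j-1}),\Theta(w_j))=0$ forces equality term-by-term in $j$, yielding~(\ref{eqLiftLI}).

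The main technical obstacle is the interlocking compatibility between the two a priori unrelated inverse branch sequences $\{\wt{g}_i\}$ and $\{\wt{h}_i\}$: the symbolic pasts $\xi^{(j)}$ and $\eta^{(j)}$ they produce must satisfy $f(\xi^{(j)}_0)=f(\eta^{(j)}_0)$ so that the hypothesis applies to a common pair of points $(\Theta(w_{j-1}),\Theta(w_j))$ lying in the same $1$-tile. The way around this is to use a single shared path $\wt\gamma$ from part~(i) of Corollary~\ref{corGoodPath}, whose first-level tiles $\{T_j\}$ are determined by $\wt\gamma$ rather than by the choice of inverse branch sequence, and then to invoke the ``Moreover'' clause separately for $\{\wt{g}_i\}$ and for $\{\wt{h}_i\}$; once this alignment is secured, all remaining work is routine bookkeeping with Proposition~\ref{propCellDecomp}.
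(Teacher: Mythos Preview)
Your approach is the paper's approach: subdivide a path from $u$ to $v$ into segments landing in tiles, read off symbolic pasts $\xi^{(j)},\eta^{(j)}\in\Sigma_{f,\CC}^-$ from the nested tiles produced by the two inverse-branch compositions, identify the lifted telescoped sums segment-by-segment with $\Delta^{f,\CC}_{\psi,\xi^{(j)}}$ and $\Delta^{f,\CC}_{\psi,\eta^{(j)}}$, and apply the hypothesis.

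There is one slip. Corollary~\ref{corGoodPath} does not let you \emph{choose} $n=1$: the integer $n$ is an output of that corollary, not an input (in its proof $n$ is fixed $\geq N$, where $N$ comes from Lemma~\ref{lmHomotopyCurveOnX} and is needed so that each $n$-tile meets at most one postcritical point). With the general $n$ the corollary actually gives you, your $T_j$ are $n$-tiles rather than $1$-tiles, your ``$(i+2)$-tile $\wt X^{(i+2)}_j$'' becomes the $(n+i+1)$-tile, and your last step---invoking the hypothesis with $(\Theta(w_{j-1}),\Theta(w_j))\in T_j\times T_j$ and $T_j\in\X^1(f,\CC)$---no longer applies verbatim. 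The repair is immediate: since $f(\CC)\subseteq\CC$, the cell decomposition $\DD^n$ refines $\DD^1$, so each $n$-tile $T_j$ lies in a unique $1$-tile $X_j$, and that $1$-tile in turn lies in the $0$-tile $f(\xi^{(j)}_0)=f(\eta^{(j)}_0)$; the hypothesis then applies with $X=X_j$. This is exactly what the paper records as its Property~(2). Once you drop ``$n=1$'' and carry the general $n$ through the indices, your argument and the paper's coincide.
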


\begin{proof}
Fix sequences $\{\wt{g}_i\}_{i\in\N}$ and $\{\wt{h}_i\}_{i\in\N}$ of inverse branches of $f$ on $\XX$. Fix arbitrary points $u, \, v\in\XX$.

By Corollary~\ref{corGoodPath}, there exists a continuous path $\wt\gamma \: [0,1] \rightarrow \XX$, integers $k, \, n\in\N$, a strictly increasing sequence of numbers $0 \eqqcolon a_0 < a_1 < \cdots < a_{k-1} < a_k \coloneqq 1$, and a sequence $\{X^n_i\}_{i\in\{1, \, 2, \, \dots,  \, k\}}$ of $n$-tiles in $\X^n(f,\CC)$ such that $\wt\gamma(0) = u$, $\wt\gamma(1) = v$, and 
\begin{equation} \label{eqPflmLiftLI_InTileInte}
( \Theta \circ \wt\gamma ) ( (a_{i-1},a_i) ) \subseteq \inte (X^n_i).
\end{equation}
Moreover, for each $m\in\N$, there exist two sequences  $\{X^{n+m}_i\}_{i\in \{1, \, 2, \, \dots, \, k\}}$ and $\{Y^{n+m}_i\}_{i\in \{1, \, 2, \, \dots, \, k\}}$ of $(n+m)$-tiles in $\X^{n+m} (f,\CC)$ such that
\begin{align}   
( \Theta \circ \wt{g}_m \circ \cdots \circ \wt{g}_1 \circ \wt\gamma ) ( (a_{i-1}, a_i) )  & \subseteq \inte ( X^{n+m}_i )  \quad \text{ and} \label{eqPflmLiftLI_InTileInteSmallerX} \\
\bigl( \Theta \circ \wt{h}_m \circ \cdots \circ \wt{h}_1 \circ \wt\gamma \bigr) ( (a_{i-1}, a_i) )  & \subseteq \inte ( Y^{n+m}_i )  \label{eqPflmLiftLI_InTileInteSmallerY}
\end{align}
for each $i\in\{1, \, 2, \, \dots, \, k\}$.

We denote $u_0 \coloneqq \wt{\gamma} (a_0) =  u$, $u_i \coloneqq \wt\gamma (a_i)$, and $I_i \coloneqq (a_{i-1},a_i)$ for $i\in\{1,  \, 2,  \, \dots,  \, k\}$.

Observe that it suffices to show that (\ref{eqLiftLI}) holds with $u$ and $v$ replaced by $u_{i-1}$ and $u_i$, respectively, for each $i\in\{1, \, 2, \, \dots, \, k\}$.

Fix an arbitrary integer $i\in\{1, \, 2, \, \dots, \, k\}$.

For each $j\in\N_0$,  we denote by $\xi_{\minus j}$ the unique $1$-tile in $\X^1$ containing $X^{n+j+1}_i$, and denote by $\eta_{\minus j}$ the unique $1$-tile in $\X^1$ containing $Y^{n+j+1}_i$.

We will show that $\xi \coloneqq \{\xi_{\minus j}\}_{j\in\N_0}$ and $\eta \coloneqq \{ \eta_{\minus j} \}_{j\in\N_0}$ satisfy the following properties:
\begin{enumerate}
\smallskip
\item[(1)]  $\xi, \, \eta \in  \Sigma_{f,\,\CC}^-$.

\smallskip
\item[(2)] $f(\xi_0) = f(\eta_0) \eqqcolon X^0 \supseteq X^n_i \supseteq (\Theta \circ \wt\gamma) (I_i)$.

\smallskip
\item[(3)] For each $j\in\N_0$,
\begin{align*}
	\bigl( \Theta \circ \wt{g}_{j+1} \circ \cdots \circ \wt{g}_1 \circ \wt\gamma \bigr) (I_i)
             &\subseteq \bigl( f_{\xi_{\minus j}}^{-1} \circ  \cdots \circ f_{\xi_0}^{-1} \bigr)  (X^0) \qquad\text{ and} \\
    \bigl( \Theta \circ \wt{h}_{j+1} \circ \cdots \circ \wt{h}_1 \circ \wt\gamma \bigr) (I_i)
             &\subseteq \bigl( f_{\eta_{\minus j}}^{-1} \circ  \cdots \circ f_{\eta_0}^{-1} \bigr)  (X^0).
\end{align*}
\end{enumerate}

\smallskip

(1) Fix an arbitrary integer $m\in\N_0$. We note that by (\ref{eqPflmLiftLI_InTileInteSmallerX}),
\begin{align}   \label{eqPflmLiftLI_Prop1}
&        f \bigl(\xi_{\minus(m+1)} \bigr) \cap \inte(\xi_{\minus m}) \notag      \\
&\qquad \supseteq f \bigl( X^{n+m+2}_i \bigr) \cap \inte \bigl( X^{n+m+1}_i \bigr)   \notag\\
&\qquad \supseteq ( f \circ \Theta \circ \wt{g}_{m+2} \circ \cdots \circ \wt{g}_1 \circ \wt\gamma ) (I_i) 
               \cap ( \Theta \circ \wt{g}_{m+1}           \circ \cdots \circ \wt{g}_1 \circ \wt\gamma ) (I_i)  \\
&\qquad    =              ( \Theta \circ \wt{g}_{m+1}            \circ \cdots \circ \wt{g}_1 \circ \wt\gamma ) (I_i) \notag       \\
 &\qquad      \neq \emptyset. \notag                    
\end{align}
Since $f \bigl( \xi_{\minus(m+1)} \bigr) \in \X^0$ (see Proposition~\ref{propCellDecomp}~(i)), we get from (\ref{eqPflmLiftLI_Prop1}) that  $f \bigl( \xi_{\minus(m+1)} \bigr) \supseteq \xi_{\minus m}$. Since $m\in\N_0$ is arbitrary, we get $\xi \in \Sigma_{f,\,\CC}^-$. Similarly, we have $\eta \in \Sigma_{f,\,\CC}^-$.

\smallskip

(2) We note that by (\ref{eqPflmLiftLI_InTileInteSmallerX}), (\ref{eqPflmLiftLI_InTileInteSmallerY}), and (\ref{eqPflmLiftLI_InTileInte}),
\begin{align}   \label{eqPflmLiftLI_Prop2}
&f ( \inte (\xi_0) ) \cap f (\inte (\eta_0) )   \cap  \inte (X^n_i)  \notag\\
&\qquad \supseteq ( f \circ \Theta \circ \wt{g}_1 \circ \wt\gamma ) (I_i)  \cap \bigl( f \circ \Theta \circ \wt{h}_1 \circ \wt\gamma \bigr) (I_i)   \cap  \inte (X^n_i)  
    =     (  \Theta \circ \wt\gamma ) (I_i)  
  \neq      \emptyset.  
\end{align}
It follows from (\ref{eqPflmLiftLI_Prop2}) and Proposition~\ref{propCellDecomp}~(i) that $f(\xi_0) = f(\eta_0) \eqqcolon X^0 \supseteq  X^n_i \supseteq (\Theta \circ \wt\gamma) (I_i)$. This verifies Property~(2).

\smallskip

(3) We will establish the first relation in Property~(3) since the proof of the second one is the same. We note that by (\ref{eqPflmLiftLI_InTileInteSmallerX}), it suffices to show that
\begin{equation}   \label{eqPflmLiftLI_Prop3Equiv}
X^{n+j+1}_i \subseteq  \bigl( f_{\xi_{\minus j}}^{-1} \circ  \cdots \circ f_{\xi_0}^{-1} \bigr)  (X^0)
\end{equation}
for each $j\in\N_0$.

We prove (\ref{eqPflmLiftLI_Prop3Equiv}) by induction on $j\in\N_0$.

For $j=0$, we have $f_{\xi_0}^{-1} (X^0) = \xi_0 \supseteq X^{n+1}_i$ by Property~(2) and our construction above.

Assume that (\ref{eqPflmLiftLI_Prop3Equiv}) holds for some $j\in\N_0$. Then by the induction hypothesis, (\ref{eqPflmLiftLI_InTileInteSmallerX}), and the fact that $f$ is injective on $\xi_{\minus (j+1)} \supseteq X^{n+j+2}_i$ (see Proposition~\ref{propCellDecomp}~(i)), we get
\begin{align*}
&                  \bigl( f_{\xi_{\minus (j+1)}}^{-1} \circ  \cdots \circ f_{\xi_0}^{-1} \bigr)  (X^0)  \cap   \inte \bigl( X^{n+j+2}_i \bigr)  \\
&\qquad \supseteq  f_{\xi_{\minus (j+1)}}^{-1} \bigl( X^{n+j+1}_i \bigr)   \cap   \inte \bigl( X^{n+j+2}_i \bigr)  \\
&\qquad  =       f_{\xi_{\minus (j+1)}}^{-1} \bigl( X^{n+j+1}_i    \cap  f \bigl(  \inte \bigl( X^{n+j+2}_i \bigr) \bigr) \bigr) \\
&\qquad \supseteq  f_{\xi_{\minus (j+1)}}^{-1} \bigl( \bigl( \Theta \circ \wt{g}_{j+1} \circ \cdots \circ \wt{g}_1 \circ \wt\gamma \bigr) (I_i)
                                         \cap \bigl( f \circ \Theta \circ \wt{g}_{j+2} \circ \cdots \circ \wt{g}_1 \circ \wt\gamma \bigr) (I_i)  \bigr) \\
&\qquad    =       f_{\xi_{\minus (j+1)}}^{-1} \bigl( \bigl( \Theta \circ \wt{g}_{j+1} \circ \cdots \circ \wt{g}_1 \circ \wt\gamma \bigr) (I_i)     \bigr). 
\end{align*}
The set on the right-hand side of the last line above is nonempty, since by (\ref{eqPflmLiftLI_InTileInteSmallerX}) and our construction,
\begin{equation*}
        \bigl( \Theta \circ \wt{g}_{j+1} \circ \cdots \circ \wt{g}_1 \circ \wt\gamma \bigr) (I_i) 
   =   \bigl( f \circ \Theta \circ \wt{g}_{j+2} \circ \cdots \circ \wt{g}_1 \circ \wt\gamma \bigr) (I_i) 
\subseteq   f \bigl( X^{n+j+2}_i \bigr)
\subseteq    f \bigl( \xi_{\minus (j+1)} \bigr).
\end{equation*}
On the other hand, since $\xi \in \Sigma_{f,\,\CC}^-$, it follows immediately from Lemma~\ref{lmCylinderIsTile} that
\begin{equation}  \label{eqPflmLiftLI_ImageIsTile}
\bigl( f_{\xi_{\minus (j+1)}}^{-1} \circ  \cdots \circ f_{\xi_0}^{-1} \bigr)  (X^0)  \in \X^{j+2}.
\end{equation}
Hence, $X^{n+j+2}_i \subseteq \bigl( f_{\xi_{\minus (j+1)}}^{-1} \circ  \cdots \circ f_{\xi_0}^{-1} \bigr)  (X^0)$.

The induction step is now complete, establishing Property~(3).

\smallskip

Finally, by Property~(3), for each $m\in\N_0$ and each $w\in\{ u_{i-1},  \, u_i \}$ we have
\begin{equation*}
 ( \Theta \circ \wt{g}_{m+1} \circ \cdots \circ \wt{g}_1 ) (w)
             \subseteq \bigl( f_{\xi_{\minus m}}^{-1} \circ  \cdots \circ f_{\xi_0}^{-1} \bigr)  (X^0).
\end{equation*}
Since $f^{m+1} ( ( \Theta \circ \wt{g}_{m+1} \circ \cdots \circ \wt{g}_1 ) (w) ) = \Theta (w)$ and $f^{m+1}$ is injective on $\bigl( f_{\xi_{\minus m}}^{-1} \circ  \cdots \circ f_{\xi_0}^{-1} \bigr)  (X^0)$ (by (\ref{eqPflmLiftLI_ImageIsTile}) and Proposition~\ref{propCellDecomp}~(i)) with $\bigl( f^{m+1} \circ f_{\xi_{\minus m}}^{-1} \circ  \cdots \circ f_{\xi_0}^{-1} \bigr)  (x) = x$ for each $x\in X^0$, we get
$
 ( \Theta \circ \wt{g}_{m+1} \circ \cdots \circ \wt{g}_1 ) (w)   =  \bigl( f_{\xi_{\minus m}}^{-1} \circ  \cdots \circ f_{\xi_0}^{-1} \bigr)  ( \Theta(w) )$. Hence,
\begin{align*}
&                       \sum_{j=0}^{+\infty}    \Bigl(  \Bigl(   \wt{\psi} \circ \wt{g}_{j+1} \circ\cdots\circ \wt{g}_1 \Bigr)(u_{i-1})  
                                                                                      - \Bigl(   \wt{\psi} \circ \wt{g}_{j+1} \circ\cdots\circ \wt{g}_1 \Bigr)(u_i) \Bigr) \\ 
&\qquad  =      \sum_{j=0}^{+\infty}    \Bigl(  \Bigl(   \psi \circ f_{\xi_{\minus j}}^{-1} \circ \cdots \circ f_{\xi_{0}}^{-1} \Bigr)  (   \Theta (u_{i-1})  )  
                                                                                     - \Bigl(   \psi \circ f_{\xi_{\minus j}}^{-1} \circ \cdots \circ f_{\xi_{0}}^{-1} \Bigr)  (   \Theta (u_i)  )   \Bigr) \\
&\qquad   =     \Delta^{f,\,\CC}_{\psi,\,\xi} (  \Theta(u_{i-1}),   \Theta(u_i)  )   ,                                                                                  
\end{align*}
where $\Delta^{f,\,\CC}_{\psi,\,\xi} $ is defined in (\ref{eqDelta}). Similarly, the right-hand side of (\ref{eqLiftLI}) with $u$ and $v$ replaced by $u_{i-1}$ and $u_i$, respectively, is equal to $\Delta^{f,\,\CC}_{\psi,\,\eta} (  \Theta(u_{i-1}),   \Theta(u_i)  )$.

Since $\{ \Theta(u_{i-1}),  \, \Theta(u_i) \} \subseteq X^n_i  \subseteq f ( \xi_0 )  = f ( \eta_0 )$ by Property~(2), we get from our assumption on $\psi^{f,\,\CC}_{\xi,\,\eta}$ and Definition~\ref{defTemporalDist} that
\begin{equation*}
0 = \psi^{f,\,\CC}_{\xi,\,\eta}( \Theta(u_{i-1}),   \Theta(u_i) ) = \Delta^{f,\,\CC}_{\psi,\,\xi} ( \Theta(u_{i-1}),   \Theta(u_i) ) - \Delta^{f,\,\CC}_{\psi,\,\eta} ( \Theta(u_{i-1}),   \Theta(u_i) ).
\end{equation*}
Therefore,  (\ref{eqLiftLI}) holds with $u$ and $v$ replaced by $u_{i-1}$ and $u_i$, respectively. This establishes the lemma.
\end{proof}

\begin{proof}[Proof of Theorem~\ref{thmNLI}]
In the case where $\psi\in\Holder{\alpha}(S^2,d)$ is real-valued, the implication (vi)$\implies$(iii) follows immediately from Theorem~5.45 in \cite{Li17}. Conversely, statement~(iii) implies that $S_n\psi(x)=nK=S_n(K\mathbbm{1}_{S^2})(x)$ for all $x\in S^2$ and $n\in\N$ satisfying $f^n(x)=x$. So $K\in\R$. By Proposition~5.52 in \cite{Li17}, the function $\beta$ in statement~(iii) can be assumed to be real-valued. Then (vi) follows from Theorem~5.45 in \cite{Li17}.

\smallskip 
We now focus on the general case where $\psi\in\Holder{\alpha}((S^2,d),\C)$ is complex-valued. The implication (i)$\implies$(ii) is trivial.

\smallskip

(iii)$\implies$(iv): Note that statement~(iii) implies that $S_n\psi(x)=nK=S_n(K\mathbbm{1}_{S^2})(x)$ for all $x\in S^2$ and $n\in\N$ satisfying $f^n(x)=x$. Now statement~(iv) follows from Proposition~5.52 in \cite{Li17}.

\smallskip

(iv)$\implies$(i): Fix a Jordan curve $\CC\subseteq S^2$ satisfying $\post f\subseteq\CC$ and $f^n(\CC)\subseteq \CC$ for some $n\in\N$ (see Lemma~\ref{lmCexistsL}). We assume that statement~(iv) holds. Denote $F \coloneqq f^n$ and $\Psi \coloneqq S_n^f\psi=nK+\tau\circ F - \tau \in \Holder{\alpha} ((S^2,d),\C)$ (by Lemma~\ref{lmSnPhiHolder}). Fix any $\xi=\{ \xi_{\minus i} \}_{i\in\N_0} \in  \Sigma_{F,\,\CC}^-$ and $\eta=\{ \eta_{\minus i} \}_{i\in\N_0} \in  \Sigma_{F,\,\CC}^-$ with $F(\xi_0) = F(\eta_0)$. By (\ref{eqDelta}), we get that for all $(x,y)\in \bigcup\limits_{\substack{X\in\X^1(F,\CC) \\ X\subseteq F(\xi_0)}}X \times X$,
\begin{align*}
    \Delta^{F,\,\CC}_{\Psi,\,\xi}(x,y) 
&=  \lim_{i\to+\infty}  \Bigl(  \tau(x)  -   \tau\bigl(F^{-1}_{\xi_{\minus i}} \circ\cdots\circ F^{-1}_{\xi_0}(x)\bigr)  
                                                - \tau(y) +   \tau\bigl(F^{-1}_{\xi_{\minus i}} \circ\cdots\circ F^{-1}_{\xi_0}(y)\bigr)  \Bigr)  \\
&=  \tau(x)-\tau(y).
\end{align*}
The second equality here follows from the H\"{o}lder continuity of $\tau$ and Lemma~\ref{lmCellBoundsBM}~(ii). Similarly, we have $\Delta^{F,\,\CC}_{\Psi,\,\eta}(x,y)= \tau(x)-\tau(y)$. Therefore, by Definition~\ref{defTemporalDist}, $\Psi_{\xi,\,\eta}^{F,\,\CC}(x,y)=0$ for all $(x,y)\in \bigcup\limits_{\substack{X\in\X^1(F,\CC) \\ X\subseteq F(\xi_0)}}X \times X$, establishing (i).

\smallskip

(iv)$\implies$(v): Let $M\coloneqq n \cdot \mathbbm{1}_{\Sigma_{A_{\ti}}^+}$ and $\varpi \coloneqq \tau \circ \pi_{\ti}$. Then (v) follows immediately from Proposition~\ref{propTileSFT}.

\smallskip

(v)$\implies$(ii): We argue by contradiction and assume that (ii) does not hold but (v) holds. Fix $n\in\N$, $\CC\subseteq S^2$, $K\in\C$, $M\in\CCC \bigl( \Sigma_{A_{\ti}}^+ , \Z \bigr)$, and $\varpi\in\CCC \bigl( \Sigma_{A_{\ti}}^+ , \C \bigr)$ as in (v). Recall $F\coloneqq f^n$, $\Psi \coloneqq S_n^f \psi$, and
\begin{equation}   \label{eqPfthmNLI_DefPhiPi_PsiCohomology}
\Psi \circ \pi_{\ti} = KM + \varpi \circ \sigma_{A_{\ti}} - \varpi.
\end{equation}
Since $ \Sigma_{A_{\ti}}^+$ is compact, we know that $\card \bigl( M \bigl(  \Sigma_{A_{\ti}}^+ \bigr) \bigr)$ is finite. Thus, considering that the topology on $\Sigma_{A_{\ti}}^+$ is induced from the product topology, we can choose $m\in\N$ such that $M( \underline{u} ) = M ( \underline{v} )$ for all $\underline{u} = \{u_i\}_{i\in\N_0}\in  \Sigma_{A_{\ti}}^+$ and $\underline{v} = \{v_i\}_{i\in\N_0} \in  \Sigma_{A_{\ti}}^+$ with $u_i=v_i$ for each $i\in\{0, \, 1, \, \dots,  \, m\}$. Fix $\xi=\{ \xi_{\minus i} \}_{i\in\N_0} \in  \Sigma_{F,\,\CC}^-$, $\eta=\{ \eta_{\minus i} \}_{i\in\N_0} \in  \Sigma_{F,\,\CC}^-$, $X \in \X^1(F,\CC)$, and $x, \, y\in X$ with $X \subseteq F(\xi_0) = F(\eta_0)$ and
$
\Psi_{\xi,\,\eta}^{F,\,\CC}(x,y) \neq 0
$
as in (ii). Since $D\coloneqq S^2 \setminus \bigcup_{i\in\N_0} F^{-i} (\CC)$ is dense in $S^2$, by (\ref{eqDeltaDifference}) in Lemma~\ref{lmDeltaHolder} and Definition~\ref{defTemporalDist}, we can assume without loss of generality that there exists $X^m \in \X^m(F,\CC)$ with $x, \, y\in X^m \setminus D \subseteq X$. Thus, by Proposition~\ref{propTileSFT}, $\pi_{\ti}$ is injective on $\pi_{\ti}^{-1} (P)$ where $P\coloneqq \bigcup_{i\in\N_0} F^{-i} (\{x, \, y\})$. With abuse of notation, we denote by $\pi_{\ti}^{-1} \: P \rightarrow \pi_{\ti}^{-1}(P)$ the inverse of $\pi_{\ti}$ on $P$. Then by (\ref{eqDelta}), Proposition~\ref{propTileSFT}, and (\ref{eqPfthmNLI_DefPhiPi_PsiCohomology}),
\begin{align*}
        \Delta^{F,\,\CC}_{\Psi,\,\xi}(x,y) 
&=   \sum_{i=0}^{+\infty} \Bigl( 
                     \bigl( ( \Psi \circ \pi_{\ti} )  \circ \bigl( \pi_{\ti}^{-1} \circ F^{-1}_{\xi_{\minus i}} \circ \pi_{\ti} \bigr) \circ \cdots \circ \bigl( \pi_{\ti}^{-1} \circ F^{-1}_{\xi_0}  \circ \pi_{\ti} \bigr) \bigr) \bigl( \pi_{\ti}^{-1} (x) \bigr) \\
   & \qquad\quad - \bigl( ( \Psi \circ \pi_{\ti} )  \circ \bigl( \pi_{\ti}^{-1} \circ F^{-1}_{\xi_{\minus i}} \circ \pi_{\ti} \bigr) \circ \cdots \circ \bigl( \pi_{\ti}^{-1} \circ F^{-1}_{\xi_0}  \circ \pi_{\ti} \bigr) \bigr) \bigl( \pi_{\ti}^{-1} (y) \bigr)
                                      \Bigr) \\
&=  \lim_{i\to+\infty}  \Bigl( \varpi \bigl( \pi_{\ti}^{-1}(x) \bigr)  -\varpi \bigl( \pi_{\ti}^{-1}(y) \bigr) \\
    &\qquad\qquad\quad                -\bigl( \varpi \circ \bigl( \pi_{\ti}^{-1} \circ F^{-1}_{\xi_{\minus i}} \circ \pi_{\ti} \bigr) \circ \cdots \circ \bigl( \pi_{\ti}^{-1} \circ F^{-1}_{\xi_0}  \circ \pi_{\ti} \bigr) \bigr) \bigl( \pi_{\ti}^{-1} (x) \bigr) \\
    &\qquad\qquad\quad              +\bigl( \varpi \circ \bigl( \pi_{\ti}^{-1} \circ F^{-1}_{\xi_{\minus i}} \circ \pi_{\ti} \bigr) \circ \cdots \circ \bigl( \pi_{\ti}^{-1} \circ F^{-1}_{\xi_0}  \circ \pi_{\ti} \bigr) \bigr) \bigl( \pi_{\ti}^{-1} (y) \bigr)  \Bigr) \\
&=  \varpi \bigl( \pi_{\ti}^{-1}(x) \bigr)  -\varpi \bigl( \pi_{\ti}^{-1}(y) \bigr).
\end{align*}
The last identity follows from the uniform continuity of $\varpi$. Similarly, $ \Delta^{F,\,\CC}_{\Psi,\,\eta}(x,y) = \varpi \bigl( \pi_{\ti}^{-1}(x) \bigr)  -\varpi \bigl( \pi_{\ti}^{-1}(y) \bigr) $. Thus, by Definition~\ref{defTemporalDist}, $\Psi_{\xi,\,\eta}^{F,\,\CC}(x,y)=0$, a contradiction. The implication (v)$\implies$(ii) is now established.

\smallskip

It remains to show the implication (ii)$\implies$(iii).

Assume that statement~(ii) holds. Fix $n\in\N$ and a Jordan curve $\CC\subseteq S^2$ with $\post f \subseteq \CC$ and $f^n(\CC)\subseteq \CC$. We denote $F \coloneqq f^n$, and $\Psi \coloneqq S_n^f \psi\in\Holder{\alpha}((S^2,d),\C)$ (see Lemma~\ref{lmSnPhiHolder}).

Let $\mathcal{O}_F=(S^2,\alpha_F)$ be the orbifold associated to $F$. By Proposition~\ref{propSameRamificationFn}, we have $\mathcal{O}_F=\mathcal{O}_f$. Let $\Theta\:\XX\rightarrow S^2_0$ be the universal orbifold covering map of $\mathcal{O}_F=\mathcal{O}_f$, which depends only on $f$, and in particular, is independent of $n$.

For each branched covering map $\wt{h}\in \Inv(F)$, i.e., $\wt{h}\: \XX\rightarrow\XX$ satisfying $F\circ \Theta \circ \wt{h} = \Theta$, we define a function $\wt{\beta}_{\wt{h}} \: \XX\rightarrow\C$ by
\begin{equation}  \label{eqDef_b_h}
\wt{\beta}_{\wt{h}} (u)   \coloneqq   \sum_{i=1}^{+\infty}  \bigl( (\Psi\circ\Theta)\bigl(\wt{h}^i (u)\bigr)  -  \Psi_{\wt{h}}   \bigr)
\end{equation}
for $u\in\XX$, where
\begin{equation}  \label{eqPfthmNLI_DefPhi_wt_h}
\Psi_{\wt{h}} \coloneqq \lim_{j\to+\infty} (\Psi\circ\Theta)\bigl(\wt{h}^j (u)\bigr)
\end{equation}
converges and the limit in (\ref{eqPfthmNLI_DefPhi_wt_h}) is independent of $u\in\XX$ by Proposition~\ref{propInvBranchFixPt}. 

Fix an arbitrary point $u\in\XX$. We will show that the series in (\ref{eqDef_b_h}) converges absolutely. Note that it follows immediately from (\ref{eqGoodPathLimitLength0}) in Corollary~\ref{corGoodPath} (applied to $x\coloneqq u$ and $y\coloneqq \wt{h}(u)$) that there exists an integer $k\in\N$ such that for each $i\in\N$,
\begin{align*}
&                        \AbsBig{  (\Psi\circ\Theta)\bigl(\wt{h}^i (u)\bigr) - \lim_{j\to+\infty} (\Psi\circ\Theta)\bigl(\wt{h}^{i+j} (u)\bigr)  }  \\
&\qquad \leq  \sum_{j=1}^{+\infty}  \AbsBig{  (\Psi\circ\Theta) \bigl(\wt{h}^{i+j-1} (u)\bigr) - (\Psi\circ\Theta) \bigl(\wt{h}^{i+j} (u)\bigr)  }  \\
&\qquad \leq \sum_{j=1}^{+\infty} k^\alpha C^\alpha \Lambda^{-(i+j-1)\alpha} \Hnorm{\alpha}{\Psi}{(S^2,d)}  \\
 &\qquad \leq \frac{ k C}{ 1-\Lambda^{-\alpha}}      \Lambda^{-i\alpha}  \Hnorm{\alpha}{\Psi}{(S^2,d)},
\end{align*} 
where $C\geq 1$ is the constant from Lemma~\ref{lmCellBoundsBM} depending only on $f$, $\CC$, and $d$. Thus, the series in (\ref{eqDef_b_h}) converges uniformly and absolutely, and $\wt{\beta}_{\wt{h}}$ is well-defined and continuous on $\XX$.

Hence, for arbitrary $l\in\N_0$ and $u\in\XX$, it follows from (\ref{eqGoodPathLimitLength0}) in Corollary~\ref{corGoodPath} (applied to $x\coloneqq u$ and $y\coloneqq \wt{h}(u)$) that
\begin{align*}
&                      \Absbigg{ \wt{\beta}_{\wt{h}} (u) - \sum_{i=1}^{+\infty}  \bigl( (\Psi\circ\Theta)\bigl(\wt{h}^i (u)\bigr)- (\Psi\circ\Theta)\bigl(\wt{h}^{i} \bigl(\wt{h}^{l}(u)\bigr)\bigr)  \bigr) } \\
&\qquad  \leq \sum_{i=1}^{+\infty} \AbsBig{ (\Psi\circ\Theta)\bigl(\wt{h}^{i+l}  (u) \bigr)   -   \lim_{j\to+\infty} (\Psi\circ\Theta)\bigl(\wt{h}^{i+j} (u)\bigr)  \bigr) } \\
&\qquad  \leq \sum_{i=1}^{+\infty}   \sum_{j=l+1}^{+\infty}    \AbsBig{  (\Psi\circ\Theta) \bigl(\wt{h}^{i+j-1} (u)\bigr) - (\Psi\circ\Theta) \bigl(\wt{h}^{i+j} (u)\bigr)  }  \\
&\qquad  \leq \sum_{i=1}^{+\infty}   \sum_{j=l+1}^{+\infty}    k^\alpha C^\alpha \Lambda^{-(i+j-1)\alpha} \Hnorm{\alpha}{\Psi}{(S^2,d)}   \\
&\qquad   \leq k C(1-\Lambda^{-\alpha})^{-2 }      \Lambda^{-l\alpha}  \Hnorm{\alpha}{\Psi}{(S^2,d)}.
\end{align*}
Hence, for each $u\in\XX$,
\begin{equation}  \label{eqSwitchLimitSum}
\wt{\beta}_{\wt{h}} (u)  = \lim_{j\to+\infty} \sum_{i=1}^{+\infty}  \bigl( (\Psi\circ\Theta)\bigl(\wt{h}^i (u)\bigr)- (\Psi\circ\Theta)\bigl(\wt{h}^{i} \bigl(\wt{h}^{j}(u)\bigr)\bigr)  \bigr).
\end{equation}

We now fix an inverse branch $\wt{g}\in\Inv(F)$ of $F$ on $\XX$ and consider the map $\wt{\beta}_{\wt{g}}$. Note that by the absolute convergence of the series in (\ref{eqDef_b_h}), for each $u\in \XX$,
\begin{equation}  \label{eqPreCohomology}
\wt{\beta}_{\wt{g}}(u) - \wt{\beta}_{\wt{g}}  ( \wt{g}(u)  ) = \Psi (\Theta  (\wt{g}(u)  )  ) - \Psi_{\wt{g}} .
\end{equation}

We claim that $\wt{\beta}_{\wt{g}}(u) = \wt{\beta}_{\wt{g}}(\wt{\sigma}(u))$ for each $u\in\XX$ and each deck transformation $\wt{\sigma}\in \pi_1(\mathcal{O}_f)$.

By Proposition~\ref{propDeckTransf} and the fact that $\mathcal{O}_f$ is either parabolic or hyperbolic (see \cite[Proposition~2.12]{BM17}), the claim is equivalent to 
\begin{equation}  \label{eqBeta_hInvFiberTheta}
\wt{\beta}_{\wt{g}}(u)= \wt{\beta}_{\wt{g}}(v), \qquad\qquad \text{for all }y\in S^2,  \, u, \, v\in\Theta^{-1}(y).
\end{equation}

We assume that the claim holds for now and postpone its proof to the end of this discussion. Then by (\ref{eqBeta_hInvFiberTheta}), the function $\beta \: S^2_0 \rightarrow \C$ defined by assigning, for each $y\in S^2_0$,
\begin{equation}  \label{eqDefBeta}
\beta(y) \coloneqq \wt{\beta}_{\wt{g}} (v)
\end{equation}
with $v\in \Theta^{-1}(y)$ is well-defined and independent of the choice of $v$.

For each $x\in S^2_0$, by the surjectivity of $\wt{g}$, we can choose $u_0\in\XX$ such that $\Theta (\wt{g}(u_0) ) = x$. Note that $\Theta(u_0)=( F\circ\Theta\circ\wt{g} ) (u_0) = F(x)$. So $\wt{g}(u_0) \in \Theta^{-1}(x)$ and $u_0\in\Theta^{-1}(F(x))$. Then by (\ref{eqPreCohomology}), 
\begin{equation*}
\beta(F(x)) - \beta(x)  
= \wt{\beta}_{\wt{g}} (u_0) - \wt{\beta}_{\wt{g}}  (\wt{g}(u_0) ) 
= \Psi (\Theta  (\wt{g}(u_0)  )  ) - \Psi_{\wt{g}}
= \Psi(x) -  \Psi_{\wt{g}}.
\end{equation*}

We will show that $\beta \in \Holder{\alpha} \bigl( \bigl( X^0_\c \setminus \post F, d \bigr), \C \bigr)$ for each $\c\in\{\b, \, \w\}$. Here $X^0_\b$ (resp.\ $X^0_\w$) is the black (resp.\ white) $0$-tile in $\X^0(F,\CC)$.

Fix arbitrary $\c\in\{\b, \, \w\}$ and $x_0, \, y_0 \in X^0_\c \setminus \post F$. Let $\gamma \: [0,1] \rightarrow X^0_\c \setminus \post F$ be an arbitrary continuous path with $\gamma(0) = x_0$, $\gamma(1) = y_0$, and $\gamma((0,1)) \subseteq \inte \bigl(X^0_\c \bigr)$.

By Lemma~\ref{lmLiftPathBM}, we can lift $\gamma$ to $\wt\gamma \: [0,1] \rightarrow \XX$ such that $\Theta \circ \wt\gamma = \gamma$. Denote $u \coloneqq \wt\gamma(0)$, $v \coloneqq \wt\gamma(1)$, and $I \coloneqq (0,1)$. Thus,
\begin{equation}   \label{eqPfthmNLI_InTileInte}
(\Theta \circ \wt\gamma) (I) \subseteq \inte \bigl( X^0_\c \bigr).
\end{equation}

Fix an arbitrary integer $m\in\N$.

By (\ref{eqPfthmNLI_InTileInte}), each connected component of $F^{-m}((\Theta \circ \wt\gamma)(I))$ is contained in some connected component of $F^{-m}\bigl(\inte \bigl(X^0_\c\bigr) \bigr)$. Since both $\bigl( \Theta \circ \wt{g}^m \circ \wt \gamma \bigr) (I)$ and $F^m \bigl( \bigl( \Theta \circ \wt{g}^m \circ \wt\gamma \bigr) (I) \bigr) = (\Theta \circ \wt\gamma)(I)$ are connected, by Proposition~\ref{propCellDecomp}~(v), there exists an $m$-tile $X^m \in \X^m(F,\CC)$ such that 
\begin{equation*}  
\bigl( \Theta \circ \wt{g}^m \circ \wt\gamma \bigr) (I)   \subseteq   \inte ( X^m ). 
\end{equation*}
We denote $x_m \coloneqq (\Theta \circ \wt{g}^m ) (u)$ and $y_m \coloneqq (\Theta \circ \wt{g}^m ) (v)$. Then $F^m(x_m) =  ( F^m \circ \Theta \circ \wt{g}^m ) (u) = x_0$, $F^m(y_m) =  ( F^m \circ \Theta \circ \wt{g}^m ) (v) = y_0$, and $x_m,  \, y_m \in X^m$. Hence, by (\ref{eqDefBeta}), the absolute convergence of the series defining $\wt{\beta}_{\wt{g}}$ in (\ref{eqDef_b_h}), and Lemma~\ref{lmSnPhiBound},
\begin{align*}
            \abs{ \beta(x_0) - \beta(y_0)} 
&=       \AbsBig{ \wt\beta_{\wt{g}}(u) -\wt\beta_{\wt{g}}(v) }  \\
&=       \lim_{m\to+\infty}  \Absbigg{  \sum_{i=1}^m \bigl( \bigl( \Psi \circ \Theta \circ \wt{g}^i \bigr) (u) - \bigl( \Psi \circ \Theta \circ \wt{g}^i \bigr) (v)  \bigr) }  \\
&=       \lim_{m\to+\infty}  \Absbigg{  \sum_{j=0}^{m-1} \bigl( \bigl( \Psi \circ F^j \circ \Theta \circ \wt{g}^m \bigr) (u) - \bigl( \Psi \circ F^j \circ \Theta \circ \wt{g}^m \bigr) (v)  \bigr) }  \\
&=       \lim_{m\to+\infty} \Absbig{ S^F_m \Psi (x_m) - S^F_m \Psi (y_m) }   \\
&\leq  \limsup_{m\to+\infty} C_1 d ( F^m (x_m), F^m (y_m) )^\alpha  \\
&= C_1 d(x_0, y_0)^\alpha,
\end{align*}
where $C_1 = C_1(F,\CC,d,\Psi,\alpha)>0$ is the constant depending only on $F$, $\CC$, $d$, $\Psi$, and $\alpha$ from Lemma~\ref{lmSnPhiBound}.

Hence, $\beta \in \Holder{\alpha} \bigl( \bigl( X^0_\c \setminus \post F, d \bigr), \C \bigr)$.

We can now extend $\beta$ continuously to $X^0_\c$, denoted by $\beta_\c$. Since $\c\in\{\b, \, \w\}$ is arbitrary, $\bigl( X^0_\b \setminus \post F \bigr) \cap \bigl( X^0_\b \setminus \post F \bigr) = \CC \setminus \post F$, and $\post F$ is a finite set, we get $\beta_\b|_\CC = \beta_\w|_\CC$. Thus, $\beta$ can be extended to $S^2$, and this shows that $\Psi=S_n^f\psi$ is cohomologous to a constant $K_1$ in $\CCC(S^2,\C)$ (see Definition~\ref{defCohomologous}). Therefore, by Lemma~5.53 in \cite{Li17}, $\psi = K - \beta + \beta\circ f$ for some constant $K\in\C$, establishing statement~(iii).

Thus, it suffices to prove the claim. The verification of the claim occupies the remaining part of this proof.

We denote $\wt\Psi \coloneqq \Psi \circ \Theta$.

Let $\wt\sigma \in \pi_1(\mathcal{O}_f)$ be a deck transformation on $X$. Then by Definition~\ref{defInverseBranch} and Definition~\ref{defDeckTransf} it is clear that $\wt\sigma \circ \wt{g} \in \Inv(F)$. Thus, by the absolute convergence of the series defining $\wt{\beta}_{\wt{g}}$ in (\ref{eqDef_b_h}), for each $u\in\XX$,
\begin{align}    \label{eqPfthmNLI_PfClaim1}
  \wt{\beta}_{\wt{g}} ( (\wt\sigma \circ \wt{g})(u) )  - \wt{\beta}_{\wt{g}} (  \wt{g}(u) ) 
&  =  \bigl(\wt{\beta}_{\wt{g}} ( (\wt\sigma \circ \wt{g})(u) ) - \wt{\beta}_{\wt{g}}  (u) \bigr)
    -\bigl(\wt{\beta}_{\wt{g}} ( \wt{g}(u) ) - \wt{\beta}_{\wt{g}}  (u) \bigr) \notag \\
&  =  \sum_{i=1}^{+\infty} \bigl( \wt\Psi \bigl(\wt{g}^i ((\wt\sigma \circ \wt{g}) (u) ) \bigr)   -  \wt\Psi \bigl(\wt{g}^i (u) \bigr) \bigr)
                                                                 +   \wt\Psi(\wt{g}(u)) - \Psi_{\wt{g}}  .  
\end{align}
Fix arbitrary $i_0, \, j_0\in\N$. It follows immediately from (\ref{eqGoodPathLimitLength0}) in Corollary~\ref{corGoodPath} that all three series
\begin{align*}
&   \sum_{i=1}^{+\infty} \Absbig{ \wt\Psi \bigl(\wt{g}^i ((\wt\sigma \circ \wt{g}) (u) ) \bigr) 
                                           -\wt\Psi \bigl( \wt{g}^{i} \bigl( (\wt\sigma \circ \wt{g})^{j_0} (u) \bigr) \bigr)   },   \\
&  \sum_{i=1}^{+\infty}  \Absbig{ \wt\Psi \bigl(\wt{g}^i (u) \bigr) 
                                          -\wt\Psi \bigl( \wt{g}^{i} \bigl( (\wt\sigma \circ \wt{g})^{j_0} (u) \bigr) \bigr)   } ,   \text{ and}\\                                       
&   \sum_{j=1}^{+\infty} \Absbig{ \wt\Psi \bigl(\wt{g}^{i_0} ((\wt\sigma \circ \wt{g})^j (u) ) \bigr) 
                                           -\wt\Psi \bigl( \wt{g}^{i_0} \bigl( (\wt\sigma \circ \wt{g})^{j+1} (u) \bigr) \bigr)   }
\end{align*}
are majorized by convergent geometric series. So the right-hand side of (\ref{eqPfthmNLI_PfClaim1}) is equal to
\begin{align*}
&    \sum_{i=1}^{+\infty} \bigl(\wt\Psi \bigl(\wt{g}^i ((\wt\sigma \circ \wt{g}) (u) ) \bigr) 
                                           -\wt\Psi \bigl( \wt{g}^{i} \bigl( (\wt\sigma \circ \wt{g})^{j_0} (u) \bigr) \bigr)   \bigr)   \\
&\quad  - \sum_{i=1}^{+\infty} \bigl(\wt\Psi \bigl(\wt{g}^i (u) \bigr) 
                                          -\wt\Psi \bigl( \wt{g}^{i} \bigl( (\wt\sigma \circ \wt{g})^{j_0} (u) \bigr) \bigr)   \bigr)
   +  \wt\Psi(\wt{g}(u)) -  \Psi_{\wt{g}}  \\
&\qquad =   \lim_{j\to+\infty} \sum_{i=1}^{+\infty} \bigl(\wt\Psi \bigl(\wt{g}^i ((\wt\sigma \circ \wt{g}) (u) ) \bigr) 
                                           -\wt\Psi \bigl( \wt{g}^{i} \bigl( (\wt\sigma \circ \wt{g})^j (u) \bigr) \bigr)   \bigr)   \\
&\qquad\quad      - \lim_{j\to+\infty} \sum_{i=1}^{+\infty} \bigl(\wt\Psi \bigl(\wt{g}^i (u) \bigr) 
                                          -\wt\Psi \bigl( \wt{g}^{i} \bigl( (\wt\sigma \circ \wt{g})^j (u) \bigr) \bigr)   \bigr)
   +  \wt\Psi(\wt{g}(u)) -  \Psi_{\wt{g}}  .
\end{align*}
Then by Lemma~\ref{lmLiftLI}, (\ref{eqSwitchLimitSum}), and (\ref{eqPreCohomology}), the right-hand side of the above equation is equal to
\begin{align*}
 &     \lim_{j\to+\infty} \sum_{i=1}^{+\infty} \bigl(\wt\Psi \bigl((\wt\sigma\circ\wt{g})^i ((\wt\sigma \circ \wt{g}) (u) ) \bigr) 
                                           -\wt\Psi \bigl( (\wt\sigma\circ\wt{g})^{i} \bigl( (\wt\sigma \circ \wt{g})^j (u) \bigr) \bigr)   \bigr)   \\
&\quad     - \lim_{j\to+\infty} \sum_{i=1}^{+\infty} \bigl(\wt\Psi \bigl((\wt\sigma\circ\wt{g})^i (u) \bigr)  
                                          -\wt\Psi \bigl( (\wt\sigma\circ\wt{g})^{i} \bigl( (\wt\sigma \circ \wt{g})^j (u) \bigr) \bigr)   \bigr) 
                           +   \wt\Psi(\wt{g}(u)) - \Psi_{\wt{g}} \\
&\qquad=    \wt{\beta}_{ \wt{\sigma} \circsmall \wt{g} }  ( ( \wt{\sigma} \circ \wt{g}  ) (u) )  -   \wt{\beta}_{ \wt{\sigma} \circsmall \wt{g} }  ( u ) +  \wt\Psi(\wt{g}(u)) - \Psi_{\wt{g}}  \\
&\qquad=   - \wt\Psi((\wt\sigma \circ \wt{g})(u)) + \Psi_{ \wt{\sigma} \circsmall \wt{g} }    +  \wt\Psi(\wt{g}(u)) -  \Psi_{\wt{g}}  \\
&\qquad=    \Psi_{ \wt{\sigma} \circsmall \wt{g} }     -  \Psi_{\wt{g}} .
\end{align*}
The last equality follows from $\wt{\Psi}\circ\wt{\sigma}= \Psi\circ(\Theta\circ\wt{\sigma}) = \Psi\circ\Theta = \wt{\Psi}$. Since $\wt{g} \: \XX \rightarrow \XX$ is surjective, we can conclude that for each $v\in\XX$ and each $\wt{\sigma} \in \pi_1( \mathcal{O}_f )$,
\begin{equation}  \label{eqPfthmNLI_PfClaim2}
\wt{\beta}_{\wt{g}} ( \wt{\sigma} (v) ) - \wt{\beta}_{\wt{g}} ( v ) =  \Psi_{ \wt{\sigma} \circsmall \wt{g} }     -  \Psi_{\wt{g}} .
\end{equation}

The claim follows once we show that $ \Psi_{ \wt{\sigma} \circsmall \wt{g} }    =  \Psi_{\wt{g}}$ for each $\wt{\sigma} \in \pi_1( \mathcal{O}_f )$. We argue by contradiction and assume that $\Psi_{ \wt{\sigma} \circsmall \wt{g} }    -  \Psi_{\wt{g}} \neq 0$ for some $\wt{\sigma} \in \pi_1( \mathcal{O}_f )$. Then by (\ref{eqPfthmNLI_PfClaim2}), for each $k\in \N$,
\begin{equation*}
        \Psi_{ \wt{\sigma}^k \circsmall \wt{g} }    -  \Psi_{\wt{g}}  
    = \wt{\beta}_{\wt{g}} \bigl( \wt{\sigma}^k (v) \bigr) - \wt{\beta}_{\wt{g}} ( v ) 
    = \sum_{i=0}^{k-1}  \Bigl(   \wt{\beta}_{\wt{g}} \bigl( \wt{\sigma} \bigl( \bigl( \wt{\sigma}^i (v) \bigr) \bigr) \bigr) -  \wt{\beta}_{\wt{g}} \bigl( \wt{\sigma}^i (v) \bigr)   \Bigr)
    = k \bigl( \Psi_{ \wt{\sigma} \circsmall \wt{g} }    -  \Psi_{\wt{g}}   \bigr).
\end{equation*}
However, by (\ref{eqPfthmNLI_DefPhi_wt_h}), Proposition~\ref{propInvBranchFixPt}, and Theorem~\ref{thmETMBasicProperties}~(ii), 
\begin{equation*}
\card \bigl\{  \Psi_{ \wt{\sigma}^k \circsmall \wt{g} }  : k\in\N \bigr\} 
\leq  \card \bigl\{  \Psi_{ \wt{h} }  : \wt{h}\in \Inv(F) \bigr\}  
\leq  \card \{ \Psi(x) : x\in S^2, \, F(x)=x \}
< +\infty. 
\end{equation*}
This is a contradiction.

The claim is now proved, establishing the implication  (ii)$\implies$(iii).
\end{proof}

\section{Proofs of Theorems~\ref{thmZetaAnalExt_SFT},~\ref{thmZetaAnalExt_InvC}, and~\ref{thmPrimeOrbitTheorem}}  \label{sctDynOnC_Reduction}

Recall that $s_0$ is defined in the Assumptions in Section~\ref{sctAssumptions}.

\begin{proof}[Proof of Theorem~\ref{thmZetaAnalExt_SFT}]
By Proposition~\ref{propZetaFnConv_s0}, we can assume that $\phi$ is not cohomologous to a constant in $\CCC( S^2 )$.
	
In this proof, for $s\in\C$ and $r\in\R$, $B(s,r)$ denotes the open Euclidean ball in $\C$ centered at $s$ with radius $r>0$. For an arbitrary number $t\in\R$, by Proposition~\ref{propZetaFnConv_s0}~(i), we have 
\begin{equation}  \label{eqPfthmZetaAnalExt_SFT_Unique0}
P (\sigma_{A_{\ti}}, - t \phi \circ \pi_{\ti} ) = 0 \quad \text{ if and only if } \quad   t = s_0.
\end{equation}

Fix an arbitrary number $\theta \in (0,1)$. By \cite[Theorem~4.5, Propositions~4.6, 4.7, and 4.8]{PP90} and the discussion preceding them in \cite{PP90}, the exponential of the topological pressure $\exp  (P( \sigma_{A_{\ti}}, \cdot ))$ as a function on $\Holder{1} \bigl( \Sigma_{A_{\ti}}^+, d_\theta \bigr)$ can be extended to a new function (still denoted by $\exp( P( \sigma_{A_{\ti}}, \cdot))$) with the following properties:
\begin{enumerate}
\smallskip
\item[(1)] The domain $\operatorname{dom} (\exp ( P ( \sigma_{A_{\ti}}, \cdot )))$ of $\exp (P ( \sigma_{A_{\ti}}, \cdot ))$ is an nonempty open subset of $\Holder{1} \bigl( \bigl( \Sigma_{A_{\ti}}^+, d_\theta \bigr), \C\bigr)$.

\smallskip
\item[(2)] The function $s \mapsto \exp( P  ( \sigma_{A_{\ti}}, - s \phi \circ \pi_{\ti}  ))$ is a holomorphic map in an open neighborhood $U \subseteq \C$ of $s$ if  $- s \phi \circ \pi_{\ti}  \in \operatorname{dom} (\exp (P ( \sigma_{A_{\ti}}, \cdot )))$ .

\smallskip
\item[(3)] If $\psi \in \operatorname{dom} ( \exp (P ( \sigma_{A_{\ti}}, \cdot )))$ and $\eta \coloneqq \psi + c + 2\pi\I M + u - u\circ \sigma_{A_{\ti}}$ for some $c\in\C$, $M \in \CCC \bigl( \Sigma_{A_{\ti}}^+, \Z \bigr)$, and $u\in \Holder{1} \bigl( \bigl( \Sigma_{A_{\ti}}^+, d_\theta \bigr), \C\bigr)$, then $\eta \in \operatorname{dom} (\exp ( P ( \sigma_{A_{\ti}}, \cdot )))$ and $\exp (P(  \sigma_{A_{\ti}}, \eta)) = e^c \exp (P(  \sigma_{A_{\ti}}, \psi) )$.
\end{enumerate}

\smallskip

We first show that $s_0$ is not an accumulation point of zeros of the function $s\mapsto 1-\exp ( P ( \sigma_{A_{\ti}}, -s \phi \circ \pi_{\ti} ))$. We argue by contradiction and assume otherwise. Then by Property~(2) above, $\exp ( P ( \sigma_{A_{\ti}}, -s \phi \circ \pi_{\ti} )) = 1$ for all $s$ in a neighborhood of $s_0$. This contradicts with (\ref{eqPfthmZetaAnalExt_SFT_Unique0}).

Thus, by \cite[Theorems~5.5~(ii) and~5.6~(b), (c)]{PP90}, we can choose $\vartheta_0 >0$ small enough such that  $\zeta_{\sigma_{A_{\ti}}, \, \minus \phi \circsmall \pi_{\ti} } (s)$ has a non-vanishing holomorphic extension 
\begin{equation}  \label{eqPfthmZetaAnalExt_SFT_Extension}
\zeta_{\sigma_{A_{\ti}}, \, \minus \phi \circsmall \pi_{\ti} } (s) 
=   \frac{ \exp \Bigl( \sum_{n=1}^{+\infty}  \frac1n   \sum_{ \underline{x}  \in P_{1,\sigma_{A_{\ti}}^n}} E_{s,n,\underline{x}}   \Bigr)  }        
             { 1 - \exp ( P( \sigma_{A_{\ti}}, - s \phi \circ \pi_{\ti} ) )}  ,
\end{equation}
where $E_{s,n,\underline{x}}$ is given by
\begin{equation*} 
E_{s,n,\underline{x}} \coloneqq \exp \bigl( -s S_n^{\sigma_{A_{\ti}}} (\phi \circ \pi_{\ti} )( \underline{x} )\bigr)
- \exp ( n P( \sigma_{A_{\ti}}, - s \phi \circ \pi_{\ti} ) ) ,
\end{equation*}
to $B(s_0, \vartheta_0 ) \setminus \{s_0\}$, and $\zeta_{\sigma_{A_{\ti}}, \, \minus \phi \circsmall \pi_{\ti} } (s)$ has a pole at $s=s_0$. Moreover, the numerator on the right-hand side of (\ref{eqPfthmZetaAnalExt_SFT_Extension}) is a non-vanishing holomorphic function on $B(s_0, \vartheta_0 )$.

Next, we show that $\zeta_{\sigma_{A_{\ti}}, \, \minus \phi \circsmall \pi_{\ti} } (s)$ has a simple pole at $s=s_0$. It suffices to show that $1 - \exp ( P( \sigma_{A_{\ti}}, -s \phi \circ \pi_{\ti} ) )$ has a simple zero at $s=s_0$. Indeed, since $\phi$ is eventually positive, we fix $m\in\N$ such that $S_m^f \phi $ is strictly positive on $S^2$ (see Definition~\ref{defEventuallyPositive}). By Proposition~\ref{propZetaFnConv_s0}~(i), Theorem~\ref{thmEquilibriumState}~(ii), and the fact that the equilibrium state $\mu_{\minus t\phi}$ for $f$ and $-t\phi$ is an $f$-invariant probability measure (see Theorem~\ref{thmEquilibriumState}~(i) and Subsection~\ref{subsctThermodynFormalism}), we have for $t\in \R$,
\begin{align}   \label{eqPfthmZetaAnalExt_SFT_Simple0}
        \frac{\mathrm{d}}{\mathrm{d}t} ( 1 - \exp ( P( \sigma_{A_{\ti}}, -t \phi \circ \pi_{\ti} ) )   ) 
&=   \frac{\mathrm{d}}{\mathrm{d}t} \bigl( 1 - e^{ P(f, -t \phi  )  } \bigr)   \notag  \\
&=     - e^{ P(f, -t \phi  )  }   \frac{\mathrm{d}}{\mathrm{d}t} P(f,- t\phi)  \notag \\
&= e^{ P(f, -t \phi  )  } \int \!\phi \,\mathrm{d}\mu_{\minus t\phi} \\
&=  \frac{e^{ P(f, -t \phi  )  }}{m} \int \! S_m^f \phi \,\mathrm{d}\mu_{\minus t\phi} \notag \\
&> 0.\notag 
\end{align}  
Hence, by (\ref{eqPfthmZetaAnalExt_SFT_Simple0}) and Property~(2) above, we get that $\zeta_{\sigma_{A_{\ti}}, \, \minus \phi \circsmall \pi_{\ti} } (s)$ has a simple pole at $s=s_0$.

We now show that for each $b\in \R \setminus \{ 0 \}$, there exists $\vartheta_b>0$ such that  $\zeta_{\sigma_{A_{\ti}}, \, \minus \phi \circsmall \pi_{\ti} } (s)$ has a non-vanishing holomorphic extension to $B (s_0+ \I b, \vartheta_b)$.

By \cite[Theorems~5.5~(ii) and~5.6]{PP90}, and the fact that $\operatorname{dom} ( \exp ( P ( \sigma_{A_{\ti}}, \cdot )))$ is open and $\exp ( P( \sigma_{A_{\ti}}, \cdot ))$ is continuous on $\operatorname{dom} (\exp (P ( \sigma_{A_{\ti}}, \cdot )))$ (see Properties~(2) and (3) above), we get that for each $b\in \R \setminus \{ 0 \}$, we can always choose  $\vartheta_b>0$ such that $\zeta_{\sigma_{A_{\ti}}, \, \minus \phi \circsmall \pi_{\ti} } (s)$ has a non-vanishing holomorphic extension to $B (s_0+ \I b, \vartheta_b)$ unless the following two conditions are both satisfied:
\begin{enumerate}
\smallskip
\item[(i)] $- (s_0 + \I b) \phi \circ \pi_{\ti} = - s_0 \phi \circ \pi_{\ti} + \I c + 2\pi\I M + u - u\circ \sigma_{A_{\ti}} \in \operatorname{dom} ( \exp ( P ( \sigma_{A_{\ti}}, \cdot )))$ for some $c\in \C$, $M \in \CCC \bigl(\Sigma_{A_{\ti}}^+, \Z \bigr)$ and  $u\in \Holder{1} \bigl( \bigl( \Sigma_{A_{\ti}}^+, d_\theta \bigr), \C \bigr)$.

\smallskip
\item[(ii)] $1- \exp ( P( \sigma_{A_{\ti}}, - (s_0 + \I b) \phi \circ \pi_{\ti} ) ) = 0$.
\end{enumerate}

We will show that conditions~(i) and (ii) cannot be both satisfied. We argue by contradiction and assume that conditions~(i) and (ii) are both satisfied. Then by Property~(3) above, $c \equiv 0 \pmod{2\pi}$. Thus, by taking the imaginary part of both sides of the identity in condition~(i), we get that $\phi\circ \pi_{\ti} = K M + \tau - \tau \circ \sigma_{A_{\ti}}$ for some $K\in\R$, $M \in \CCC \bigl( \Sigma_{A_{\ti}}^+ , \Z \bigr)$, and $\tau \in \Holder{1} \bigl( \bigl( \Sigma_{A_{\ti}}^+, d_\theta \bigr), \C \bigr)$. Then by Theorem~\ref{thmNLI}, $\phi$ is cohomologous to a constant in $\CCC( S^2)$, a contradiction, establishing Theorem~\ref{thmZetaAnalExt_SFT}.
\end{proof}

\begin{prop}  \label{propZetaProduct}
	Let $f$, $\CC$, $d$, $\phi$, $s_0$ satisfy the Assumptions in Section~\ref{sctAssumptions}. Assume $f(\CC) \subseteq \CC$.	
	Then on $\H_{s_0} =\{s\in\C : \Re(s) > s_0 \}$, $\DS_{F,\,\minus \Phi,\,\deg_F} (s)$ converges and satisfies
	\begin{equation}    \label{eqDirichletSeriesIsCombZetaFns}
		\DS_{f,\,\minus\phi,\,\deg_f} (s) 
		=  \zeta_{ \sigma_{A_{\ti}}, \, \minus \phi \circsmall \pi_{\ti} } (s) 
		\zeta_{ \sigma_{A_{\e}}, \, \minus \phi \circsmall \pi_{\e} } (s)   \zeta_{ f|_{\V^0}, \, \minus \phi|_{\V^0} } (s)   
		/   \zeta_{ \sigma_{A_{\ee}}, \, \minus \phi \circsmall \pi_{\e} \circsmall \pi_{\ee} } (s) .
	\end{equation}
	If, in addition, $\phi$ is not cohomologous to a constant in $\CCC( S^2 )$ and no $1$-tile in $\X^1(f,\CC)$ joins opposite sides of $\CC$, then $\DS_{f,\,\minus\phi,\,\deg_f} (s)$ extends to non-vanishing holomorphic function on $\overline{\H}_{s_0} =\{s\in\C : \Re(s) \geq s_0\}$ except for the simple pole at $s=s_0$.
\end{prop}

\begin{proof}
	%We first observe that by the continuity of the topological pressure (see for example, \cite[Theorem~3.6.1]{PU10}) and Theorem~\ref{thmPressureOnC}, there exists a real number $\epsilon'_0 \in (0, \min \{\wt{\epsilon}_0, \, s_0 \} )$ such that $P (f|_{\V^0}, -(s_0 - \epsilon'_0) \phi|_{\V^0} ) < 0 $ and
	
	We first observe that by the continuity of the topological pressure (see for example, \cite[Theorem~3.6.1]{PU10}) and Theorem~\ref{thmPressureOnC}, there exists a real number $\epsilon'_0 \in (0,  s_0   )$ such that $P (f|_{\V^0}, -(s_0 - \epsilon'_0) \phi|_{\V^0} ) < 0 $ and
	\begin{equation*}
		P  (   \sigma_{A_{\ee}}, -(s_0 - \epsilon'_0) \varphi \circ \pi_{\e} \circ \pi_{\ee}  )
		=   P  (   \sigma_{A_{\e}},  -(s_0 - \epsilon'_0) \varphi \circ \pi_{\e}                  )
		<   0.
	\end{equation*}
	%Here $\wt{\epsilon}_0 > 0$ is the constant from Theorem~\ref{thmZetaAnalExt_SFT} depending only on $f$, $\CC$, $d$, and $\phi$.
	
	By Lemma~\ref{lmDynDirichletSeriesConv_general}, Remark~\ref{rmDynDirichletSeriesZetaFn}, Proposition~\ref{propSFT}~(ii), and the fact that $\phi$ is eventually positive, each of the zeta functions $\zeta_{ f|_{\V^0}, \, \minus \phi|_{\V^0} }$, $\zeta_{ \sigma_{A_{\e}}, \, \minus \phi \circsmall \pi_{\e} }$, and $\zeta_{ \sigma_{A_{\ee}}, \, \minus \phi \circsmall \pi_{\e} \circsmall \pi_{\ee} }$ converges uniformly and absolutely to a non-vanishing bounded holomorphic function on the closed half-plane $\overline{\H}_{s_0 - \epsilon'_0} = \{ s\in\C: \Re(s) \geq s_0 - \epsilon'_0 \}$.
	
	On the other hand, for each $n\in\N$, we have $P_{1, (f|_{\V^0})^n} \subseteq P_{1, f^n}$, and by Proposition~\ref{propSFTs_C},
	\begin{equation*}
		(\pi_{\e} \circ \pi_{\ee} )  \bigl(   P_{1, \sigma_{A_{\ee}}^n } \bigr)
		\subseteq     \pi_{\e}                    \bigl(  P_{1, \sigma_{A_{\e }}^n } \bigr)
		\subseteq                                         P_{1, (f|_\CC)^n }
		\subseteq                                         P_{1, f^n }.
	\end{equation*} 
	Thus, by (\ref{eqDefDynDirichletSeries}), (\ref{eqDefZetaFn}), and Theorem~\ref{thmNoPeriodPtsIdentity}, we get that for each $s\in \H_{s_0}$, (\ref{eqDirichletSeriesIsCombZetaFns}) holds.	
	
	The proposition now follows from Theorem~\ref{thmZetaAnalExt_SFT}.	
\end{proof}

\begin{proof}[Proof of Theorem~\ref{thmZetaAnalExt_InvC}]
We choose $N_f\in\N$ as in Remark~\ref{rmNf}. Note that $P \bigl( f^i, - s_0 S_i^f \phi \bigr) = i P(f, - s_0 \phi) = 0$ for each $i\in\N$ (see for example, \cite[Theorem~9.8]{Wal82}). We observe that by Lemma~\ref{lmCexistsL}, it suffices to prove the case $n=N_f = 1$. In this case, $F=f$, $\Phi=\phi$, and there exists a Jordan curve $\CC\subseteq S^2$ satisfying $f(\CC)\subseteq \CC$, $\post f\subseteq \CC$, and no $1$-tile in $\X^1(f,\CC)$ joins opposite sides of $\CC$.

In this proof, we write $l_\phi(\tau) \coloneqq \sum_{y\in\tau} \phi(y)$ and $\deg_f(\tau) \coloneqq \prod_{y\in\tau} \deg_f(y)$ for each primitive periodic orbit $\tau\in\Orb(f)$ and each $y\in\tau$. 

We first note that the conclusion on $\DS_{f,\,\minus\phi,\,\deg_f}$ follows from Proposition~\ref{propZetaProduct}.

Next, we observe that by (\ref{eqZetaFnOrbitForm_ThurstonMap}) and  (\ref{eqZetaFnOrbitForm_ThurstonMapDegree}) in Proposition~\ref{propZetaFnConv_s0},
\begin{equation}   \label{eqPfthmZetaAnalExt_InvC_zeta}
   \zeta_{f,\,\minus\phi} (s) \prod_{\tau\in\Orb^>(f|_{\V^0})}  \Bigl(  1- e^{ - s l_\phi(\tau)  } \Bigr)   
=   \DS_{f,\,\minus\phi,\,\deg_f} (s)  \prod_{\tau\in\Orb^>(f|_{\V^0})}  \Bigl(  1- \deg_f(\tau) e^{ - s l_\phi(\tau)  } \Bigr)                  
\end{equation}
for all $s\in\C$ with $\Re(s)> s_0$, where 
\begin{equation}  \label{eqPfthmZetaAnalExt_InvC_Orb>}
\Orb^>(f|_{\V^0}) \coloneqq \bigl\{\tau\in \Orb(f|_{\V^0}) :  \deg_f(\tau) >1 \bigr\}
\end{equation}
is a finite set since $\V^0=\post f$ is a finite set.

We denote, for each $\tau\in\Orb^>(f|_{\V^0})$,
\begin{equation*}
\beta_\tau \coloneqq \deg_f(\tau) e^{-s_0 l_\phi(\tau)}.
\end{equation*}

Fix an arbitrary $\tau \in \Orb^>(f|_{\V^0})$. We show now that $1-\beta_\tau \geq 0$. We argue by contradiction and assume that $\beta_\tau >1$. Let $k \coloneqq \card \tau$, and fix an arbitrary $y\in\tau$. Then $y\in P_{1,f^{km}}$ for each $m\in\N$. Thus, by Proposition~\ref{propTopPressureDefPeriodicPts},
\begin{align*}
0 =     P(f,-s_0\phi)
 &\geq  \lim_{m\to+\infty} \frac{1}{km} \log \bigl(  \deg_{f^{km}} (y)  \exp \bigl(-s_0 S^f_{km} \phi (y) \bigr) \bigr)   \\
 &=     \lim_{m\to+\infty} \frac{1}{km} \log \bigl( \beta_\tau^m \bigr)
  =     \frac{\log \beta_\tau }{k}
  >     0.
\end{align*}
This is a contradiction, proving $1-\beta_\tau \geq 0$ for each $\tau\in\Orb^>(f|_{\V^0})$.

By Proposition~\ref{propZetaFnConv_s0}, we can now assume that $\phi$ is not cohomologous to a constant in $\CCC( S^2 )$.

\smallskip

\emph{Claim.} We have $1-\beta_\tau > 0$ for each $\tau\in\Orb^>(f|_{\V^0})$.

\smallskip

We argue by contradiction and assume that there exists $\eta \in \Orb^>(f|_{\V^0})$ with $1-\beta_\eta = 0$. We define a function $w\: S^2\rightarrow \C$ by
\begin{equation*}
w(x) \coloneqq \begin{cases} \deg_f(x) & \text{if } x\in S^2 \setminus \eta, \\ 0  & \text{otherwise}. \end{cases}
\end{equation*}

Fix an arbitrary real number $a>s_0$. By (\ref{eqLocalDegreeProduct}), Proposition~\ref{propTopPressureDefPeriodicPts}, and Corollary~\ref{corS0unique}, for each $n\in\N$,
\begin{align*}
&             \limsup_{n\to+\infty} \frac{1}{n} \log \sum_{y\in P_{1,f^n}} \exp(-a S_n\phi(y)) \prod_{i=0}^{n-1} w\bigl(f^i(y)\bigr) \\
&\qquad  \leq \limsup_{n\to+\infty} \frac{1}{n} \log \sum_{y\in P_{1,f^n}} \deg_{f^n}(y) \exp(-a S_n\phi(y))    
  = P(f, -a\phi) <0.
\end{align*}
Hence, by Lemma~\ref{lmDynDirichletSeriesConv_general} and Theorem~\ref{thmETMBasicProperties}~(ii), $\DS_{f,\,\minus\phi,\,w}(s)$ converges uniformly and absolutely on the closed half-plane $\overline{\H}_a$, and 
\begin{equation}  \label{eqPfthmZetaAnalExt_InvC_DynDSeriesIdenw}
	\begin{aligned}
		\DS_{f,\,\minus\phi,\,w}(s)  
		&= \prod_{\tau\in\Orb(f)\setminus\{\eta\}}  \Bigl(1 - \deg_f(\tau) e^{-s l_\phi(\tau)   } \Bigr)^{-1}  \\
		&= \DS_{f,\,\minus\phi,\,\deg_f}(s) \Bigl(1 - \deg_f(\eta) e^{-s l_\phi(\eta)  } \Bigr)  
	\end{aligned}
\end{equation}
for $s\in \overline{\H}_a$. Note that by our assumption that $1-\beta_\tau = 0$, we know that $1 - \deg_f(\eta) e^{-s l_\phi(\eta)  }$ is an entire function with simple zeros at $s=s_0 +\I j h_0 $, $j\in\Z$, where $h_0 \coloneqq \frac{2\pi}{l_\phi(\eta)}$. Note that $l_\phi(\eta) > 0$ since $\phi$ is eventually positive (see Definition~\ref{defEventuallyPositive}). Since $\DS_{f,\,\minus\phi,\,\deg_f}$ has a non-vanishing holomorphic extension to $\overline{\H}_{s_0}$ except a simple pole at $s=s_0$, we get from (\ref{eqPfthmZetaAnalExt_InvC_DynDSeriesIdenw}) that $\DS_{f,\,\minus\phi,\,w}$ has a holomorphic extension to $\overline{\H}_{s_0}$ with $\DS_{f,\,\minus\phi,\,w}(s_0) \neq 0$ and $\DS_{f,\,\minus\phi,\,w}(s_0 + \I jh_0) = 0$ for each $j\in\Z \setminus \{0\}$.

On the other hand, for each $s\in \overline{\H}_a$,
\begin{equation} \label{eqPfthmZetaAnalExt_InvC_DynDSeriesIdenwBound}
      \AbsBigg{ \sum_{n=1}^{+\infty}  \frac{1}{n} \sum_{x\in P_{1,f^n}} e^{-s S_n\phi(x)} \prod_{i=0}^{n-1} w\bigl(f^i(x)\bigr) }
\leq  \sum_{n=1}^{+\infty}  \frac{1}{n} \sum_{x\in P_{1,f^n}} e^{-\Re(s) S_n\phi(x)} \prod_{i=0}^{n-1} w\bigl(f^i(x)\bigr).
\end{equation}
Since $a>s_0$ is arbitrary, it follows from (\ref{eqDefDynDirichletSeries}), (\ref{eqPfthmZetaAnalExt_InvC_DynDSeriesIdenwBound}), and $\DS_{f,\,\minus\phi,\,w}(s_0) \neq 0$ that
\begin{equation*}
\limsup_{a\to s_0^+}   \AbsBigg{ \sum_{n=1}^{+\infty}  \frac{1}{n} \sum_{x\in P_{1,f^n}} e^{- (a + \I b) S_n\phi(x)} \prod_{i=0}^{n-1} w\bigl(f^i(x)\bigr) }  < +\infty
\end{equation*}
for each $b\in \R$. By (\ref{eqDefDynDirichletSeries}), this is a contradiction to the fact that $\DS_{f,\,\minus\phi,\,w}$ has a holomorphic extension to $\overline{\H}_{s_0}$ with $\DS_{f,\,\minus\phi,\,w}(s_0 + \I jh_0) = 0$ for each $j\in\Z \setminus \{0\}$. Claim is now established.

\smallskip

Hence, $\prod_{\tau \in \Orb^>(f|_{\V^0})}  \frac{  1 - \deg_f(\tau) \exp(-s l_\phi(\tau))  }{  1 -  \exp(-s l_\phi(\tau))   }$ is uniformly bounded away from $0$ and $+\infty$ on the closed half-plane $\overline{\H}_{s_0-\epsilon_0}$ for some $\epsilon_0 \in ( 0, \epsilon'_0 )$.

Therefore, Theorem~\ref{thmZetaAnalExt_InvC} follows from Proposition~\ref{propZetaProduct} and (\ref{eqPfthmZetaAnalExt_InvC_zeta}).
\end{proof}

\begin{prop}  \label{propOrbitLength}
	Let $f$, $\CC$, $d$, $\phi$, $s_0$ satisfy the Assumptions in Section~\ref{sctAssumptions}. Assume $f(\CC) \subseteq \CC$ and $\phi \equiv 1$. Then $s_0 = \log (\deg f)$ and
	\begin{equation}   \label{eqOrbitLengthAsymp}
		\pi_{f,\phi}(T) \sim \frac{s_0 e^{s_0}}{e^{s_0} - 1}\operatorname{Li}\bigl( e^{s_0 T} \bigr)  \sim \frac{(\deg f)^{T+1}}{ (-1 + \deg f) T}\text{ as } T \to + \infty.
	\end{equation}
\end{prop}

\begin{proof}
	It follows immediately from $h_{\operatorname{top}} (f) = \log (\deg f)$ that $s_0 = \log (\deg f)$.
		
	Similar to the claim in the proof of Theorem~\ref{thmZetaAnalExt_InvC} above, we claim that for each $\tau \in \Orb(f)$, we have $\deg (\tau) \coloneqq \prod_{x \in \tau} \deg_f(x) < (\deg f)^{\card \tau}$. We argue by contradiction and assume that there exists $\tau \in \Orb(f)$ such that $\deg (\tau) \geq (\deg f)^{\card \tau}$. Write $k\coloneqq \card \tau$. So by (\ref{eqLocalDegreeProduct}), for each $m\in\N$, 
	\begin{equation*}
		\frac{1}{(\deg f)^{m k}} \sum_{x \in \tau} \deg_{f^{m k}} (x) \geq k.
	\end{equation*}
	But by \cite[Lemma~5.11]{Li16} (with $M \coloneqq \tau$),
	\begin{equation*}
	 \lim_{n \to +\infty} \frac{1}{(\deg f)^n} \sum_{x \in \tau} \deg_{f^n} (x)  = 0.
	\end{equation*}
	This is a contradiction, establishing the claim.
	
	We note that in the notation of page~101 in \cite{PP90}, writing $z\coloneqq e^s$, we have
	\begin{equation*}
		\zeta_{\sigma_A,\,1} (s)   
		    = \prod_{\tau\in \Orb(\sigma_A)} \bigl( 1- e^{s \card \tau} \bigr)^{-1}
		    = \zeta_A (z)   
	\end{equation*}
	for each transition matrix $A \in\{ A_{\ti}, \, A_{\e}, \, A_{\ee}, \, A_{\po}\}$ (see Proposition~\ref{propTileSFT}, (\ref{eqDefA|}), and (\ref{eqDefA||})). Here $A_{\po} \: \V^0 \times \V^0 \rightarrow \{0,\,1\}$ is the transition matrix induced by $f|_{\V^0} \: \V^0 \rightarrow \V^0$.
	
	Write $\beta \coloneqq \deg f$.
	
	It follows immediately from the claim above that for each $\tau \in \Orb(f)$ with $\deg(\tau) > 1$, both $\bigl(  1- \deg_f(\tau) z^{ \card\tau  } \bigr)^{-1}$ and $\bigl(  1-  z^{ \card \tau  } \bigr)^{-1}$ are holomorphic in $\{z \in \C : \abs{z} < e^{\epsilon_0} / \beta\}$ for some $\epsilon_0 >0$.
	
	Hence, by (\ref{eqDirichletSeriesIsCombZetaFns}) in Proposition~\ref{propZetaProduct}, Theorem~\ref{thmPressureOnC} (with $\varphi \equiv 0$), the fact that $h_{\operatorname{top}} (\sigma_{A_{\ti}}) = \log(\deg f)$ (by (\ref{eqCardBlackNTiles}), Propositions~\ref{propCellDecomp},~\ref{propSFT}~(iv), and~\ref{propTileSFT}), and the arguments on pages~100--101 in \cite{PP90}, we get that for each $s\in\C$ with $\Re(s) < - s_0$,
	\begin{equation*}
				\DS_{f,\,1,\,\deg_f} (s) 
		=  \zeta_{ A_{\ti} } (z) \zeta_{ A_{\e} } (z) \zeta_{ A_{\ee} } (z) /\zeta_{ A_{\po} } (z) ,
	\end{equation*}
	where $z\coloneqq e^s$, and additionally by (\ref{eqPfthmZetaAnalExt_InvC_zeta}) in the proof of Theorem~\ref{thmZetaAnalExt_InvC} above and the claim, we also get that
	\begin{equation*}
		\frac{ \zeta'_f (z)}{\zeta_f(z)} = \frac{\beta}{1-\beta z} + \alpha (z), 
	\end{equation*}
	where $\zeta_f(z) \coloneqq \prod_{\tau \in \Orb(f)} \bigl( 1- z^{\card \tau}\bigr)^{-1}$ and $\alpha(z)$ is holomorphic in $\{z \in \C : \abs{z} < e^{\epsilon} / \beta\}$ for some $\epsilon >0$.
	
	Following verbatim the same arguments as that of \cite[Theorem~6.5]{PP90} on pages~101--105 in \cite{PP90} (with $\zeta_A$ replaced by $\zeta_f$), we get (\ref{eqOrbitLengthAsymp}).	
\end{proof}

Theorem~\ref{thmPrimeOrbitTheorem} now follows from standard number-theoretic arguments. More precisely, a proof of statement~(ii) in Theorem~\ref{thmPrimeOrbitTheorem}, relying on Theorem~\ref{thmZetaAnalExt_InvC} and the Ikehara--Wiener Tauberian Theorem (see \cite[Appendix~I]{PP90}), is verbatim the same as that of \cite[Theorem~6.9]{PP90} on pages 106--109 of \cite{PP90} (after defining $h\coloneqq s_0$, $\lambda(\tau) \coloneqq l_{F,\Phi}(\tau)$, $\pi\coloneqq \pi_{F,\Phi}$, and $\zeta \coloneqq \zeta_{F, \, \minus s_0 \Phi}$ in the notation of \cite{PP90}) with an additional observation that $\lim_{y\to+\infty} \frac{ \operatorname{Li}(y) }{  \frac{y}{ \log y } }  = 1$. We omit this proof here and direct the interested readers to the references cited above. On the other hand, statement~(i) in Theorem~\ref{thmPrimeOrbitTheorem} follows immediately from Proposition~\ref{propOrbitLength}, Remark~\ref{rmNf}, and Theorem~\ref{thmNLI}.

\appendix

\section{Basic facts about subshifts of finite type}    \label{apxSFT}

We collect some basic facts about subshifts of finite type.

\begin{prop}  \label{propSFT}
Consider a finite set of states $S$ and a transition matrix $A \: S\times S \rightarrow \{0, \, 1\}$. Let $\left( \Sigma_A^+ , \sigma_A \right)$ be the one-sided subshift of finite type defined by $A$, and $\phi\in\Holder{1}\bigl(\Sigma_A^+, d_\theta\bigr)$ be a real-valued Lipschitz continuous function with $\theta \in (0,1)$. Then the following statements are satisfied:
\begin{enumerate}
\smallskip
\item[(i)] $\card P_{1,\sigma_A^n} \leq (\card S)^n$ for all $n\in\N$.

\smallskip
\item[(ii)] $P(\sigma_A,\phi) \geq \limsup_{n\to +\infty} \frac{1}{n} \log  \sum_{\underline{x}\in P_{1,\sigma_A^n}}  \exp (S_n \phi(\underline{x}))$. 

\smallskip
\item[(iii)] $P(\sigma_A,\phi) = \lim_{n\to +\infty} \sup_{\underline{x}\in\Sigma_A^+} 
                 \frac{1}{n} \log \sum_{\underline{y}\in \sigma_A^{-n}(\underline{x})}  \exp (S_n \phi(\underline{y}))$.
\end{enumerate}

If, in addition, $(\Sigma_A^+,\sigma_A)$ is topologically mixing, then
\begin{enumerate}
\smallskip
\item[(iv)] $P(\sigma_A,\phi) = \lim_{n\to +\infty} \frac{1}{n} \log \sum_{\underline{y}\in \sigma_A^{-n}(\underline{x})}  \exp (S_n \phi(\underline{y}))$ for each $\underline{x}\in \Sigma_A^+$.
\end{enumerate}
\end{prop}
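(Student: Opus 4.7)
For (i), elementary counting suffices: every $\underline{x} = \{x_i\}_{i \in \N_0} \in P_{1,\sigma_A^n}$ is completely determined by $(x_0, x_1, \dots, x_{n-1}) \in S^n$ via the periodicity relation $x_{i+n} = x_i$, so there are at most $(\card S)^n$ such sequences.

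For (ii), I will argue that $P_{1,\sigma_A^n}$ itself is an $(n,1)$-separated subset of $\bigl(\Sigma_A^+, d_\theta\bigr)$. Indeed, distinct $\underline{x}, \underline{y} \in P_{1,\sigma_A^n}$ must differ in some coordinate $i \in \{0,1,\dots,n-1\}$, which forces $d_\theta(\sigma_A^i \underline{x}, \sigma_A^i \underline{y}) = \theta^0 = 1$, so the $n$-th dynamical $d_\theta$-distance of $\underline{x}$ and $\underline{y}$ is at least $1$. Since every $(n,1)$-separated subset is contained in a maximal one and the relevant quantity in the definition (\ref{defTopPressure}) is monotone non-increasing in $\epsilon$, inserting $\epsilon = 1$ yields the asserted inequality.

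For (iii), set $g_n(\underline{x}) \coloneqq \sum_{\underline{y} \in \sigma_A^{-n}(\underline{x})} \exp(S_n \phi(\underline{y}))$. The cocycle identity $S_{n+m}\phi = S_n\phi + S_m\phi \circ \sigma_A^n$ yields the factorization
\begin{equation*}
g_{n+m}(\underline{x}) = \sum_{\underline{z} \in \sigma_A^{-n}(\underline{x})} \exp(S_n\phi(\underline{z})) \, g_m(\underline{z}),
\end{equation*}
and hence the submultiplicative bound $\sup_{\underline{x}} g_{n+m}(\underline{x}) \leq \sup_{\underline{x}} g_n(\underline{x}) \cdot \sup_{\underline{x}} g_m(\underline{x})$. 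Fekete's lemma then guarantees the existence of $L \coloneqq \lim_{n\to+\infty} \frac{1}{n} \log \sup_{\underline{x}} g_n(\underline{x})$. To identify $L$ with $P(\sigma_A, \phi)$, I plan to apply the cylinder formula (\ref{eqEquivDefByCoverForTopPressure}) with the finite partitions into admissible $n$-cylinders $[a_0, a_1, \dots, a_{n-1}]$; Lipschitz continuity of $\phi$ with respect to $d_\theta$ provides the standard bounded-distortion estimate $|S_n\phi(\underline{u}) - S_n\phi(\underline{v})| \leq \Hseminorm{1}{(\Sigma_A^+, d_\theta)}{\phi} \cdot \theta/(1-\theta)$ whenever $\underline{u}, \underline{v}$ lie in the same $n$-cylinder. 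Comparing these cylinder sums with $g_n(\underline{x})$---whose terms are in bijection with admissible $n$-tuples extendable by $x_0$, with the restriction to such extensions altering the total only by a factor between $1$ and $\card S$---yields the desired identification.

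Statement (iv) will be the main obstacle and relies essentially on topological mixing. The plan is to invoke the Ruelle--Perron--Frobenius theorem for topologically mixing one-sided subshifts of finite type with Lipschitz potential (see \cite[Theorems~2.2 and~4.5]{PP90}): the transfer operator $(\RR_\phi u)(\underline{x}) \coloneqq \sum_{\underline{y} \in \sigma_A^{-1}(\underline{x})} \exp(\phi(\underline{y})) u(\underline{y})$ has a simple, isolated, positive eigenvalue $\lambda = \exp P(\sigma_A, \phi)$ with a strictly positive Lipschitz eigenfunction $h$, and $\lambda^{-n} \RR_\phi^n \mathbbm{1}$ converges uniformly on $\Sigma_A^+$ to a positive constant multiple of $h$. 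Since $\RR_\phi^n \mathbbm{1}(\underline{x}) = g_n(\underline{x})$ and $h$ is bounded above and below away from zero, taking logarithms and dividing by $n$ furnishes the pointwise limit $\lim_{n \to +\infty} \frac{1}{n} \log g_n(\underline{x}) = P(\sigma_A, \phi)$ for every $\underline{x} \in \Sigma_A^+$. The technical subtlety here is ensuring that the regularity class $\Holder{1}\bigl(\Sigma_A^+, d_\theta\bigr)$ together with the combinatorial hypothesis of topological mixing indeed meets the hypotheses of the Ruelle--Perron--Frobenius theorem in the form cited, which is classical.
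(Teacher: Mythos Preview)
Your arguments for (i) and (ii) are essentially identical to the paper's: periodic sequences are determined by their first $n$ coordinates, and $P_{1,\sigma_A^n}$ is $(n,1)$-separated. For (iii) and (iv) the paper does not give an argument at all---it simply cites \cite[Remark~1.3]{Ba00}, \cite[Lemma~4.5 and Theorem~3.1]{Ru89} for (iii), and \cite[Proposition~4.4.3]{PU10} for (iv)---so your Fekete-plus-cylinder sketch and your Ruelle--Perron--Frobenius argument are genuine additions rather than alternatives. Both sketches are sound; the only point that deserves a line of care in (iii) is the comparison between $\sup_{\underline{x}} g_n(\underline{x})$ and the full cylinder sum $Z_n$: every admissible $n$-cylinder $[a_0,\dots,a_{n-1}]$ admits a continuation to some $\underline{x}\in\Sigma_A^+$, so its (distortion-corrected) weight appears in $g_n(\underline{x})$ for at least one $\underline{x}$, whence $Z_n \leq (\card S)\cdot C\cdot \sup_{\underline{x}} g_n(\underline{x})$ after grouping by the symbol $x_0$. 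With that made explicit, your write-up would be a self-contained replacement for the paper's citations.
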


\begin{proof}
(i) Fix $n\in\N$. The inequality follows trivially from the observation that each $\{x_i\}_{i\in\N_0} \in P_{1,\sigma_A^n}$ is uniquely determined by the first $n$ entries in the sequence $\{x_i\}_{i\in\N_0}$.

\smallskip

(ii) Fix $n\in\N$. Since each $\{x_i\}_{i\in\N_0} \in P_{1,\sigma_A^n}$ is uniquely determined by the first $n$ entries in the sequence $\{x_i\}_{i\in\N_0}$,
it is clear that each pair of distinct $\{x_i\}_{i\in\N_0}, \,  \{x'_i\}_{i\in\N_0}\in P_{1,\sigma_A^n}$ are $(n,1)$-separated (see Subsection~\ref{subsctThermodynFormalism}). The inequality now follows from (\ref{defTopPressure}) and the observation that an $(n,1)$-separated set is also $(n,\epsilon)$-separated for all $\epsilon\in(0,1)$.

\smallskip

(iii) As remarked in \cite[Remark~1.3]{Bal00}, the proof of statement~(iii) follows from \cite[Lemma~4.5]{Rue89} and the beginning of the proof of Theorem~3.1 in \cite{Rue89}.

\smallskip

(iv) One can find a proof of this well-known fact in \cite[Proposition~4.4.3]{PU10} (see the first page of Chapter~4 in \cite{PU10} for relevant definitions).
\end{proof}

\begin{lemma}   \label{lmUnifBddToOneFactorPressure}
For each $i\in\{1, \, 2\}$, given a finite set of states $S_i$ and a transition matrix $A_i \: S_i \times S_i \rightarrow \{0, \, 1\}$, we denote by $\bigl( \Sigma_{A_i}^+ , \sigma_{A_i} \bigr)$ the one-sided subshift of finite type defined by $A_i$. Let $\phi\in\Holder{1}\bigl(\Sigma_{A_2}^+, d_\theta\bigr)$ be a real-valued Lipschitz continuous function on $\Sigma_{A_2}^+$ with $\theta \in (0,1)$. Suppose that there exists a uniformly bounded-to-one H\"{o}lder continuous factor map $\pi\: \Sigma_{A_1}^+ \rightarrow \Sigma_{A_2}^+$, i.e., $\pi$ is a H\"{o}lder continuous surjective map with $\sigma_{A_2} \circ \pi = \pi \circ \sigma_{A_1}$ and $\sup \bigl\{ \card \bigl(\pi^{-1}(\underline{x}) \bigr) : \underline{x}\in \Sigma_{A_2}^+ \bigr\} < +\infty$. Then
$
P(\sigma_{A_1}, \phi\circ \pi ) = P(\sigma_{A_2}, \phi).
$
\end{lemma}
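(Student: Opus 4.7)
\smallskip

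\noindent\textbf{Proof proposal.} The plan is to express both topological pressures through the preimage formula in Proposition~\ref{propSFT}~(iii) and then compare the two sums on $\Sigma_{A_1}^+$ and $\Sigma_{A_2}^+$ using the uniform bounded-to-one property of the factor map~$\pi$. First, I would address a minor regularity issue. By hypothesis $\phi$ is Lipschitz on $\bigl(\Sigma_{A_2}^+, d_\theta\bigr)$ and $\pi$ is H\"older continuous with some exponent $\alpha \in (0,1]$ with respect to the metrics $d_\theta$ on both sides, so $\phi \circ \pi$ is only $\alpha$-H\"older on $\bigl(\Sigma_{A_1}^+, d_\theta\bigr)$. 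However, $d_\theta(\underline{u},\underline{v})^\alpha = d_{\theta^\alpha}(\underline{u},\underline{v})$, so $\phi\circ\pi \in \Holder{1}\bigl(\Sigma_{A_1}^+, d_{\theta^\alpha}\bigr)$. Since the metrics $d_\theta$ and $d_{\theta^\alpha}$ induce the same (product) topology on $\Sigma_{A_1}^+$, and topological pressure depends only on the underlying topology (see Subsection~\ref{subsctThermodynFormalism}), Proposition~\ref{propSFT}~(iii) applies to both $\phi$ and $\phi\circ\pi$.

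For the upper bound $P(\sigma_{A_1}, \phi \circ \pi) \leq P(\sigma_{A_2}, \phi)$, set $N \coloneqq \sup \bigl\{ \card \bigl(\pi^{-1}(\underline{x})\bigr) : \underline{x} \in \Sigma_{A_2}^+ \bigr\} < +\infty$. Fix $n \in \N$ and $\underline{x}' \in \Sigma_{A_1}^+$. Since $\sigma_{A_2}^n \circ \pi = \pi \circ \sigma_{A_1}^n$, the factor map $\pi$ sends $\sigma_{A_1}^{-n}(\underline{x}')$ into $\sigma_{A_2}^{-n}(\pi(\underline{x}'))$, and every fiber of this restricted map has at most $N$ elements. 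Moreover, $S_n(\phi\circ\pi)(\underline{y}) = S_n\phi(\pi(\underline{y}))$. Grouping the sum by the value $\underline{z} = \pi(\underline{y})$ gives
\begin{equation*}
\sum_{\underline{y} \in \sigma_{A_1}^{-n}(\underline{x}')} e^{S_n(\phi\circ\pi)(\underline{y})}
\;\leq\; N \sum_{\underline{z} \in \sigma_{A_2}^{-n}(\pi(\underline{x}'))} e^{S_n \phi(\underline{z})}
\;\leq\; N \sup_{\underline{x} \in \Sigma_{A_2}^+} \sum_{\underline{z} \in \sigma_{A_2}^{-n}(\underline{x})} e^{S_n \phi(\underline{z})}.
\end{equation*}
Taking $\frac{1}{n}\log$, then $\sup$ over $\underline{x}'$, then $\lim$ as $n \to +\infty$, and invoking Proposition~\ref{propSFT}~(iii) on both sides gives the desired upper bound.

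For the lower bound $P(\sigma_{A_1}, \phi \circ \pi) \geq P(\sigma_{A_2}, \phi)$, fix $n \in \N$ and $\underline{x} \in \Sigma_{A_2}^+$. Using surjectivity of $\pi$, for each $\underline{z} \in \sigma_{A_2}^{-n}(\underline{x})$ pick some $\underline{w}_{\underline{z}} \in \pi^{-1}(\underline{z})$; then $\pi(\sigma_{A_1}^n(\underline{w}_{\underline{z}})) = \sigma_{A_2}^n(\underline{z}) = \underline{x}$, so $\sigma_{A_1}^n(\underline{w}_{\underline{z}}) \in \pi^{-1}(\underline{x})$. Partitioning the set $\{\underline{w}_{\underline{z}} : \underline{z} \in \sigma_{A_2}^{-n}(\underline{x})\}$ according to which element $\underline{x}' \in \pi^{-1}(\underline{x})$ the point $\sigma_{A_1}^n(\underline{w}_{\underline{z}})$ equals, using $\card \bigl(\pi^{-1}(\underline{x})\bigr) \leq N$ and the pigeonhole principle, we find some $\underline{x}' \in \pi^{-1}(\underline{x})$ such that
\begin{equation*}
\sum_{\underline{y} \in \sigma_{A_1}^{-n}(\underline{x}')} e^{S_n(\phi\circ\pi)(\underline{y})}
\;\geq\; \frac{1}{N} \sum_{\underline{z} \in \sigma_{A_2}^{-n}(\underline{x})} e^{S_n \phi(\underline{z})}.
\end{equation*}
Taking $\sup$ over $\underline{x}'$ on the left dominates $\sup$ over $\underline{x}$ on the right up to the factor $N^{-1}$, whose logarithmic contribution vanishes after dividing by $n$ and letting $n \to +\infty$. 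Another application of Proposition~\ref{propSFT}~(iii) yields the lower bound, completing the proof.

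The argument is essentially routine once the preimage formula is available; the only step requiring some care is the lower bound, where one must exploit surjectivity of $\pi$ to produce, for each $\underline{x}$, a carefully chosen lift $\underline{x}'$ whose preimage sum under $\sigma_{A_1}$ captures a definite fraction of the preimage sum of $\underline{x}$ under $\sigma_{A_2}$, rather than losing exponentially many terms.
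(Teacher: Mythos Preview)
Your argument is correct. The upper bound is essentially identical to the paper's: both use Proposition~\ref{propSFT}~(iii) and the observation that $\pi$ sends $\sigma_{A_1}^{-n}(\underline{x}')$ at most $N$-to-one into $\sigma_{A_2}^{-n}(\pi(\underline{x}'))$. For the lower bound the paper simply cites a general factor-map inequality \cite[Lemma~3.2.8]{PU10}, which says that pressure can only go up when passing to an extension; you instead give a direct pigeonhole argument that, for each $\underline{x}$, produces a lift $\underline{x}'$ whose preimage sum captures at least a $1/N$-fraction of the downstairs sum. Your route is more elementary and self-contained, and in fact makes the symmetry of the two bounds transparent (both come from the same $N$-to-one control), whereas the paper's route is shorter but relies on an external reference. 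Your remark that $\phi\circ\pi\in\Holder{1}\bigl(\Sigma_{A_1}^+,d_{\theta^\alpha}\bigr)$ is exactly the paper's one-line observation that $\phi\circ\pi\in\Holder{1}\bigl(\Sigma_{A_1}^+,d_{\theta'}\bigr)$ for some $\theta'\in(0,1)$.
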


\begin{proof}
We observe that since $\bigl(\Sigma_{A_2}^+, \sigma_{A_2}\bigr)$ is a factor of $\bigl(\Sigma_{A_1}^+, \sigma_{A_1}\bigr)$ with the factor map $\pi$, it follows from \cite[Lemma~3.2.8]{PU10} that $P  ( \sigma_{A_1},  \phi \circ \pi  ) \geq P  ( \sigma_{A_2},  \phi  )$. It remains to show $P  ( \sigma_{A_1},  \phi \circ \pi  ) \leq P  ( \sigma_{A_2},  \phi  )$.

Denote $M \coloneqq \sup \bigl\{ \card \bigl(\pi^{-1}(\underline{x}) \bigr) : \underline{x}\in \Sigma_{A_2}^+ \bigr\}$.

Note that $\phi \circ \pi \in\Holder{1}\bigl(\Sigma_{A_1}^+, d_{\theta'}\bigr)$ for some $\theta' \in (0,1)$. By Proposition~\ref{propSFT}~(iii), for each $\epsilon>0$, we can choose a sequence $\{\underline{x}^n\}_{n\in\N_0}$ in $\Sigma_{A_1}^+$ such that
\begin{equation}   \label{eqPflmUnifBddToOneFactorPressure}
     \liminf_{n\to +\infty}  \frac{1}{n} \log  \sum_{\underline{y}\in \sigma_{A_1}^{-n}(\underline{x}^n)}
          \exp \biggl( \sum_{i=0}^{n-1} \bigl( \phi \circ \pi \circ \sigma_{A_1}^i \bigr)(\underline{y}) \biggr)
\geq P(\sigma_{A_1}, \phi \circ \pi) - \epsilon.
\end{equation}
Observe that for all $\underline{x}, \underline{y} \in \Sigma_{A_1}^+$, and $n\in \N_0$, if $\sigma_{A_1}^n(\underline{y})=\underline{x}$, then 
$\pi(\underline{x}) = \bigl( \pi\circ \sigma_{A_1}^n \bigr) (\underline{y}) =  \bigl( \sigma_{A_2}^n \circ \pi \bigr) (\underline{y})$. Thus, by (\ref{eqPflmUnifBddToOneFactorPressure}) and Proposition~\ref{propSFT}~(iii),
\begin{align*}
        P(\sigma_{A_1}, \phi \circ \pi) - \epsilon
&\leq   \liminf_{n\to +\infty}  \frac{1}{n} \log  \sum_{\underline{y}\in \sigma_{A_1}^{-n}(\underline{x}^n)}
          \exp \biggl( \sum_{i=0}^{n-1} \bigl( \phi \circ \sigma_{A_2}^i \bigr)( \pi(\underline{y} ) ) \biggr) \\
&\leq   \liminf_{n\to +\infty}  \frac{1}{n} \log  \sum_{\underline{y}\in \pi^{-1} \bigl( \sigma_{A_2}^{-n}(\pi(\underline{x}^n)) \bigr)}
          \exp \biggl( \sum_{i=0}^{n-1} \bigl( \phi \circ \sigma_{A_2}^i \bigr)( \pi(\underline{y} ) ) \biggr) \\
&\leq  \limsup_{n\to +\infty}  \frac{1}{n} \log \Biggl( M \sum_{\underline{z}\in  \sigma_{A_2}^{-n}(\pi(\underline{x}^n))  }
          \exp \biggl( \sum_{i=0}^{n-1} \bigl( \phi \circ \sigma_{A_2}^i \bigr)(  \underline{z}  ) \biggr)  \Biggr) \\
&\leq    P(\sigma_{A_2}, \phi ) .
\end{align*}
Since $\epsilon>0$ is arbitrary, we get $P(\sigma_{A_1}, \phi \circ \pi) \leq P(\sigma_{A_2}, \phi )$. The proof is complete.
\end{proof}

\end{document}